%% file: nonparametric_homogeneity_circle_arXiv.tex
\documentclass[oneside,11pt]{article}

\input{preamble_arXiv.tex}

\usepackage[pdftex,final,bookmarksnumbered,bookmarksopen=false,breaklinks,colorlinks]{hyperref}
\hypersetup{
	final,
	bookmarksnumbered=true,
	bookmarksopen=true,
	bookmarksopenlevel=0,
	unicode=false,
	pdftoolbar=true,
	pdfmenubar=true,
	pdffitwindow=false,
	pdftitle={A class of nonparametric homogeneity tests on the circle},
	pdfdisplaydoctitle=true,
	pdflang={English},
	pdfauthor={Alberto Fernandez-de-Marcos, Eduardo Garcia-Portugues},
	pdfsubject={arXiv paper},
	pdfcreator={Alberto Fernandez-de-Marcos},
	pdfproducer={Alberto Fernandez-de-Marcos},
	pdfkeywords={Circular data}{Homogeneity tests}{Multisample tests}{Sobolev tests}{Uniform scores},
	pdfnewwindow=true,
	breaklinks=true,
	hidelinks,
	linkcolor=black,
	citecolor=black,
	filecolor=magenta,
	urlcolor=cyan
}

\newif\ifmain
\maintrue
\newif\ifsupplement
\supplementtrue

\newif\iffigstabs
\figstabstrue

\begin{document}

\ifmain

\title{A class of nonparametric homogeneity tests on the circle}
\setlength{\droptitle}{-1cm}
\predate{}%
\postdate{}%
\date{}

\author{Alberto Fern\'andez-de-Marcos$^{1,2}$ and Eduardo Garc\'ia-Portugu\'es$^{1}$}
\footnotetext[1]{Department of Statistics, Universidad Carlos III de Madrid (Spain).}
\footnotetext[2]{Corresponding author. e-mail: \href{mailto:albertfe@est-econ.uc3m.es}{albertfe@est-econ.uc3m.es}.}
\maketitle

\begin{abstract}
    We develop a unified framework for $c$-sample homogeneity testing on the circle. The proposed class of \emph{$c$-sample Sobolev tests} generalizes the two-sample Sobolev tests based on uniform scores and encompasses the existing multisample tests as particular cases. We further embed this class into a broader aggregation framework, where the samplewise Sobolev components are combined through general merging functions, including average-type, maximum-type, and interpolating families. Within this class, we introduce the first Anderson--Darling-type homogeneity test for circular data, and we further propose two new tests designed to detect multimodal departures from homogeneity, constructed from softmax and Poisson kernels. We derive the asymptotic null distribution of the class, prove its consistency against a broad family of fixed alternatives, and obtain the asymptotic distribution under shift-type local alternatives. The tests are distribution-free and therefore do not require resampling. A comprehensive simulation study demonstrates the strong power of the Anderson--Darling-type test compared with Cramér--von Mises-type competitors across several scenarios, and the effectiveness of the softmax and Poisson tests under several alternatives. The testing toolbox is applied to analyze the nursing patterns of polar bears.
\end{abstract}
\begin{flushleft}
	\small\textbf{Keywords:} Circular data; Homogeneity tests; Multisample tests; Sobolev tests; Uniform scores.
\end{flushleft}

\section{Introduction}
\label{sec:intro}

Testing the equality of two or more populations based on observed samples is a fundamental problem in statistical inference. Many of the proposed solutions adapt methods for the one-sample goodness-of-fit problem, see, e.g., \citet[Part~II]{Thas2010}. In the context of directional data, specifically circular data, the approach resembles that on the real line, with some additional difficulties due to the absence of a canonical ordering.

The homogeneity problem with $c\geq 2$ samples on the circle $\Sp^1:=\{\bx\in\R^{2}:\|\bx\|=1\}$ is formalized as follows. Let $\smash{\{\bX_i^{(\ell)}\}_{i=1}^{n_{\ell}}}$, with $\smash{\bX_i^{(\ell)}\sim {\rm P}_{\ell}}$ for all $i=1,\ldots,n_\ell$, be the $\ell$th sample for $\ell=1,\ldots,c$. We are concerned with testing
$$
\Hcal_0: {\rm P}_1 =\cdots = {\rm P}_c\quad
\text{against the most general alternative }\quad
\Hcal_1: \neg\Hcal_0.
$$
In the rest of the paper, every ${\rm P}_{\ell}$, $\ell=1,\ldots,c$, is an absolutely continuous distribution, and all the observations $\bX_{i}^{(\ell)}$ are mutually independent. For the sake of clarity, we work with polar coordinates, i.e., the samples are expressed as the angles $\{\Theta_i^{(\ell)}\}_{i=1}^{n_\ell}$, $\ell=1,\ldots,c$, with $\bx=\lrp{\cos\theta,\sin\theta}^{\top}\in\Sp^1$ and $0\leq\theta<2\pi$. The total sample size is denoted by $N:=\sum_{\ell=1}^{c}n_{\ell}$.

In the circular setting, the two-sample problem ($c=2$) has been extensively studied. As in the linear case, using the empirical cumulative distribution function (ecdf) offers a natural approach. However, classical univariate ecdf-based tests, such as the Kolmogorov--Smirnov and Cramér--von Mises tests, must be adapted to account for the geometry of the circle. Specifically, they must be invariant under rotations and reflections, so that neither the arbitrary starting point nor the direction of accumulation of the ecdf influences the test statistic. Let $\widehat{F}_{\ell,n_\ell}$ denote the angular ecdf of the $\ell$th sample with starting point $\theta = 0$, and the ecdf of the pooled sample be $\widehat{H}_N:=\sum_{\ell=1}^{c}\pi_{\ell,N} \widehat{F}_{\ell,n_\ell}$, with $\pi_{\ell,N}:=n_{\ell}/N$ denoting the proportion of the $\ell$th sample. The first test for $c=2$ was proposed by \cite{Kuiper1960}, measuring the dissimilarity between the two ecdfs using a rotation-invariant version of the Kolmogorov--Smirnov (KS) distance. Let $D_{n_1,n_2}^{+}:=\sup_{x\in[0,2\pi)}\big\{\widehat{F}_{1,n_1}(x) - \widehat{F}_{2,n_2}(x)\big\}$ and $D_{n_1,n_2}^{-}:=\sup_{x\in[0,2\pi)}\big\{\widehat{F}_{2,n_2}(x) - \widehat{F}_{1,n_1}(x)\big\}$. The Kuiper test statistic is
$$
V_{n_1,n_2}:=D_{n_1,n_2}^{+}+D_{n_1,n_2}^{-},
$$
which, unlike the KS statistic $\max\{D^{+}_{n_1,n_2},D^{-}_{n_1,n_2}\}$, is rotation-invariant.
The Cramér--von Mises (CvM) counterpart is the \cite{Watson1962} test, which uses a rotation-invariant version of the CvM distance between ecdfs, based on the empirical variance of the difference between the two ecdfs. The Watson test statistic is
\begin{align*}
    U_{n_1,n_2}^2:=\dfrac{n_1n_2}{N}\int_{0}^{2\pi}\bigg[\widehat{F}_{1,n_1}(x)-\widehat{F}_{2,n_2}(x)-\int_{0}^{2\pi}(\widehat{F}_{1,n_1}(y)-\widehat{F}_{2,n_2}(y))\,\mathrm{d} \widehat{H}_N(y)\bigg]^2\,\mathrm{d} \widehat{H}_N(x).
\end{align*}
The limiting null distributions of both statistics, as $N\to\infty$ and under non-extreme asymptotic rates for each sample, i.e., $\pi_{1,N}\to \pi_1\in(0,1)$, are distribution-free and coincide with the asymptotic null distribution of the \cite{Kuiper1960} and \cite{Watson1961} test statistics of uniformity, respectively.

A different way of building distribution-free two-sample tests on $\Sp^1$ was unveiled by \cite{Wheeler1964}. In that work, the \cite{Rayleigh1919} test of uniformity is applied to the \emph{uniform scores}, a transformation of the ranks, thereby yielding a homogeneity test. The testing procedure begins by choosing an arbitrary orientation and an origin on the circle. Then, the pooled sample is sorted, with $\br^{(\ell)}:=(r_1^{(\ell)},\ldots, r_{n_\ell}^{(\ell)})$ denoting the ranks of the $\ell$th sample with respect to the pooled sample. The uniform scores are defined as $\bc^{(\ell)}:=2\pi\br^{(\ell)}/N$, and the Rayleigh uniformity test is applied to $\bc^{(1)}$. That is,
\begin{align}\label{eq:Rayleigh-us}
    R_N:=\dfrac{2}{n_1}\bigg[\Big(\sum_{i=1}^{n_{1}}\cos(c_i^{(1)})\Big)^2 + \Big(\sum_{i=1}^{n_{1}}\sin(c_i^{(1)})\Big)^2\bigg].
\end{align}
Its asymptotic null distribution was found in \cite{Mardia1969a} and shown to be equal to that of the Rayleigh uniformity test, i.e., $R_N\inlaw\chi^2_2$. Another interesting feature to note is its invariance under interchanging the two samples. Indeed, the Rayleigh statistic depends on the length of the first sample's uniform scores resultant vector $\smash{\bR_1:=\sum_{i=1}^{n_1} (\cos c_i^{(1)},\sin c_i^{(1)})}$. Due to the equally spaced construction of the uniform scores, $\bR = \bR_1 + \bR_2=\mathbf{0}$, and therefore $\|\bR_1\|=\|\bR_2\|$.

Later, this uniform scores-based test was naturally extended from the particular Rayleigh test to the family of \emph{Sobolev} tests of uniformity introduced in \cite{Beran1968, Beran1969}, thereby unlocking a whole new class of two-sample tests on the circle. The class of test statistics presented in \cite{Beran1969a} is defined as
\begin{align}\label{eq:beran}
    B_{n_1,n_2}^{\psi}:=\sum_{i,j=1}^{n_1}\psi((r_i^{(1)}-r_j^{(1)})/N),
\end{align}
where $\psi:[-1,1]\to\R$ is given by the Fourier expansion $\psi(x):=2\sum_{k=1}^{\infty}v_k^2 \cos(2\pi k x)$ for $x\in[-1,1]$, with $(v_k)_{k\geq 1}$ being a real sequence such that $\sum_{k\geq 1}v_k^2<\infty$. Thus, $\psi$ is continuous, symmetric with respect to $0$ and $\pm 1/2$, and periodic of period $1$. In that work, it was shown that the \cite{Watson1962} two-sample test, $U_{n_1,n_2}^2$, belongs to this class. The asymptotic null distribution of $(N-1)B_{n_1,n_2}^{\psi}/(n_1n_2)$ is shown to be equal to that of the corresponding uniformity test, which is a weighted sum of independent chi-squared random variables, $\sum_{k\geq 1}v_k^2 \chi^2_2$. \cite{Schach1969b} also developed a class of two-sample nonparametric tests closely related to the Sobolev family of \cite{Beran1969a}. As in this class, the test statistics are $V$-statistics based on the uniform scores. However, in Schach's approach, the kernel $\psi_N:[-1,1]\to\R$ is a step function satisfying the same symmetry and periodicity conditions as $\psi$, but is allowed to depend on $N$. This dependence leads to a more refined treatment of the asymptotic behavior.

Considerably less attention has been devoted to the multisample $c>2$ case. To the best of our knowledge, the only ecdf-based proposal is due to \cite{Maag1966}, who introduced a natural extension of the \cite{Watson1962} test given by
$$
U^2_{N,c}:=\sum_{\ell=1}^{c} n_{\ell}\int_{0}^{2\pi}\left[\widehat{F}_{\ell,n_{\ell}}(x)- \widehat{H}_N(x)-\int_{0}^{2\pi}(\widehat{F}_{\ell,n_{\ell}}(y)- \widehat{H}_N(y))\,\rd  \widehat{H}_N(y)\right]^2\,\rd  \widehat{H}_N(x),
$$
which satisfies $U^2_{N,2}=U^2_{n_1,n_2}$. As an extension of the uniform scores-based tests, \cite{Mardia1972a} proposed a generalization of the \cite{Wheeler1964} test, with test statistic
$$
R_{N,c}:=\sum_{\ell=1}^{c} R_{N}(\bc^{(\ell)}),
$$
where $R_{N}(\bc^{(\ell)})$ denotes the Rayleigh statistic defined in~\eqref{eq:Rayleigh-us} but applied to the uniform scores of the $\ell$th sample. Note that $R_{N,2}=R_{N}/\pi_{2,N}$. Under the usual asymptotic regime, the null distribution satisfies $R_{N,c}\inlaw \chi^2_{2(c-1)}$. Later, \cite{Mardia1973} introduced a $q$-modal $c$-sample test on the circle, in the spirit of \cite{Mardia1972a}, but designed to detect multimodal alternatives,
\begin{align*}
R_{N,q,c}:=\sum_{\ell = 1}^{c}\dfrac{2}{n_\ell}\bigg[\Big(\sum_{i=1}^{n_{\ell}}\cos(q c_i^{(\ell)})\Big)^2 + \Big(\sum_{i=1}^{n_{\ell}}\sin(q c_i^{(\ell)})\Big)^2\bigg].
\end{align*}
This test statistic was developed considering $q$-modal von Mises distributions. For $q=1$, it reduces to $R_{N,c}$. Interestingly, for all $q\geq1$, its asymptotic null distribution coincides with that of $R_{N,c}$.

This paper addresses four main objectives in the study of circular $c$-sample homogeneity tests. First, given the scattered literature on the $c>2$ case, we establish a general $c$-sample framework, namely, \emph{$c$-sample Sobolev tests}, in which previously developed multisample tests---such as those of \cite{Mardia1972a}, \cite{Mardia1973}, and \cite{Maag1966}---appear as particular cases. This class of distribution-free tests naturally generalizes the \cite{Beran1969a} two-sample Sobolev tests, which are recovered as a special case. Second, we introduce a broader aggregation framework. Instead of summing the $c$ individual Sobolev statistics, we allow them to be combined through a general \emph{merging function} that satisfies certain properties. The usual $c$-sample Sobolev statistic corresponds to the sum merging function. Within this framework, we build a max-type statistic, which can be more powerful when the departure from homogeneity is present in a few samples. We also introduce two flexible interpolation families, the $M$-family and the log-sum-exp family, which range between sum-type and max-type behavior. Third, we introduce the first Anderson--Darling-type homogeneity test for circular data, building on the corresponding uniformity test introduced in \cite{Garcia-Portugues2020b}. In addition, we propose two new tests for homogeneity designed for multimodal alternatives, based on recent uniformity tests: the softmax-based test of \cite{Fernandez-de-Marcos2023b} and the Poisson-kernel-based test of \cite{Pycke2010}. Fourth, we derive the asymptotic theory of the general aggregation $c$-sample Sobolev tests. Under the null hypothesis, we show the test statistics are distribution-free, hence avoiding the need for resampling. We also correct an overlooked assumption on the summability of the coefficients $v_k$ in Theorem 1 of \cite{Beran1969a} and show that ignoring this assumption leads to nonstandard asymptotic behavior of the test statistic, due to the discreteness of the uniform scores. The consistency of $c$-sample Sobolev tests is established against absolutely continuous fixed alternatives. We also derive the asymptotic distribution of these homogeneity tests under novel shift-type local alternatives that have not previously been studied for circular data. Numerical experiments demonstrate the improved power of the Anderson--Darling test compared to the CvM-type test, as well as the effectiveness of the multimodal tests against various alternative scenarios. In a real-data application we evaluate whether the diel patterns of polar bears nursing their cubs follow the same distribution across different seasons and different cubs' ages.\nowidow[3]

The remainder of the paper is organized as follows. Section~\ref{sec:unif-scores} introduces the $c$-sample Sobolev class of test statistics and establishes several of its properties. In Section~\ref{sec:specific} we explore the connection between specific kernels and existing test statistics, and we present both the new Anderson--Darling $c$-sample test and the tests designed for multimodal alternatives. Section~\ref{sec:general-agg} introduces the general aggregation class, defining a merging function and exploring the different choices available. Section~\ref{sec:null-asymp} derives the null distribution of the class, and Section~\ref{sec:nonnull-asymp} shows the asymptotic behavior under non-null alternatives, including the consistency against fixed alternatives and the asymptotic distribution under shift-type local alternatives. Section~\ref{sec:sim} presents the simulation results showing the finite-sample performance of the tests, using different kernels and merging functions. Section~\ref{sec:app_bear} illustrates the application of these tests to a real dataset. Concluding remarks are provided in Section~\ref{sec:discussion}. All proofs and additional simulations are provided in the Supplementary Materials (SM).

\section{\texorpdfstring{$c$}{c}-sample Sobolev test based on uniform scores}
\label{sec:unif-scores}

\subsection{Background}
\label{subsec:unif-scores:basics}

We begin by reviewing the required basics of the Sobolev class of uniformity test statistics on $\Sp^1$. Given a circular sample $\bTheta:=\{\Theta_i:i=1,\ldots, n\}$, the Sobolev test statistic of uniformity based on the kernel $\phi$ is defined as
\begin{align}\label{eq:sob-def}
    S^{\phi}_n(\bTheta):=\dfrac{1}{n}\sum_{i,j=1}^{n}\phi(\cos(\Theta_i - \Theta_j)),
\end{align}
where $\phi:[-1,1]\to\R$ with $\phi\in L^2[-1,1]$, the space of square integrable functions defined on $[-1,1]$ with respect to the weight $t\mapsto(1-t^2)^{-1/2}$. Thus, $\phi$ can be expanded in terms of the Chebyshev polynomials $T_{k}(\cos x):=\cos(kx)$, which form an orthogonal basis of $L^2[-1,1]$. Sobolev statistics are designed such that the expectation of $\phi$ under the uniform distribution is zero, which makes $b_0(\phi)=0$ in the following expansion. Using the addition formula, we obtain
\begin{align}\label{eq:sob-exp}
    \phi(\cos(\theta_1 - \theta_2))&=\sum_{k=1}^{\infty}b_k(\phi)\cos(k(\theta_1-\theta_2))\nonumber\\
    &=\sum_{k=1}^{\infty}b_k(\phi)\big(\cos(k\theta_1)\cos(k\theta_2) + \sin(k\theta_1)\sin(k\theta_2)\big),
\end{align}
where 
$$
b_k(\phi)=\dfrac{2}{\pi (1+\delta_{k0})}\int_{-1}^{1}\phi(t)\cos{(k(\cos^{-1} t))}(1-t^2)^{-1/2}\,\rd t
$$
are the projections of the kernel onto the orthogonal basis $\{T_k\}_{k\geq 0}$, and $\delta_{k\ell}$ denotes the usual Kronecker delta.
In addition, the kernel $\phi$ is assumed to meet the summability condition 
\begin{align}\label{eq:summ-cond}
    \sum_{k=1}^{\infty}|b_k(\phi)|<\infty,
\end{align}
which ensures that $\phi(1)$ is well defined and also that the series \eqref{eq:sob-exp} converges uniformly on $[-1,1]$, which in turn ensures that $\phi$ is continuous, as is the case for the kernel $\psi$ in \eqref{eq:beran}.

\subsection{The class of \texorpdfstring{$c$}{c}-sample Sobolev statistics}
\label{subsec:unif-scores:class}

We now introduce the class of $c$-sample Sobolev tests, for $c\geq 2$, as Sobolev statistics acting on uniform scores. Let $\br^{(\ell)}$ be the ranks of the $\ell$th sample with respect to the pooled sample, $\br^{(\ell)}:=(r_1^{(\ell)},\ldots, r_{n_\ell}^{(\ell)})$, and $\bc^{(\ell)}$ be the corresponding uniform scores, $\bc^{(\ell)}:=2\pi\br^{(\ell)}/N$. We consider the statistic
\begin{align}\label{eq:unif-sc-k}
    T_{N,c}^{\phi}:=\sum_{\ell=1}^{c}S^{\phi}_{n_\ell}(\bc^{(\ell)}).
\end{align}
Henceforth, we restrict~\eqref{eq:unif-sc-k} to samples with size satisfying $\min_{1\leq \ell\leq c}n_\ell\geq 2$, and hence $N\geq 2c$. Expression~\eqref{eq:sob-exp} allows writing~\eqref{eq:unif-sc-k} in a form that is crucial for obtaining the asymptotic behavior of the statistic and proving certain properties:
\begin{align*}
    T_{N,c}^{\phi}&=\sum_{\ell=1}^{c}\dfrac{1}{n_{\ell}}\sum_{i,j=1}^{n_{\ell}}\sum_{k=1}^{\infty}b_k(\phi)\big(\cos(kc_i^{(\ell)})\cos(kc_j^{(\ell)}) + \sin(kc_i^{(\ell)})\sin(kc_j^{(\ell)})\big)\nonumber\\
    &=\sum_{\ell=1}^{c}\sum_{k=1}^{\infty}b_k(\phi)\dfrac{1}{n_{\ell}}\Big(\sum_{i,j=1}^{n_{\ell}}\cos(kc_i^{(\ell)})\cos(kc_j^{(\ell)}) + \sum_{i,j=1}^{n_{\ell}}\sin(kc_i^{(\ell)})\sin(kc_j^{(\ell)})\Big)\nonumber\\
    &=\sum_{\ell=1}^{c}\sum_{k=1}^{\infty}b_k(\phi)\dfrac{1}{n_{\ell}}\bigg(\Big(\sum_{i=1}^{n_{\ell}}\cos(kc_i^{(\ell)})\Big)^2 + \Big(\sum_{i=1}^{n_{\ell}}\sin(kc_i^{(\ell)})\Big)^2\bigg)\nonumber\\
    &=\sum_{\ell=1}^{c}\sum_{k=1}^{\infty}\sum_{r=1}^{2} 2^{-1}b_k(\phi) \dfrac{1}{n_{\ell}}\Big(\sum_{i=1}^{n_{\ell}}g_{k,r}(c_i^{(\ell)})\Big)^2,
\end{align*}
where $g_{k,1}(\theta):=\sqrt{2}\cos(k\theta)$ and $g_{k,2}(\theta):=\sqrt{2}\sin(k\theta)$. Note that in the second equality we used the absolute summability of coefficients in \eqref{eq:summ-cond}.

Proposition~\ref{prp:unif-scores-symm}\eqref{cond:symm-2} reveals that the statistic~\eqref{eq:unif-sc-k} encompasses the class of \cite{Beran1969a} for $c=2$ and kernels $\phi$ with sequences of coefficients $b_k(\phi)\geq0$ for all $k\geq 1$.

We now review certain desired properties that the statistic \eqref{eq:unif-sc-k} satisfies. As with the two-sample class of statistics $\smash{B_{n_1,n_2}^{\psi}}$, the statistic $\smash{T_{N,c}^{\phi}}$ is rotation-invariant. Indeed, Sobolev statistics only depend on the pairwise angles between the uniform scores, which remain constant under rotations.
The statistic is also symmetric or invariant under relabeling of the samples. This is immediate from commutativity. However, it is not immediately apparent if the test based on $\smash{B_{n_1,n_2}^{\psi}}$ is symmetric. Proposition~\ref{prp:unif-scores-symm}\eqref{cond:symm-1} answers this question affirmatively, while unlocking the connection between the proposed class and the \cite{Beran1969a} class.
\begin{proposition}
\label{prp:unif-scores-symm}
Let $c=2$. Then:
\begin{enumerate}[label=(\textit{\roman*}),ref=\textit{\roman*}]
        \item $n_1 S_{n_1}^{\phi}(\bc^{(1)})-n_2 S_{n_2}^{\phi}(\bc^{(2)})=(n_1-n_2)N\sum_{k=1}^{\infty} b_{kN}(\phi)$.\label{cond:symm-1}
        \item $T_{N,2}^{\phi}=\dfrac{N}{n_1n_2}B^{\psi}_{n_1,n_2}-\Big(\dfrac{n_1}{n_2}-1\Big)N\sum_{k=1}^{\infty} b_{kN}(\phi)
        $, where $\psi(x):=\phi(\cos(2\pi x))$ for all $x\in[-1,1]$ with $b_k(\phi)\geq0$ for all $k\geq 1$.\label{cond:symm-2}
    \end{enumerate}
\end{proposition}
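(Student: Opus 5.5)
The plan is to work entirely with the Fourier representation of $T_{N,c}^{\phi}$ derived above, and to exploit the fact that the pooled uniform scores are exactly the $N$ equally spaced points $2\pi m/N$, $m=1,\ldots,N$.

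For each $k\geq1$ and $\ell=1,2$, define the complex resultant $A_{\ell,k}:=\sum_{i=1}^{n_\ell}e^{\mathrm{i}kc_i^{(\ell)}}$. The computation preceding the proposition, which uses the summability condition~\eqref{eq:summ-cond} to interchange sums, gives
\[
n_\ell S_{n_\ell}^{\phi}(\bc^{(\ell)})=\sum_{k=1}^{\infty}b_k(\phi)\,|A_{\ell,k}|^2 .
\]
Since the two samples together exhaust the pooled scores,
\[
A_{1,k}+A_{2,k}=\sum_{m=1}^{N}e^{2\pi\mathrm{i}km/N}.
\]
By the geometric sum of roots of unity, this equals $N$ if $N\mid k$ and $0$ otherwise. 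Two cases follow.

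\textbf{Case $N\nmid k$.} Here $A_{2,k}=-A_{1,k}$, so $|A_{1,k}|^2=|A_{2,k}|^2$. These terms cancel in the difference of part~(i).

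\textbf{Case $k=jN$.} Every term of $A_{\ell,k}$ equals $e^{2\pi\mathrm{i}jr}=1$, so $A_{\ell,k}=n_\ell$. The contribution to the difference is then $b_{jN}(\phi)(n_1^2-n_2^2)=b_{jN}(\phi)(n_1-n_2)N$.

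Summing over $j$ gives~(i). The subseries $\sum_j b_{jN}(\phi)$ converges absolutely by~\eqref{eq:summ-cond}, so the termwise subtraction of the two series is legitimate. This is the only analytic point to watch.

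For part~(ii), first identify Beran's statistic with a Sobolev statistic of the first sample. With $\psi(x):=\phi(\cos(2\pi x))$, we have $\psi((r_i^{(1)}-r_j^{(1)})/N)=\phi(\cos(c_i^{(1)}-c_j^{(1)}))$, hence $B^{\psi}_{n_1,n_2}=n_1S^{\phi}_{n_1}(\bc^{(1)})$. We must also check that this $\psi$ is an admissible Beran kernel. Expanding gives $\psi(x)=\sum_{k\geq1}b_k(\phi)\cos(2\pi kx)$, which has the required form $2\sum_k v_k^2\cos(2\pi kx)$ with $v_k^2=b_k(\phi)/2$ and $\sum_k v_k^2<\infty$. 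This is exactly where the hypothesis $b_k(\phi)\geq0$ is needed.

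Then write
\[
T_{N,2}^{\phi}=\frac{1}{n_1}\,n_1S_{n_1}^{\phi}(\bc^{(1)})+\frac{1}{n_2}\,n_2S_{n_2}^{\phi}(\bc^{(2)})
\]
and substitute $n_2S^{\phi}_{n_2}(\bc^{(2)})=n_1S^{\phi}_{n_1}(\bc^{(1)})-(n_1-n_2)N\sum_k b_{kN}(\phi)$ from~(i). Collecting terms with $1/n_1+1/n_2=N/(n_1n_2)$ yields the stated identity.

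The main obstacle is minor. It is to handle the roots-of-unity cancellation cleanly, in particular the residual multiples-of-$N$ frequencies that prevent exact symmetry, and to justify rearranging the series, which the absolute summability covers.
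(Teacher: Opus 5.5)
Your proposal is correct and follows essentially the same route as the paper: you expand both Sobolev statistics through the Fourier series, use that the pooled uniform scores are the $N$ equally spaced points so the two samples' resultants cancel unless $N\mid k$ (where each equals $n_\ell$), and then substitute (i) into $T_{N,2}^{\phi}$ to get (ii). Packaging the cosine and sine sums into the complex resultant $A_{\ell,k}$ only streamlines the paper's separate treatment of the two sums. Your remarks that absolute summability justifies the termwise subtraction and that $b_k(\phi)\geq 0$ is needed only to place $\psi$ in Beran's class are accurate additions.
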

We can observe that the statistic $B_{n_1,n_2}^{\psi}$ is not symmetric in general, since $n_1 S_{n_1}^{\phi}(\bc^{(1)})=B^{\psi}_{n_1,n_2}\neq B^{\psi}_{n_2,n_1}=n_2 S_{n_2}^{\phi}(\bc^{(2)})$ when $n_1 \neq n_2$ and $N\sum_{k=1}^{\infty} b_{kN}(\phi)\neq 0$. However, the difference in~\eqref{cond:symm-1} is deterministic, so that the corresponding test is indeed invariant under sample relabeling. Moreover, the statistic is invariant when one of the following conditions holds: (\emph{a}) $n_1 = n_2$, or (\emph{b}) $N\sum_{k=1}^{\infty} b_{kN}(\phi)=0$. For kernels with a nonnegative sequence of coefficients, the latter happens precisely when $\{k\geq 1: b_k(\phi)\neq 0\}\cap N\Z$ is empty. This gives an alternative explanation for why the Rayleigh statistic, $R_N$, is indeed symmetric, since $b_k=0$ for all $k>1$, yielding an empty intersection for all $N>1$.

\subsection{Discrete nature of the statistics}
\label{subsec:unif-scores:discrete}

The statistics based on uniform scores are discrete, since the rank nature of uniform scores collapses different configurations of the sample into a finite number of equivalence classes. Thus, obtaining the exact distribution for small sample sizes is feasible. However, the number of different configurations grows exponentially, which, in turn, motivates the use of the asymptotic distribution in practice for moderate sample sizes. To give some intuition on the finite-sample behavior of the statistics, we use \emph{bracelets}, combinatorial objects whose enumeration counts the distinct configurations of $N$ elements on $\Sp^1$ up to rotations and reflections.

A bracelet is the lexicographically minimal representative of an equivalence class of $c$-ary strings under rotation and reversal. The set of all bracelets of length $N$ over an alphabet of $c$ symbols with content $\bn:=(n_1,\ldots,n_c)$, denoting the number of elements for each respective symbol, is denoted by $\bB_c(\bn)$. Let
$C(\bn):=N!/(\prod_{\ell=1}^{c} n_{\ell}!)$ be the number of strings with this
content. Then, using Lemma 2.2 in \cite{Csiszar2004} and the fact that each equivalence class under rotation and reversal contains at most $2N$ strings, the following lower bounds follow
\begin{align*}
    \dfrac{\exp(N h(\bn/N))}{2N(N+1)^c} \leq \dfrac{C(\bn)}{2 N}\leq \#\bB_c(\bn),
\end{align*}
where $h(\bpi):=-\sum_{\ell=1}^c \pi_\ell\log\pi_\ell$, and $\#\bB_c(\bn)$ grows exponentially with $N$ under the usual asymptotic regime. \cite{Karim2013} proposes an efficient algorithm to list $\bB_c(\bn)$.

Denoting the set of possible values of $T_{N,c}^{\phi}$ with $\Omega$, we have that $\#\Omega\leq \#\bB_c(\bn)$. Indeed, $\#\Omega$ may be reduced by intrinsic symmetries of the kernel $\phi$ and, in balanced settings, by invariance of the statistic under relabeling of the samples. Figure~\ref{fig:combs_us_cmp} shows $\#\Omega$ for three different kernels in the balanced case $n_1=\cdots=n_c$, together with the upper bound of $\#\bB_c(\bn)$ and a polynomial (quadratic) lower bound. It shows that $\#\Omega$ grows nearly exponentially for the three kernels and therefore the discreteness of the test statistics vanishes rapidly.

\begin{figure}[!ht]
    \centering
    \includegraphics[width=0.75\linewidth, clip=true, trim={0cm 0.56cm 0cm 0.75cm}]{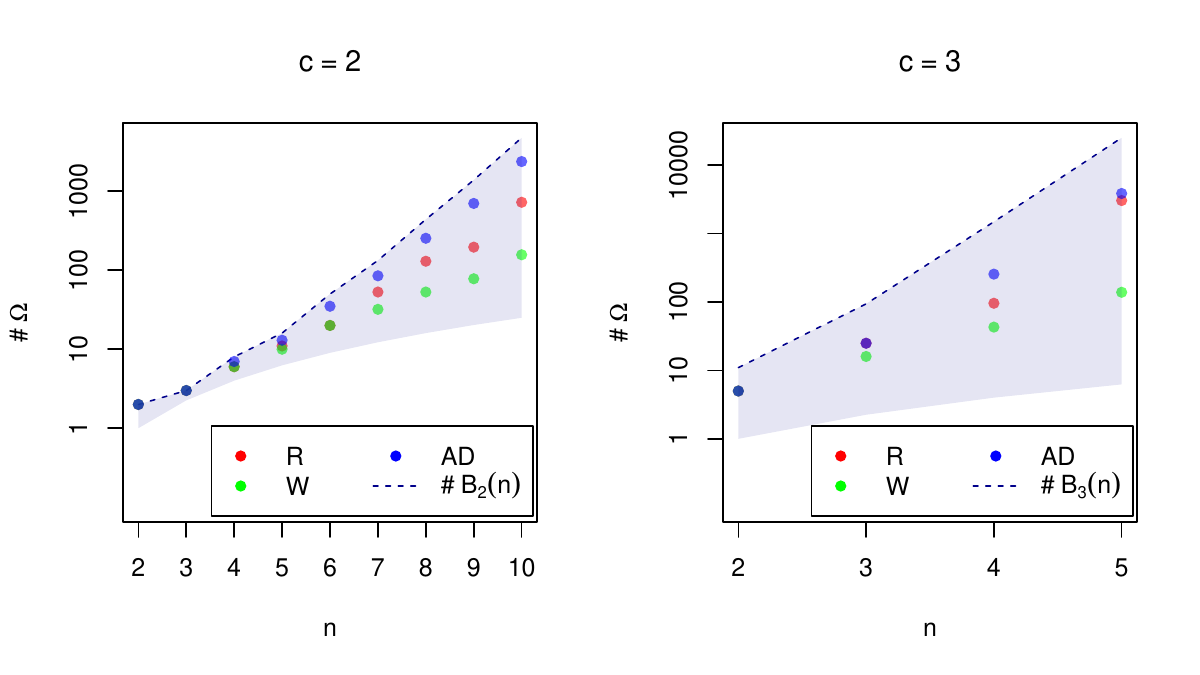}
    \caption{\small Total number of different values of $T_{cn,c}^{\phi}$ for $c$ balanced samples as a function of each sample size, $n$, for $\phi^{\rm R}$, $\phi^{\rm W}$, and $\phi^{\rm AD}$ (see Section~\ref{sec:specific}). Shaded areas range between the curves $n\mapsto n^2/4$ and $n\mapsto\#\bB_c(n,\ldots,n)$. The vertical axes use a logarithmic scale.}
    \label{fig:combs_us_cmp}
\end{figure}

\section{Specific cases and connections}
\label{sec:specific}

In this section, we explore the connection
between specific kernels and existing test statistics, and we present new tests for circular homogeneity that have not been considered previously. Throughout, for $x\in[-1,1]$, we write $\theta_x:=\arccos(x)\in[0,\pi]$.

\subsection{Classical tests that belong to the class}
\label{subsec:classic}

In this section, we review kernels that yield classical $c$-sample homogeneity tests from the literature.

\emph{Rayleigh test statistic.} The tests developed by \cite{Wheeler1964} and \cite{Mardia1972a} arise with the kernel $\phi^{\rm R}(x):=2x$, with coefficients $b_k(\phi^{\rm R})=2\delta_{k1}$, $k\geq1$. In particular, 
\begin{align*}
    T_{N,c}^{{\rm R}}
    =\sum_{\ell = 1}^{c} \dfrac{2}{n_{\ell}}\sum_{i,j=1}^{n_{\ell}} \cos(c_{i}^{(\ell)} - c_{j}^{(\ell)})
    =\sum_{\ell = 1}^{c} \dfrac{2}{n_{\ell}}\bigg[\Big(\sum_{i=1}^{n_{\ell}}\cos(c_i^{(\ell)})\Big)^2 + \Big(\sum_{i=1}^{n_{\ell}}\sin(c_i^{(\ell)})\Big)^2\bigg]=R_{N,c}.
\end{align*}

\emph{$q$-modal test statistics.}
Let $q\geq 1$. The \cite{Mardia1973} class of statistics comes from the kernel $\phi^{{\rm M}_q}(x):=2\cos(q\theta_x)$, with coefficients $b_k(\phi^{{\rm M}_q})=2\delta_{kq}$, $k\geq1$. In particular, $T^{{\rm M}_q}_{N,c}=R_{N,q,c}$. Also note that the Rayleigh statistic is a particular case of this class, with $q=1$, as well as the Bingham-type statistic, for $q=2$.

\emph{Watson test statistic.} The test proposed by \cite{Watson1962} and \cite{Maag1966} is obtained with the kernel
\begin{align*}
    \phi^{\rm W}(x):=\frac{1}{12}+\frac{\theta_x}{4 \pi}\left(\frac{\theta_x}{2 \pi}-1\right),
\end{align*}
with coefficients $b_k(\phi^{\rm W})=(2 \pi^2 k^2)^{-1}$, $k\geq 1$, as shown next.

\begin{proposition}[Equivalence of the \cite{Watson1962} and \cite{Maag1966} tests with Watson's uniform scores test]\label{prp:Maag-unifsc}
    For $c\geq 2$, we have
    \begin{align*}
        U^2_{N,c}
        &=T^{W}_{N,c}-\frac{1}{12N}.
    \end{align*}
\end{proposition}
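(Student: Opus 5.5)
The plan is to reduce both sides to explicit sums over the pooled order statistics and compare them term by term. The key structural fact is that $U^2_{N,c}$ depends on the data only through the ranks. Since $\widehat{H}_N$ puts mass $1/N$ at each pooled observation, both integrals in $U^2_{N,c}$ become averages over the $N$ pooled points. We will work with pooled sorted positions $m=1,\ldots,N$ and, for each sample $\ell$, write $D_\ell(m):=\widehat F_{\ell,n_\ell}(x_{(m)})-\widehat H_N(x_{(m)})$. Here $\widehat F_{\ell,n_\ell}(x_{(m)})=n_\ell^{-1}\#\{i: r_i^{(\ell)}\le m\}$ and $\widehat H_N(x_{(m)})=m/N$. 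Then
\begin{align*}
U^2_{N,c}=\sum_{\ell=1}^c \frac{n_\ell}{N}\sum_{m=1}^N\Big(D_\ell(m)-\bar D_\ell\Big)^2,\qquad \bar D_\ell:=\frac1N\sum_{m=1}^N D_\ell(m),
\end{align*}
which is a function of the ranks alone.

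The main tool is a discrete Fourier expansion. Let $\varepsilon_\ell(m):=\mathbb 1\{m\in \br^{(\ell)}\}/n_\ell-1/N$; then $D_\ell(m)=\sum_{j\le m}\varepsilon_\ell(j)$ and $\sum_{j=1}^N\varepsilon_\ell(j)=0$. Expand $\varepsilon_\ell$ in the DFT basis $e^{2\pi i k m/N}$, $k=1,\ldots,N-1$ (the $k=0$ coefficient vanishes). The coefficient is $\hat\varepsilon_\ell(k)=n_\ell^{-1}\sum_{i} e^{-ikc_i^{(\ell)}}$, whose squared modulus is $n_\ell^{-2}[(\sum_i\cos kc_i^{(\ell)})^2+(\sum_i\sin kc_i^{(\ell)})^2]$.

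Cumulative summation of a zero-mean periodic sequence corresponds to division by $1-e^{2\pi i k/N}$ after centering. By Parseval,
\begin{align*}
\frac1N\sum_{m}\big(D_\ell(m)-\bar D_\ell\big)^2=\frac1{N^2}\sum_{k=1}^{N-1}\frac{|\sum_i e^{ikc_i^{(\ell)}}|^2/n_\ell^2}{4\sin^2(\pi k/N)}.
\end{align*}
This gives a finite-$N$ spectral form of $U^2_{N,c}$.

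The matching step compares this with the other side. Since $b_k(\phi^{\rm W})=(2\pi^2k^2)^{-1}$, the series expansion displayed before Proposition~\ref{prp:unif-scores-symm} gives
\begin{align*}
T^{\rm W}_{N,c}=\sum_\ell\frac1{n_\ell}\sum_{k\ge1}\frac{|\sum_i e^{ikc_i^{(\ell)}}|^2}{2\pi^2k^2}.
\end{align*}
Because the $c_i^{(\ell)}$ lie on the grid $2\pi\mathbb Z/N$, the modulus factor is $N$-periodic in $k$. Grouping $k=k_0+pN$ and using $\sum_{p\in\mathbb Z}(k_0+pN)^{-2}=\pi^2/(N^2\sin^2(\pi k_0/N))$ turns the residues $k_0=1,\ldots,N-1$ into exactly the spectral form above. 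The remaining residue $k_0=0$ (i.e.\ $k\in N\mathbb Z$) contributes $|\sum_i e^{ikc_i}|^2=n_\ell^2$, giving $\sum_\ell n_\ell\sum_{p\ge1}(2\pi^2p^2N^2)^{-1}=N\cdot\frac{1}{12N^2}=\frac1{12N}$. This is the stated constant, so the identity follows. (Alternatively, one can evaluate $\phi^{\rm W}$ directly at $\cos(2\pi(r_i-r_j)/N)$ as a quadratic polynomial in the rank differences and expand Maag's sum by hand, but the Fourier route is cleaner.)

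The main obstacle is bookkeeping of the discrete cumulative-sum/Parseval step. One must check that centering by $\bar D_\ell$ exactly removes the DFT mean term, and that the rank convention (ranks $1,\ldots,N$ versus $0,\ldots,N-1$ and right-continuity of the ecdf at tied grid points) leaves the moduli unchanged. Rotation invariance of both sides, i.e.\ a cyclic shift of ranks, settles the convention issue.
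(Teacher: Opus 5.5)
Your argument is correct, and it follows a genuinely different route from the paper's.

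\textbf{The paper's route.} The paper works by direct algebra in the ranks. It quotes the standard computational formula for Watson's uniformity statistic (expression (6.3.33) of Mardia and Jupp), which writes $T^{\rm W}_{N,c}$ in terms of the ordered ranks $r^{(\ell)}_{(i)}$ plus $1/(12n_\ell)$ per sample. It then evaluates $\sum_j(a_{\ell,j}/n_\ell-j/N)$ and $\sum_j(a_{\ell,j}/n_\ell-j/N)^2$ blockwise over the gaps between consecutive ranks, and arrives at the same expression minus $1/(12N)$.

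\textbf{Your route.} You diagonalize both sides in the discrete Fourier basis. Parseval applied to the cumulative-sum sequence $D_\ell$ gives the weights $1/(4\sin^2(\pi k/N))$. Since $D_\ell(0)=D_\ell(N)=0$, the sequence is genuinely $N$-periodic, so your concern about rank conventions does not arise. Aliasing the series $\sum_k b_k(\phi^{\rm W})|S_\ell(k)|^2$, with $S_\ell(k):=\sum_i e^{ikc_i^{(\ell)}}$, onto the $N$ residues reproduces those weights.

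\textbf{One step to write out.} Grouping $k\geq 1$ as $k=k_0+pN$ only produces $p\geq 0$. To use the two-sided identity $\sum_{p\in\Z}(k_0+pN)^{-2}=\pi^2/(N^2\sin^2(\pi k_0/N))$, you must pair $k_0$ with $N-k_0$ using $|S_\ell(N-k_0)|=|S_\ell(k_0)|$. This introduces a factor $1/2$, and it is exactly $\frac{1}{2\pi^2}\cdot\frac12\cdot\pi^2=\frac14$ that matches $1/(4\sin^2(\pi k_0/N))$. Without it you would get twice the spectral form. Absolute convergence of $\sum_k k^{-2}$ justifies the regrouping.

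\textbf{What each approach buys.} Your approach needs no external computational formula, only the coefficients $b_k(\phi^{\rm W})$ the paper already states. It also explains where the constant comes from: $1/(12N)=N\sum_{k\in N\Z_+}b_k(\phi^{\rm W})$ is precisely the resonant-harmonic term. Your argument therefore shows $U^2_{N,c}=\widetilde{T}^{\rm W}_{N,c}$, that is, Maag's statistic is exactly the centered statistic of Section~\ref{sec:null-asymp}, a connection the paper's proof does not make visible. The paper's approach stays fully elementary and, as a by-product, yields an explicit rank-based computing formula for $U^2_{N,c}$.
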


\emph{Rothman test statistic.}
Let $t\in(0,1)$ and $t_\wedge:=\min(t, 1-t)$. The statistic based on \cite{Rothman1972}'s test of uniformity is given by the kernel 
\begin{align*}
    \phi^{{\rm R}_{t}}(x):=\left(t_\wedge-\frac{\theta_x}{2 \pi}\right)_{+}-t_\wedge^2,
\end{align*}
with coefficients
\begin{align*}
    b_k(\phi^{{\rm R}_{t}})=2\sin^2(k\pi t_\wedge) (\pi k)^{-2},
\end{align*}
specified in \cite{Garcia-Portugues2020b}. For $t=1/2$, the kernel is equal to the \cite{Ajne1968} kernel, whose two-sample version based on uniform scores was shown to be a particular case in \cite{Beran1969a}.

\subsection{Area-based test statistic}
\label{subsec:area}

\cite{Shirahata1990} proposed a test of homogeneity in $\R$ based on ranks transformed to points equally spaced on a semicircle. This test can be adapted to use uniform scores with the kernel
$$
\phi^{\rm A}(x)=\frac{2}{\pi} - \sin\bigg(\frac{\theta_x}{2}\bigg)
$$
and coefficients
$$
b_k(\phi^{\rm A})=\frac{4}{\pi(4k^2 - 1)},
$$
leading to a new homogeneity test for circular data. Note that the coefficients are remarkably similar to those of Watson, $b_k(\phi^{\rm W})$. 

The interest of this statistic resides in its geometric motivation, adapted from \cite{Shirahata1990}. Let $\smash{\alpha_i^{(\ell)}:={\pi}/({N+1})(r_i^{(\ell)} - ({N+1})/{2})}$ so that $\smash{\alpha_i^{(\ell)}\in(-\pi/2,\pi/2)}$, and let $\smash{\bs_i^{(\ell)}\in\Sp^1}$ be the unit vector such that $\smash{\arg(\bs_i^{(\ell)})=\alpha_i^{(\ell)}}$. Then, we build the $2n_\ell$-polygon whose edges are obtained by sequentially connecting the vectors $\smash{\bs_{(1)}^{(\ell)},\ldots,\bs_{(n_\ell)}^{(\ell)}, -\bs_{(1)}^{(\ell)},\ldots,\allowbreak-\bs_{(n_{\ell})}^{(\ell)},}$ where $\bs_{(i)}^{(\ell)}$ corresponds to the unit vector for the ordered $\smash{\alpha_{(i)}^{(\ell)}}$. The area corresponding to this polygon is computed by $$T^{(\ell)}:=\sum_{i<j}\sin|\alpha_i^{(\ell)}-\alpha_j^{(\ell)}|=\sum_{i<j}\sin|N/(2(N+1))(c_i^{(\ell)}-c_j^{(\ell)})|.$$
Then, the statistic $A_{N,c}:=2N/\pi-2\sum_{\ell=1}^c T^{(\ell)}/n_{\ell}$ is expected to be small under $\Hcal_0$, as the area of the constructed polygon is maximized when it is regular, and
\begin{align*}
    A_{N,c}&=\frac{2N}{\pi}-2\sum_{\ell=1}^c \frac{1}{n_{\ell}} T^{(\ell)}
    =\sum_{\ell=1}^c \frac{1}{n_{\ell}} \sum_{i,j=1}^{n_\ell}\bigg(\frac{2}{\pi}-\sin\frac{N|c_i^{(\ell)}-c_j^{(\ell)}|}{2(N+1)}\bigg).
\end{align*}
Thus, $A_{N,c}$ is based on a finite-sample kernel that, as $N\to\infty$,
\begin{align*}
    \frac{2}{\pi}-\sin\frac{N \abs{\theta}}{2(N+1)}\to \phi^{\rm A}(\cos\theta),\qquad\text{for all }\theta\in[-2\pi,2\pi].
\end{align*}

\subsection{New proposals}
\label{subsec:new}

\emph{Anderson--Darling test statistic.}
We define the circular Anderson--Darling $c$-sample test employing the Anderson--Darling kernel introduced in \cite{Garcia-Portugues2020b},
$$
\phi^{\rm AD}(x):=\begin{cases}
    1-2\log(2\pi)+\dfrac{1}{\pi}\big\{\theta_x\log\theta_x + (2\pi - \theta_x)\log(2\pi-\theta_x)\big\},&\theta_x\in(0,\pi],\\
    1,&\theta_x = 0,
\end{cases}
$$
with coefficients
$$
    b_{k}(\phi^{\rm AD})=\frac{1}{\pi k^2} \int_0^\pi \frac{1-\cos (2 k \theta)}{(\pi-\theta) \theta}\, \mathrm{d} \theta.
$$

\emph{Softmax test statistic.}
Let $\kappa>0$. We propose using the smooth maximum kernel introduced in \cite{Fernandez-de-Marcos2023b},
\begin{align*}
    \phi^{\rm SM_{\kappa}}(x):=e^{-\kappa}(e^{\kappa x} - \Ical_0(\kappa)),
\end{align*}
with coefficients
\begin{align*}
    b_k(\phi^{\rm SM_{\kappa}})=2 e^{-\kappa}\Ical_k(\kappa),
\end{align*}
where $\Ical_k$ is the modified Bessel function of the first kind and $k$th order.

\emph{Poisson test statistic.}
Let $0<\rho<1$. Using the Poisson kernel introduced by \cite{Pycke2010} in uniformity testing, we obtain
\begin{align*}
    \phi^{\rm P_{\rho}}(x):=\dfrac{(1-\rho)^2}{1-2 \rho x+\rho^2} - \dfrac{(1-\rho)^2}{1-\rho^2},
\end{align*}
with coefficients
\begin{align*}
    b_k(\phi^{\rm P_{\rho}})=\dfrac{2(1-\rho)^2}{1-\rho^2}\rho^k.
\end{align*}

In \cite{Fernandez-de-Marcos2023b}, the latter two flexible test statistics were shown to be especially sensitive to multimodality in uniformity testing when the corresponding parameters, $\kappa$ and $\rho$, take larger values. A similar behavior is expected in the homogeneity problem.

\section{Generalized aggregation class}
\label{sec:general-agg}

The statistic $T_{N,c}^{\phi}$ is obtained by summing the individual uniformity test statistics computed from each sample's uniform scores. Although this aggregation rule is simple and analytically tractable, it is not the only possible way of combining test statistics. In this section, we introduce a general framework for constructing aggregation rules with desirable asymptotic properties.

For an aggregation map $m:\R^c\to\R$, define
\begin{align*}
    T_{N,c}^{\phi,m}
    :=m\bigl(
        S^\phi_{n_1}(\bc^{(1)}),\ldots,
        S^\phi_{n_c}(\bc^{(c)})
    \bigr).
\end{align*}
A \emph{merging function} is an increasing map $m:\R^c\to\R$ satisfying:
\begin{enumerate}[label=(M$\arabic*$),ref=M$\arabic*$]
    \item\label{m:lip} $m$ is Lipschitz continuous with respect to the supremum norm, that is, there exists a constant $L>0$ such that for all $\bx,\by\in\R^c$, $|m(\bx)-m(\by)|\leq L\|\bx-\by\|_{\infty}$.
    \item\label{m:hom} There exists a function $m_{\infty}:\R^c\to\R$ such that $\lim_{N\to\infty} N^{-1}m(N\bx)=m_{\infty}(\bx)$ pointwise for all $\bx\in\R^c$.
    \item\label{m:pos} $m_{\infty}(\bx)>0$ for all $\bx\in[0,\infty)^c$ for which there exist two distinct indices $\ell_1,\ell_2\in\{1,\ldots,c\}$ such that $x_{\ell_1}>0$ and $x_{\ell_2}>0$.
    \item\label{m:str-inc} $m:\R^c\to\R$ is strictly increasing with respect to strict componentwise order, that is, if $x_\ell<y_\ell$ for every $\ell=1,\ldots,c$, then $m(\bx)<m(\by)$.
\end{enumerate} 
A merging function $m$ is \emph{symmetric} if $m(\bx)$ is invariant under any permutation of $\bx$. This symmetry ensures that the corresponding statistic is invariant under sample relabeling.

\begin{remark}\label{rmk:m-uniform-lim}
    Under condition~\eqref{m:hom}, condition~\eqref{m:lip} implies that the convergence is uniform over any compact set $K\subset\R^c$. This follows from the fact that the family $(h_N)$, with $h_N(\bx):=N^{-1}m(N\bx)$, is equi-Lipschitz.
\end{remark}

For the null asymptotic result, only condition \eqref{m:lip} is needed. Condition \eqref{m:hom} ensures the function is asymptotically homogeneous of degree one, which comes into play for the asymptotic behavior of the test statistic under fixed alternatives. This condition is imposed to maintain the linear divergence rate of the individual uniformity statistics under fixed alternatives. Note that for any homogeneous function of degree one $m_1:\R^c\to\R$, \eqref{m:hom} holds directly with $m_{\infty}=m_1$, and the resulting statistic is scale-equivariant. More generally, under \eqref{m:lip}--\eqref{m:hom}, $m_{\infty}$ inherits the properties of $L$-Lipschitz continuity and monotonicity.
Condition~\eqref{m:pos} is used to obtain consistency under fixed alternatives. Under these alternatives, the pooled limiting distribution is a convex combination of the $c$ component distributions with all weights positive, so at least two component distributions must differ from the pooled limiting distribution. Consequently, the limiting signal vector in Theorem~\ref{thm:asymp-Halt-general} has at least two positive coordinates. Requiring positivity when only one coordinate is positive would also imply consistency, but would unnecessarily exclude useful aggregations such as weighted sums with one zero weight. 
Finally, condition~\eqref{m:str-inc} is not necessary, but natural, since it rules out plateau regions of $m$ that will not capture changes in the test statistics.

A consequence of Proposition~\ref{prp:unif-scores-symm}\eqref{cond:symm-1} and~\eqref{m:str-inc} is that, for $c=2$, merging functions cannot yield distinct upper-tail tests. Thus, different choices of $m$ only matter when $c>2$. The next result proves this fact.

\begin{corollary}\label{cor:agg-equiv-c2}
    Let $c=2$. Suppose that $m:\R^2\to\R$ is a function satisfying \eqref{m:str-inc}. Then, all upper-tail tests based on $T_{N,2}^{\phi,m}$ are equivalent.
\end{corollary}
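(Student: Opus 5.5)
The plan is to use Proposition~\ref{prp:unif-scores-symm}\eqref{cond:symm-1} to reduce $T_{N,2}^{\phi,m}$ to a strictly increasing function of a single scalar statistic. Then any two upper-tail tests coincide once their critical values are matched at the same level.

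First, set $X:=S_{n_1}^{\phi}(\bc^{(1)})$ and $Y:=S_{n_2}^{\phi}(\bc^{(2)})$. By Proposition~\ref{prp:unif-scores-symm}\eqref{cond:symm-1},
\begin{align*}
Y=\frac{n_1}{n_2}X-\frac{(n_1-n_2)N}{n_2}\sum_{k=1}^{\infty}b_{kN}(\phi)=:aX+d,
\end{align*}
where $a:=n_1/n_2>0$ and $d$ is deterministic, depending only on $n_1,n_2,N,\phi$. Hence
\begin{align*}
T_{N,2}^{\phi,m}=m(X,aX+d)=:g(X).
\end{align*}
The map $x\mapsto(x,ax+d)$ is strictly increasing in both coordinates because $a>0$. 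So if $x<x'$, then $(x,ax+d)<(x',ax'+d)$ componentwise strictly, and \eqref{m:str-inc} gives $g(x)<g(x')$. Thus $g$ is strictly increasing on $\R$.

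Second, I translate this into equivalence of tests. For any strictly increasing $g$ and any threshold $t$, the rejection region $\{g(X)>t\}$ equals $\{X>x_t\}$ or $\{X\geq x_t\}$, where $x_t:=\sup\{x:g(x)\leq t\}$; this is an upper-tail region in $X$. The converse also holds: every upper-tail region in $X$, restricted to the finite support $\Omega$ of $X$, is of the form $\{g(X)>t\}$ for a suitable $t$.

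Here the discreteness noted in Section~\ref{subsec:unif-scores:discrete} helps. Because $X$ takes finitely many values, $g$ restricted to the support is an order-preserving bijection onto its image. Consequently the family of upper-tail rejection regions generated by $T_{N,2}^{\phi,m}$ is exactly the family generated by $X$, which is the same for every $m$ satisfying \eqref{m:str-inc}. Moreover, $X$ is distribution-free under $\Hcal_0$, since it depends only on the ranks. So at any nominal level $\alpha$ (including randomized tests if exact size is needed), the tests based on $m$ and $m'$ reject on identical events, and their $p$-values coincide: $\mathbb{P}_{\Hcal_0}(g(X)\geq g(x_{\rm obs}))=\mathbb{P}_{\Hcal_0}(X\geq x_{\rm obs})$.

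I expect the only delicate point to be making \enquote{equivalent} precise. The cleanest formulation is that the $p$-values coincide almost surely, which follows immediately from the strict monotonicity of $g$ on the support $\Omega$. This avoids any issues with ties or boundary conventions at the critical value.
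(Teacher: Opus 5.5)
Your proposal is correct and follows the paper's proof. Both use Proposition~\ref{prp:unif-scores-symm}\eqref{cond:symm-1} to write $S_{n_2}^{\phi}(\bc^{(2)})=(n_1/n_2)S_{n_1}^{\phi}(\bc^{(1)})$ minus a deterministic constant, so that \eqref{m:str-inc} makes $T_{N,2}^{\phi,m}$ a strictly increasing function of $S_{n_1}^{\phi}(\bc^{(1)})$. Your extra discussion of rejection regions and coinciding $p$-values simply spells out the step the paper leaves as ``the result follows''.
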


We now present several representative examples of merging functions.

\begin{example}[Maximum]\label{merging:max}
    The function $\max(\bx):=\max_{1\leq \ell\leq c}x_\ell$ is a symmetric merging function.
\end{example}

\begin{example}[Weighted sum]\label{merging:ws}
    Let $\bw$ be such that $w_\ell\geq 0$ for all $\ell=1,\ldots,c$, $\sum_{\ell=1}^{c}w_\ell=1$, and at most one $w_\ell$ is zero. The weighted sum given by
    $$
    {\rm WS}_{\bw}(\bx):=\sum_{\ell=1}^{c}w_\ell x_\ell
    $$
    is a merging function. In particular, the choice $\bw=c^{-1}\mathbf{1}_{c}$ gives the \emph{average} aggregation, $T_{N,c}^{\phi,{\rm avg}}:=\smash{T_{N,c}^{\phi, {\rm WS}_{c^{-1}\mathbf{1}_{c}}}}$. This statistic yields a test equivalent to the one based on the \emph{sum} statistic in~\eqref{eq:unif-sc-k}, since $\smash{T_{N,c}^{\phi}=cT_{N,c}^{\phi, {\rm avg}}}$. Moreover, the average is the only symmetric member of the $\rm WS$ family.
\end{example}

\begin{example}[Incomplete average]\label{merging:inc_avg}
    Another member of the $\rm WS$ family is the \emph{incomplete average}. It arises when $\bw=(c-1)^{-1}(\mathbf{1}_{c-1},0)^{\top}$, thus considering the sum of the uniformity statistics of only $c-1$ samples, instead of all $c$ samples as in~\eqref{eq:unif-sc-k}. This leads to the incomplete statistic defined as
    \begin{align*}
        T_{N, c}^{\phi, {\rm inc}}:=\sum_{\ell=1}^{c-1}S^{\phi}_{n_\ell}(\bc^{(\ell)})=(c-1)T_{N,c}^{\phi,{\rm WS}_{(c-1)^{-1}(\mathbf{1}_{c-1},0)^{\top}}}.
    \end{align*}
    When $c=2$, the incomplete statistic is deterministically related to the proposed statistic \eqref{eq:unif-sc-k}, as
    $$
    T_{N,2}^{\phi,{\rm inc}}= \dfrac{n_2}{N} T_{N,2}^{\phi}-(n_2-n_1)\sum_{k=1}^{\infty} b_{kN}(\phi),
    $$
    and the relation with the test statistic of \cite{Beran1969a} is given directly by
    $$
    n_1 T_{N,2}^{\phi,{\rm inc}}=B_{n_1,n_2}^{\psi},
    $$
    when $b_k(\phi)\geq0$ for all $k\geq1$. Therefore, the test based on $T_{N,2}^{\phi,{\rm inc}}$ is also invariant under relabeling of the samples. However, in general, for $c>2$, the incomplete test is not invariant under sample relabeling, and thus, to ensure this property, we need to adopt a decision rule to choose an ordering for the $c$ samples. This is a substantial practical drawback compared to the (complete) average test, as seen in Section~\ref{subsec:sim:incomplete}.
\end{example}

These merging functions behave differently under alternative scenarios. The test based on the average favors alternatives where signal is spread across samples, while the maximum aggregation favors alternatives where the departure is concentrated in a small number of samples. Building a family of tests where these two behaviors, along with a range of intermediate behaviors, can be achieved is valuable. The following two merging families extend the maximum and average merging functions in two alternative ways, both indexed by a tuning parameter.

\begin{example}[$M$-family]\label{merging:M-family}
    Let $r\geq 1$. The function given by
    $$
    M_{r}(\bx):=\Big(\frac{1}{c}\sum_{\ell=1}^{c}x_{\ell}^r\Big)^{1/r}
    $$
    can be considered only for nonnegative components, that is, $M_{r}:[0,+\infty)^c\to\R$ is a merging function, and it is symmetric. Despite this nonnegativity restriction, the family is broadly applicable, as most statistics $S_{n}^{\phi}$ are nonnegative (see, e.g., Section~\ref{sec:specific}).
    The average corresponds to $r=1$, while the maximum arises as $r\to+\infty$.
\end{example}

\begin{example}[Log-sum-exp family]
    The log-sum-exp merging function, given by 
    $$
    {\rm LSE}_{\kappa}(\bx):=\kappa^{-1}\log\Big(\frac{1}{c}\sum_{\ell=1}^{c}\exp(\kappa x_\ell)\Big),
    $$ 
    where $\kappa>0$, is symmetric. This family interpolates between the maximum $(\kappa\to+\infty)$ and the average $(\kappa\to0)$ merging functions.
    In contrast to the previous examples, the {\rm LSE} is not homogeneous.
    However, it follows that
    $
    N^{-1}{\rm LSE}_{\kappa}(N\bx)\to\max(\bx)
    $
    as $N\to\infty$ uniformly over all $\bx\in\R^c$, ensuring \eqref{m:hom} holds.
\end{example}

\section{Null asymptotics}
\label{sec:null-asymp}

Throughout this section, we derive all the results under the conventional asymptotic regime where $\lim_{N\to\infty}\pi_{\ell,N}=\pi_\ell\in(0,1)$ for all $\ell=1,\ldots,c$. We denote the vector of limiting proportions with $\bpi:=(\pi_1,\ldots,\pi_c)^{\top}$ and weak convergence with $\inlaw$.

Under the null hypothesis, Proposition~\ref{prp:gkr-asymp-H0} establishes a joint CLT for finite vectors of empirical harmonic averages evaluated at the uniform scores. The asymptotic null distribution of $T_{N,c}^{\phi,m}$ in Theorem~\ref{thm:asymp-H0-general} is then obtained through Slutsky's theorem and the continuous mapping theorem, and a truncation argument for tail control. This last step is where the additional condition on the kernel coefficients $b_k(\phi)$ is needed. Corollary~\ref{cor:asymp-H0-sum} finally diagonalizes the limiting Gaussian quadratic form, yielding the explicit weighted chi-squared series for the statistic $\smash{T_{N,c}^{\phi}}$.

\begin{proposition}\label{prp:gkr-asymp-H0}
    Let $K\geq 1$ be fixed, and let $\bG_N:=(\bG_{1,N}^{\top},\ldots, \bG_{K,N}^{\top})^{\top}$ with $\bG_{k,N}:=\break(\bG_{k,1,N}^{\top}, \bG_{k,2,N}^{\top})^{\top}$ where $\bG_{k,r,N}:=(G_{k,r,1,N},\ldots, G_{k,r,c,N})^{\top}$, and $G_{k,r,\ell,N}={n_{\ell}}^{-1}\sum_{i=1}^{n_\ell}g_{k,r}(c_i^{(\ell)})$. Then, under $\Hcal_0$ and as $N\to\infty$, we have
    $$
    N^{1/2}\bG_N\inlaw \mathcal{N}_D(\mathbf{0},\bSigma_0),
    $$
    with $D:=2Kc$, $\bSigma_0:=\bI_{2K}\otimes \bS$, and $\bS:= \operatorname{diag}(1/\bpi) - \mathbf{1}_{c}\mathbf{1}_{c}^{\top}$.
\end{proposition}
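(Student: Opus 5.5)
Under $\Hcal_0$ the pooled sample is i.i.d. from a continuous distribution. Hence the vector of sample labels, read in the order of the sorted pooled sample, is a uniformly random arrangement of $n_1$ symbols $1$, \ldots, $n_c$ symbols $c$. Equivalently, the uniform scores $\bc^{(\ell)}$ are obtained by sampling without replacement from the $N$ equispaced points $2\pi j/N$, $j=1,\ldots,N$. The statistic $N^{1/2}\bG_N$ is therefore a multivariate finite-population (permutation) linear rank statistic, and the plan is to apply a combinatorial CLT, either Hájek's CLT for sampling without replacement or the multivariate Hoeffding combinatorial CLT, combined with the Cramér--Wold device.

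\emph{Step 1: exact moments.} Write $G_{k,r,\ell,N}=n_\ell^{-1}\sum_{j=1}^N a_{k,r}(j)\,\mathbb{1}\{\text{label of rank } j=\ell\}$ with scores $a_{k,r}(j):=g_{k,r}(2\pi j/N)$. For $1\le k\le K<N$, the discrete orthogonality of the trigonometric system on the $N$th roots of unity gives $\sum_j a_{k,r}(j)=0$ and $N^{-1}\sum_j a_{k,r}(j)a_{k',r'}(j)=\delta_{kk'}\delta_{rr'}$ (for $N>2K$, so that no aliasing $k\pm k'\in N\Z$ occurs). Using the standard covariance of indicators under a random permutation, $\operatorname{Cov}(\mathbb{1}_{j\ell},\mathbb{1}_{j'\ell'})$, one obtains
\[
\operatorname{Cov}\bigl(G_{k,r,\ell,N},G_{k',r',\ell',N}\bigr)=\frac{\delta_{kk'}\delta_{rr'}}{N-1}\Bigl(\frac{\delta_{\ell\ell'}}{\pi_{\ell,N}}-1\Bigr),
\]
with mean zero. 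Thus $\operatorname{Cov}(N^{1/2}\bG_N)\to\bI_{2K}\otimes\bS$. The zero-mean constraint on the scores also explains the singular structure $\sum_\ell \pi_\ell G_{k,r,\ell,N}=0$.

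\emph{Step 2: asymptotic normality.} Fix $\bt\in\R^{D}$ and consider $L_N:=N^{1/2}\bt^\top\bG_N=\sum_{j=1}^N b_N(j,\lambda_j)$. Here $\lambda=(\lambda_1,\ldots,\lambda_N)$ is the random label sequence and $b_N(j,\ell):=N^{1/2}n_\ell^{-1}\sum_{k,r}t_{k,r,\ell}\,a_{k,r}(j)$. This is a statistic of the form $\sum_j d_N(j,\sigma(j))$ with $\sigma$ a uniform permutation of $\{1,\ldots,N\}$, where $d_N(j,i):=b_N(j,\ell(i))$ and $\ell(i)$ is the label of position $i$ in a fixed arrangement. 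Hoeffding's combinatorial CLT then applies, and it suffices to check its Lindeberg-type condition
\[
\max_{j,i}\bigl|d_N(j,i)-\bar d_{j\cdot}-\bar d_{\cdot i}+\bar d_{\cdot\cdot}\bigr|^2\Big/\operatorname{Var}(L_N)\to0.
\]
This is immediate: the $a_{k,r}$ are bounded by $\sqrt2$, so each $d_N=O(N^{-1/2})$, while $\operatorname{Var}(L_N)\to\bt^\top\bSigma_0\bt$. If $\bt^\top\bSigma_0\bt=0$, then $L_N\to0$ in $L^2$ by Step 1. Cramér--Wold then yields the joint limit $\mathcal N_D(\mathbf0,\bSigma_0)$.

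\emph{Main obstacle.} The delicate points are the bookkeeping in Step 1 (discrete orthogonality, and the correction factor $N/(N-1)$ coming from sampling without replacement) and correctly casting the multisample problem into the single-permutation form required by Hoeffding's theorem. Alternatively, one could condition on sequential sampling and use Hájek's theorem sample by sample, but the permutation form handles all $c$ samples jointly at once, so I would pursue that route.
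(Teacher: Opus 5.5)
Your proof is correct, but it takes a genuinely different route from the paper's. The paper writes $G_{k,r,\ell,N}=\int_0^{2\pi} g_{k,r}(2\pi\widehat{H}_N(\theta))\,\rd\widehat{F}_{\ell,n_\ell}(\theta)$ and invokes Lemma~\ref{lemma:gkr-clt-moving}. That lemma is a Chernoff--Savage/Puri-type CLT for rank statistics under possibly moving distributions: Lemma~\ref{lemma:puri} applied to the combined scores $J_{\ell,\mathbf{a}}$, followed by Cram\'er--Wold. Under $\Hcal_0$ the paper then notes that the centering $\bmu_N$ vanishes because $F_{\ell,N}=H_N$. It evaluates the limiting double integrals $I^{(i,j,\ell)}_{k,k',r,r'}=\delta_{kk'}\delta_{rr'}$ and reads off $\bSigma_0$ from the resulting quadratic form by polarization.

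You instead use that, under $\Hcal_0$, the label sequence is an exactly uniform random arrangement, so $N^{1/2}\bG_N$ is a finite-population linear statistic. Discrete orthogonality on the $N$th roots of unity and the permutation covariance of the label indicators give exact moments for $N>2K$: $\mathrm{E}[\bG_N]=\mathbf{0}$ and $N\,\mathrm{Cov}(\bG_N)=\frac{N}{N-1}\,\bI_{2K}\otimes\bigl(\operatorname{diag}(1/\bpi_N)-\mathbf{1}_c\mathbf{1}_c^{\top}\bigr)$. This is a multivariate version of Lemma~\ref{lemma:exp-G2}. Hoeffding's max condition is then immediate because all scores are $O(N^{-1/2})$. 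Your separate treatment of degenerate directions is genuinely needed, since $\bS\bpi=\mathbf{0}$.

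Your route is shorter and self-contained. It avoids controlling the Chernoff--Savage remainder $R_N^{(\ell)}=o_{\rm P}(N^{-1/2})$, which the paper delegates to Puri's appendix, and it yields exact finite-$N$ moments. The paper's route buys generality. The uniform-permutation structure is lost as soon as the $F_{\ell,N}$ differ, whereas Lemma~\ref{lemma:gkr-clt-moving} is stated for moving distributions and is reused for the local alternatives of Proposition~\ref{prp:gkr-asymp-Hlocalt}. Carrying your argument over to that setting would require an extra device such as contiguity and Le Cam's third lemma.
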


\begin{theorem}\label{thm:asymp-H0-general}
    Let $m:\R^c\to\R$ be a function satisfying \eqref{m:lip}. Assume that $|b_k(\phi)|=O(k^{-(1+\delta)})$ for some $\delta>0$. Then, under $\Hcal_0$ and as $N\to\infty$,
    \begin{align}\label{eq:Tcmp_asymp_H0_general}
        T_{N, c}^{\phi, m}\inlaw T_{\infty, c}^{\phi, m}:=m\bigg(\Big(\pi_\ell \sum_{k=1}^{\infty}\sum_{r=1}^{2} 2^{-1}b_k(\phi) Z_{k,r,\ell}^2\Big)_{\ell=1}^{c}\bigg),
    \end{align}
    where $\{\bZ_{k,r}:=(Z_{k,r,1},\ldots,Z_{k,r,c})\sim \mathcal{N}_{c}(\mathbf{0}, \bS): k\geq1, r=1,2\}$ is a collection of mutually independent random vectors.
\end{theorem}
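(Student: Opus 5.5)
Using the expansion derived after \eqref{eq:unif-sc-k}, write each component as
\[
S^{\phi}_{n_\ell}(\bc^{(\ell)})=\sum_{k\ge1}\sum_{r=1}^2 2^{-1}b_k(\phi)\,n_\ell G_{k,r,\ell,N}^2=\pi_{\ell,N}\sum_{k\ge1}\sum_{r=1}^2 2^{-1}b_k(\phi)\big(N^{1/2}G_{k,r,\ell,N}\big)^2 .
\]
Fix $K$ and split $S^{\phi}_{n_\ell}=S^{(K)}_{\ell,N}+R^{(K)}_{\ell,N}$, where $S^{(K)}_{\ell,N}$ keeps the frequencies $k\le K$. The plan is the standard truncation scheme (e.g., Billingsley's Theorem 3.2 on approximating sequences). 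Three facts are needed.

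\emph{(a) Finite-$K$ limit.} By Proposition~\ref{prp:gkr-asymp-H0}, $N^{1/2}\bG_N\inlaw\mathcal N_D(\mathbf 0,\bI_{2K}\otimes\bS)$. By the continuous mapping theorem and Slutsky ($\pi_{\ell,N}\to\pi_\ell$),
\[
\big(S^{(K)}_{\ell,N}\big)_{\ell=1}^{c}\inlaw\Big(\pi_\ell\sum_{k\le K}\sum_{r}2^{-1}b_kZ_{k,r,\ell}^2\Big)_{\ell=1}^{c}=:\bY^{(K)}.
\]
Then $m(\cdot)$ applied to this vector converges by continuity of $m$, which follows from \eqref{m:lip}.

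\emph{(b) Limit as $K\to\infty$.} $\bY^{(K)}\to\bY$ almost surely and in $L^1$, since $\E Z_{k,r,\ell}^2=1/\pi_\ell-1$ and $\sum_k|b_k|<\infty$. Hence $m(\bY^{(K)})\to m(\bY)=T^{\phi,m}_{\infty,c}$ by the Lipschitz property.

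\emph{(c) Uniform tail control.} We need $\lim_K\limsup_N \Prob(\max_\ell|R^{(K)}_{\ell,N}|>\varepsilon)=0$. By \eqref{m:lip},
\[
|m(\bS_N)-m(\bS_N^{(K)})|\le L\max_\ell|R^{(K)}_{\ell,N}|,
\]
so (c) is all that remains, and it is the main obstacle.

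The naive bound $\E R^{(K)}_{\ell,N}\le \sum_{k>K}|b_k|\,n_\ell\E G_{k,r,\ell,N}^2$ is not enough. Under $\Hcal_0$ the vector of ranks is uniformly distributed, so the uniform scores of sample $\ell$ form a simple random sample without replacement of size $n_\ell$ from $\{2\pi j/N\}_{j=1}^N$. Thus $\sum_i g_{k,r}(c_i^{(\ell)})$ is a sample sum from the finite population $\{g_{k,r}(2\pi j/N)\}_{j}$. Its population mean is $0$ unless $N\mid k$, and its population variance is at most $2$.

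\emph{Frequencies $k\notin N\Z$.} The finite-population variance formula gives $n_\ell\E G^2_{k,r,\ell,N}\le 2$. The contribution is then at most $2\sum_{K<k}|b_k|\to0$ uniformly in $N$.

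\emph{Frequencies $k\in N\Z$.} Here every score is a multiple of $2\pi/N$, so $g_{k,1}\equiv\sqrt2$ and $g_{k,2}\equiv0$. The term is deterministic and equals $n_\ell b_k$. These aliased terms sum to $n_\ell\sum_{j\ge1}b_{jN}(\phi)$, which is exactly the quantity appearing in Proposition~\ref{prp:unif-scores-symm}. This is where the rate assumption enters: $|b_k|=O(k^{-1-\delta})$ gives
\[
n_\ell\sum_{j\ge1}|b_{jN}|\lesssim N\cdot N^{-1-\delta}\sum_j j^{-1-\delta}=O(N^{-\delta})\to0 .
\]
Mere summability only gives $O(1)$ possibly non-vanishing terms, which explains the nonstandard behaviour mentioned in the introduction.

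\emph{Assembling (c).} Combine Markov's inequality for the non-aliased part with this deterministic bound for the aliased part. This yields (c); the cases $k\le K$ with $N\mid k$ vanish once $N>K$. Together with (a) and (b), the approximation theorem then yields \eqref{eq:Tcmp_asymp_H0_general}.
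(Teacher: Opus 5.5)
Your proposal is correct and follows essentially the same route as the paper. Both arguments truncate at $K$, obtain the finite-$K$ limit from Proposition~\ref{prp:gkr-asymp-H0} via Slutsky and the continuous mapping theorem, and control the tail through the exact null second moments (your finite-population argument is the paper's Lemma~\ref{lemma:exp-G2}), with the resonant frequencies $k\in N\Z_+$ handled deterministically through the decay condition. The differences are cosmetic: the paper works with characteristic functions and bounds the resonant tail uniformly in $N$ by $(1+1/\delta)C/K^{\delta}$, whereas your use of Billingsley's approximation theorem only needs $\limsup_N$, for which your $O(N^{-\delta})$ bound suffices.
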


\begin{corollary}\label{cor:asymp-H0-sum}
    Assume that $|b_k(\phi)|=O(k^{-(1+\delta)})$ for some $\delta>0$. Under $\Hcal_0$ and as $N\to\infty$,
    \begin{align}\label{eq:Tcmp_asymp_H0}
        T_{N,c}^{\phi}\inlaw T_{\infty,c}^{\phi}:=\sum_{k=1}^{\infty}2^{-1}b_k(\phi)Y_k,
    \end{align}
    where $\{Y_k\sim \chi^2_{2(c-1)}: k\geq 1\}$ is a collection of mutually independent chi-squared distributed random variables.
\end{corollary}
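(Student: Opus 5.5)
Corollary~\ref{cor:asymp-H0-sum} follows from Theorem~\ref{thm:asymp-H0-general} with the sum merging function $m(\bx)=\sum_{\ell=1}^c x_\ell$. This function is Lipschitz with respect to the supremum norm (with $L=c$), so \eqref{m:lip} holds and the theorem applies. It gives
$$T_{N,c}^{\phi}\inlaw \sum_{k=1}^{\infty}\sum_{r=1}^{2}2^{-1}b_k(\phi)\,Q_{k,r},\qquad Q_{k,r}:=\sum_{\ell=1}^c \pi_\ell Z_{k,r,\ell}^2=\big\|\operatorname{diag}(\bpi)^{1/2}\bZ_{k,r}\big\|^2 .$$
Interchanging the finite sums over $\ell$ with the series in $k$ is harmless. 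The series $\sum_k |b_k(\phi)|\,\E Q_{k,r}$ is finite, so the limit converges in $L^1$ and almost surely.

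The main step is to identify the law of each $Q_{k,r}$. Set $\bW:=\operatorname{diag}(\bpi)^{1/2}\bZ_{k,r}$. Then $\bW\sim\mathcal{N}_c(\mathbf{0},\bA)$ with
$$\bA=\operatorname{diag}(\bpi)^{1/2}\bS\operatorname{diag}(\bpi)^{1/2}=\bI_c-\sqrt{\bpi}\sqrt{\bpi}^{\top}.$$
Because $\sum_\ell\pi_\ell=1$, the vector $\sqrt{\bpi}$ has unit norm. Hence $\bA$ is the orthogonal projection onto the orthogonal complement of $\sqrt{\bpi}$, which is symmetric, idempotent, and of rank $c-1$. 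By the standard result on quadratic forms of Gaussian vectors with projection covariance, $\|\bW\|^2\sim\chi^2_{c-1}$.

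The vectors $\bZ_{k,r}$ are mutually independent across $(k,r)$, so the $Q_{k,r}$ are independent $\chi^2_{c-1}$ variables. For each $k$, set $Y_k:=Q_{k,1}+Q_{k,2}$. Then $Y_k\sim\chi^2_{2(c-1)}$, the $Y_k$ are mutually independent across $k$, and the limit takes the form $\sum_k 2^{-1}b_k(\phi)Y_k$, which is the claim.

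The only point that needs care is the projection identity $\bA^2=\bA$. It reduces to the computation $(\sqrt{\bpi}^{\top}\sqrt{\bpi})=\sum_\ell\pi_\ell=1$, so I do not expect a real obstacle. All the analytic difficulty, namely tail control via $|b_k|=O(k^{-(1+\delta)})$, is already handled inside Theorem~\ref{thm:asymp-H0-general}.
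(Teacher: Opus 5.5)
Your proof is correct and follows essentially the paper's route: you apply Theorem~\ref{thm:asymp-H0-general} with the sum merging function, show that each block $\sum_\ell \pi_\ell Z_{k,r,\ell}^2$ is $\chi^2_{c-1}$ because $\bPi^{1/2}\bS\bPi^{1/2}=\bI_c-\sqrt{\bpi}\sqrt{\bpi}^{\top}$ is a rank-$(c-1)$ projection, and combine the two independent $r$-components into $Y_k\sim\chi^2_{2(c-1)}$. Your version is slightly more streamlined in two places: you compute the covariance of $\bPi^{1/2}\bZ_{k,r}$ directly rather than factoring through $\bS^{1/2}$ with a spectrum-equivalence argument, and you rearrange the almost surely absolutely convergent series directly rather than identifying the law at each truncation level $K$ and invoking uniqueness of weak limits.
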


\begin{remark}
    The decay condition $|b_k(\phi)|=O(k^{-(1+\delta)})$ was overlooked in Theorem 1 of \cite{Beran1969a}. In the derivation of his equation (2.21), the calculations fail when $k\in N\Z_{+}$, see Lemma~\ref{lemma:exp-G2}. Therefore, Theorem 1, without any further assumption on the decay of $|b_k(\phi)|$, is not true. 
    Indeed, the following is a counterexample. Consider $T_{N,c}^{\phi_b}$, where the kernel $\phi_b$ has coefficients
    \begin{align}\label{eq:bk-2/3}
        b_k=\begin{cases}
            {1}/{m^2},&\text{if }k=m^3\text{ for some }m\in\N,\\
            0,&\text{otherwise.}
        \end{cases}
    \end{align}
    It is immediate that $\sum_{k\geq 1}|b_k|< \infty$. However, $f(N):=N\sum_{k\in N\Z_+} b_{k}$ does not converge as $N\to\infty$. Indeed, the subsequence $(f(n^3))_{n\in\N}$ diverges, as
    $$n^3\sum_{k\geq 1}b_{kn^3}=n^3\sum_{k\in\{m^3:m\in\N\}} b_{kn^3}=n^3\sum_{m\in\N} 1/(mn)^2
    =n\pi^2/6.$$
    Together with Corollary~\ref{cor:centered_asymp}, it can be shown that
    $\smash{T_{n^3,c}^{\phi_b}\inprob +\infty}$ under $\Hcal_0$. In contrast, along other subsequences, such as $(p_n)_{n\in \N}$, where $p_n$ denotes the $n$th prime number, $T_{p_n,c}^{\phi_b}$ converges in law to the weak limit~\eqref{eq:Tcmp_asymp_H0}, since for any prime $p$ and $m\in\N$, $p\mid m$ if and only if $p\mid m^3$, so that
    $$p_n\sum_{k\in p_n\Z_+} b_{k}
    =p_n\sum_{k\in \{m^3: m\in p_n\Z_+\}} b_{k}=p_n\sum_{\ell\in\N} \dfrac{1}{(p_n\ell)^2}=\dfrac{\pi^2}{6p_n}\to 0,\qquad\text{as } n\to\infty.$$
    Figure~\ref{fig:asymp0-vMF-kappa10-k-2/3} (top) shows that $T_{N,c}^{\phi_b}$ under $\Hcal_0$ diverges in probability along the sequence of sample sizes $(n^{3})_{n\in \N}$ even for $c = 2$ (left figure), while being stable for the sequence of sample sizes $(p_n)_{n\in\N}$ (right figure).

\end{remark}
\begin{figure}[ht!]
    \centering
    \begin{subfigure}{0.633\linewidth}
        \includegraphics[width=\linewidth, clip=true, trim={0cm, 0.6cm, 0cm, 2cm}]{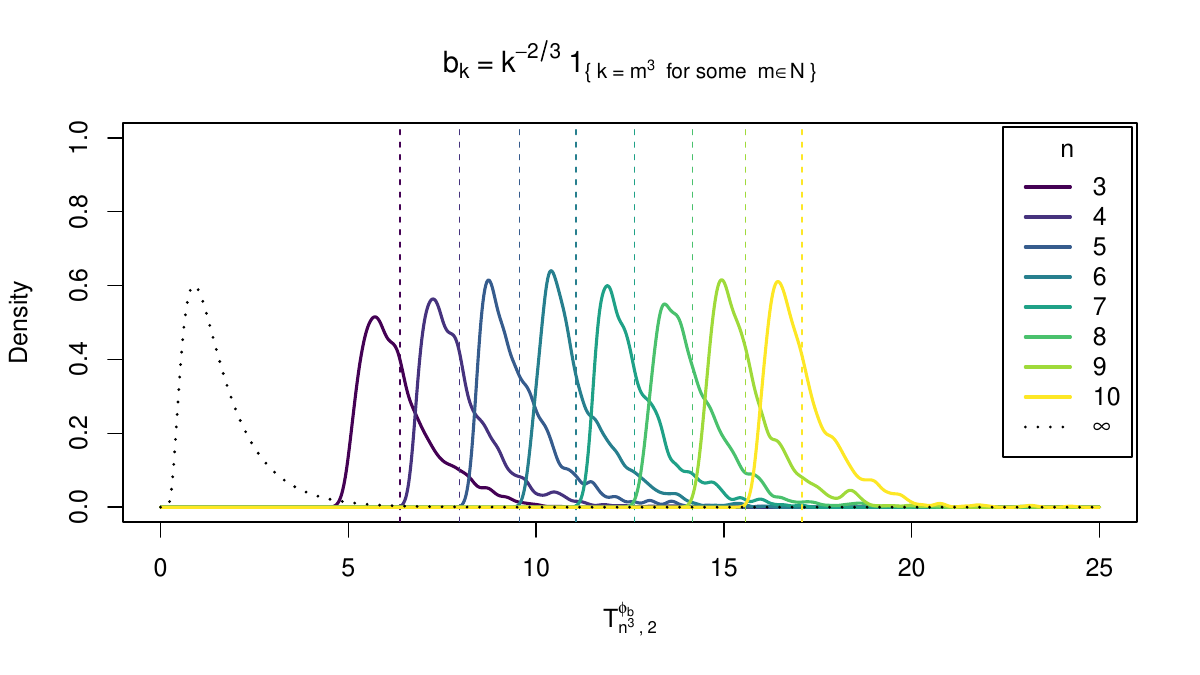}
    \end{subfigure}
    \begin{subfigure}{0.325\linewidth}
        \includegraphics[width=\linewidth, clip=true, trim={1cm, 0.6cm, 0cm, 2cm}]{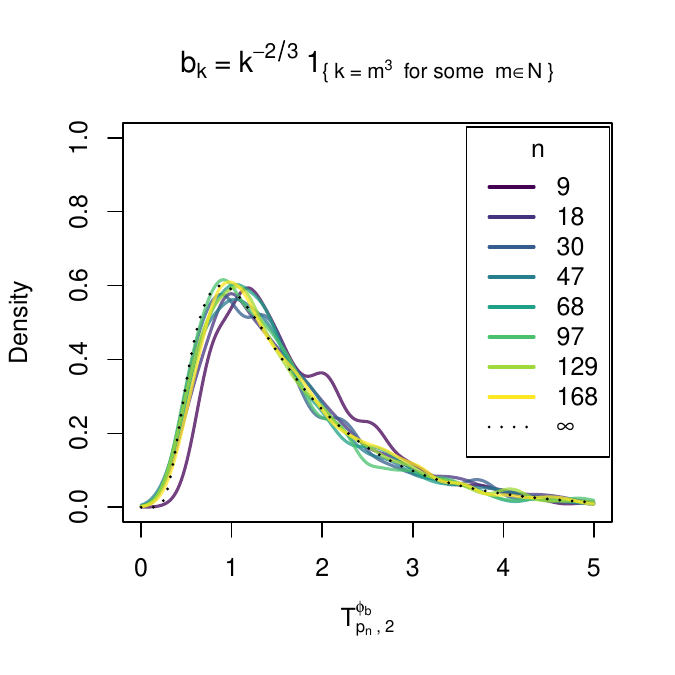}
    \end{subfigure}
    
    \begin{subfigure}{0.633\linewidth}
        \includegraphics[width=\linewidth, clip=true, trim={0cm, 0.6cm, 0cm, 1.75cm}]{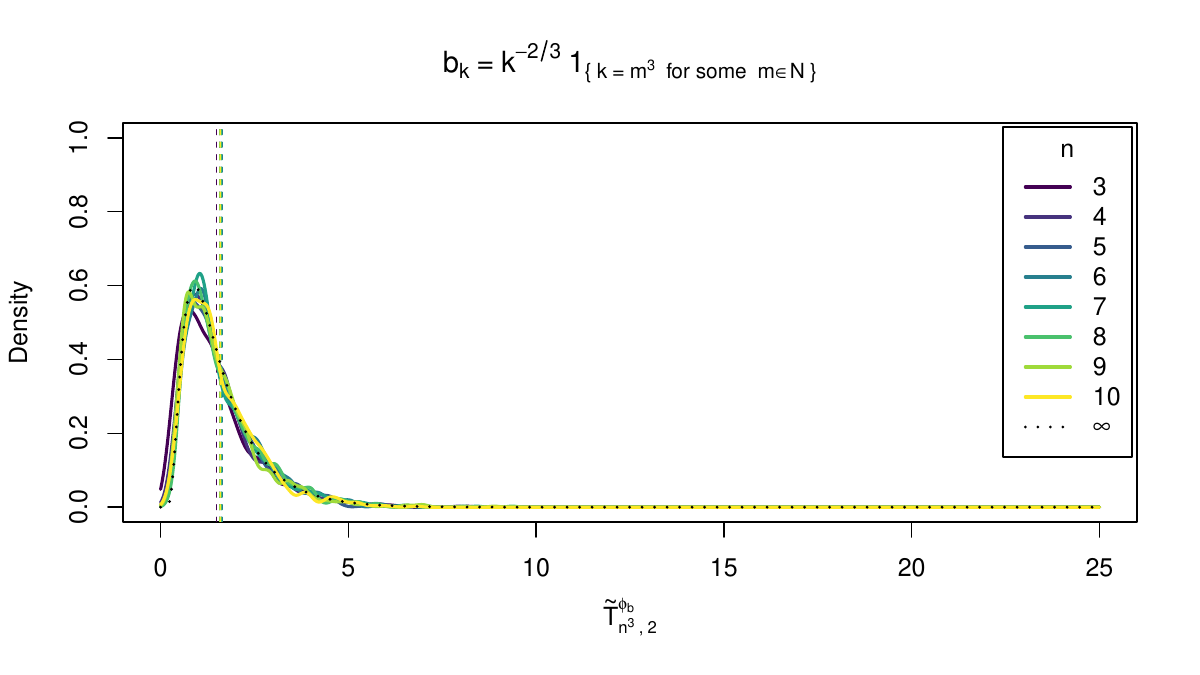}
    \end{subfigure}
    \begin{subfigure}{0.325\linewidth}
        \includegraphics[width=\linewidth, clip=true, trim={1cm, 0.6cm, 0cm, 1.75cm}]{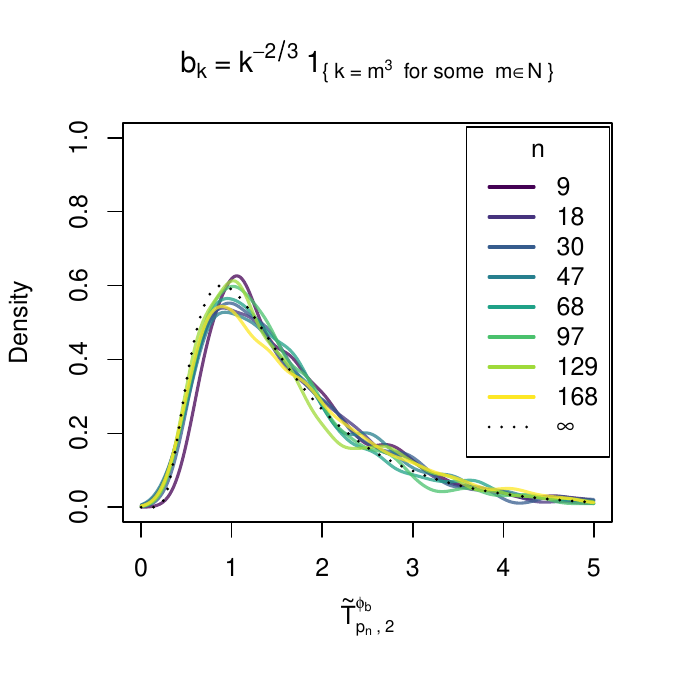}
    \end{subfigure}
    \caption{\small Top row: Density of $T_{N,2}^{\phi_b}$ for approximately balanced samples and sequences of sample sizes $\smash{N=(n^3)_{n\in\N}}$ (left) and $\smash{N=(p_n)_{n\in\N}}$ (right) under $\Hcal_0$. Bottom row: Same with the centered statistic, $\smash{\widetilde{T}^{\phi_b}_{N,2}}$. The density is estimated via kernel density estimation with $M=10^3$ Monte Carlo samples. Each Monte Carlo sample consists of $N$ iid observations drawn from a von Mises distribution with $\kappa=10$ and $\mu=\pi/2$. The vertical lines show the sample mean for each $N$. The dotted black density corresponds to the random variable $T_{\infty,2}^{\phi_b}$ given in Corollary~\ref{cor:asymp-H0-sum}.}
    \label{fig:asymp0-vMF-kappa10-k-2/3}
\end{figure}

\begin{remark}
    The absolute summability of $b_k(\phi)$ in \eqref{eq:summ-cond} is not sufficient to ensure $|b_k(\phi)|=O(k^{-(1+\delta)})$ for some $\delta>0$.
    Indeed, the sequence~\eqref{eq:bk-2/3} serves as a counterexample.
    Along the subsequence $k=m^3$, one has $b_k=k^{-2/3}$. Hence, $b_k$ is not $O(k^{-(1+\delta)})$ for any $\delta>0$.
\end{remark}

To remove the deterministic contribution from the resonant harmonics $k\in N\Z_+$, define the centered statistic
$$\widetilde{T}^{\phi,m}_{N,c}:=m\bigg(\Big(\sum_{\substack{k\geq1\\k\notin N\Z_{+}}}\sum_{r=1}^{2} 2^{-1}b_k(\phi) n_{\ell}G_{k,r,\ell,N}^2\Big)_{\ell=1}^{c}\bigg),$$
which in the case of the sum merging function reduces to the shifted version
\begin{align*}
    \widetilde{T}^{\phi}_{N,c}:=T_{N,c}^{\phi}-N\sum_{k\in N\Z_+}b_k(\phi).
\end{align*}

This modification eliminates the divergence in probability on the subsequences along which the uncentered statistic diverges and, as the next result shows, yields convergence in law of $\widetilde{T}^{\phi,m}_{N,c}$. Note that the centering term is always finite by the summability condition~\eqref{eq:summ-cond}.
In addition, after the centering, $\smash{\Esbig{\widetilde{T}^{\phi}_{N,c}}{0}}$ under $\Hcal_0$ is uniformly bounded for all $N\geq 2c$ and is given by
\begin{align*}
    \Esbig{\widetilde{T}^{\phi}_{N,c}}{0}&=\frac{(c-1)N}{N-1}\sum_{k\notin N\Z_+}b_k(\phi).
\end{align*}

\begin{corollary}\label{cor:centered_asymp}
    Let $m:\R^c\to\R$ be a function satisfying \eqref{m:lip}. Under $\Hcal_0$ and as $N\to\infty$, $\widetilde{T}_{N,c}^{\phi,m}\inlaw T_{\infty,c}^{\phi,m}$,
    with $T_{\infty,c}^{\phi,m}$ defined in~\eqref{eq:Tcmp_asymp_H0_general}.
\end{corollary}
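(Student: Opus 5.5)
The plan is a truncation argument. For each fixed $K$, the finitely many low harmonics are handled by Proposition~\ref{prp:gkr-asymp-H0}. The high harmonics are controlled in $L^1$ uniformly in $N$, using only the summability condition~\eqref{eq:summ-cond}. This is possible because the resonant harmonics $k\in N\Z_+$, which are exactly what broke the argument without a decay assumption, have been removed by the centering.

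\emph{Step 1: the finite part.} Fix $K$ and define the truncated centered statistic
$$\widetilde T^{(K)}_{N}:=m\Big(\Big(\textstyle\sum_{k\le K,\,k\notin N\Z_+}\sum_{r=1}^2 2^{-1}b_k(\phi)\,n_\ell G_{k,r,\ell,N}^2\Big)_{\ell=1}^c\Big).$$
For $N>K$ the constraint $k\notin N\Z_+$ is void, so the argument of $m$ equals $\big(\pi_{\ell,N}\sum_{k\le K}\sum_r 2^{-1}b_k(\phi)(N^{1/2}G_{k,r,\ell,N})^2\big)_{\ell}$. This is a continuous function of $N^{1/2}\bG_N$ and $\pi_{\ell,N}$. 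By Proposition~\ref{prp:gkr-asymp-H0}, $\pi_{\ell,N}\to\pi_\ell$, Slutsky's theorem, and continuity of $m$ (from~\eqref{m:lip}), we obtain $\widetilde T^{(K)}_N\inlaw T^{(K)}_\infty$. Here $T^{(K)}_\infty$ is the limit in~\eqref{eq:Tcmp_asymp_H0_general} with the inner series truncated at $K$. The structure $\bSigma_0=\bI_{2K}\otimes\bS$ gives exactly the independent vectors $\bZ_{k,r}\sim\mathcal N_c(\mathbf 0,\bS)$. Moreover, $T^{(K)}_\infty\to T^{\phi,m}_{\infty,c}$ in $L^1$ as $K\to\infty$. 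This follows from~\eqref{m:lip} and $\E|\sum_{k>K}\sum_r b_kZ_{k,r,\ell}^2|\le \sum_{k>K}|b_k|\,2(1/\pi_\ell-1)\to0$.

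\emph{Step 2: uniform tail bound.} By~\eqref{m:lip},
$$|\widetilde T^{\phi,m}_{N,c}-\widetilde T^{(K)}_N|\le L\max_\ell\Big|\textstyle\sum_{k>K,\,k\notin N\Z_+}\sum_r 2^{-1}b_k\, n_\ell G^2_{k,r,\ell,N}\Big|.$$
We then need $\E_0[n_\ell(G_{k,1,\ell,N}^2+G_{k,2,\ell,N}^2)]$ for $k\notin N\Z_+$. Under $\Hcal_0$ with continuous distributions, the ranks of sample $\ell$ form a uniformly random $n_\ell$-subset of $\{1,\dots,N\}$. Hence $\sum_i e^{\mathrm i k c_i^{(\ell)}}$ is the sum of a sample drawn without replacement from the $N$ values $e^{2\pi\mathrm i kj/N}$. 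When $N\nmid k$ these values have population mean $0$ and mean squared modulus $1$. The finite-population variance formula then gives
$$\E_0\Big|\textstyle\sum_i e^{\mathrm ikc_i^{(\ell)}}\Big|^2=\frac{n_\ell(N-n_\ell)}{N-1},$$
and therefore $\E_0[n_\ell(G_{k,1}^2+G_{k,2}^2)]=2(N-n_\ell)/(N-1)\le2$ (this is presumably the content of Lemma~\ref{lemma:exp-G2}). Bounding the maximum over $\ell$ by the sum over $\ell$, we get
$$\E_0|\widetilde T^{\phi,m}_{N,c}-\widetilde T^{(K)}_N|\le Lc\sum_{k>K}|b_k(\phi)|,$$
uniformly in $N\ge 2c$. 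The right-hand side tends to $0$ as $K\to\infty$ by~\eqref{eq:summ-cond}. Combined with Markov's inequality, this yields $\lim_K\limsup_N\Prob(|\widetilde T^{\phi,m}_{N,c}-\widetilde T^{(K)}_N|>\varepsilon)=0$.

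\emph{Step 3: conclusion.} Steps 1 and 2, together with the standard approximation theorem for weak convergence (Billingsley, Theorem~3.2), give $\widetilde T^{\phi,m}_{N,c}\inlaw T^{\phi,m}_{\infty,c}$.

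The main obstacle is Step 2: obtaining a tail bound that is uniform in $N$. This is precisely where the uncentered statistic fails. For $k\in N\Z_+$, every $kc_i^{(\ell)}$ is a multiple of $2\pi$, so $n_\ell(G_{k,1}^2+G_{k,2}^2)=2n_\ell$ deterministically, and these terms contribute the non-vanishing drift $N\sum_{k\in N\Z_+}b_k$. After those indices are excluded, the without-replacement variance computation gives the uniform bound, and summability of $b_k(\phi)$ alone suffices.
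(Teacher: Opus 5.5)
Your proposal is correct and follows essentially the same route as the paper. For $N>K$ the truncated centered statistic coincides with the uncentered truncated one, whose limit comes from Proposition~\ref{prp:gkr-asymp-H0}. The non-resonant tail is then bounded in $L^1$ uniformly in $N$, using the exact null second moments of Lemma~\ref{lemma:exp-G2} (which your without-replacement variance computation reproduces) together with \eqref{eq:summ-cond}. The only cosmetic difference is that the paper closes with an explicit $\epsilon/3$ argument on characteristic functions instead of invoking Billingsley's approximation theorem.
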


\section{Non-null asymptotics}
\label{sec:nonnull-asymp}

In this section, we derive the results under alternatives, using the non-extreme asymptotic regime and notation introduced in Section~\ref{sec:null-asymp}.

\subsection{Consistency against fixed alternatives}
\label{subsec:nonnull:consistency}

We consider fixed absolutely continuous distributions ${\rm P}_1, {\rm P}_2,\ldots,{\rm P}_c$ that do not depend on $N$. For each $\ell=1,\ldots,c$, let $F_\ell$ denote the angular cdf of ${\rm P}_\ell$, with accumulation origin at $\theta=0$.

Denote the pooled cdf by $H_N:=\sum_{\ell=1}^{c}\pi_{\ell,N}F_\ell$ and its limit as $N\to\infty$ by
\begin{align}\label{eq:H_limit}
    H:=\sum_{\ell=1}^{c}\pi_\ell F_\ell.
\end{align}

We say that $({\rm P}_1,{\rm P}_2,\ldots,{\rm P}_c)$ is a fixed alternative if it belongs to
$$
\Hcal_{1,{\rm fix}}:= \{({\rm P}_1,{\rm P}_2,\ldots,{\rm P}_c): \exists i\neq j\text{ such that }{\rm P}_i\neq {\rm P}_j\}.
$$

The next two results show that the test based on a statistic with a kernel $\phi$ with $b_k(\phi)>0$ for all $k\geq 1$ is universally consistent. This implies that of all the proposed kernels in Section~\ref{sec:specific}, only the Rayleigh, the $q$-modal, and  the Rothman (with $t\in\mathbb{Q}$) tests are not omnibus.

\begin{theorem}\label{thm:asymp-Halt-general}
    Let $m:\R^c\to\R$ be a function satisfying \eqref{m:lip}--\eqref{m:hom}. Under the fixed-distribution setting, as $N\to\infty$, almost surely
    \begin{align*}
        N^{-1}T_{N,c}^{\phi,m}\to m_{\infty}\bigg(\Big(\pi_{\ell}\sum_{k=1}^{\infty} b_k(\phi) \|\bga_{\ell,k}\|^2\Big)_{\ell=1}^{c}\bigg),
    \end{align*}
    where $m_{\infty}$ is the function appearing in \eqref{m:hom}, and $\bga_{\ell,k}:=(\gamma_{\ell,k,1},\gamma_{\ell,k,2})^{\top}$ is given by
    $$
    \gamma_{\ell,k,r}:=\dfrac{1}{\sqrt{2}}\int_{0}^{2\pi}g_{k,r}(2\pi H(\theta))\,\rd F_{\ell}(\theta),\qquad r=1,2.
    $$
\end{theorem}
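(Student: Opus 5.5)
We reduce everything to the sample-wise quantities $x_{\ell,N}:=N^{-1}S^{\phi}_{n_\ell}(\bc^{(\ell)})$. By the expansion of Section~\ref{subsec:unif-scores:class},
$$
x_{\ell,N}=\pi_{\ell,N}\sum_{k=1}^{\infty}\sum_{r=1}^{2}2^{-1}b_k(\phi)\,G_{k,r,\ell,N}^2,
\qquad G_{k,r,\ell,N}=n_\ell^{-1}\sum_{i=1}^{n_\ell}g_{k,r}(c_i^{(\ell)}).
$$
Since $2^{-1}\sum_r(\sqrt2\,\gamma_{\ell,k,r})^2=\|\bga_{\ell,k}\|^2$, it suffices to prove two things:
\begin{itemize}
\item[(a)] $G_{k,r,\ell,N}\to\sqrt2\,\gamma_{\ell,k,r}$ almost surely for each fixed $k,r,\ell$;
\item[(b)] $\bx_N:=(x_{1,N},\ldots,x_{c,N})\to\bx^\ast:=\big(\pi_\ell\sum_k b_k(\phi)\|\bga_{\ell,k}\|^2\big)_\ell$ almost surely.
\end{itemize}
Given (b), the conclusion follows by writing
$$
N^{-1}T^{\phi,m}_{N,c}=h_N(\bx_N),\qquad h_N(\bx):=N^{-1}m(N\bx).
$$
The family $(h_N)$ is $L$-Lipschitz by \eqref{m:lip}, and converges to $m_\infty$ uniformly on compacts by Remark~\ref{rmk:m-uniform-lim}. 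Hence
$$
|h_N(\bx_N)-m_\infty(\bx^\ast)|\le L\|\bx_N-\bx^\ast\|_\infty+|h_N(\bx^\ast)-m_\infty(\bx^\ast)|\to0 .
$$

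For (a), write $c_i^{(\ell)}=2\pi\widehat H_N(\Theta_i^{(\ell)})$. Continuity of the $F_\ell$ rules out ties almost surely, so the rank of an observation equals $N\widehat H_N$ evaluated at it. Then
$$
G_{k,r,\ell,N}=\int g_{k,r}\big(2\pi\widehat H_N(\theta)\big)\,\rd\widehat F_{\ell,n_\ell}(\theta).
$$
We decompose
$$
G_{k,r,\ell,N}-\int g_{k,r}(2\pi H)\,\rd F_\ell=\int\big[g_{k,r}(2\pi\widehat H_N)-g_{k,r}(2\pi H)\big]\,\rd\widehat F_{\ell,n_\ell}+\int g_{k,r}(2\pi H)\,\rd(\widehat F_{\ell,n_\ell}-F_\ell).
$$
\begin{itemize}
\item The first term is bounded by $2\sqrt2\pi k\,\sup_\theta|\widehat H_N-H|$, because $g_{k,r}$ is $\sqrt2 k$-Lipschitz. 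Moreover $\sup|\widehat H_N-H|\le\sum_\ell\pi_{\ell,N}\sup|\widehat F_{\ell,n_\ell}-F_\ell|+\sum_\ell|\pi_{\ell,N}-\pi_\ell|\to0$ almost surely by Glivenko--Cantelli applied to each sample.
\item The second term tends to $0$ almost surely by the SLLN, since $g_{k,r}(2\pi H(\cdot))$ is bounded and measurable. Strictly this needs a SLLN for triangular arrays, as $n_\ell$ grows with $N$. Since $\sup|\widehat F_{\ell,n}-F_\ell|\to0$ along the full sequence of $n$ and $H$ is continuous, the bounded-variation integration-by-parts bound handles it uniformly anyway.
\end{itemize}

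For (b), we use dominated convergence for series. We have the bound $|G_{k,r,\ell,N}|\le\sqrt2$, so the $k$th term of $x_{\ell,N}$ is bounded by $2|b_k(\phi)|$, which is summable by \eqref{eq:summ-cond}. On the almost-sure event where (a) holds for all countably many $(k,r,\ell)$ simultaneously, Tannery's theorem lets us pass the limit inside the sum. Together with $\pi_{\ell,N}\to\pi_\ell$ this gives $x_{\ell,N}\to x^\ast_\ell$. In contrast to the null case, no decay condition beyond \eqref{eq:summ-cond} is needed here, and the resonant harmonics are harmless. The reason is the $N^{-1}$ normalization: after dividing by $N$, the deterministic term $\sum_{k\in N\Z_+}b_k(\phi)$ vanishes anyway, since it is a tail of a convergent series.

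The main obstacle is (a): justifying the replacement of the data-dependent scores $2\pi\widehat H_N$ by $2\pi H$ uniformly, and making the almost-sure statements rigorous along the growing, $N$-indexed samples. The Lipschitz bound plus uniform Glivenko--Cantelli is the route to try first, using a single probability space holding infinite iid sequences for each $\ell$.
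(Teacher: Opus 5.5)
Your proof is correct, but it takes the limit at a different level than the paper.

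\textbf{The paper's route (kernel level).} It writes $N^{-1}S^{\phi}_{n_\ell}(\bc^{(\ell)})$ as the V-statistic $\pi_{\ell,N}n_\ell^{-2}\sum_{i,j}\psi(\widehat H_N(\Theta_i^{(\ell)})-\widehat H_N(\Theta_j^{(\ell)}))$, with $\psi(x)=\phi(\cos(2\pi x))$. It then swaps $\widehat H_N$ for $H$ in one step, using the modulus of continuity of $\psi$ and Glivenko--Cantelli (Lemma~\ref{lemma:consistency}). Next it applies the SLLN for V-statistics to the bounded kernel $\psi(H(\alpha)-H(\beta))$. Only at the end does it expand $\psi$ in its uniformly convergent Fourier series to identify the limit $\pi_\ell\sum_k b_k(\phi)\|\bga_{\ell,k}\|^2$.

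\textbf{Your route (harmonic by harmonic).} A Lipschitz replacement plus the ordinary one-sample SLLN gives $G_{k,r,\ell,N}\to\sqrt2\,\gamma_{\ell,k,r}$. Tannery's theorem, with dominating sequence $2|b_k(\phi)|$, then passes the limit through the series.

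\textbf{What each buys.} Your route avoids the V-statistic SLLN altogether. It reuses the harmonic representation from the null theory and makes the signal $\sqrt2\,\gamma_{\ell,k,r}$ explicit; this is the same object as the centering $\mu_{k,r,\ell,N}$ in the local analysis. The paper's route needs only uniform continuity of $\psi$ for the replacement step, with no per-harmonic Lipschitz constants and no countable intersection of almost-sure events. Both use \eqref{eq:summ-cond} in the same role, to interchange the limit and the series.

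\textbf{Your final step.} It is slightly leaner than the paper's. Each $h_N$ is $L$-Lipschitz, so the pointwise convergence $h_N(\bx^\ast)\to m_\infty(\bx^\ast)$ from \eqref{m:hom} suffices. You need neither Remark~\ref{rmk:m-uniform-lim} nor continuity of $m_\infty$, even though you cite the former.

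\textbf{A small correction.} Your worry about needing a triangular-array SLLN is unfounded. The function $g_{k,r}(2\pi H(\cdot))$ does not depend on $N$, so under the nested coupling you already adopt, the ordinary SLLN along $n_\ell\to\infty$ is enough. The paper tacitly makes the same coupling assumption when it invokes Glivenko--Cantelli and the V-statistic SLLN.
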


Under $\Hcal_{1,{\rm fix}}$, there exist $\ell_1\neq\ell_2$ and $k_1,k_2\geq1$ such that $\bga_{\ell_j,k_j}\neq \mathbf{0}$ for $j=1,2$. Thus, the limit in Theorem~\ref{thm:asymp-Halt-general} is positive due to \eqref{m:pos} as long as $\phi$ has positive Fourier coefficients, and the following consistency result follows.

\begin{corollary}\label{cor:consistency-Halt-general}
    Let $m:\R^c\to\R$ be a merging function and let $\phi$ be a kernel such that $b_k(\phi)>0$ for all $k\geq 1$.
    Then, the test based on $T_{N,c}^{\phi,m}$ that rejects for large values is consistent against~$\Hcal_{1,{\rm fix}}$, i.e., $$\mathrm{P}(T_{N,c}^{\phi,m}>c)\to 1\text{ as }N\to\infty\text{ for any }c\in\R.$$
\end{corollary}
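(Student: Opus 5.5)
The plan is to deduce the corollary from Theorem~\ref{thm:asymp-Halt-general}. Write
\[
\bx^\ast:=\Big(\pi_\ell\sum_{k\geq1} b_k(\phi)\|\bga_{\ell,k}\|^2\Big)_{\ell=1}^c .
\]
Since $m$ is a merging function, it satisfies \eqref{m:lip}--\eqref{m:hom}, so the theorem applies and gives $N^{-1}T^{\phi,m}_{N,c}\to m_\infty(\bx^\ast)$ almost surely. It then suffices to show $m_\infty(\bx^\ast)>0$. Indeed, for any fixed threshold $c\in\R$, almost sure convergence of $N^{-1}T^{\phi,m}_{N,c}$ to a positive constant forces $T^{\phi,m}_{N,c}\to+\infty$ almost surely. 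Hence $\mathrm{P}(T^{\phi,m}_{N,c}>c)\to1$, for instance by dominated convergence applied to the indicator.

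First I would check that $\bx^\ast\in[0,\infty)^c$. This holds because $\pi_\ell>0$, $b_k(\phi)>0$ and $\|\bga_{\ell,k}\|^2\geq0$. To apply \eqref{m:pos}, I need two distinct indices $\ell_1\neq\ell_2$ with $x^\ast_{\ell_j}>0$. Because every $b_k(\phi)$ is positive, $x^\ast_\ell>0$ holds if and only if $\bga_{\ell,k}\neq\mathbf{0}$ for some $k\geq1$.

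The core step is the following claim: $\bga_{\ell,k}=\mathbf{0}$ for all $k$ if and only if $F_\ell=H$. Note that $\gamma_{\ell,k,1}+i\gamma_{\ell,k,2}=\int e^{ik2\pi H(\theta)}\,\rd F_\ell(\theta)$, which is the $k$th Fourier coefficient of the law of $U_\ell:=2\pi H(\Theta)$ with $\Theta\sim F_\ell$. Each $F_\ell$ is absolutely continuous and $H=\sum_\ell\pi_\ell F_\ell$ with $\pi_\ell>0$, so $F_\ell\ll H$ as measures. Consequently $H$ is continuous, and $H$ is strictly increasing on the support of $F_\ell$ up to $H$-null sets. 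If all nonzero Fourier coefficients vanish, then $U_\ell$ is uniform on $[0,2\pi)$ by uniqueness of Fourier coefficients on the circle. Then $F_\ell(\theta)=\mathrm{P}(H(\Theta)\leq H(\theta))=H(\theta)$, where flat pieces of $H$ carry no $F_\ell$-mass. Conversely, $F_\ell=H$ gives $U_\ell$ uniform and all $\bga_{\ell,k}=\mathbf{0}$.

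Finally, under $\Hcal_{1,{\rm fix}}$ I would show that at least two indices satisfy $F_\ell\neq H$. If $F_\ell=H$ for all $\ell$ except possibly one, say $\ell_0$, then $H=\pi_{\ell_0}F_{\ell_0}+(1-\pi_{\ell_0})H$. Since $\pi_{\ell_0}>0$, this gives $F_{\ell_0}=H$, so all $F_\ell$ coincide, which contradicts the alternative. Therefore there are two coordinates with $x^\ast_{\ell}>0$, \eqref{m:pos} yields $m_\infty(\bx^\ast)>0$, and the conclusion follows.

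The main obstacle is the Fourier-uniqueness step: it must handle possible flat stretches of $H$ and the use of the probability integral transform, and this is where absolute continuity and $F_\ell\ll H$ are needed.
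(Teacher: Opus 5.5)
Your proposal is correct and follows the paper's proof: you reduce to Theorem~\ref{thm:asymp-Halt-general}, use the mixture identity to show that at least two $F_\ell$ differ from $H$, use Fourier uniqueness to get a nonzero $\bga_{\ell,k}$ for those indices, and conclude with \eqref{m:pos}. The only difference is cosmetic: you apply Fourier uniqueness to the law of $2\pi H(\Theta)$, $\Theta\sim F_\ell$, and invert via the probability integral transform, handling flat pieces of $H$, whereas the paper phrases the same step through the Radon--Nikodym derivative $\rho_\ell=\rd\mu_\ell/\rd\mu_H$ and the non-constant $L^2$ function $\rho_\ell\circ Q_H$, which is exactly the Lebesgue density of that law.
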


\subsection{Asymptotics under local alternatives}
\label{subsec:nonnull:localt}

In this section, we derive the asymptotic behavior of the truncated test statistic under a sequence of local alternatives indexed by the total sample size $N$. For each $N$ with $\min_{1\leq \ell\leq c} n_\ell\geq 2$ and every $\ell=1,\ldots,c$, let $\Theta_{N,1}^{(\ell)},\ldots,\Theta_{N,n_\ell}^{(\ell)}$ be iid random variables on $[0,2\pi)$ with cdf $F_{\ell,N}$, and assume that the $c$ samples are mutually independent.

Let $F$ denote an angular cdf with starting point $\theta=0$, whose density $f$ has a $2\pi$-periodic extension belonging to $C^1(\R)$. For some $\bdelta\in\R^c$, let $\delta_\ell$ be its $\ell$th component and set $\Delta_{\ell,N}:=N^{-1/2}\delta_\ell$. We consider the location-shift local model
\begin{align}\label{eq:localt}
\Theta_{N,i}^{(\ell)} &\sim F_{\ell,N}, \quad i=1,\ldots,n_\ell,\quad\text{with}\quad f_{\ell,N}(\theta)= f(\theta+\Delta_{\ell,N}),\quad \ell=1,\ldots,c,
\end{align}
and write $H_N:=\sum_{\ell=1}^c \pi_{\ell,N}F_{\ell,N}$.
Thus, the sequence of alternatives approaches the null hypothesis at the rate $N^{-1/2}$.

We first derive the local asymptotic behavior of the averages of the spherical harmonics evaluated at the uniform scores, from which the local asymptotic distribution of the truncated statistic 
\begin{align*}
    T_{N,c,K}^{\phi,m}
    :=\,&m\bigg(\Big(\sum_{k=1}^{K}\sum_{r=1}^{2} 2^{-1}b_k(\phi) n_{\ell}G_{k,r,\ell,N}^2\Big)_{\ell=1}^{c}\bigg)
\end{align*}
is derived in Theorem~\ref{thm:asymp-Hlocalt-general}. Then, Corollary~\ref{cor:asymp-Hlocalt} gives an explicit expression for the case of the truncated sum statistic.

\begin{proposition}\label{prp:gkr-asymp-Hlocalt}
    Let $K\geq 1$ be fixed and let $\bG_N$ be as defined in Proposition~\ref{prp:gkr-asymp-H0}. Then, under the sequence of location-shift local alternatives~\eqref{eq:localt} and as $N\to\infty$,
    \begin{align*}
        N^{1/2}\bG_N\inlaw \mathcal{N}_D(\bbeta,\bSigma_0),
    \end{align*}
    with $D:=2Kc$, $\bbeta:=(\eta_{k,r,\ell})$ where
    \begin{align*}
        \eta_{k,r,\ell}:=\beta_{k,r}(\bpi-\be_{\ell})^{\top}\bdelta,
    \end{align*}
    with $\be_\ell$ denoting the $\ell$th canonical vector of $\R^c$,
    \begin{align*}
        \beta_{k,1}=-2\pi\sqrt{2}\,k\int_{0}^{2\pi}\sin(2\pi kF(\theta))f(\theta)^2\,\rd \theta\quad\text{and}\quad
        \beta_{k,2}=2\pi\sqrt{2}\,k\int_{0}^{2\pi}\cos(2\pi kF(\theta))f(\theta)^2\,\rd \theta,
    \end{align*}
    and covariance matrix $\bSigma_0$ defined in Proposition~\ref{prp:gkr-asymp-H0}.
\end{proposition}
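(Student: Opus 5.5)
Because the location-shift model~\eqref{eq:localt} is contiguous to the null, I will prove the statement with Le Cam's third lemma rather than a direct computation of the non-null law of the ranks. Fix $\theta\mapsto f(\theta)$ as the common null density, with all $N$ observations iid from $F$. The joint density of the pooled sample under~\eqref{eq:localt}, relative to $\Hcal_0$, gives the log-likelihood ratio
\[
\Lambda_N=\sum_{\ell=1}^c\sum_{i=1}^{n_\ell}\log\frac{f(\Theta_i^{(\ell)}+\Delta_{\ell,N})}{f(\Theta_i^{(\ell)})}.
\]
Since $f$ is $C^1$ and periodic, the family is differentiable in quadratic mean. If $f$ may vanish, I will work with $\sqrt f$; otherwise a second-order Taylor expansion suffices. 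This yields
\[
\Lambda_N=N^{-1/2}\sum_{\ell}\delta_\ell\sum_i \frac{f'}{f}(\Theta_i^{(\ell)})-\tfrac12\sum_\ell \pi_\ell\delta_\ell^2 I(f)+o_P(1),
\]
with $I(f)=\int (f')^2/f$. Hence $\Lambda_N$ is asymptotically normal $\mathcal N(-\sigma^2/2,\sigma^2)$ under $\Hcal_0$, and the sequence is contiguous.

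\textbf{Step 1 (rank statistics via projections).} Under $\Hcal_0$, I will replace each uniform-score average $G_{k,r,\ell,N}$ by its Hájek projection. The uniform score $c_i^{(\ell)}=2\pi r_i^{(\ell)}/N$ is close to $2\pi \widehat H_N(\Theta_i^{(\ell)})$, which in turn is close to $2\pi F(\Theta_i^{(\ell)})$, up to the fluctuation of the pooled ecdf. I will expand $g_{k,r}(2\pi\widehat H_N(\theta))$ around $2\pi F(\theta)$ and average over the sample. This gives
\[
N^{1/2}G_{k,r,\ell,N}=N^{1/2}\Big[\frac1{n_\ell}\sum_i g_{k,r}(2\pi F(\Theta_i^{(\ell)}))-\sum_{j}\pi_{j,N}\frac1{n_j}\sum_{i}g_{k,r}(2\pi F(\Theta_i^{(j)}))\Big]+o_P(1).
\]
The second term arises because the derivative term integrates against the pooled ecdf fluctuation, and $\int g_{k,r}(2\pi F)\,\rd F=0$ gives this centered form. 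This representation must reproduce the covariance $\bSigma_0$ of Proposition~\ref{prp:gkr-asymp-H0}, which serves as a consistency check. I expect this step to be \textbf{the main technical obstacle}: controlling the remainder uniformly, which I will handle through the DKW inequality and the Lipschitz continuity of $g_{k,r}$ for fixed $k\le K$. Because $K$ is fixed, no tail control in $k$ is needed.

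\textbf{Step 2 (joint normality and covariance).} The linear representation from Step 1 and $\Lambda_N$ are both sums of independent terms across the samples. The multivariate CLT then gives joint normality of $(N^{1/2}\bG_N,\Lambda_N)$ under $\Hcal_0$. The cross-covariance for sample $j$ involves
\[
\mathrm{Cov}\bigl(g_{k,r}(2\pi F(\Theta)),\,(f'/f)(\Theta)\bigr)=\int g_{k,r}(2\pi F)f'\,\rd\theta.
\]
Integrating by parts, this equals $-\int 2\pi\,g_{k,r}'(2\pi F)f^2\,\rd\theta$. For $r=1$, $g_{k,1}'=-\sqrt2 k\sin$, so the covariance equals $2\pi\sqrt2 k\int\sin(2\pi kF)f^2$, which is $-\beta_{k,1}$. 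The $r=2$ case is analogous and produces $-\beta_{k,2}$. The sign depends on whether the shift is $+\Delta$; I will track it carefully against the stated $\beta_{k,r}$.

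\textbf{Step 3 (assembling the mean and applying Le Cam's third lemma).} Combining the weights from the two terms of Step 1 with $\delta_j$ and $\pi_j$, the covariance with $\Lambda_N$ of the $\ell$th coordinate is
\[
\beta_{k,r}\Big(\sum_j\pi_j\delta_j-\delta_\ell\Big)=\beta_{k,r}(\bpi-\be_\ell)^\top\bdelta .
\]
Here the $\pi_\ell^{-1}$ factor from $1/n_\ell$ cancels against the factor $n_\ell/N$ in $\Lambda_N$. Le Cam's third lemma then shifts the mean of the Gaussian limit by this cross-covariance while keeping the covariance $\bSigma_0$, which completes the proof.
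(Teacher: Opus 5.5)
Your route (Hájek projection under $\Hcal_0$, LAN, and Le Cam's third lemma) is different from the paper's. Your bookkeeping is correct: the projection of $N^{1/2}G_{k,r,\ell,N}$ reproduces $\bS$, and after integrating by parts the cross-covariance $\big(\int g_{k,r}(2\pi F)f'\,\rd\theta\big)(\delta_\ell-\bpi^\top\bdelta)$ equals $\beta_{k,r}(\bpi-\be_\ell)^\top\bdelta$, signs included.

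The gap is your opening claim that, because $f$ is $C^1$ and periodic, the shift family is differentiable in quadratic mean.
\begin{itemize}
\item DQM requires $\sqrt f$ to have an $L^2$ derivative, i.e.\ finite Fisher information $I(f)=\int (f')^2/f$. A $C^1$ density with zeros need not have it. Near a zero $\theta_0$ take $f(\theta)\approx|\theta-\theta_0|/\log(1/|\theta-\theta_0|)$. This is $C^1$ with $f'(\theta_0)=0$, but $(f')^2/f\approx 1/(|\theta-\theta_0|\log(1/|\theta-\theta_0|))$ is not integrable.
\item Passing to $\sqrt f$ does not help. Here $\sqrt f\notin W^{1,2}$, so $h^2(\Delta):=\int(\sqrt{f(\theta+\Delta)}-\sqrt{f(\theta)})^2\,\rd\theta$ satisfies $h^2(\Delta)/\Delta^2\to\infty$. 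Hence $n_\ell h^2(\Delta_{\ell,N})\to\infty$ whenever $\delta_\ell\neq0$.
\item Consequently the product laws under~\eqref{eq:localt} are asymptotically orthogonal to the iid-$F$ null, and there is no contiguity. Also $f'/f\notin L^2(F)$, so your Step~2 has no joint CLT, and Le Cam's third lemma is unavailable.
\end{itemize}
Yet the proposition remains true for such $f$, since $\beta_{k,r}$ only needs $f$ bounded. As written, you prove the statement only under the extra hypothesis $I(f)<\infty$. That hypothesis does hold if $f>0$ on the circle (then $\sqrt f\in C^1$), or if $f\in C^2$ (by Glaeser's inequality $(f')^2\le 2\|f''\|_\infty f$). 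Even for $f>0$, a second-order Taylor expansion of $\log f$ is not available when $f$ is only $C^1$; invoke DQM $\Rightarrow$ LAN instead.

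The paper avoids likelihoods altogether. With $\psi_{k,r}(u):=g_{k,r}(2\pi u)$, it proceeds as follows:
\begin{itemize}
\item It applies a Chernoff--Savage/Puri-type triangular-array CLT (Lemmas~\ref{lemma:puri} and~\ref{lemma:gkr-clt-moving}) directly under the moving laws $F_{\ell,N}$.
\item It expands the centering $\mu_{k,r,\ell,N}=\int\psi_{k,r}(H_N)\,\rd F_{\ell,N}$ using the uniform expansions of Lemma~\ref{lemma:unif-taylor}, obtaining $N^{1/2}\mu_{k,r,\ell,N}\to\beta_{k,r}(\bpi-\be_\ell)^\top\bdelta$ via the same integration by parts you use.
\item It checks that the covariance functionals $I^{(i,j,\ell)}_{k,k',r,r',N}$ converge to their null values.
\end{itemize}
This needs only uniform expansions of $F_{\ell,N}$ and $f_{\ell,N}$, which the $C^1$ assumption supplies. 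Your route, when $I(f)<\infty$, needs only a null CLT and explains transparently why the covariance is unchanged.

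A minor point in Step~1: DKW and Lipschitz continuity control the quadratic Taylor remainder. However, they only give $O_{\rm P}(N^{-1/2})$ for the cross term $\int\psi_{k,r}'(F)(\widehat H_N-F)\,\rd(\widehat F_{\ell,n_\ell}-F)$. Getting $o_{\rm P}(N^{-1/2})$ requires exploiting its degenerate second-moment structure, or Hájek's projection theorem for linear rank statistics.
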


\begin{theorem}\label{thm:asymp-Hlocalt-general}
    Fix $K\geq1$. Let $m:\R^c\to\R$ be a function satisfying \eqref{m:lip}. Under the sequence of location-shift local alternatives~\eqref{eq:localt} and as $N\to\infty$,
    \begin{align*}
        T_{N, c, K}^{\phi, m}\inlaw T_{\infty,c,K}^{\phi,m,{\rm loc}}:=m\bigg(\Big(\pi_\ell \sum_{k=1}^{K}\sum_{r=1}^{2} 2^{-1}b_k(\phi) Z_{k,r,\ell}^2\Big)_{\ell=1}^{c}\bigg),
    \end{align*}
    where $\{\bZ_{k,r}:=(Z_{k,r,1},\ldots,Z_{k,r,c})\sim \mathcal{N}_{c}(\bbeta_{k,r}, \bS): 1\leq k\leq K, r=1,2\}$ is a collection of mutually independent random vectors, and $\bbeta_{k,r}:=(\eta_{k,r,1},\ldots,\eta_{k,r,c})^{\top}$.
\end{theorem}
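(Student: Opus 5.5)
The plan is to derive Theorem~\ref{thm:asymp-Hlocalt-general} from Proposition~\ref{prp:gkr-asymp-Hlocalt} by a continuous-mapping argument. Since $K$ is fixed, no truncation or tail control is needed, unlike in Theorem~\ref{thm:asymp-H0-general}.

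First, rewrite each argument of $m$ as a function of $N^{1/2}\bG_N$:
\begin{align*}
\sum_{k=1}^{K}\sum_{r=1}^{2}2^{-1}b_k(\phi)\,n_\ell G_{k,r,\ell,N}^2=\pi_{\ell,N}\sum_{k=1}^{K}\sum_{r=1}^{2}2^{-1}b_k(\phi)\big(N^{1/2}G_{k,r,\ell,N}\big)^2 .
\end{align*}
Define $Q:\R^{D}\times\R^c\to\R^c$ by $Q(\bx,\bp)_\ell:=p_\ell\sum_{k\le K}\sum_{r}2^{-1}b_k(\phi)x_{k,r,\ell}^2$, which is continuous. Then $T_{N,c,K}^{\phi,m}=m\big(Q(N^{1/2}\bG_N,\bpi_N)\big)$, where $\bpi_N:=(\pi_{1,N},\ldots,\pi_{c,N})^{\top}$.

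Next, by Proposition~\ref{prp:gkr-asymp-Hlocalt}, $N^{1/2}\bG_N\inlaw\mathcal{N}_D(\bbeta,\bSigma_0)$. Because $\bpi_N\to\bpi$ deterministically, Slutsky's theorem gives joint convergence $(N^{1/2}\bG_N,\bpi_N)\inlaw(\bW,\bpi)$ with $\bW\sim\mathcal{N}_D(\bbeta,\bSigma_0)$. The function $m$ is continuous by \eqref{m:lip}, so $m\circ Q$ is continuous, and the continuous mapping theorem yields $T_{N,c,K}^{\phi,m}\inlaw m(Q(\bW,\bpi))$.

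It remains to identify the law of $\bW$. Since $\bSigma_0=\bI_{2K}\otimes\bS$ is block diagonal, the $c$-dimensional blocks $\bZ_{k,r}:=(W_{k,r,1},\ldots,W_{k,r,c})^{\top}$, for $k\le K$ and $r=1,2$, are jointly Gaussian and uncorrelated across different pairs $(k,r)$. They are therefore mutually independent, with $\bZ_{k,r}\sim\mathcal{N}_c(\bbeta_{k,r},\bS)$ where $\bbeta_{k,r}=(\eta_{k,r,1},\ldots,\eta_{k,r,c})^{\top}$. This step only requires checking that the ordering of coordinates in $\bG_N$, namely $k$ outermost, then $r$, then $\ell$, matches the Kronecker structure. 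It does, because $\bG_{k,N}=(\bG_{k,1,N}^{\top},\bG_{k,2,N}^{\top})^{\top}$ and each $\bG_{k,r,N}$ is indexed by $\ell$. Substituting gives exactly $T_{\infty,c,K}^{\phi,m,{\rm loc}}$.

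The substantive difficulty is entirely contained in Proposition~\ref{prp:gkr-asymp-Hlocalt}, which the paper states earlier and I assume. The only potential subtlety here is that $\bS$ is singular, since $\bpi^{\top}\bS\bpi=0$. This does not matter: the continuous mapping theorem does not require a nondegenerate limit, and the blockwise independence argument goes through for degenerate Gaussian vectors as well.
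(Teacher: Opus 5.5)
Your proposal is correct and matches the paper's proof. Like the paper, you rewrite $n_\ell G_{k,r,\ell,N}^2=\pi_{\ell,N}(N^{1/2}G_{k,r,\ell,N})^2$, then combine Proposition~\ref{prp:gkr-asymp-Hlocalt}, $\bpi_N\to\bpi$, Slutsky's theorem and the continuous mapping theorem (with continuity of $m$ from \eqref{m:lip}), and read off mutual independence of the blocks $\bZ_{k,r}$ from $\bSigma_0=\bI_{2K}\otimes\bS$.
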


\begin{corollary}\label{cor:asymp-Hlocalt}
    Fix $K\geq1$. Under the sequence of location-shift local alternatives~\eqref{eq:localt} and as $N\to\infty$,
    \begin{align*}
        T_{N,c,K}^{\phi}\inlaw T_{\infty,c,K}^{\phi,{\rm loc}}:=\sum_{k=1}^{K}2^{-1}b_k(\phi)Y_k,
    \end{align*}
    where $\{Y_k\sim\chi^2_{2(c-1)}(\lambda_k): 1\leq k\leq K\}$ is a collection of mutually independent random variables, with
    \begin{align*}
        \lambda_k
        &=(\beta_{k,1}^2+\beta_{k,2}^2)\lrp{\bdelta^{\top}\bPi\bdelta-(\bpi^{\top}\bdelta)^2},
    \end{align*}
    where $\beta_{k,r}$ are defined in Proposition~\ref{prp:gkr-asymp-Hlocalt}, and $\bPi:=\diag{\bpi}$.
\end{corollary}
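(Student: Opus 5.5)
The plan is to specialize Theorem~\ref{thm:asymp-Hlocalt-general} to the sum merging function $m(\bx)=\sum_{\ell=1}^c x_\ell$, which is Lipschitz with respect to the supremum norm (constant $L=c$). This gives
$$
T_{N,c,K}^{\phi}\inlaw \sum_{k=1}^{K}2^{-1}b_k(\phi)\sum_{r=1}^{2}\sum_{\ell=1}^{c}\pi_\ell Z_{k,r,\ell}^2,
$$
with the $\bZ_{k,r}\sim\mathcal{N}_c(\bbeta_{k,r},\bS)$ mutually independent. It then suffices to show two things: that $Y_k:=\sum_{r=1}^2 \bZ_{k,r}^\top\bPi\bZ_{k,r}$ is $\chi^2_{2(c-1)}(\lambda_k)$, and that the $Y_k$ are independent. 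Independence is immediate because $Y_k$ is a function of $(\bZ_{k,1},\bZ_{k,2})$ only.

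For the distribution of each quadratic form, I would set $\bW_{k,r}:=\bPi^{1/2}\bZ_{k,r}$. This is Gaussian with covariance
$$
\bPi^{1/2}\bS\bPi^{1/2}=\bI_c-\sqrt{\bpi}\sqrt{\bpi}^{\top},
$$
where $\sqrt{\bpi}$ is taken componentwise. Since $\sum_\ell\pi_\ell=1$, this matrix is a symmetric idempotent projection of rank $c-1$. The mean of $\bW_{k,r}$ is $\bPi^{1/2}\bbeta_{k,r}=\beta_{k,r}\bPi^{1/2}(\bpi\mathbf{1}_c^\top-\bI_c)\bdelta$, using $\eta_{k,r,\ell}=\beta_{k,r}(\bpi^\top\bdelta-\delta_\ell)$. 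I will check that this mean lies in the range of the projection, i.e. that it is orthogonal to $\sqrt{\bpi}$:
$$
\sqrt{\bpi}^{\top}\bPi^{1/2}(\bpi^\top\bdelta\,\mathbf{1}_c-\bdelta)=\bpi^\top\bdelta-\bpi^\top\bdelta=0.
$$
Because the mean lies in that range, $\|\bW_{k,r}\|^2\sim\chi^2_{c-1}(\|\bPi^{1/2}\bbeta_{k,r}\|^2)$, by the standard result that a Gaussian vector with idempotent covariance and mean in its range has a noncentral chi-squared norm.

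Next I compute the noncentrality:
$$
\|\bPi^{1/2}\bbeta_{k,r}\|^2=\beta_{k,r}^2\sum_\ell\pi_\ell(\bpi^\top\bdelta-\delta_\ell)^2=\beta_{k,r}^2\bigl(\bdelta^\top\bPi\bdelta-(\bpi^\top\bdelta)^2\bigr),
$$
where the last step expands the square and uses $\sum_\ell\pi_\ell=1$. Summing over $r=1,2$, with independent components, gives $Y_k\sim\chi^2_{2(c-1)}(\lambda_k)$ with $\lambda_k$ exactly as stated. This completes the argument.

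The only step requiring care is the verification that the mean lies in the range of the degenerate covariance. That is what ensures a genuine noncentral chi-squared with $c-1$ degrees of freedom, rather than a mixture with a degenerate component, and it is handled by the orthogonality computation above. Everything else follows directly from Theorem~\ref{thm:asymp-Hlocalt-general}.
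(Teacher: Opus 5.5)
Your argument is correct and is essentially the paper's proof. You specialize Theorem~\ref{thm:asymp-Hlocalt-general} to the sum, reduce each block to a Gaussian quadratic form governed by the rank-$(c-1)$ projection $\bI_c-\sqrt{\bpi}\sqrt{\bpi}^{\top}$, use $\bpi^{\top}\bbeta_{k,r}=0$ to place the mean in the right subspace, and read off the noncentrality $\bbeta_{k,r}^{\top}\bPi\bbeta_{k,r}$. Your whitening $\bPi^{1/2}\bZ_{k,r}$ is a little more direct than the paper's representation $\bZ_{k,r}\equald\bS^{1/2}\bW_{k,r}$, which needs a preimage $\bbeta^{\ast}_{k,r}$ with $\bbeta_{k,r}=\bS^{1/2}\bbeta^{\ast}_{k,r}$ and a spectral comparison of $\bD=\bS^{1/2}\bPi\bS^{1/2}$ with $\bC$.

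One slip: the mean is $\beta_{k,r}\bPi^{1/2}(\mathbf{1}_c\bpi^{\top}-\bI_c)\bdelta$, not $\beta_{k,r}\bPi^{1/2}(\bpi\mathbf{1}_c^{\top}-\bI_c)\bdelta$. Your next display already uses the correct form $\bpi^{\top}\bdelta\,\mathbf{1}_c-\bdelta$.
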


\begin{remark}
    Note that $\lambda_k=0$ for all $k\geq 1$ if and only if one of the following conditions holds:
    \begin{enumerate}[label=(\textit{\roman*}),ref=\textit{\roman*}]
        \item $\beta_{k,1}=\beta_{k,2}=0$ for all $k\geq 1$, that is, $F$ is the cdf of the uniform distribution, and all samples are equally distributed.
        \item $\bdelta\propto\mathbf{1}$, that is, all the samples undergo the same local shift and, thus, are equally distributed.
    \end{enumerate}
\end{remark}

We leave the extension of Theorem~\ref{thm:asymp-Hlocalt-general} to $K=\infty$ for future work. Such an extension would require a decay condition on the sequence $b_k(\phi)$, analogous to the null case, to control the contribution of resonant harmonics. However, the proof seems substantially more delicate, since it requires knowledge of the exact moments of spherical harmonics under the local alternative scenario.

\section{Numerical experiments}
\label{sec:sim}

\subsection{Null asymptotic distribution of test statistics}
\label{subsec:sim:asymp0}

We empirically validate the asymptotic distribution of $\smash{T_{N,c}^{\phi}}$ under $\Hcal_0$. We consider the common distribution to be a von Mises distribution with density $f_{\rm vM}(\theta; \mu, \kappa)=c_{{\rm vM},\kappa}\exp({\kappa \cos(\theta-\mu)})$, where $c_{{\rm vM},\kappa}$ denotes the normalizing constant, $\mu = \pi/2$, and $\kappa = 10$. A total of $M=10^4$ Monte Carlo replications are generated under $\Hcal_0$. For each $c\in\{3,5,10\}$, the $c$ samples are balanced with $n_\ell=\floor{500/c}$ observations, $\ell=1,\ldots, c$. The statistics $T_{N,c}^{\phi}$ corresponding to ${\rm R}_{1/2}$, $\rm W$, $\rm AD$, ${\rm SM}_{1}$, and ${\rm P}_{1/2}$ are computed.

Figure~\ref{fig:asymp0-vMF-kappa10} shows the resulting histograms along with the corresponding asymptotic densities. The latter are approximated by differentiating the truncated distribution $x\mapsto\mathrm{P}(\sum_{k=1}^{K_{\rm tr}}2^{-1}b_k(\phi)Y_k\leq x)$ with $K_{\rm tr}=10^3$, using the exact method of \cite{Imhof1961} and a faster approximation, the Hall--Buckley--Eagleson (HBE) method, which performs a three-moment match to a Gamma distribution, see \cite{Buckley1988}. Both methods yield virtually the same curves, and the histograms of the exact statistics agree remarkably well with the asymptotic curves.

\begin{figure}[ht!]
    \centering
    \includegraphics[width=1\linewidth,trim={0cm 0cm 0cm 1.2cm},clip]{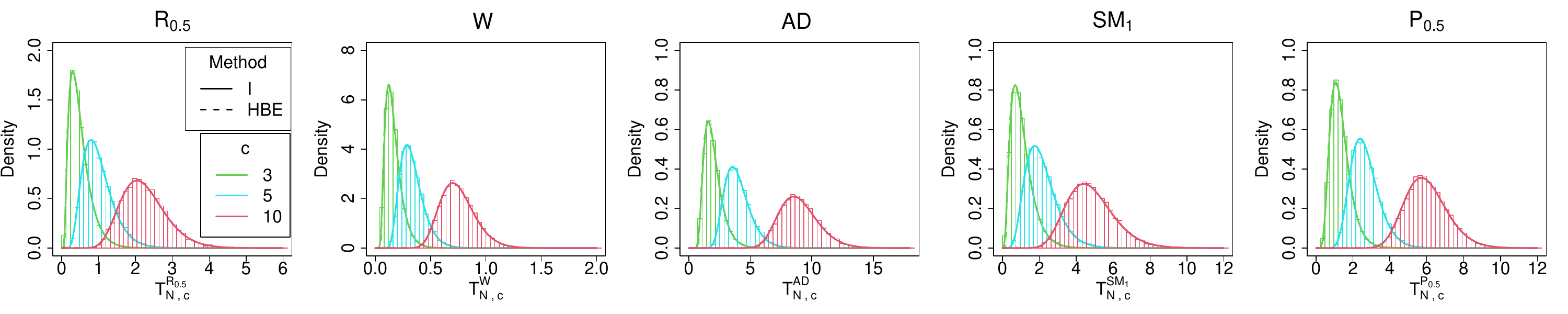}
    \caption{\small Histograms of $T_{N,c}^{\phi}$ with $N=c\floor{500/c}$ against their asymptotic densities computed using \cite{Imhof1961} and HBE methods. The kernel $\phi$ is indicated by the column.}
    \label{fig:asymp0-vMF-kappa10}
\end{figure}

\subsection{Power under fixed alternatives}
\label{subsec:sim:pow}

In this section, we study the finite-sample power of the tests based on $T_{N,c}^{\phi}$ for the kernels introduced in Section~\ref{sec:specific} under certain fixed alternatives. The common framework consists of $c\in\{3,5,10\}$ samples, only one of which has a different distribution, i.e., ${\rm P}_1=\cdots={\rm P}_{c-1}$ and ${\rm P}_c\neq {\rm P}_1$. Two types of alternatives are considered: (\emph{i}) location-shift alternatives, where the angular density $f_1=\cdots=f_{c-1}=f(\cdot;\mu,\kappa)$ while $f_c=f(\cdot;\mu+\Delta_\mu,\kappa)$, and (\emph{ii}) different concentration alternatives, where $f_1=\cdots=f_{c-1}=f(\cdot;\mu,\kappa)$ and $f_c=f(\cdot;\mu,\kappa+\Delta_\kappa)$. Within each scenario, all subsamples are generated from the same parametric family with density $f$, the specific values of $\Delta_\mu$ and $\Delta_\kappa$ are indicated in the captions, and the following distribution families are considered:
\begin{enumerate}[label={}]
    \item[(vM)] The von Mises distribution with location parameter $\mu=\pi/2$ and concentration $\kappa>0$.\label{fsim1}
    \item[(W)] The Watson distribution, with density $f_{\rm W}(\theta; \mu,\kappa)= c_{{\rm W},\kappa}\exp(\kappa \cos^2(\theta-\mu))$.\label{fsim2}
    \item[(SC)] The Small Circle distribution, with density $f_{\rm SC}(\theta; \mu,\kappa,\nu)= c_{{\rm SC, \kappa, \nu}}\exp(-\kappa(\cos(\theta-\mu)-\nu)^2)$ and $\nu=1/3$, which controls the modal locations.\label{fsim3}
    \item[(MvM)] A mixture of $m$ equally weighted vM distributions with density $f_{\rm MvM}(\theta;\mu,\kappa,m)=\break m^{-1}\sum_{j=1}^{m}f_{\rm vM}(\theta;\mu_j,\kappa)$ where $\mu_j := \mu + 2\pi (j-1)/m$ for $j=1,\ldots, m$, and common concentration $\kappa$. Considered for $m=3,4$.\label{fsim4}
\end{enumerate}

For each alternative scenario with fixed $n$, $c$, and $f$, $M=10^4$ Monte Carlo samples are generated. Each sample consists of a total of $N=nc$ observations, with $n\in\{25, 50, 100, 200\}$ observations corresponding to each subsample. For every Monte Carlo sample, the tests based on $T^{\phi}_{N,c}$ are computed using the asymptotic critical value at significance level $\alpha=0.05$, which is obtained using \cite{Imhof1961}'s method as in Section~\ref{subsec:sim:asymp0}, and kernels $\phi\in\{{\rm R},{\rm M}_2, {\rm W}, {\rm A}, {\rm R}_t, {\rm AD}, {\rm SM}_{\kappa}, {\rm P}_{\rho}\}$. The empirical rejection proportions for the tests are reported in Tables~\ref{tbl:pow-shift}--\ref{tbl:pow-conc}, separately for location and concentration alternatives.

The simulation results lead to the following conclusions.
\begin{enumerate}[label=(\textit{\roman*}),ref=\textit{\roman*}]
    \item The test $\rm R$ is not generally the most powerful test, even when all samples belong to the vM family. Nevertheless, in that setting, its power remains close to that of the best-performing tests, which are typically $\rm P_{0.5}$, ${\rm R}_{t}$, $t\in\{1/3,1/4\}$, and $\rm AD$.
    \item The ${\rm M}_2$ test outperforms the competing procedures under the bimodal alternatives (W and SC) in which the contaminated sample follows a bimodal distribution whose modes are located between (i.e., alternate with respect to) the modes of the remaining samples.\label{conc:bimod}
    \item Under the multimodal alternatives (MvM), the highest power is attained by ${\rm SM}_{10}$ and ${\rm P}_{0.9}$. The next best-performing test is ${\rm AD}$, but with a significant power gap.
    \item The tests $\rm W$ and $\rm AD$ exhibit similar power performance across scenarios. However, $\rm AD$ consistently achieves slightly higher rejection rates.
    \item Due to the similarity of the coefficients defining $\rm W$ and the Area-based tests, they both display comparable performance under vM and multimodal alternatives, with low rejection rates in the latter. In contrast, under bimodal alternatives of the type described in \eqref{conc:bimod}, $\rm W$ yields significantly higher rejection proportions. This behavior has also been noted on the real line, see \cite{Shirahata1990}.
    \item \label{conc:power-c-onediff} For fixed $n$, power generally decreases as $c$ increases across the kernels considered. Since only one of the $c$ samples differs from the others, increasing $c$ dilutes the relative contribution of the affected sample in the sum statistic. The effect is mainly visible before the power saturates, and it disappears as $n$ grows.
\end{enumerate}

For representative ${\rm AD}$ and ${\rm SM}_{10}$ kernels, additional simulations to study the effect of imbalanced samples are available in Section~\ref{subsec:sims:imbalanced} of the~SM.

\begin{table}[!htbp]
\centering
\scalebox{0.67}{
\begin{tabular}{ccc|r|r|r|r|rrr|r|rrr|rrr}
\toprule
\multirow{2}{*}{Family} & \multirow{2}{*}{$c$} & \multirow{2}{*}{$n$} & \multicolumn{1}{c|}{\multirow{2}{*}{R}} & \multicolumn{1}{c|}{\multirow{2}{*}{${\rm M}_2$}} & \multicolumn{1}{c|}{\multirow{2}{*}{W}} & \multicolumn{1}{c|}{\multirow{2}{*}{A}} & \multicolumn{3}{c|}{${\rm R}_t$} & \multicolumn{1}{c|}{\multirow{2}{*}{AD}} & \multicolumn{3}{c|}{${\rm SM}_{\kappa}$} & \multicolumn{3}{c}{${\rm P}_{\rho}$} \\ 
& & & & & & & $1/4$ & $1/3$ & $1/2$ & & $0.1$ & $1$ & $10$ & $0.1$ & $0.5$ & $0.9$\\
\midrule
\multirow{12}{*}{$\substack{{\rm vM}\\\\\centering\includegraphics[width=1.75cm, clip=true, trim={1.6cm 1.5cm 1.75cm 0.5cm}]{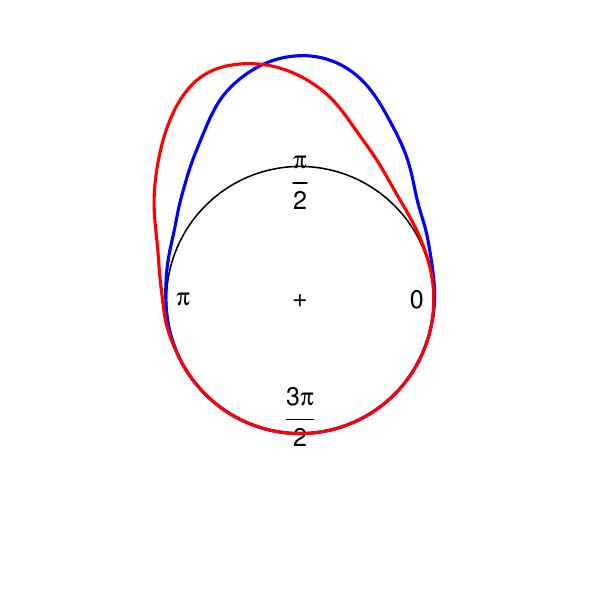}}$} & \multirow{4}{*}{$3$} & 25 & 28.3 & 10.2 & $\mathbf{30.7}$ & 30.2 & 30.4 & 29.9 & 28.9 & $\mathbf{31.2}$ & 28.6 & 30.0 & 24.7 & 29.0 & 30.2 & 11.1 \\ 
& & 50 & 55.5 & 18.3 & 60.0 & 59.1 & $\mathbf{60.8}$ & 59.3 & 56.6 & $\mathbf{61.2}$ & 55.9 & 59.1 & 56.6 & 57.2 & $\mathbf{61.1}$ & 40.9 \\ 
& & 100 & 87.9 & 34.7 & 91.7 & 91.2 & 92.2 & 91.1 & 88.8 & 92.4 & 88.4 & 91.0 & 91.6 & 89.4 & $\mathbf{93.0}$ & 84.1 \\ 
& & 200 & 99.6 & 66.0 & $\mathbf{99.9}$ & $\mathbf{99.9}$ & $\mathbf{99.9}$ & $\mathbf{99.9}$ & 99.7 & $\mathbf{99.9}$ & 99.6 & $\mathbf{99.9}$ & $\mathbf{99.9}$ & 99.7 & $\mathbf{99.9}$ & 99.8 \\ 
\cline{2-17}
& \multirow{4}{*}{$5$} & 25 & 23.8 & 9.5 & $\mathbf{25.5}$ & 25.1 & $\mathbf{25.7}$ & 25.1 & 24.3 & $\mathbf{26.1}$ & 23.9 & 24.9 & 22.4 & 24.4 & $\mathbf{25.7}$ & 13.7 \\ 
& & 50 & 50.9 & 16.5 & 55.6 & 54.7 & $\mathbf{56.0}$ & 54.7 & 52.3 & $\mathbf{56.5}$ & 51.3 & 54.2 & 51.4 & 52.7 & $\mathbf{56.5}$ & 37.3 \\ 
& & 100 & 86.3 & 33.8 & 90.8 & 90.2 & 91.4 & 90.3 & 87.4 & $\mathbf{91.8}$ & 86.8 & 89.9 & 90.6 & 88.2 & $\mathbf{92.2}$ & 81.6 \\ 
& & 200 & 99.6 & 66.1 & $\mathbf{99.9}$ & 99.8 & $\mathbf{99.9}$ & 99.8 & 99.7 & $\mathbf{99.9}$ & 99.6 & 99.8 & $\mathbf{99.9}$ & 99.7 & $\mathbf{99.9}$ & 99.7 \\ 
\cline{2-17}
& \multirow{4}{*}{$10$} & 25 & 18.2 & 8.7 & $\mathbf{19.8}$ & $\mathbf{19.5}$ & $\mathbf{20.1}$ & $\mathbf{19.5}$ & 18.7 & $\mathbf{20.2}$ & 18.3 & 19.4 & 18.1 & 18.8 & $\mathbf{20.0}$ & 13.4 \\ 
& & 50 & 39.7 & 14.2 & 43.9 & 43.0 & 44.0 & 43.4 & 40.7 & $\mathbf{45.2}$ & 40.2 & 42.6 & 40.7 & 41.2 & $\mathbf{44.9}$ & 30.8 \\ 
& & 100 & 76.4 & 27.6 & 82.3 & 81.2 & 83.3 & 81.3 & 78.1 & $\mathbf{83.5}$ & 77.0 & 80.7 & 80.5 & 78.4 & $\mathbf{84.0}$ & 68.4 \\ 
& & 200 & 99.1 & 57.9 & $\mathbf{99.7}$ & 99.6 & $\mathbf{99.7}$ & 99.6 & 99.3 & $\mathbf{99.8}$ & 99.1 & 99.6 & 99.6 & 99.3 & $\mathbf{99.8}$ & 98.4 \\ 
\midrule
\multirow{12}{*}{$\substack{{\rm W}\\\\\centering\includegraphics[width=1.75cm, clip=true, trim={1.6cm 1.5cm 1.75cm 0.5cm}]{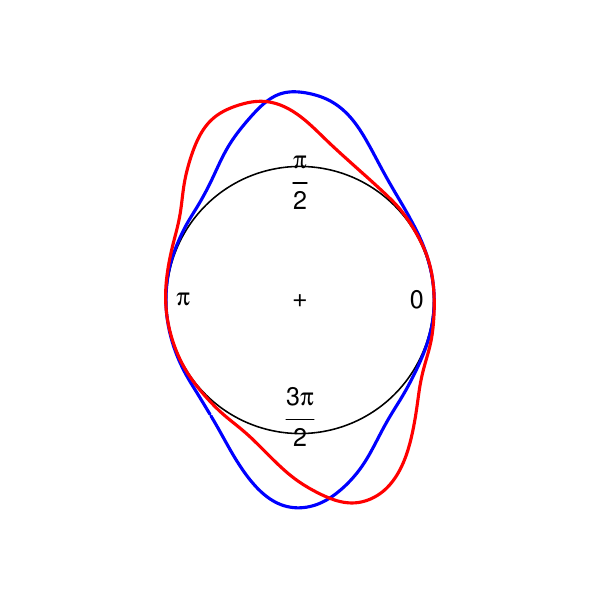}}$} & \multirow{4}{*}{$3$} & 25 & 6.2 & $\mathbf{46.2}$ & 11.8 & 10.2 & 21.6 & 11.6 & 6.4 & 14.6 & 6.6 & 11.1 & 32.5 & 7.8 & 22.1 & 16.8 \\ 
& & 50 & 6.5 & $\mathbf{83.7}$ & 24.5 & 18.7 & 52.5 & 24.0 & 6.7 & 34.8 & 7.3 & 21.7 & 73.0 & 10.6 & 54.5 & 63.1 \\ 
& & 100 & 6.0 & $\mathbf{99.3}$ & 65.3 & 49.2 & 92.0 & 64.0 & 6.3 & 81.4 & 7.6 & 56.7 & 98.6 & 16.7 & 94.0 & 97.9 \\ 
& & 200 & 6.0 & $\mathbf{100.0}$ & 99.2 & 96.6 & $\mathbf{100.0}$ & 99.1 & 6.5 & $\mathbf{99.9}$ & 10.2 & 97.7 & $\mathbf{100.0}$ & 44.7 & $\mathbf{100.0}$ & $\mathbf{100.0}$ \\ 
\cline{2-17}
& \multirow{4}{*}{$5$} & 25 & 5.6 & $\mathbf{44.6}$ & 10.9 & 9.5 & 18.8 & 10.7 & 5.7 & 13.3 & 5.9 & 10.1 & 29.8 & 7.0 & 19.9 & 21.3 \\ 
& & 50 & 5.7 & $\mathbf{83.2}$ & 21.9 & 16.8 & 45.6 & 21.3 & 6.1 & 30.8 & 6.5 & 18.9 & 69.3 & 9.6 & 48.1 & 61.3 \\ 
& & 100 & 6.0 & $\mathbf{99.5}$ & 55.8 & 41.0 & 88.2 & 54.9 & 6.3 & 73.4 & 7.5 & 47.1 & 98.1 & 15.8 & 90.8 & 97.0 \\ 
& & 200 & 5.4 & $\mathbf{100.0}$ & 97.8 & 92.1 & $\mathbf{100.0}$ & 97.6 & 5.8 & 99.8 & 9.0 & 94.8 & $\mathbf{100.0}$ & 36.4 & $\mathbf{100.0}$ & $\mathbf{100.0}$ \\ 
\cline{2-17}
& \multirow{4}{*}{$10$} & 25 & 5.1 & $\mathbf{35.7}$ & 9.3 & 8.2 & 14.7 & 9.3 & 5.3 & 11.7 & 5.3 & 8.4 & 22.8 & 6.2 & 15.2 & 19.8 \\ 
& & 50 & 5.3 & $\mathbf{73.5}$ & 17.0 & 13.5 & 32.4 & 16.8 & 5.5 & 23.0 & 5.9 & 14.6 & 54.8 & 8.1 & 35.0 & 50.4 \\ 
& & 100 & 5.4 & $\mathbf{98.5}$ & 41.2 & 30.3 & 75.3 & 40.3 & 5.6 & 57.2 & 6.5 & 33.5 & 94.6 & 12.9 & 79.4 & 92.7 \\ 
& & 200 & 5.3 & $\mathbf{100.0}$ & 89.4 & 74.5 & 99.6 & 88.2 & 5.4 & 97.7 & 8.3 & 79.6 & $\mathbf{100.0}$ & 25.7 & 99.8 & $\mathbf{100.0}$ \\ 
\midrule
\multirow{12}{*}{$\substack{{\rm SC}\\\\\centering\includegraphics[width=1.75cm, clip=true, trim={1.6cm 1.5cm 1.75cm 0.5cm}]{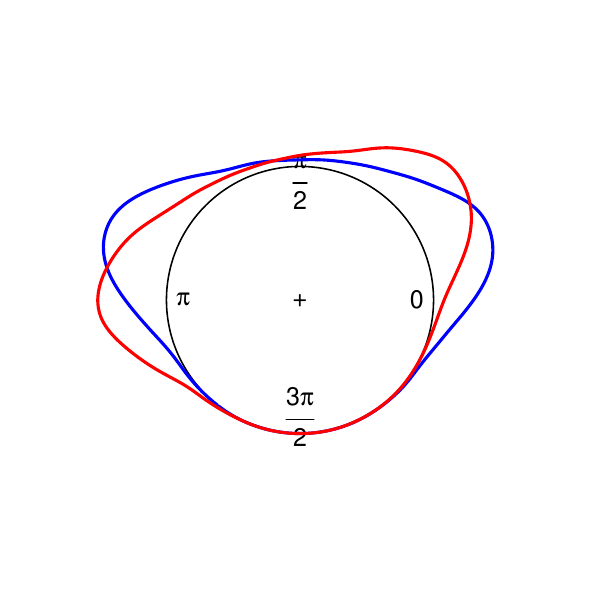}}$} & \multirow{4}{*}{$3$} & 25 & 6.4 & $\mathbf{36.3}$ & 10.1 & 9.2 & 16.7 & 10.0 & 6.5 & 12.1 & 6.7 & 9.6 & 24.6 & 7.5 & 16.9 & 11.8 \\ 
& & 50 & 6.6 & $\mathbf{72.0}$ & 19.5 & 15.5 & 40.2 & 18.9 & 6.8 & 26.8 & 7.2 & 17.6 & 58.9 & 9.8 & 41.9 & 46.6 \\ 
& & 100 & 7.2 & $\mathbf{97.3}$ & 49.1 & 36.6 & 82.3 & 47.9 & 7.8 & 65.3 & 8.7 & 42.9 & 94.6 & 15.5 & 84.5 & 91.1 \\ 
& & 200 & 8.6 & $\mathbf{100.0}$ & 94.5 & 85.8 & 99.6 & 93.9 & 9.4 & 98.6 & 12.0 & 89.9 & $\mathbf{100.0}$ & 34.6 & 99.8 & $\mathbf{100.0}$ \\ 
\cline{2-17}
& \multirow{4}{*}{$5$} & 25 & 5.8 & $\mathbf{34.8}$ & 10.2 & 8.9 & 15.7 & 10.1 & 6.1 & 12.1 & 6.1 & 9.4 & 23.1 & 7.0 & 16.0 & 15.6 \\ 
& & 50 & 5.7 & $\mathbf{70.1}$ & 16.7 & 13.6 & 33.8 & 16.2 & 6.1 & 22.5 & 6.3 & 14.8 & 53.1 & 8.7 & 35.8 & 43.8 \\ 
& & 100 & 6.6 & $\mathbf{97.0}$ & 41.6 & 30.5 & 76.7 & 40.2 & 7.1 & 57.8 & 7.8 & 35.1 & 92.6 & 13.5 & 79.4 & 88.9 \\ 
& & 200 & 8.3 & $\mathbf{100.0}$ & 90.0 & 77.6 & 99.5 & 89.0 & 9.2 & 97.4 & 11.6 & 83.1 & $\mathbf{100.0}$ & 30.7 & 99.6 & $\mathbf{100.0}$ \\ 
\cline{2-17}
& \multirow{4}{*}{$10$} & 25 & 5.1 & $\mathbf{27.0}$ & 8.3 & 7.5 & 12.1 & 8.3 & 5.2 & 9.8 & 5.3 & 7.9 & 18.0 & 6.1 & 12.4 & 14.8 \\ 
& & 50 & 5.3 & $\mathbf{58.6}$ & 13.8 & 11.4 & 25.0 & 13.6 & 5.6 & 18.1 & 5.7 & 12.3 & 41.6 & 7.7 & 26.4 & 36.1 \\ 
& & 100 & 6.0 & $\mathbf{93.8}$ & 30.0 & 22.3 & 60.0 & 29.2 & 6.4 & 42.1 & 6.9 & 24.9 & 83.2 & 11.3 & 63.0 & 78.5 \\ 
& & 200 & 6.6 & $\mathbf{100.0}$ & 75.3 & 58.4 & 97.2 & 73.4 & 7.4 & 90.3 & 9.1 & 64.4 & 99.8 & 22.1 & 98.0 & 99.7 \\ 
\midrule
\multirow{12}{*}{$\substack{{\rm MvM (3)}\\\\\centering\includegraphics[width=1.75cm, clip=true, trim={1.6cm 1.5cm 1.75cm 0.5cm}]{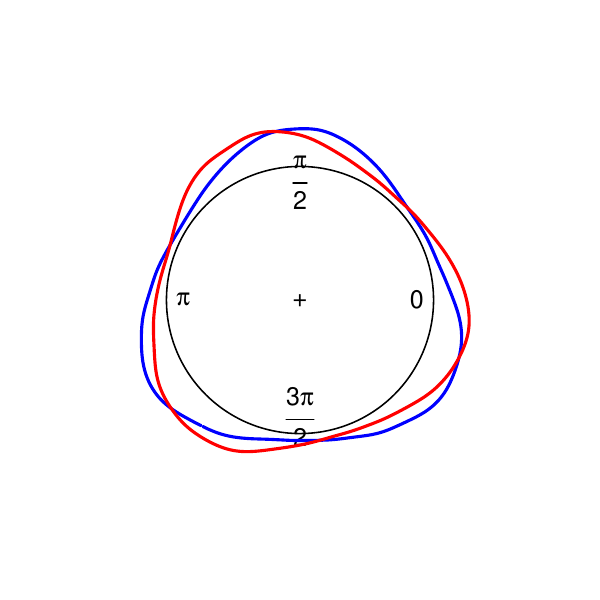}}$} & \multirow{4}{*}{$3$} & 25 & 5.7 & 5.9 & 6.3 & 6.2 & 6.3 & 5.7 & 6.3 & 7.0 & 5.7 & 5.8 & $\mathbf{9.7}$ & 5.8 & 7.3 & 4.2 \\ 
& & 50 & 5.3 & 6.8 & 7.2 & 6.7 & 7.4 & 5.6 & 6.7 & 8.2 & 5.3 & 6.0 & $\mathbf{21.0}$ & 5.5 & 10.4 & 17.8 \\ 
& & 100 & 4.9 & 6.7 & 9.5 & 8.2 & 9.9 & 5.3 & 8.8 & 13.0 & 4.9 & 6.2 & $\mathbf{47.4}$ & 5.2 & 18.6 & $\mathbf{47.0}$ \\ 
& & 200 & 5.3 & 7.0 & 17.5 & 13.4 & 18.3 & 5.7 & 15.8 & 29.5 & 5.3 & 7.9 & 88.4 & 5.9 & 48.7 & $\mathbf{89.6}$ \\ 
\cline{2-17}
& \multirow{4}{*}{$5$} & 25 & 5.5 & 5.8 & 6.2 & 6.1 & 6.4 & 5.8 & 6.1 & 6.6 & 5.5 & 5.9 & $\mathbf{9.7}$ & 5.7 & 7.2 & 6.9 \\ 
& & 50 & 5.3 & 5.7 & 7.2 & 6.6 & 7.4 & 5.5 & 7.1 & 8.4 & 5.3 & 5.9 & $\mathbf{18.0}$ & 5.4 & 9.7 & 16.9 \\ 
& & 100 & 4.8 & 5.9 & 8.8 & 7.5 & 9.1 & 5.1 & 8.4 & 12.2 & 4.8 & 5.9 & 41.4 & 5.1 & 17.0 & $\mathbf{42.4}$ \\ 
& & 200 & 4.8 & 5.5 & 15.3 & 12.0 & 15.5 & 5.2 & 14.4 & 24.3 & 4.8 & 7.3 & 82.7 & 5.3 & 39.0 & $\mathbf{84.9}$ \\ 
\cline{2-17}
& \multirow{4}{*}{$10$} & 25 & 4.8 & 4.9 & 5.5 & 5.3 & 5.5 & 5.1 & 5.4 & 5.9 & 4.7 & 4.9 & $\mathbf{8.0}$ & 4.9 & 6.0 & $\mathbf{8.1}$ \\ 
& & 50 & 5.1 & 5.2 & 6.7 & 6.3 & 6.8 & 5.3 & 6.5 & 7.6 & 5.1 & 5.6 & $\mathbf{14.2}$ & 5.2 & 8.4 & $\mathbf{13.9}$ \\ 
& & 100 & 5.1 & 5.2 & 8.3 & 7.4 & 8.3 & 5.4 & 8.3 & 10.5 & 5.1 & 6.1 & 30.8 & 5.3 & 13.4 & $\mathbf{32.6}$ \\ 
& & 200 & 4.9 & 5.3 & 13.0 & 10.2 & 13.0 & 5.1 & 12.5 & 19.0 & 4.8 & 6.8 & 70.3 & 5.3 & 29.0 & $\mathbf{73.1}$ \\ 
\midrule
\multirow{12}{*}{$\substack{{\rm MvM (4)}\\\\\centering\includegraphics[width=1.75cm, clip=true, trim={1.6cm 1.5cm 1.75cm 0.5cm}]{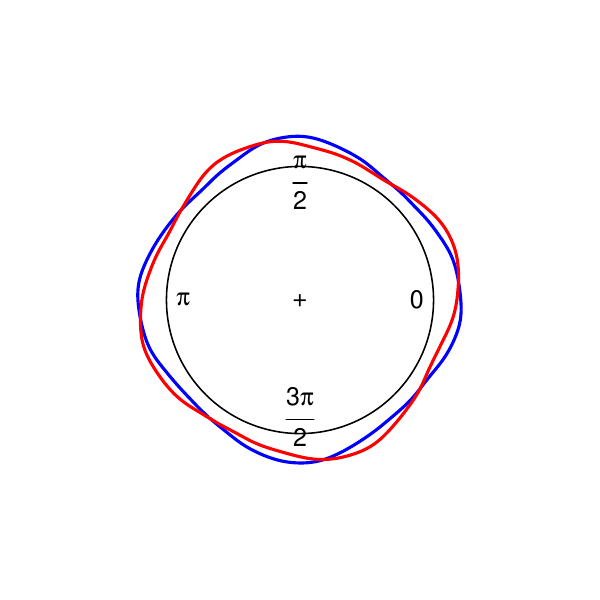}}$} & \multirow{4}{*}{$3$} & 25 & 5.4 & 5.4 & $\mathbf{5.5}$ & 5.4 & $\mathbf{5.5}$ & 5.4 & $\mathbf{5.5}$ & $\mathbf{5.6}$ & 5.4 & 5.4 & 5.4 & $\mathbf{5.5}$ & 5.4 & 2.4 \\ 
& & 50 & 5.1 & 5.4 & 5.3 & 5.2 & 5.0 & 5.2 & 5.0 & 5.4 & 5.0 & 5.0 & $\mathbf{8.2}$ & 5.1 & 5.6 & 6.8 \\ 
& & 100 & 4.9 & 5.3 & 5.8 & 5.5 & 4.9 & 5.8 & 5.0 & 6.4 & 4.9 & 5.1 & 13.1 & 5.0 & 6.8 & $\mathbf{14.9}$ \\ 
& & 200 & 5.0 & 5.3 & 7.0 & 6.4 & 5.2 & 6.7 & 5.1 & 8.6 & 5.0 & 5.1 & 29.0 & 5.0 & 9.6 & $\mathbf{35.4}$ \\ 
\cline{2-17}
& \multirow{4}{*}{$5$} & 25 & 4.8 & 4.9 & 5.1 & 4.9 & 4.9 & 5.2 & 5.0 & 5.3 & 4.8 & 4.9 & $\mathbf{5.9}$ & 4.9 & 5.1 & 4.1 \\ 
& & 50 & 5.2 & 5.5 & 5.5 & 5.3 & 5.3 & 5.4 & 5.2 & 5.8 & 5.2 & 5.1 & $\mathbf{7.6}$ & 5.2 & 5.7 & $\mathbf{7.3}$ \\ 
& & 100 & 5.3 & 5.2 & 5.9 & 5.7 & 5.2 & 5.8 & 5.3 & 6.6 & 5.2 & 5.2 & 12.4 & 5.2 & 7.0 & $\mathbf{13.6}$ \\ 
& & 200 & 4.9 & 5.5 & 6.8 & 6.3 & 5.2 & 6.8 & 5.1 & 8.2 & 4.9 & 5.1 & 24.3 & 4.9 & 9.0 & $\mathbf{30.9}$ \\ 
\cline{2-17}
& \multirow{4}{*}{$10$} & 25 & 5.3 & 4.9 & $\mathbf{5.8}$ & $\mathbf{5.7}$ & 5.2 & $\mathbf{5.6}$ & 5.5 & $\mathbf{5.9}$ & 5.3 & 5.4 & $\mathbf{5.9}$ & 5.4 & 5.5 & 5.4 \\ 
& & 50 & 4.7 & 5.1 & 5.2 & 5.1 & 5.1 & 5.3 & 4.8 & 5.6 & 4.7 & 5.0 & $\mathbf{7.3}$ & 4.7 & 5.6 & $\mathbf{7.6}$ \\ 
& & 100 & 5.1 & 5.2 & 5.9 & 5.6 & 5.2 & 5.8 & 5.2 & 6.5 & 5.1 & 5.2 & 10.4 & 5.2 & 6.5 & $\mathbf{12.0}$ \\ 
& & 200 & 4.9 & 5.5 & 6.5 & 6.1 & 5.2 & 6.4 & 5.1 & 7.6 & 4.9 & 4.9 & 19.1 & 4.9 & 8.0 & $\mathbf{23.4}$ \\ 
\bottomrule
\end{tabular}
}
\caption{\small Empirical rejection proportion (\%) of tests based on $T_{N,c}^{\phi}$ with $N = nc$ using $c$ balanced samples. In each simulation, $c-1$ samples are drawn from a distribution indicated by the first column, with location $\mu_0=\pi/2$ and concentration $\kappa=5$ (blue pdf in the charts), while the remaining sample comes from a shifted distribution with location $\mu_c=\mu_0 + \pi/10$ and the same concentration (red pdf). Results are based on $M = 10^4$ replications. Boldface indicates the row-wise maximum power across different kernels. Values within the lower one-sided $95\%$ confidence interval for the maximum power are also shown in bold.}
\label{tbl:pow-shift}
\end{table}

\begin{table}[!htbp]
\centering
\scalebox{0.67}{
\begin{tabular}{ccc|r|r|r|r|rrr|r|rrr|rrr}
\toprule
\multirow{2}{*}{Family} & \multirow{2}{*}{$c$} & \multirow{2}{*}{$n$} & \multicolumn{1}{c|}{\multirow{2}{*}{R}} & \multicolumn{1}{c|}{\multirow{2}{*}{${\rm M}_2$}} & \multicolumn{1}{c|}{\multirow{2}{*}{W}} & \multicolumn{1}{c|}{\multirow{2}{*}{A}} & \multicolumn{3}{c|}{${\rm R}_t$} & \multicolumn{1}{c|}{\multirow{2}{*}{AD}} & \multicolumn{3}{c|}{${\rm SM}_{\kappa}$} & \multicolumn{3}{c}{${\rm P}_{\rho}$} \\ 
& & & & & & & $1/4$ & $1/3$ & $1/2$ & & $0.1$ & $1$ & $10$ & $0.1$ & $0.5$ & $0.9$\\
\midrule
\multirow{12}{*}{$\substack{{\rm vM}\\\\\centering\includegraphics[width=1.75cm, clip=true, trim={1.6cm 1.5cm 1.75cm 0.5cm}]{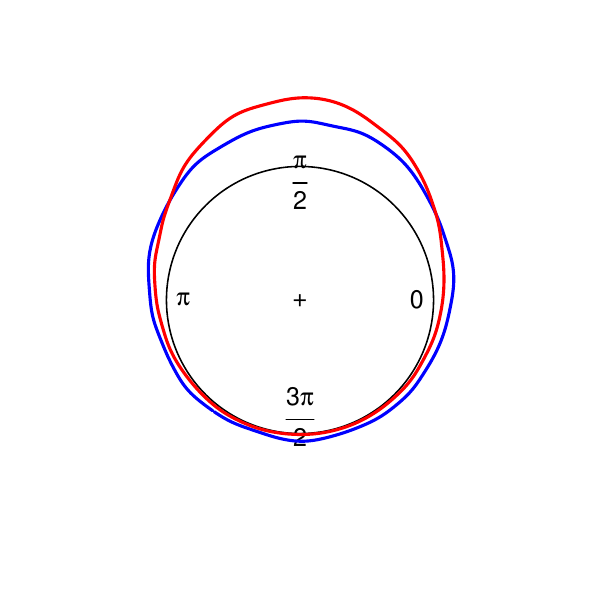}}$} & \multirow{4}{*}{$3$} & 25 & $\mathbf{23.2}$ & 6.8 & $\mathbf{22.7}$ & $\mathbf{22.7}$ & 22.1 & $\mathbf{22.6}$ & $\mathbf{23.1}$ & $\mathbf{22.9}$ & $\mathbf{23.3}$ & $\mathbf{22.8}$ & 15.0 & $\mathbf{23.2}$ & 21.2 & 5.7 \\ 
& & 50 & 45.6 & 9.0 & $\mathbf{46.6}$ & $\mathbf{46.4}$ & $\mathbf{46.3}$ & $\mathbf{46.5}$ & 45.8 & $\mathbf{46.8}$ & 45.6 & $\mathbf{46.5}$ & 35.9 & $\mathbf{46.2}$ & 45.3 & 21.1 \\ 
& & 100 & 80.4 & 14.7 & $\mathbf{81.9}$ & $\mathbf{81.8}$ & $\mathbf{81.6}$ & $\mathbf{82.0}$ & 80.5 & $\mathbf{82.0}$ & 80.6 & $\mathbf{82.0}$ & 72.8 & 81.2 & 81.3 & 53.9 \\ 
& & 200 & 98.8 & 27.7 & $\mathbf{99.2}$ & $\mathbf{99.2}$ & $\mathbf{99.2}$ & $\mathbf{99.2}$ & 98.8 & $\mathbf{99.3}$ & 98.8 & $\mathbf{99.1}$ & 98.2 & 99.0 & $\mathbf{99.2}$ & 93.2 \\ 
\cline{2-17}
& \multirow{4}{*}{$5$} & 25 & $\mathbf{20.2}$ & 5.9 & $\mathbf{19.6}$ & $\mathbf{19.6}$ & 18.5 & $\mathbf{19.7}$ & $\mathbf{20.1}$ & 19.5 & $\mathbf{20.2}$ & $\mathbf{19.6}$ & 13.4 & $\mathbf{20.0}$ & 17.9 & 7.5 \\ 
& & 50 & $\mathbf{42.2}$ & 7.2 & $\mathbf{42.3}$ & $\mathbf{42.2}$ & 40.7 & $\mathbf{42.6}$ & $\mathbf{42.3}$ & $\mathbf{42.2}$ & $\mathbf{42.4}$ & $\mathbf{42.3}$ & 30.3 & $\mathbf{42.6}$ & 40.0 & 19.2 \\ 
& & 100 & 78.8 & 9.9 & $\mathbf{79.7}$ & $\mathbf{79.6}$ & 78.9 & $\mathbf{79.6}$ & 78.8 & $\mathbf{79.4}$ & 78.9 & $\mathbf{79.5}$ & 65.8 & $\mathbf{79.3}$ & 78.1 & 45.4 \\ 
& & 200 & 99.1 & 17.5 & $\mathbf{99.3}$ & $\mathbf{99.3}$ & 99.1 & $\mathbf{99.3}$ & 99.1 & $\mathbf{99.3}$ & 99.1 & $\mathbf{99.3}$ & 97.5 & $\mathbf{99.2}$ & 99.1 & 88.9 \\ 
\cline{2-17}
& \multirow{4}{*}{$10$} & 25 & $\mathbf{15.8}$ & 5.1 & $\mathbf{16.0}$ & $\mathbf{15.9}$ & 14.9 & $\mathbf{16.0}$ & $\mathbf{16.0}$ & $\mathbf{15.8}$ & $\mathbf{15.7}$ & $\mathbf{15.7}$ & 11.7 & $\mathbf{15.9}$ & 14.6 & 7.7 \\ 
& & 50 & $\mathbf{32.6}$ & 6.0 & $\mathbf{32.3}$ & $\mathbf{32.3}$ & 30.5 & $\mathbf{32.3}$ & $\mathbf{32.7}$ & 32.0 & $\mathbf{32.8}$ & $\mathbf{32.1}$ & 21.6 & $\mathbf{32.8}$ & 29.3 & 14.1 \\ 
& & 100 & 68.7 & 7.3 & $\mathbf{69.3}$ & $\mathbf{69.2}$ & 66.9 & $\mathbf{69.7}$ & 68.7 & 68.6 & 68.8 & $\mathbf{69.2}$ & 49.9 & $\mathbf{69.2}$ & 65.6 & 32.5 \\ 
& & 200 & 97.6 & 10.3 & $\mathbf{98.0}$ & $\mathbf{97.9}$ & $\mathbf{97.7}$ & $\mathbf{98.0}$ & 97.6 & $\mathbf{97.9}$ & 97.7 & $\mathbf{98.0}$ & 91.3 & $\mathbf{97.9}$ & 97.4 & 73.8 \\ 
\midrule
\multirow{12}{*}{$\substack{{\rm W}\\\\\centering\includegraphics[width=1.75cm, clip=true, trim={1.6cm 1.2cm 1.75cm 0.5cm}]{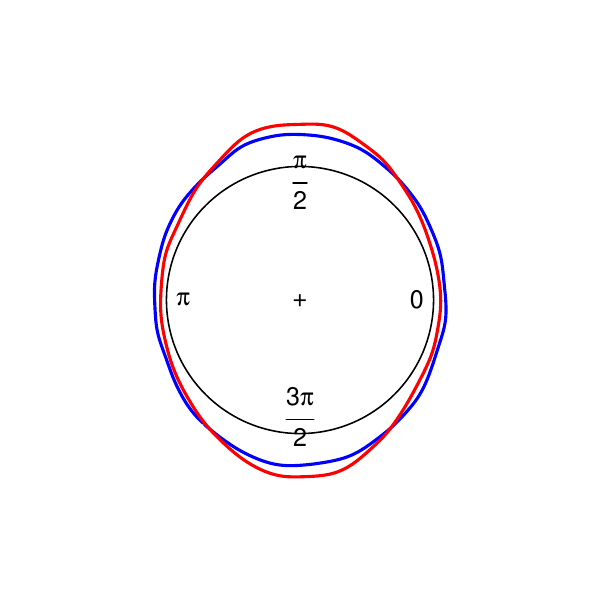}}$} & \multirow{4}{*}{$3$} & 25 & 5.4 & $\mathbf{12.5}$ & 6.3 & 5.9 & 7.7 & 6.2 & 5.6 & 6.7 & 5.4 & 6.2 & 8.0 & 5.6 & 7.4 & 2.8 \\ 
& & 50 & 5.3 & $\mathbf{22.4}$ & 7.1 & 6.6 & 10.7 & 7.1 & 5.2 & 7.9 & 5.4 & 7.0 & 13.3 & 5.8 & 10.4 & 8.0 \\ 
& & 100 & 5.4 & $\mathbf{46.4}$ & 10.6 & 9.0 & 19.9 & 10.5 & 5.5 & 13.0 & 5.8 & 10.2 & 27.7 & 6.9 & 19.9 & 19.3 \\ 
& & 200 & 5.1 & $\mathbf{80.2}$ & 20.7 & 15.8 & 45.6 & 20.6 & 5.0 & 28.0 & 5.8 & 19.4 & 59.6 & 8.8 & 46.0 & 46.8 \\ 
\cline{2-17}
& \multirow{4}{*}{$5$} & 25 & 4.8 & $\mathbf{11.0}$ & 5.6 & 5.4 & 6.5 & 5.5 & 4.9 & 5.8 & 4.9 & 5.5 & 7.4 & 5.0 & 6.4 & 4.4 \\ 
& & 50 & 5.4 & $\mathbf{19.7}$ & 7.3 & 6.8 & 10.0 & 7.3 & 5.3 & 8.1 & 5.5 & 7.1 & 12.2 & 6.0 & 9.7 & 9.7 \\ 
& & 100 & 5.2 & $\mathbf{42.4}$ & 10.0 & 8.7 & 18.0 & 9.9 & 5.2 & 12.1 & 5.4 & 9.5 & 24.6 & 6.4 & 17.9 & 17.5 \\ 
& & 200 & 4.9 & $\mathbf{78.9}$ & 17.6 & 13.6 & 40.5 & 17.7 & 5.1 & 23.5 & 5.5 & 16.6 & 53.5 & 8.1 & 39.8 & 39.2 \\ 
\cline{2-17}
& \multirow{4}{*}{$10$} & 25 & 4.6 & $\mathbf{9.1}$ & 5.5 & 5.3 & 6.3 & 5.5 & 4.8 & 5.7 & 4.8 & 5.3 & 6.7 & 4.9 & 5.9 & 5.3 \\ 
& & 50 & 5.2 & $\mathbf{16.0}$ & 6.9 & 6.5 & 8.7 & 6.9 & 5.4 & 7.6 & 5.4 & 6.8 & 10.5 & 5.9 & 8.6 & 7.9 \\ 
& & 100 & 5.1 & $\mathbf{33.4}$ & 8.7 & 7.5 & 14.0 & 8.7 & 5.2 & 10.2 & 5.3 & 8.4 & 17.8 & 6.1 & 13.6 & 13.3 \\ 
& & 200 & 5.0 & $\mathbf{69.0}$ & 14.1 & 11.4 & 29.6 & 14.0 & 5.1 & 17.8 & 5.6 & 13.3 & 40.2 & 7.7 & 29.0 & 27.9 \\ 
\midrule
\multirow{12}{*}{$\substack{{\rm SC}\\\\\centering\includegraphics[width=1.75cm, clip=true, trim={1.6cm 1.5cm 1.75cm 0.5cm}]{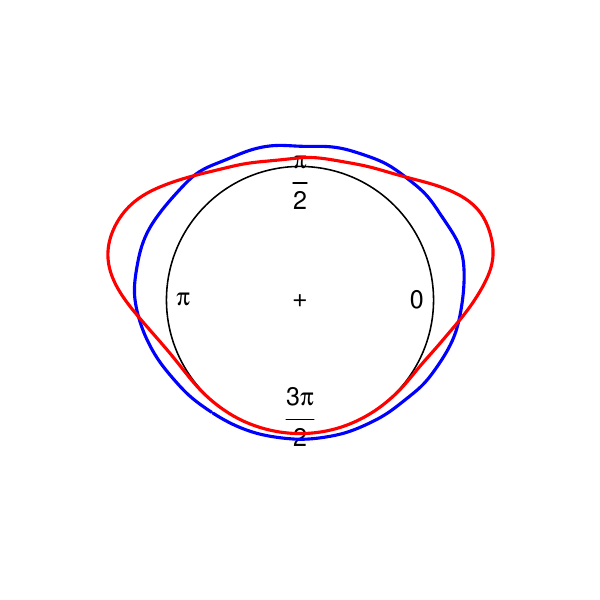}}$} & \multirow{4}{*}{$3$} & 25 & 7.7 & $\mathbf{55.6}$ & 14.6 & 12.6 & 27.3 & 14.3 & 7.8 & 18.0 & 8.2 & 14.2 & 36.2 & 9.8 & 26.7 & 16.0 \\ 
& & 50 & 9.2 & $\mathbf{91.7}$ & 33.5 & 26.2 & 66.0 & 32.8 & 9.3 & 44.3 & 10.5 & 31.6 & 80.2 & 15.5 & 66.3 & 60.0 \\ 
& & 100 & 12.4 & $\mathbf{99.9}$ & 80.8 & 67.8 & 98.1 & 79.4 & 12.6 & 91.5 & 15.7 & 77.3 & 99.7 & 32.4 & 98.6 & 98.3 \\ 
& & 200 & 19.9 & $\mathbf{100.0}$ &$\mathbf{100.0}$& 99.5 & $\mathbf{100.0}$ & $\mathbf{99.9}$ & 20.9 & $\mathbf{100.0}$ & 29.6 & $\mathbf{99.9}$ & $\mathbf{100.0}$ & 76.9 & $\mathbf{100.0}$ & $\mathbf{100.0}$ \\ 
\cline{2-17}
& \multirow{4}{*}{$5$} & 25 & 6.7 & $\mathbf{53.6}$ & 13.0 & 11.3 & 23.4 & 13.0 & 6.7 & 15.8 & 7.1 & 12.6 & 30.7 & 8.8 & 22.9 & 17.2 \\ 
& & 50 & 8.1 & $\mathbf{92.8}$ & 28.6 & 22.4 & 60.8 & 28.6 & 8.2 & 38.2 & 9.0 & 27.2 & 74.3 & 13.6 & 59.5 & 53.2 \\ 
& & 100 & 10.8 & $\mathbf{100.0}$ & 73.3 & 58.6 & 97.8 & 73.0 & 10.9 & 86.9 & 14.2 & 70.7 & 99.5 & 28.1 & 97.9 & 95.7 \\ 
& & 200 & 17.4 & $\mathbf{100.0}$ & $\mathbf{99.9}$ & 99.0 & $\mathbf{100.0}$ & 99.8 & 17.7 & $\mathbf{100.0}$ & 26.4 & 99.8 & $\mathbf{100.0}$ & 70.8 & $\mathbf{100.0}$ & $\mathbf{100.0}$ \\ 
\cline{2-17}
& \multirow{4}{*}{$10$} & 25 & 6.3 & $\mathbf{43.8}$ & 11.1 & 9.9 & 18.1 & 10.9 & 6.5 & 12.9 & 6.7 & 10.7 & 23.5 & 7.8 & 17.7 & 15.4 \\ 
& & 50 & 7.0 & $\mathbf{86.8}$ & 21.7 & 17.3 & 45.9 & 21.7 & 7.0 & 27.9 & 7.9 & 20.5 & 58.3 & 11.3 & 44.5 & 40.0 \\ 
& & 100 & 8.5 & $\mathbf{99.9}$ & 54.4 & 42.1 & 91.1 & 54.8 & 8.9 & 70.0 & 10.8 & 52.3 & 96.7 & 20.9 & 90.5 & 85.2 \\ 
& & 200 & 14.5 & $\mathbf{100.0}$ & 98.3 & 93.4 & $\mathbf{100.0}$ & 98.3 & 14.6 & $\mathbf{99.9}$ & 20.6 & 97.7 & $\mathbf{100.0}$ & 53.0 & $\mathbf{100.0}$ & $\mathbf{100.0}$ \\ 
\midrule
\multirow{12}{*}{$\substack{{\rm MvM (3)}\\\\\centering\includegraphics[width=1.75cm, clip=true, trim={1.6cm 1.5cm 1.75cm 0.5cm}]{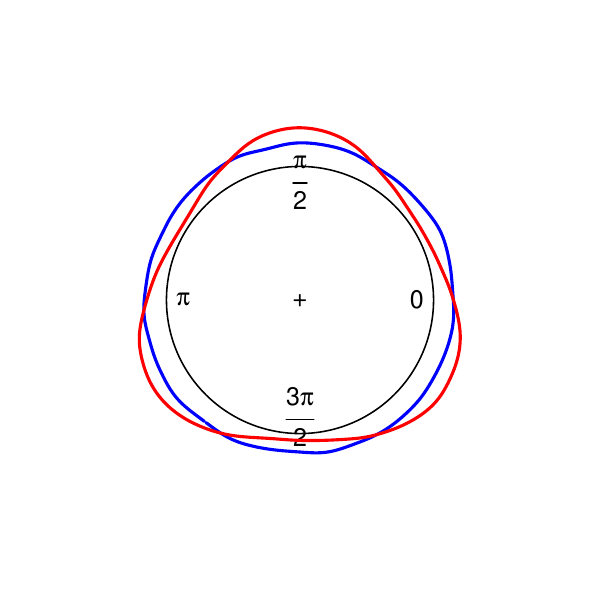}}$} & \multirow{4}{*}{$3$} & 25 & 5.8 & 6.4 & 6.6 & 6.4 & 6.7 & 5.8 & 6.5 & 7.1 & 5.8 & 6.1 & $\mathbf{11.4}$ & 5.9 & 7.8 & 5.1 \\ 
& & 50 & 4.9 & 7.4 & 7.5 & 6.9 & 7.9 & 5.4 & 7.2 & 9.2 & 5.0 & 5.9 & $\mathbf{27.5}$ & 5.3 & 11.9 & 20.5 \\ 
& & 100 & 5.4 & 7.5 & 11.7 & 9.6 & 11.9 & 5.7 & 10.8 & 16.9 & 5.5 & 7.2 & $\mathbf{64.6}$ & 5.8 & 27.7 & 56.3 \\ 
& & 200 & 5.1 & 7.5 & 24.7 & 17.6 & 23.9 & 5.5 & 23.6 & 42.8 & 5.2 & 9.2 & $\mathbf{97.0}$ & 6.0 & 70.2 & 95.2 \\ 
\cline{2-17}
& \multirow{4}{*}{$5$} & 25 & 5.2 & 5.7 & 6.2 & 5.9 & 6.2 & 5.4 & 6.0 & 6.8 & 5.2 & 5.6 & $\mathbf{11.6}$ & 5.3 & 7.4 & 7.5 \\ 
& & 50 & 5.1 & 6.0 & 7.6 & 6.8 & 7.3 & 5.2 & 7.6 & 9.2 & 5.1 & 5.8 & $\mathbf{24.2}$ & 5.3 & 11.9 & 19.8 \\ 
& & 100 & 5.0 & 6.3 & 10.9 & 9.0 & 10.8 & 5.3 & 10.4 & 15.4 & 5.0 & 6.7 & $\mathbf{58.3}$ & 5.5 & 24.0 & 50.5 \\ 
& & 200 & 5.4 & 6.3 & 22.7 & 16.5 & 21.5 & 5.9 & 21.8 & 36.9 & 5.5 & 9.1 & $\mathbf{95.7}$ & 6.3 & 62.4 & 92.0 \\ 
\cline{2-17}
& \multirow{4}{*}{$10$} & 25 & 4.8 & 5.5 & 5.7 & 5.5 & 6.0 & 5.1 & 5.7 & 6.3 & 4.9 & 5.2 & $\mathbf{9.8}$ & 5.1 & 7.1 & 8.1 \\ 
& & 50 & 5.1 & 5.3 & 7.0 & 6.4 & 7.2 & 5.2 & 6.8 & 8.0 & 5.1 & 5.8 & $\mathbf{18.6}$ & 5.3 & 9.8 & 15.9 \\ 
& & 100 & 4.8 & 5.7 & 9.8 & 8.4 & 9.5 & 5.0 & 9.7 & 12.8 & 4.9 & 6.3 & $\mathbf{44.2}$ & 5.2 & 18.6 & 36.5 \\ 
& & 200 & 5.1 & 5.3 & 17.0 & 13.2 & 16.0 & 5.3 & 17.3 & 26.4 & 5.1 & 8.1 & $\mathbf{88.1}$ & 5.7 & 44.5 & 80.8 \\ 
\midrule
\multirow{12}{*}{$\substack{{\rm MvM (4)}\\\\\centering\includegraphics[width=1.75cm, clip=true, trim={1.6cm 1.5cm 1.75cm 0.5cm}]{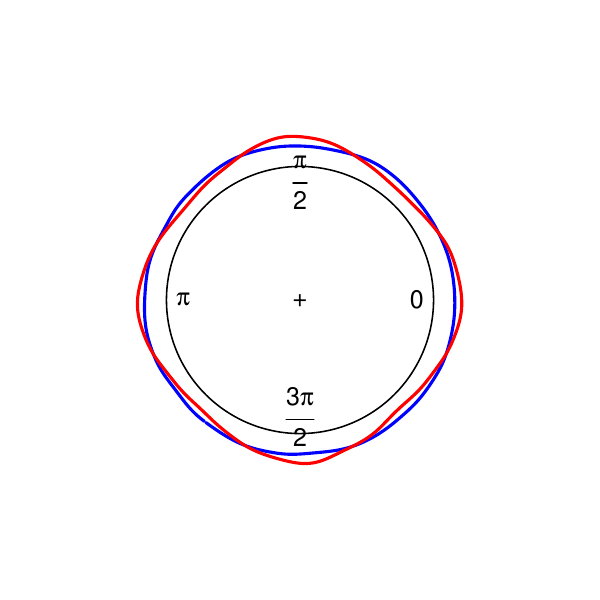}}$} & \multirow{4}{*}{$3$} & 25 & 5.0 & 4.9 & 5.1 & 5.1 & 4.8 & 4.9 & 5.1 & 5.2 & 5.1 & 5.0 & 5.0 & 5.1 & 4.8 & 2.0 \\ 
& & 50 & 4.7 & 5.1 & 5.0 & 4.9 & 5.0 & 4.9 & 4.8 & 5.3 & 4.7 & 4.7 & $\mathbf{6.8}$ & 4.7 & 5.3 & 5.2 \\ 
& & 100 & 5.1 & 5.3 & 5.6 & 5.4 & 5.2 & 5.6 & 5.2 & 6.3 & 5.0 & 5.0 & $\mathbf{11.1}$ & 5.0 & 6.6 & $\mathbf{11.4}$ \\ 
& & 200 & 5.3 & 4.7 & 6.6 & 6.4 & 5.2 & 6.5 & 5.5 & 7.5 & 5.3 & 5.4 & 20.3 & 5.3 & 8.1 & $\mathbf{24.3}$ \\ 
\cline{2-17}
& \multirow{4}{*}{$5$} & 25 & 4.9 & 4.8 & 5.2 & 5.1 & 5.1 & 5.1 & 5.1 & $\mathbf{5.5}$ & 4.9 & 5.0 & $\mathbf{5.6}$ & 5.0 & 5.2 & 4.0 \\ 
& & 50 & 5.2 & 5.2 & 5.6 & 5.5 & 5.3 & 5.7 & 5.2 & 5.8 & 5.1 & 5.4 & $\mathbf{7.0}$ & 5.2 & 5.9 & 6.4 \\ 
& & 100 & 5.2 & 5.1 & 5.7 & 5.6 & 5.2 & 5.8 & 5.2 & 6.2 & 5.2 & 5.3 & 9.8 & 5.2 & 6.2 & $\mathbf{10.3}$ \\ 
& & 200 & 5.0 & 5.0 & 6.3 & 5.9 & 5.2 & 6.3 & 5.2 & 7.2 & 5.0 & 5.2 & 17.2 & 5.1 & 7.8 & $\mathbf{20.4}$ \\ 
\cline{2-17}
& \multirow{4}{*}{$10$} & 25 & 5.0 & 4.9 & 5.3 & 5.2 & 5.2 & 5.3 & 5.1 & 5.4 & 5.1 & 5.1 & $\mathbf{5.6}$ & 5.0 & 5.3 & 5.0 \\ 
& & 50 & 5.0 & 4.9 & 5.2 & 5.0 & 5.1 & 5.1 & 5.0 & 5.5 & 5.0 & 4.9 & $\mathbf{6.4}$ & 5.0 & 5.4 & $\mathbf{6.5}$ \\ 
& & 100 & 4.9 & 5.1 & 5.5 & 5.4 & 5.1 & 5.5 & 5.0 & 6.0 & 4.9 & 5.0 & $\mathbf{8.5}$ & 4.9 & 6.0 & $\mathbf{8.9}$ \\ 
& & 200 & 5.5 & 5.2 & 6.5 & 6.3 & 5.4 & 6.6 & 5.6 & 7.4 & 5.5 & 5.6 & 14.3 & 5.5 & 7.7 & $\mathbf{15.4}$ \\ 
\bottomrule
\end{tabular}
}
\caption{\small Same description as Table~\ref{tbl:pow-shift}, but $c-1$ samples are drawn from a distribution indicated by the first column, with location $\mu=\pi/2$ and concentration $\kappa_0=1$, while the remaining sample comes from a distribution with the same location $\mu$ and concentration $\kappa_c=2$ for vM and W, and $\kappa_c=5$ for SC and MvM.}
\label{tbl:pow-conc}
\end{table}

\subsection{Power of the \texorpdfstring{${\rm LSE}$-}{LSE-}aggregation family}
\label{subsec:sim:merging-families}

In Section~\ref{sec:general-agg}, the merging ${\rm LSE}_{\kappa}$ function was introduced as an interpolation family between the average ($\kappa\to0$) and the maximum ($\kappa\to+\infty$) aggregations, controlled by a tuning parameter $\kappa$. To illustrate the behavior of this family as a function of the parameter, we obtain the power under two types of fixed von Mises alternatives, each of them designed to be especially suited for one of the ``extreme'' statistics of the family. 
First, \emph{one-different-location} alternatives are as considered in Section~\ref{subsec:sim:pow}, for which the maximum statistic is expected to detect the departure more effectively. Second, we consider \emph{equispaced-locations} alternatives, where each of the $c$ samples has a different location, equispaced along the circle, that is $\mu_{\ell}=\mu_0 + 2\pi(\ell-1)/c$, $\ell=1,\ldots,c$; hence, the information is shared across all the samples, and the average statistic is expected to collect all the information more effectively.\nowidow[3]

For each alternative scenario with fixed $n\in\{25, 50, 100\}$ and $c\in\{3,5,10\}$, $M=10^4$ Monte Carlo samples are generated. Each sample consists of a total of $N=nc$ observations, with $n$ observations in each subsample. For every Monte Carlo sample, the tests based on $\smash{T_{N,c}^{\phi,{\rm avg}}}$, $\smash{T_{N,c}^{\phi,\max}}$, and $\smash{T^{\phi, {\rm LSE}_\kappa}_{N,c}}$ with $\kappa\in\{10^{k}:k\in\{-1,0,1,2\}\}$ are computed using the $\rm AD$ kernel. The asymptotic critical values at significance level $\alpha=0.05$ are used and they are obtained using $10^4$ Monte Carlo replications of $\{\bZ_{k,r}, 1\leq k\leq K_{\rm tr}, r=1,2\}$ with $K_{\rm tr}=10^3$, thereby approximating the null distribution in Theorem~\ref{thm:asymp-H0-general}.

The empirical rejection proportions for the $\rm LSE$ family are reported in Figure~\ref{fig:emp_prp_agg_families} under the two alternative scenarios. The following conclusions can be drawn:
\begin{enumerate}[label=(\textit{\roman*}),ref=\textit{\roman*}]
    \item Under the one-different-location alternative, the maximum test shows higher power than the average, as the maximum selects the largest samplewise discrepancy.
    \item Under the equispaced-locations alternative, the average performs better than the maximum, as the averaged signal accumulates diffuse evidence whereas the maximum uses only the largest samplewise component.
    \item For a fixed $n$, the maximum statistic is less sensitive to the number of samples $c$. The average, in contrast, varies considerably with $c$, in a different way depending on the scenario: under one-different-location, the power is reduced as $c$ increases, as noted in point \eqref{conc:power-c-onediff} of Section~\ref{subsec:sim:pow}, while under equispaced-locations its power increases with $c$, as more signals are available to detect the departure from homogeneity.
    \item The power of the $\rm LSE$ class transitions between the two extreme behaviors as $\kappa$ varies.
\end{enumerate}

Additional simulations to study the behavior of different aggregations with imbalanced samples are reported in Section~\ref{subsec:sims:imbalanced} of the~SM.

\begin{figure}[!ht]
    \centering
    \begin{subfigure}[b]{0.49\linewidth}
        \includegraphics[width=\linewidth, trim={10cm 0.25cm 10.5cm 1.75cm},clip=true]{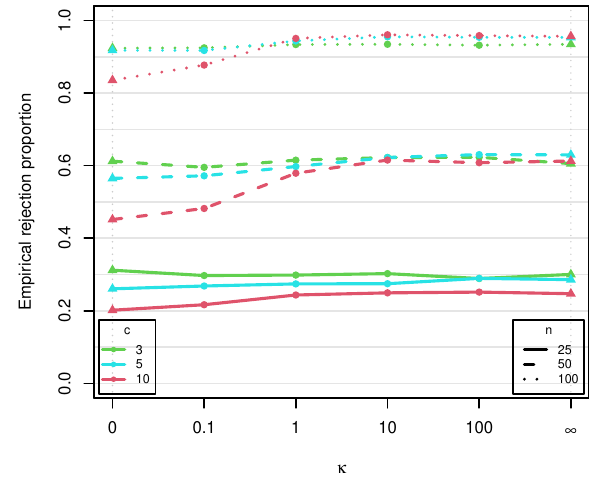}
        \caption{\small One-different-location}
    \end{subfigure}
    \hfill
    \begin{subfigure}[b]{0.49\linewidth}
        \includegraphics[width=\linewidth, trim={20.5cm 0.25cm 0cm 1.75cm},clip=true]{img/rej_prp_equispacedkappa0.25_LSE_PAD.pdf}
        \caption{\small Equispaced-locations}
    \end{subfigure}
    \caption{\small Empirical rejection proportions of $T_{N,c}^{{\rm AD},{\rm LSE}_{\kappa}}$ with $N=nc$ as a function of the parameter $\kappa$ under balanced von Mises location alternatives at significance level $0.05$. One-different-location: $c-1$ samples have location $\mu_0=\pi/2$ and concentration $\kappa_0=5$, while the remaining sample has location $\mu_c=\mu_0 + \pi/10$ and the same concentration. Equispaced-locations: The sample $\ell$ has location $\mu_{\ell}=\mu_0 + 2\pi(\ell-1)/c$, where $\mu_0=\pi/2$, and common concentration $\kappa_0=0.25$. Note that $\kappa=0$ corresponds to $\smash{T_{N,c}^{{\rm AD},{\rm avg}}}$ and $\kappa=\infty$ to $\smash{T_{N,c}^{{\rm AD},{\max}}}$. Results are based on $M=10^4$ replications.}
    \label{fig:emp_prp_agg_families}
\end{figure}

\subsection{Practical issues of the incomplete test}
\label{subsec:sim:incomplete}

As seen in Example~\ref{merging:inc_avg}, the incomplete statistic is a particular member of the weighted sum (${\rm WS}$) merging family that extends the test by \cite{Beran1969a}. This statistic is not symmetric under relabeling of the samples for $c>2$, thus posing a practical disadvantage relative to $T_{N,c}^{\phi,{\rm avg}}$, since it requires adopting a rule to decide which samplewise statistic to omit from the aggregation. The incomplete test remains consistent (see Corollary~\ref{cor:consistency-Halt-general}) after this omission, but its finite-sample power may differ from that of $T_{N,c}^{\phi,{\rm avg}}$. 

To empirically study the effect of the omission rule, we compare the finite-sample power of the incomplete test with the (complete) average test under von Mises one-different-location balanced alternatives, as explained in Section~\ref{subsec:sim:pow}. For fixed $n\in\{50, 200\}$ and $c\in\{3,5,10\}$, $M=10^4$ Monte Carlo samples are generated. Each sample consists of a total of $N=nc$ observations with $n$ observations in each subsample. For every Monte Carlo sample, the tests based on $\smash{T^{\phi,{\rm avg}}_{N,c}}$ and $\smash{T^{\phi,{\rm inc}}_{N,c}}$ are computed using the corresponding asymptotic critical value at significance level $\alpha=0.05$. For the average test, the critical value is computed as in Section~\ref{subsec:sim:asymp0}, while for the incomplete test, it is obtained using \cite{Imhof1961} to approximate its asymptotic distribution, which coincides with the distribution of $\sum_{k=1}^{\infty}2^{-1}b_k(\phi)[(1-\delta_{c2})U_k+\pi_{\rm omit} V_k]$ where $(V_k)_{k\geq 1}$ and $(U_k)_{k\geq 1}$ are independent sequences of mutually independent $\chi^2_2$ and $\chi^2_{2(c-2)}$ random variables, respectively, and where $\pi_{\rm omit}$ denotes the asymptotic proportion of the omitted sample. This null limit follows from Theorem~\ref{thm:asymp-H0-general}.

A natural choice is to omit one of the samples at random. Nevertheless, this may result in a loss of power. In particular, if the omitted sample is the one that differs from the others, the information carried by its associated uniformity statistic may be lost, even though the retained samplewise statistics still contain enough signal, through the pooled uniform scores, for consistent rejection. To quantify this effect, we evaluate the empirical power of the incomplete test under random omission and two oracle benchmark rules: (\emph{rnd}) a sample is omitted at random; (\emph{bst}) best-case scenario: omit one of the $c-1$ samples generated from the common distribution; and (\emph{wst}) worst-case scenario: omit the last sample, which is the only one that differs.

Table~\ref{tbl:pow-shift-inc-comparison} reports the relative difference in power between the incomplete and the average test. When the sample is omitted at random, the incomplete test exhibits a power reduction of approximately $1$--$20\%$ relative to the average test. This is consistent with the $1/c$ probability that random omission removes the shifted sample's component. In the worst-case scenario, this loss is substantially higher, reaching $60$--$90\%$ in some cases. In the best-case scenario, the power is comparable, and even slightly higher than the average test, since the rule omitted a less informative component.

\begin{table}[!htbp]
    \centering
    \scalebox{0.8}{
    \begin{tabular}{cc|rrr|rrr|rrr|rrr|rrr}
    \toprule
    \multirow{2}{*}{$c$} & \multirow{2}{*}{$n$} & \multicolumn{3}{c|}{R} & \multicolumn{3}{c|}{${\rm M}_2$} & \multicolumn{3}{c|}{W} & \multicolumn{3}{c|}{AD} & \multicolumn{3}{c}{${\rm SM}_{10}$}\\
    & & rnd & bst & wst & rnd & bst & wst & rnd & bst & wst & rnd & bst & wst & rnd & bst & wst \\
    \midrule
    \multirow{2}{*}{3} & 50 & $-$16.8 & 4.6 & $-$60.5 & $-$13.6 & 9.5 & $-$57.0 & $-$16.9 & 4.3 & $-$61.0 & $-$17.5 & 4.2 & $-$61.0 & $-$16.8 & 6.7 & $-$63.6 \\ 
    & 200 & $-$2.4 & 0.0 & $-$7.3 & $-$18.3 & 2.9 & $-$57.0 & $-$1.5 & 0.0 & $-$4.5 & $-$1.3 & 0.0 & $-$4.0 & $-$1.9 & 0.0 & $-$6.2 \\ 
    \cline{1-17}
    \multirow{2}{*}{5} & 50 & $-$11.2 & 6.2 & $-$81.3 & $-$3.3 & 5.9 & $-$59.9 & $-$11.6 & 5.5 & $-$81.8 & $-$11.4 & 6.0 & $-$81.7 & $-$10.4 & 6.2 & $-$81.4 \\ 
    & 200 & $-$10.4 & 0.2 & $-$54.3 & $-$13.2 & 4.2 & $-$80.8 & $-$9.3 & 0.0 & $-$48.0 & $-$9.1 & 0.0 & $-$46.1 & $-$10.2 & 0.1 & $-$52.0 \\
    \cline{1-17}
    \multirow{2}{*}{10} & 50 & $-$4.8 & 5.2 & $-$85.2 & $-$3.7 & 1.8 & $-$60.1 & $-$4.8 & 4.5 & $-$86.4 & $-$4.9 & 4.3 & $-$86.6 & $-$4.6 & 5.3 & $-$85.2 \\ 
    & 200 & $-$8.6 & 0.2 & $-$87.0 & $-$4.2 & 4.1 & $-$87.7 & $-$8.5 & 0.0 & $-$85.6 & $-$8.4 & 0.1 & $-$85.3 & $-$8.3 & 0.1 & $-$86.2 \\ 
    \bottomrule
    \end{tabular}
    }
    \caption{\small Relative difference in power ($\%$) of incomplete vs. (complete) average tests, computed as $100(\widehat{\text{power}}^{{\rm inc}}-\widehat{\text{power}}^{\rm avg})/\widehat{\text{power}}^{\rm avg}$, under the location-shift von Mises alternatives described in Table~\ref{tbl:pow-shift}. The sample omitted in the incomplete statistic is chosen according to one of the omission rules indicated in the column headers.}
    \label{tbl:pow-shift-inc-comparison}
\end{table}

Although the incomplete test is theoretically valid, the average test is invariant under sample relabeling and avoids the sensitivity of finite-sample power to an arbitrary omission rule, and is therefore preferable in practice.

\section{Application: Polar bears nursing bout initiation}
\label{sec:app_bear}

Just after leaving the maternity den at 3--4 months of age, polar bear cubs remain dependent on their mothers until they reach 2.5 years of age. During this period, their survival depends strongly on the mother's hunting success. Maternal behavior thus plays a crucial role in population recruitment dynamics. To understand these processes, a team of researchers carried out a long-term observational study of wild polar bears on Devon Island in the Canadian Arctic between 1973 and 1999 \citep{Stirling2022}.

The data collected during this period were subsequently analyzed in detail by \cite{Stirling2024}. Among other biological questions, the authors investigated whether the timing of nursing bouts was uniformly distributed throughout the day, a question also addressed by \cite{Fernandez-de-Marcos2023b}. In the latter study, this question was further examined separately for spring and summer. By contrast, \cite{Stirling2024} did not pursue a seasonal analysis, since they had previously applied the two-sample \cite{Wheeler1964} test and found no significant differences in the distribution of nursing onset times between seasons. Another difference between the two analyses is that \cite{Stirling2024} carried out the analysis using solar times, after applying a suitable transformation to account for solar light, whereas \cite{Fernandez-de-Marcos2023b} analyzed the original clock times. Here, we revisit the seasonal comparison to illustrate the use of the newly proposed tests, and we further investigate potential differences in nursing patterns across cub-age categories. We perform the analysis using the original clock times, since there is continuous daylight on Devon Island during the observation period (April~20--August~8).

The dataset consists of the date and time at which a female polar bear was observed initiating a nursing bout, yielding 151 observations in summer and 69 in spring. The age of the litter is classified into three categories: cubs-of-the-year ($n_1=128$), yearlings ($n_2 = 80$), and two-year-olds ($n_3 = 12$). Since the data were collected manually, the recorded times have a precision of one minute. To remove this data collection artifact, each observation was perturbed by an independent uniform random jitter over its corresponding centered one-minute interval, thus accounting for the discretization. Figures~\ref{fig:bears-season} and~\ref{fig:bears-group} show the observations by season and by cub-age category, respectively. The circular histograms suggest that, in spring, nursing onset times are concentrated around certain time windows. Visual inspection reveals apparent differences between the seasonal patterns. With respect to cub age, the two-year-old group appears to depart visually from uniformity, and its pattern seems to differ from those of the other two age categories. Nonetheless, the limited number of observations in this group may make this impression misleading.

\begin{figure}[!ht]
    \centering
    \includegraphics[clip=TRUE, trim={2.75cm 2.5cm 2cm 1.3cm}, width=0.53\linewidth]{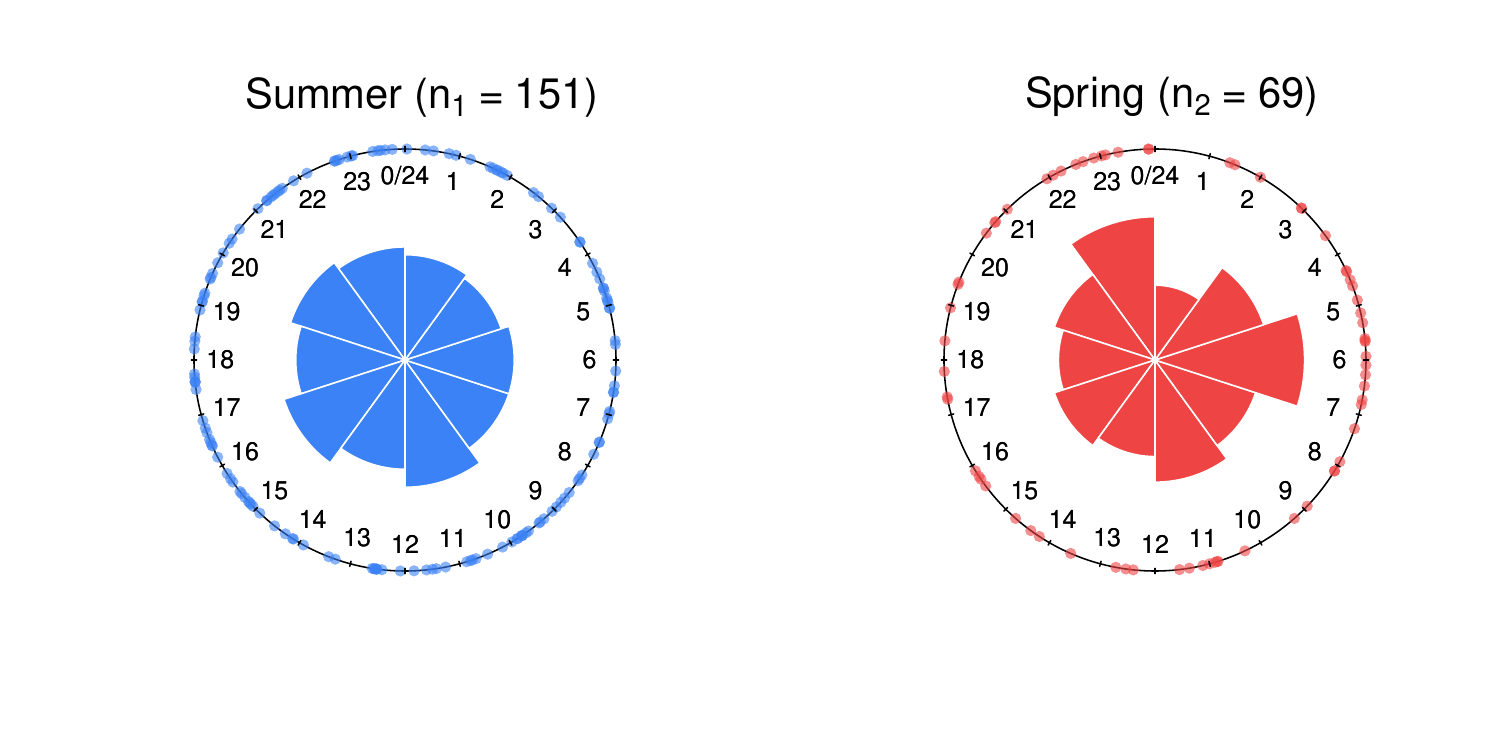}
    \caption{\small Nursing bout onset times (with added random jitter) recorded for wild polar bears in the Canadian Arctic during summer (left) and spring (right) from 1973--1999.}
    \label{fig:bears-season}
\end{figure}
\begin{figure}[!ht]
    \centering
    \includegraphics[clip=TRUE, trim={1.5cm 1.75cm 1.cm 0.75cm}, width=0.855\linewidth]{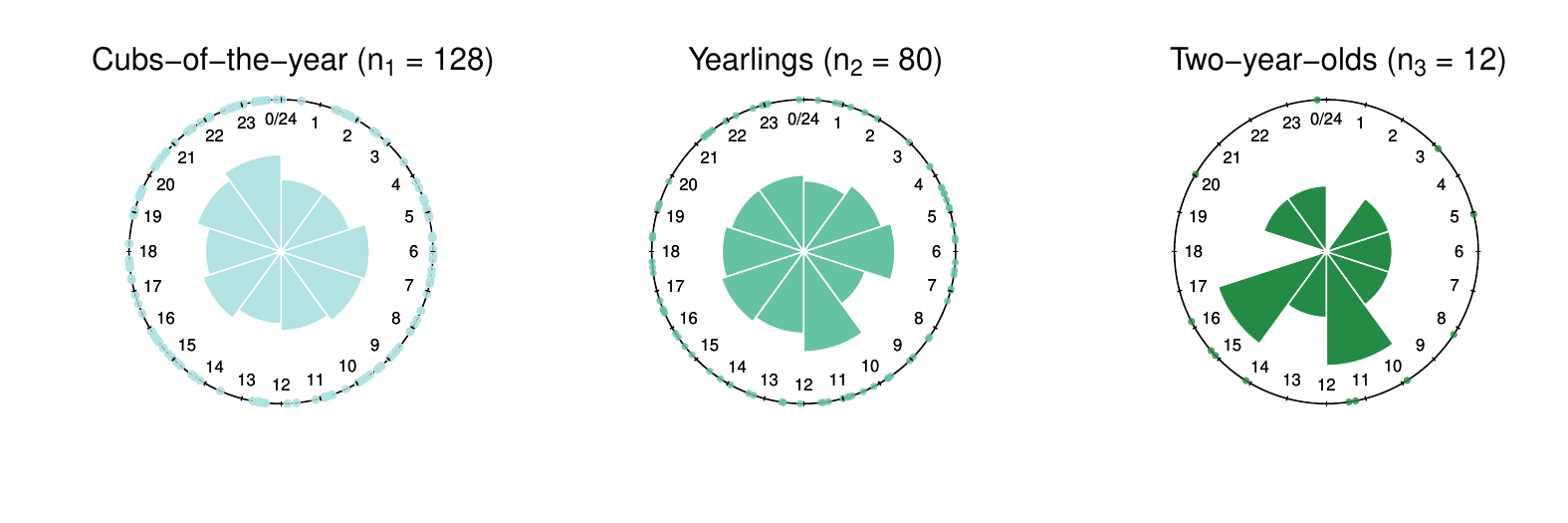}
    \caption{\small Same as Figure~\ref{fig:bears-season}, but stratified by cub age: cubs-of-the-year, yearlings, and two-year-olds.}
    \label{fig:bears-group}
\end{figure}

The first block of Table~\ref{tbl:bears-hom} reports the asymptotic and permutation-based ($M=10^4$) $p$-values for assessing the homogeneity of spring and summer nursing patterns, using the kernels considered in Section~\ref{sec:sim}. Since, when $c=2$, the aggregation rules considered yield equivalent tests (Corollary~\ref{cor:agg-equiv-c2}), only the average test is shown for the seasonal comparison. None of the tests reject the null hypothesis of equal distributions at any usual significance level. The second block presents the corresponding $p$-values for testing the homogeneity of the three cub-age categories, using both the average, $\smash{T_{N,c}^{\phi,{\rm avg}}}$, and maximum, $\smash{T_{N,c}^{\phi,\max}}$, tests. Again, the null hypothesis of equal distributions is not rejected at any usual significance level. Note that the permutation-based and asymptotic $p$-values are generally consistent, even though the two-year-old group has only $n_3=12$ observations. We also performed a two-sample test between cubs-of-the-year ($n_1=128$) and the combined yearling/two-year-old group ($n_2=92$), but these tests did not reject homogeneity either. Overall, these findings do not provide statistical evidence of differences in nursing timing across seasons or cub-age categories.

\begin{table}[ht!]
    \centering
    \scalebox{0.7}{
    \begin{tabular}{ccc|c|c|c|c|ccc|c|ccc|ccc}
        \toprule
        & \multirow{2}{*}{Test} & \multirow{2}{*}{Calibration} & \multirow{2}{*}{$\rm R$} & \multirow{2}{*}{${\rm M}_2$} & \multirow{2}{*}{$\rm W$} & \multirow{2}{*}{$\rm A$} & \multicolumn{3}{c|}{${\rm R}_{t}$} & \multirow{2}{*}{$\rm AD$} & \multicolumn{3}{c|}{${\rm SM}_{\kappa}$} & \multicolumn{3}{c}{${\rm P}_{\rho}$} \\
        & & & & & & & $1/4$ & $1/3$ & $1/2$ & & $0.1$ & $1$ & $10$ & $0.1$ & $0.5$ & $0.9$\\
        \midrule
        \multirow{2}{*}{Seasonal} & \multirow{2}{*}{Avg} & Asymp & 0.410 & 0.884 & 0.490 & 0.474 & 0.603 & 0.534 & 0.393 & 0.503 & 0.419 & 0.503 & 0.581 & 0.446 & 0.556 & 0.539 \\
        & & Perm & 0.411 & 0.884 & 0.497 & 0.473 & 0.607 & 0.543 & 0.391 & 0.523 & 0.419 & 0.505 & 0.587 & 0.446 & 0.559 & 0.554 \\
        \midrule
        \multirow{4}{*}{Cub-age} & \multirow{2}{*}{Avg} & Asymp & 0.581 & 0.794 & 0.776 & 0.741 & 0.814 & 0.716 & 0.658 & 0.830 & 0.591 & 0.699 & 0.967 & 0.626 & 0.880 & 0.963 \\
        & & Perm & 0.583 & 0.801 & 0.790 & 0.758 & 0.822 & 0.727 & 0.663 & 0.853 & 0.594 & 0.705 & 0.972 & 0.632 & 0.889 & 0.983 \\
        \cline{2-17}
        & \multirow{2}{*}{Max} & Asymp & 0.458 & 0.691 & 0.552 & 0.527 & 0.566 & 0.507 & 0.513 & 0.588 & 0.461 & 0.511 & 0.865 & 0.474 & 0.687 & 0.734 \\
        & & Perm & 0.473 & 0.701 & 0.566 & 0.535 & 0.582 & 0.519 & 0.533 & 0.602 & 0.476 & 0.523 & 0.883 & 0.488 & 0.701 & 0.768 \\
        \bottomrule
    \end{tabular}}
    \caption{\small Asymptotic and permutation-based $p$-values for the $c$-sample Sobolev tests based on $\smash{T_{N,c}^{\phi,{\rm avg}}}$ (Avg) and $\smash{T_{N,c}^{\phi,\max}}$ (Max) with $N=220$ applied to nursing onset times. The kernels used are indicated in the header.}
    \label{tbl:bears-hom}
\end{table}

\section{Discussion}
\label{sec:discussion}

This work introduces a general class of nonparametric homogeneity tests for circular data based on Sobolev statistics and uniform scores. The proposed framework unifies several previously developed tests for the multisample problem and yields new practically useful tests. In particular, the circular Anderson--Darling test provides a natural omnibus improvement over the Watson-type test, while the softmax and Poisson kernels offer targeted sensitivity to multimodal departures from homogeneity.

A central advantage of the class is that the null asymptotic distribution is distribution-free. Therefore, the tests can be calibrated without bootstrap or permutation resampling, which makes it especially convenient. The aggregation framework adds practical flexibility: average and maximum aggregations target diffuse and concentrated departures, respectively, while the $M$- and $\mathrm{LSE}$-families interpolate between these behaviors. A further theoretical contribution is identifying the decay condition on the Fourier coefficients required by the discreteness of the uniform scores, which had previously gone unnoticed.

The numerical experiments show that no single kernel is uniformly best across all alternatives, underscoring the value of the proposed framework as a flexible family of tests. Still, as a practical recommendation, the Anderson--Darling kernel appears to be a strong default omnibus choice, consistently improving on the Watson-type competitor. Regarding aggregation, the average test seems to offer the best compromise between symmetry, tractability, and power. The maximum test can be advantageous when the deviation from homogeneity is dominated by one or a few components, but this comes at the price of Monte Carlo approximation of the asymptotic critical values and a potential loss of efficiency when the signal is spread across samples. However, performance can vary depending on the sample-size proportions and alternative type. The incomplete test is theoretically appealing because of its closer connection with the classical two-sample construction, yet its dependence on the omitted sample and its lack of relabeling invariance for $c>2$ limit its practical attractiveness.

This work provides a unified theoretical and practical framework for circular homogeneity testing. Alternative nonparametric approaches could be based on estimating samplewise densities by kernel smoothing and measuring discrepancies between them, following the framework initiated by \cite{Anderson1994a} for the two-sample Euclidean problem. Another possibility is to use empirical characteristic functions for the samples and measure their distance, as done in the circular goodness-of-fit test in \cite{Jammalamadaka2019a}. However, their main disadvantage is that they would generally not be distribution-free, therefore requiring resampling.

Several future directions naturally follow. First, one could develop homogeneity tests on the sphere of arbitrary dimension. The obvious obstacle is the absence of a natural ordering in higher dimensions, which may be addressed using projection-based or transport-based tests. A second direction is the study of high-dimensional settings, where different aggregation rules could lead to useful tests. Finally, the choice of the tuning parameter in the aggregation families in a data-driven manner remains an interesting open problem.

\section*{Supplementary materials}

Supplementary materials provide the proofs of the results presented in the paper and additional simulations with imbalanced samples.

\section*{Acknowledgments}

The first author is supported by grant PIPF-2023/COM-30702, funded by Comunidad de Madrid. Both authors acknowledge support from PCI2024-155058-2, funded by MICIU/AEI/10.13039/\-501100011033/UE.



\fi

\ifsupplement

\newpage
\title{Supplementary materials for ``A class of nonparametric homogeneity tests on the circle''}
\setlength{\droptitle}{-1cm}
\predate{}%
\postdate{}%
\date{}

\author{Alberto Fern\'andez-de-Marcos$^{1,2}$ and Eduardo Garc\'ia-Portugu\'es$^{1}$}
\footnotetext[1]{Department of Statistics, Universidad Carlos III de Madrid (Spain).}
\footnotetext[2]{Corresponding author. e-mail: \href{mailto:albertfe@est-econ.uc3m.es}{albertfe@est-econ.uc3m.es}.}
\maketitle

\begin{abstract}
	These supplementary materials are divided into three parts. Section~\ref{sec:proofs} contains the proofs of the main results, Section~\ref{sec:lemma} contains some technical lemmas, and Section~\ref{sec:sims} includes additional numerical experiments.
\end{abstract}
\begin{flushleft}
	\small\textbf{Keywords:} Circular data; Homogeneity tests; Multisample tests; Sobolev tests; Uniform scores.
\end{flushleft}

\appendix

\section{Proofs of the main results}\label{sec:proofs}

\subsection{Proofs of Section~\ref{sec:unif-scores}}\label{subsec:proofs:unif-scores}

\begin{proof}[Proof of Proposition~\ref{prp:unif-scores-symm}]
    To prove \eqref{cond:symm-1}, we use~\eqref{eq:sob-def} and the expansion \eqref{eq:sob-exp}, which yield
    \begin{align}
        n_1 S_{n_1}^{\phi}(\bc^{(1)})-n_2 S_{n_2}^{\phi}(\bc^{(2)}) =&\; \sum_{i,j=1}^{n_1}\phi(\cos(c_i^{(1)} - c_j^{(1)})) - \sum_{i,j=1}^{n_2}\phi(\cos(c_i^{(2)} - c_j^{(2)}))\nonumber\\
        =&\;\sum_{k=1}^{\infty}b_k(\phi)\bigg(\Big(\sum_{i=1}^{n_1}\cos(k c_i^{(1)})\Big)^2 - \Big(\sum_{i=1}^{n_2}\cos(k c_i^{(2)})\Big)^2 \nonumber\\
        &\qquad\qquad\quad+ \Big(\sum_{i=1}^{n_1}\sin(k c_i^{(1)})\Big)^2 - \Big(\sum_{i=1}^{n_2}\sin(k c_i^{(2)})\Big)^2\bigg).\label{eq:sym-1}
    \end{align}

    Let $k\geq 1$, $\bc=(c_1,\ldots,c_N)$ with $c_i=2\pi r_i/N$ being the uniform score associated to the $i$th observation, $i=1,\ldots,N$, and $c_{(i)}$ be the $i$th order statistic of $\bc$. We show that $\sum_{i=1}^{N}\sin(k c_i) = 0$, so that the sine terms cancel each other.
    For even $N$, we have
    \begin{align*}
        \sum_{i=1}^{N}\sin(k c_i)&=\sum_{i=1}^{N/2-1}\sin(k c_{(i)}) + \sum_{i=1}^{N/2-1}\sin(k c_{(N-i)}) + \sin(k c_{(N/2)}) + \sin(k c_{(N)})\\
        &=\sum_{i=1}^{N/2-1}\Big(\sin(2\pi k i / N) + \sin(- 2\pi k i/N)\Big) + \sin(\pi k) + \sin(2\pi k)= 0,
    \end{align*}
    while for odd $N$, we have
    \begin{align*}
        \sum_{i=1}^{N}\sin(k c_i)&=\sum_{i=1}^{\floor{N/2}}\sin(k c_{(i)}) + \sum_{i=1}^{\floor{N/2}}\sin(k c_{(N-i)}) + \sin(k c_{(N)})\\
        &=\sum_{i=1}^{\floor{N/2}}\Big(\sin(2\pi k i / N) + \sin(- 2\pi k i/N)\Big) + \sin(2\pi k)= 0.
    \end{align*}
    Finally, we show that $\sum_{i=1}^{N}\cos(k c_i) = N$ only when $k\in N\Z$; otherwise, it is zero.
    First, consider $k = k'N$, for $k'\geq1$. Then,
    \begin{align*}
        \sum_{i=1}^{N}\cos(k' N c_i)&=\sum_{i=1}^{N}\cos(2\pi k' i)=N.
    \end{align*}
    Otherwise,
    \begin{align*}
        \sum_{j=1}^{N}\cos(k c_j) = \sum_{j=1}^{N}\cos(2\pi k j/N)=\Re\Big(\sum_{j=1}^{N}e^{i 2\pi k j/N}\Big)=\Re\Big(\dfrac{e^{i 2\pi k / N}(1 - e^{i 2\pi k})}{1 - e^{i 2\pi k/N}}\Big)=0, \quad\text{for }k\notin N\Z.
    \end{align*}
    Therefore, 
    \begin{align*}
        \eqref{eq:sym-1} =&\;\sum_{k=1}^{\infty}b_k(\phi)\bigg(\Big(\sum_{i=1}^{n_1}\cos(k c_i^{(1)})\Big)^2 - \Big(\sum_{i=1}^{n_2}\cos(k c_i^{(2)})\Big)^2\bigg)\\
        =&\;\sum_{k=1}^{\infty}b_{kN}(\phi)\bigg(\Big(\sum_{i=1}^{n_1}\cos(kN c_i^{(1)})\Big)^2 - \Big(\sum_{i=1}^{n_2}\cos(kN c_i^{(2)})\Big)^2\bigg)\\
        =&\;N(n_1-n_2)\sum_{k=1}^{\infty}b_{kN}(\phi),
    \end{align*}
    and the result is proven.

    Finally, \eqref{cond:symm-2} follows immediately from~\eqref{eq:unif-sc-k}, after using~\eqref{cond:symm-1} and~\eqref{eq:beran},
    \begin{align*}
        T_{N,2}^{\phi}&=S^{\phi}_{n_1}(\bc^{(1)})+S^{\phi}_{n_2}(\bc^{(2)})\\
        &=\left(\frac{n_1}{n_2} + 1\right) S_{n_1}^{\phi}(\bc^{(1)})-\left(\frac{n_1}{n_2}-1\right)N \sum_{k=1}^{\infty}b_{kN}(\phi)\\
        &=\frac{N}{n_1n_2} B_{n_1,n_2}^{\psi}-\left(\frac{n_1}{n_2}-1\right)N \sum_{k=1}^{\infty}b_{kN}(\phi).
    \end{align*} 
\end{proof}

\subsection{Proofs of Section~\ref{sec:specific}}\label{subsec:proofs:specific}

\begin{proof}[Proof of Proposition~\ref{prp:Maag-unifsc}]
    The computational formula for the Watson uniformity test statistic, see, e.g., expression (6.3.33) in \cite{Mardia1999a}, yields
    \begin{align}\label{eq:W-sc-c}
        T_{N,c}^{{\rm W}}=\sum_{\ell=1}^{c}\left[\sum_{i=1}^{n_\ell}\bigg(\dfrac{r_{(i)}^{(\ell)}}{N}-\dfrac{2i-1}{2n_\ell}-\dfrac{\bar{r}^{(\ell)}}{N}+\dfrac{1}{2}\bigg)^2+\dfrac{1}{12n_\ell}\right],
    \end{align}
    where $\bar{r}^{(\ell)}:=n_\ell^{-1}\sum_{i=1}^{n_\ell}r_{(i)}^{(\ell)}$, and $r_{(i)}^{(\ell)}$ corresponds to the $i$th ordered pooled rank of the $\ell$th sample.
    
    We derive a computational formula based on ranks for $U^2_{N,c}$. Let $\Theta_{(1)}<\cdots<\Theta_{(N)}$ denote the pooled order statistics, with ties occurring with probability zero. Note that
    \begin{align}
        U^2_{N,c}&=\sum_{\ell=1}^{c} \frac{n_{\ell}}{N}\sum_{i=1}^N \left[\widehat{F}_{\ell,n_{\ell}}(\Theta_{(i)})-\widehat{H}_N(\Theta_{(i)})-\frac{1}{N}\sum_{j=1}^N (\widehat{F}_{\ell,n_{\ell}}(\Theta_{(j)})-\widehat{H}_N(\Theta_{(j)}))\right]^2\nonumber\\
        &=\sum_{\ell=1}^{c} \frac{n_{\ell}}{N}\left\{\sum_{i=1}^N (\widehat{F}_{\ell,n_{\ell}}(\Theta_{(i)})-\widehat{H}_N(\Theta_{(i)}))^2-\frac{1}{N}\left[\sum_{j=1}^N (\widehat{F}_{\ell,n_{\ell}}(\Theta_{(j)})-\widehat{H}_N(\Theta_{(j)}))\right]^2\right\},\label{eq:Umaag1}
    \end{align}
    as specified in~\cite{Maag1966|SM}.
    
    Fix $\ell=1,\ldots, c$, and define for any $j=1,\ldots,N$, $a_{\ell, j}:=\#\{\Theta_{i}^{(\ell)}\leq \Theta_{(j)}:1\leq i\leq n_{\ell}\}$. Then,  we have $\widehat{F}_{\ell,n_{\ell}}(\Theta_{(j)})=a_{\ell,j}/n_{\ell}$, and $\widehat{H}_N(\Theta_{(j)})=~j/N$.
    Also note that 
    $$
    a_{\ell,j}=\begin{cases}
        0,&\text{ if } j< r_{(1)}^{(\ell)},\\
        i,&\text{ if } r_{(i)}^{(\ell)}\leq j< r_{(i+1)}^{(\ell)},\\
        n_\ell,&\text{ if } j\geq r_{(n_\ell)}^{(\ell)}.
    \end{cases}
    $$
    Let $r_{(0)}^{(\ell)}:=1$ and $r_{(n_\ell+1)}^{(\ell)}:=N+1$. Then,
    \begin{align}
        \sum_{j=1}^N (\widehat{F}_{\ell,n_{\ell}}(\Theta_{(j)})-\widehat{H}_N(\Theta_{(j)}))&=\sum_{j=1}^N \Big(\frac{a_{\ell,j}}{n_{\ell}}-\frac{j}{N}\Big)\nonumber
        =\sum_{i=0}^{n_{\ell}}\sum_{j=r_{(i)}^{(\ell)}}^{r_{(i+1)}^{(\ell)} - 1} \Big(\frac{a_{\ell,j}}{n_{\ell}}-\frac{j}{N}\Big)\nonumber\\
        &=\frac{N+1}{2}-\frac{1}{n_\ell}\sum_{i=1}^{n_\ell}r_{(i)}^{(\ell)}=\frac{N+1}{2}-\bar{r}^{(\ell)},\label{eq:Umaag2}
    \end{align}
    where the last equality comes from 
    \begin{align*}
        \sum_{j=r_{(i)}^{(\ell)}}^{r_{(i+1)}^{(\ell)} - 1} \Big(\frac{a_{\ell,j}}{n_{\ell}}-\frac{j}{N}\Big)&=\sum_{j=r_{(i)}^{(\ell)}}^{r_{(i+1)}^{(\ell)} - 1} \Big(\frac{i}{n_{\ell}}-\frac{j}{N}\Big)=\frac{i}{n_{\ell}}(r_{(i+1)}^{(\ell)}  - r_{(i)}^{(\ell)})-\frac{1}{2N}(r_{(i+1)}^{(\ell)}(r_{(i+1)}^{(\ell)}-1) - r_{(i)}^{(\ell)}(r_{(i)}^{(\ell)}-1)),
    \end{align*}
    for a fixed $i=0,\ldots, n_\ell$, and from
    \begin{align*}
        \sum_{i=0}^{n_{\ell}}i(r_{(i+1)}^{(\ell)}- r_{(i)}^{(\ell)})&=n_\ell(N+1)-\sum_{i=1}^{n_\ell}r_{(i)}^{(\ell)}.
    \end{align*}
    
    Noting that
    \begin{align*}
        \sum_{j=r_{(i)}^{(\ell)}}^{r_{(i+1)}^{(\ell)} - 1} \Big(\frac{a_{\ell,j}}{n_{\ell}}-\frac{j}{N}\Big)^2&=\sum_{j=r_{(i)}^{(\ell)}}^{r_{(i+1)}^{(\ell)} - 1} \Big(\frac{i}{n_{\ell}}-\frac{j}{N}\Big)^2\\
        &=\frac{1}{n_{\ell}^2}i^2(r_{(i+1)}^{(\ell)} - r_{(i)}^{(\ell)})-\frac{i}{N n_{\ell}}\big(r_{(i+1)}^{(\ell)}(r_{(i+1)}^{(\ell)} -1) - r_{(i)}^{(\ell)}(r_{(i)}^{(\ell)}-1)\big)\\
        &\quad+ \frac{1}{6N^2}\Big(r_{(i+1)}^{(\ell)}(r_{(i+1)}^{(\ell)}-1)(2r_{(i+1)}^{(\ell)}-1) - r_{(i)}^{(\ell)}(r_{(i)}^{(\ell)}-1)(2r_{(i)}^{(\ell)}-1)\Big),
    \end{align*}
    we have
    \begin{align}
        \sum_{i=1}^{N}(\widehat{F}_{\ell,n_{\ell}}(\Theta_{(i)})-\widehat{H}_N(\Theta_{(i)}))^2
        =\;&\frac{(N+1)(2N+1)}{6N}-\frac{1}{n_{\ell}^2}\sum_{i=1}^{n_\ell}(2i-1)r_{(i)}^{(\ell)}\nonumber\\
        &+\frac{1}{N n_{\ell}}\sum_{i=1}^{n_\ell}(r_{(i)}^{(\ell)})^2-\frac{\bar{r}^{(\ell)}}{N},\label{eq:Umaag3}
    \end{align}
    using
    \begin{align*}
        \sum_{i=0}^{n_{\ell}}i^2(r_{(i+1)}^{(\ell)}- r_{(i)}^{(\ell)})&=n_\ell^2(N+1)-\sum_{i=1}^{n_\ell}(2i-1)r_{(i)}^{(\ell)}
    \end{align*}
    and
    \begin{align*}
        \sum_{i=0}^{n_{\ell}}i\Big(r_{(i+1)}^{(\ell)}(r_{(i+1)}^{(\ell)}-1) - r_{(i)}^{(\ell)}(r_{(i)}^{(\ell)}-1)\Big)&=n_\ell N (N+1)-\sum_{i=1}^{n_\ell}r_{(i)}^{(\ell)}(r_{(i)}^{(\ell)}-1).
    \end{align*}
    
    Therefore,~\eqref{eq:Umaag2} and~\eqref{eq:Umaag3} yield
    \begin{align*}
        \eqref{eq:Umaag1}&=\sum_{\ell=1}^{c} \frac{n_{\ell}}{N}\left\{\frac{(N+1)(2N+1)}{6N}-\frac{1}{n_{\ell}^2}\sum_{i=1}^{n_\ell}(2i-1)r_{(i)}^{(\ell)}+\frac{1}{N n_{\ell}}\sum_{i=1}^{n_\ell}(r_{(i)}^{(\ell)})^2-\frac{\bar{r}^{(\ell)}}{N}-\frac{1}{N}\Big(\frac{N+1}{2}-\bar{r}^{(\ell)}\Big)^2\right\}\\
        &=\sum_{\ell=1}^{c} \frac{n_{\ell}}{N}\bigg\{\frac{(N+1)(2N+1)}{6N}-\frac{N(2n_\ell-1)(2n_\ell+1)}{12n_\ell^2}+\frac{N}{n_\ell}\sum_{i=1}^{n_\ell}\bigg(\frac{r_{(i)}^{(\ell)}}{N} -\frac{2i-1}{2n_\ell}\bigg)^2\\
        &\qquad\qquad\quad-\frac{\bar{r}^{(\ell)}}{N}-\frac{1}{N}\Big(\frac{N+1}{2}-\bar{r}^{(\ell)}\Big)^2\bigg\}\\
        &=\sum_{\ell=1}^{c} \frac{n_{\ell}}{N}\left\{\frac{(N+1)(2N+1)}{6N}-\frac{N(2n_\ell-1)(2n_\ell+1)}{12n_\ell^2}-\frac{2N+1}{4N}-\frac{N}{12n_\ell^2}\right\}\\
        &\quad+\sum_{\ell=1}^{c}\left\{\sum_{i=1}^{n_\ell}\Big(\frac{r_{(i)}^{(\ell)}}{N} - \frac{2i-1}{2n_\ell} - \frac{\bar{r}^{(\ell)}}{N} +\frac{1}{2}\Big)^2+\frac{1}{12n_\ell}\right\}\\
        &=\sum_{\ell=1}^{c}\left\{\sum_{i=1}^{n_\ell}\Big(\frac{r_{(i)}^{(\ell)}}{N} - \frac{2i-1}{2n_\ell} - \frac{\bar{r}^{(\ell)}}{N} +\frac{1}{2}\Big)^2+\frac{1}{12n_\ell}\right\}-\dfrac{1}{12N},
    \end{align*}
    and the result follows from \eqref{eq:W-sc-c}.
\end{proof}

\subsection{Proofs of Section~\ref{sec:general-agg}}\label{subsec:proofs:general-agg}

\begin{proof}[Proof of Corollary~\ref{cor:agg-equiv-c2}]
    Write $\rho_{12}=n_1/n_2$ and $\Delta_{n_1,n_2}^{\phi}:=(\rho_{12}-1)N\sum_{k=1}^{\infty} b_{kN}(\phi)$. By Proposition~\ref{prp:unif-scores-symm}\eqref{cond:symm-1}, $S_{n_2}^{\phi}(\bc^{(2)})=\rho_{12}S_{n_1}^{\phi}(\bc^{(1)})-\Delta_{n_1,n_2}^{\phi}$. Thus,
    $
    T_{N,2}^{\phi,m}=g_m\big(S_{n_1}^{\phi}(\bc^{(1)})\big)
    $
    with 
    $g_m(x):=m(x, \rho_{12} x - \Delta_{n_1,n_2}^{\phi})$.
    The function $g_m$ is strictly increasing because $m$ satisfies \eqref{m:str-inc} and because $\rho_{12}>0$.
    Then, $T_{N,2}^{\phi,m}$ is a strictly increasing transformation of the same statistic $S_{n_1}^{\phi}(\bc^{(1)})$, and the result follows.
\end{proof}

\subsection{Proofs of Section~\ref{sec:null-asymp}}\label{subsec:proofs:null-asymp}

\begin{proof}[Proof of Proposition~\ref{prp:gkr-asymp-H0}]
    The samples follow the triangular-array framework of Lemma~\ref{lemma:gkr-clt-moving}, which we apply. For each $N\geq 2c$, let $F_{\ell,N}$ denote the angular cdf of the $\ell$th sample with starting point $\theta=0$, and set $H_N:=\sum_{\ell=1}^{c}\pi_{\ell,N}F_{\ell,N}$. Under $\Hcal_0$, we have $F_{1,N}=\cdots=F_{c,N}=H_N$ for all $N\geq1$.
    
    We begin by expressing $\bG_N$ in terms of the ecdf's,
    $$
    G_{k,r,\ell,N}=\dfrac{1}{n_{\ell}}\sum_{i=1}^{n_\ell}g_{k,r}(c_i^{(\ell)})=\dfrac{1}{n_{\ell}}\sum_{i=1}^{n_\ell}g_{k,r}(2\pi \widehat{H}_N(\Theta_i^{(\ell)}))=\int_{0}^{2\pi}g_{k,r}(2\pi \widehat{H}_N(\theta))\,\rd \widehat{F}_{\ell,n_{\ell}}(\theta).
    $$

    Further, let $\bmu_N:=(\mu_{k,r,\ell,N})$ where 
    \begin{align*}
        \mu_{k,r,\ell,N}:=\int_{0}^{2\pi}g_{k,r}(2\pi H_N(\theta))\,\rd F_{\ell,N}(\theta).
    \end{align*}

    The proof proceeds in three steps. First, we show that $N^{1/2}(\bG_N-\bmu_N)$ is asymptotically normal using Lemma~\ref{lemma:gkr-clt-moving}. Second, we show that $\bmu_N=\mathbf{0}$ under $\Hcal_0$. Third, we identify the limiting covariance matrix under $\Hcal_0$.

    Under $\Hcal_0$, it follows that \eqref{eq:Ilim} becomes
    \begin{align}
        I_{k,k',r,r'}^{(i,j,\ell)}
        =(2\pi)^2\bigg(&\iint_{0<u<v<1} u(1-v) g'_{k,r}(2\pi u)g'_{k',r'}(2\pi v)\,\rd u\,\rd v\nonumber\\
        &+\iint_{0<v<u<1} v(1-u) g'_{k,r}(2\pi u) g'_{k',r'}(2\pi v)\,\rd u\,\rd v\bigg)
        =\delta_{kk'}\delta_{rr'}.\label{eq:Ilim-H0}
    \end{align}
    By Lemma~\ref{lemma:gkr-clt-moving}, there exists a unique symmetric matrix $\bSigma_0$ such that
    \begin{align*}
        N^{1/2}(\bG_N-\bmu_N)\inlaw \mathcal{N}_D(\mathbf{0},\bSigma_0).
    \end{align*}

    We show that $\bmu_N=\mathbf{0}$ for all $N$ under $\Hcal_0$. Noting that $H_N=F_{\ell,N}$ for all $\ell=1,\ldots,c$ and $N\geq 1$ under $\Hcal_0$, we obtain
    \begin{align*}
        \mu_{k,r,\ell,N}=\int_0^1 g_{k,r}(2\pi u)\,\rd u=0,
    \end{align*}
    for all $k=1,\ldots, K$, $r=1,2$, $\ell=1,\ldots,c$, and $N\geq 1$.
    
    We obtain the entries of the covariance matrix under $\Hcal_0$, $\bSigma_0=(\bSigma_{(k,r),(k',r')})=(\sigma_{(k,r,\ell),(k',r',\ell')})$. 
    From the quadratic-form representation established in the proof of Lemma~\ref{lemma:gkr-clt-moving}, the variance $\sigma_\bba^2=\bba^\top\bSigma_0\bba$ is defined for every $\bba\in\R^D$. Under $\Hcal_0$, \eqref{eq:Ilim-H0} gives $A_{ij}^{(\ell)}(J_{\ell_1,\bba},J_{\ell_2,\bba})=\allowbreak\sum_{k=1}^{K}\sum_{r=1}^{2}a_{k,r,\ell_1}a_{k,r,\ell_2}$ for all $i,j,\ell,\ell_1,\ell_2=1,\ldots,c$, and
    $
    Q_0(\bba):=\sigma_\bba^2
    =\sum_{i=1}^{c}\sigma_{ii,\bba}
    +\sum_{\substack{i,j=1\\i\neq j}}^{c}\sigma_{ij,\bba},
    $
    with
    \begin{align*}
    \sigma_{ii,\bba}&=\Big(\dfrac{1}{\pi_i} - 1\Big)\sum_{k=1}^{K}\sum_{r=1}^{2}a_{k,r,i}^2,
    \quad\text{and}\quad
    \sigma_{ij,\bba}=-\sum_{k=1}^{K}\sum_{r=1}^{2}a_{k,r,i}a_{k,r,j},\quad i\neq j.
    \end{align*}

    To obtain the entries of $\bSigma_0$, we use the polarization identity of the quadratic form $Q_0$. For instance, setting $\bba=\be_{k,r,i}$, where $\be_{k,r,i}$ is the vector whose $(k,r,i)$th coordinate equals 1 and all other coordinates are zero, yields
    \begin{align}
        \sigma_{(k,r,i),(k,r,i)}=\sigma_{ii,\bba}=1/\pi_{i} - 1,
        \label{eq:sigma0_diag}
    \end{align}
    and
    \begin{align}
        \sigma_{(k,r,i),(k,r,j)}
        &=\frac12(Q_0(\be_{k,r,i}+\be_{k,r,j})
        -Q_0(\be_{k,r,i})-Q_0(\be_{k,r,j}))
        =-1 \label{eq:sigma0_diag2}
    \end{align}
    for all $i,j=1,\ldots,c$ such that $i\neq j$.
    
    For off-diagonal blocks $\bSigma_{(k,r),(k',r')}$ with $k\neq k'$ or $r\neq r'$, we obtain
    \begin{align}
        \sigma_{(k,r,i),(k',r',j)}
        &=\frac12(Q_0(\be_{k,r,i}+\be_{k',r',j})
        -Q_0(\be_{k,r,i})-Q_0(\be_{k',r',j}))\nonumber\\
        &=\frac{1}{2}(1/\pi_{i} + 1/\pi_{j} - 2-(1/\pi_{i} - 1)-(1/\pi_{j} - 1))
        =0,\label{eq:sigma0_off}
    \end{align}
    for all $i,j=1,\ldots,c$.
    
    Combining~\eqref{eq:sigma0_diag},~\eqref{eq:sigma0_diag2}, and~\eqref{eq:sigma0_off}, it follows that $\bSigma_{(k,r),(k',r')}=\mathbf{0}$ when $k\neq k'$ or $r\neq r'$, and $\bSigma_{(k,r),(k,r)}=(\sigma_{(k,r,i),(k,r,j)})_{i,j=1}^{c}$, which proves the result.
\end{proof}

\begin{proof}[Proof of Theorem~\ref{thm:asymp-H0-general}]
    We denote by $\varphi^{(N)}_K$, $\varphi^{(N)}$, $\varphi_K^{\infty}$, and $\varphi^{\infty}$ the characteristic functions of the $K$-truncated statistic, 
    $$
    T^{\phi,m}_{N,c,K}:=m\bigg(\Big(\sum_{k=1}^{K}\sum_{r=1}^{2} 2^{-1}b_k(\phi) n_{\ell}^{-1}\big(\sum_{i=1}^{n_{\ell}}g_{k,r}(c_i^{(\ell)})\big)^2\Big)_{\ell=1}^{c}\bigg),
    $$
    the statistic $T^{\phi,m}_{N,c}$, the random variable 
    $$
    T^{\phi,m}_{\infty,c,K}:=m\bigg(\Big(\pi_\ell \sum_{k=1}^{K}\sum_{r=1}^{2} 2^{-1}b_k(\phi) Z_{k,r,\ell}^2\Big)_{\ell=1}^{c}\bigg),
    $$
    and $T^{\phi,m}_{\infty,c}$, respectively. Note that $T^{\phi,m}_{\infty,c}$ is well defined. Indeed, by the Monotone Convergence Theorem (MCT), for each $\ell=1,\ldots,c$ we have
    $$
    \E{\sum_{k=1}^{\infty}\sum_{r=1}^{2}
    \pi_\ell 2^{-1}|b_k(\phi)|Z_{k,r,\ell}^2}
    =
    \Big(\pi_\ell\E{Z_{1,1,\ell}^2}\Big)
    \sum_{k=1}^{\infty}|b_k(\phi)|<\infty;
    $$
    hence, $\pi_\ell \sum_{k=1}^{\infty}\sum_{r=1}^{2} 2^{-1}b_k(\phi) Z_{k,r,\ell}^2<+\infty$ almost surely. Since $m$ is continuous, thus measurable,
    $T^{\phi,m}_{\infty,c}$ is a well defined random variable.
    
    We prove that for all $t\in\R$,
    $|\varphi^{(N)}(t)-\varphi^{\infty}(t)|\to 0$,
    as $N\to\infty$. We proceed in two steps: first, we derive the asymptotic distribution of $T_{N,c,K}^{\phi,m}$ for fixed $K\geq 1$, and then we control the truncation contributions.
    
    Fix $K\geq 1$. Expanding in terms of the spherical harmonics, we have
    \begin{align*}
        T_{N, c, K}^{\phi, m}=\,&m\bigg(\Big(\sum_{k=1}^{K}\sum_{r=1}^{2} 2^{-1}b_k(\phi) n_{\ell}G_{k,r,\ell,N}^2\Big)_{\ell=1}^{c}\bigg)\nonumber\\
        =\,&m\bigg(\Big(\sum_{k=1}^{K}\sum_{r=1}^{2} 2^{-1}b_k(\phi) \pi_{\ell,N} (N^{1/2}G_{k,r,\ell,N})^2\Big)_{\ell=1}^{c}\bigg).
    \end{align*}
    Note that $m$ is continuous due to \eqref{m:lip}. Then, by Proposition~\ref{prp:gkr-asymp-H0} and the convergence $\bpi_N\to\bpi$, using Slutsky's theorem and the continuous mapping theorem yields $T_{N,c,K}^{\phi,m}~\inlaw~T_{\infty,c,K}^{\phi,m}$, and, thus, 
    \begin{align}
        |\varphi_K^{(N)}(t)-\varphi_K^{\infty}(t)|\to 0,\label{eq:T_K_asymp_general}
    \end{align}
    for all $K\geq 1$ and all $t\in\R$, as $N\to\infty$. Given the block-diagonal structure of $\bSigma_0$, the vectors $\{\bZ_{k,r}:1\leq k\leq K, r=1,2\}$ are mutually independent.
    
    We now prove that
    \begin{align}
        \sup_{N\geq 2c} |\varphi^{(N)}(t)-\varphi_K^{(N)}(t)| \to 0,\label{eq:TN_asymp_general}
    \end{align}
    for all $t\in\R$, as $K\to\infty$. Using condition \eqref{m:lip} and Lemma~\ref{lemma:exp-G2}, we have for all $N\geq 2c$, all $K\geq 1$, and $t\in\R$,
    \begin{align*}
        |\varphi^{(N)}&(t)-\varphi_K^{(N)}(t)|\leq |t|\Es{|T_{N,c}^{\phi,m}-T_{N,c,K}^{\phi,m}|}{0}\nonumber\\
        &=|t|\Es{\Big|m\bigg(\Big(\sum_{k=1}^{\infty}\sum_{r=1}^{2} 2^{-1}b_k(\phi) n_{\ell}G_{k,r,\ell,N}^2\Big)_{\ell=1}^{c}\bigg)-m\bigg(\Big(\sum_{k=1}^{K}\sum_{r=1}^{2} 2^{-1}b_k(\phi) n_{\ell}G_{k,r,\ell,N}^2\Big)_{\ell=1}^{c}\bigg)\Big|}{0}\nonumber\\
        &\leq L|t|\Es{\Big\lVert\Big(\sum_{k=K+1}^{\infty}\sum_{r=1}^{2} 2^{-1}b_k(\phi) n_{\ell}G_{k,r,\ell,N}^2\Big)_{\ell=1}^{c}\Big\rVert_{\infty}}{0}\nonumber\\
        &\leq L|t|\Es{\sum_{\ell=1}^{c}\Big|\sum_{k=K+1}^{\infty}\sum_{r=1}^{2}2^{-1}b_k(\phi)n_{\ell}G^2_{k,r,\ell,N}\Big|}{0}\nonumber\\
        &\leq L|t|\Es{\sum_{\ell=1}^{c}\sum_{k=K+1}^{\infty}\sum_{r=1}^{2}2^{-1}|b_k(\phi)|n_{\ell}G^2_{k,r,\ell,N}}{0}\nonumber\\
        &= L|t|\sum_{\ell=1}^{c}\sum_{k=K+1}^{\infty}\sum_{r=1}^{2}2^{-1}|b_k(\phi)|\Es{n_{\ell}G^2_{k,r,\ell,N}}{0}\nonumber\\
        &= L|t|\sum_{\ell=1}^{c}\sum_{k=K+1}^{\infty}|b_k(\phi)|\bigg(\Big(1 - \dfrac{n_{\ell}-1}{N-1}\Big)\mathds{1}_{\{k\notin N\Z_{+}\}} + n_{\ell}\mathds{1}_{\{k\in N\Z_{+}\}}\bigg)\nonumber\\
        &= L|t|\Big((c-1)\dfrac{N}{N-1}\sum_{\substack{k>K\\k\notin N\Z_{+}}}|b_k(\phi)| + N\sum_{\substack{k>K\\k\in N\Z_{+}}}|b_k(\phi)|\Big)\nonumber\\
        &\leq L|t|\Big(2(c-1)\sum_{\substack{k>K\\k\notin N\Z_{+}}}|b_k(\phi)| + N\sum_{\substack{k>K\\k\in N\Z_{+}}}|b_k(\phi)|\Big),
    \end{align*}
    where the second equality holds by the MCT.
    Since $|b_k(\phi)|=O(k^{-(1+\delta)})$ for some $\delta>0$, there exists some $K_1\geq 1$ and $C>0$, such that $|b_k(\phi)|\leq Ck^{-(1+\delta)}$ for all $k\geq K_1$. Hence, for all $K\geq K_1$,
    \begin{align*}
        N\sum_{\substack{k>K\\k\in N\Z_{+}}}|b_k(\phi)|&=N\sum_{\substack{k>K\\k=Nk',k'\in \Z_{+}}}|b_k(\phi)|
        \leq N\sum_{k'\geq \lceil K/N\rceil}|b_{Nk'}(\phi)|
        \leq \frac{C}{N^{\delta}}\sum_{k'\geq \lceil K/N\rceil} \frac{1}{(k')^{1+\delta}}\nonumber\\
        &\leq \frac{C}{N^{\delta}}\lrb{\frac{1}{\lceil K/N\rceil^{1+\delta}}+\int_{\lceil K/N\rceil}^\infty \frac{1}{x^{1+\delta}}\,\rd x}
        = \frac{C}{N^{\delta}}\lrb{\frac{1}{\lceil K/N\rceil^{1+\delta}}+\frac{1}{\delta\lceil K/N\rceil^\delta}}\nonumber\\
        &\leq C\Big(1+\frac{1}{\delta}\Big)\lrb{\frac{1}{N^{\delta}}\mathds{1}_{\{N>K\}} + \dfrac{1}{K^\delta}\mathds{1}_{\{N\leq K\}}}
        \leq \Big(1+\frac{1}{\delta}\Big)\dfrac{C}{K^\delta}.
    \end{align*}
    Therefore, as $K\to\infty$ and due to the summability condition~\eqref{eq:summ-cond},
    \begin{align*}
        |\varphi^{(N)}(t)-\varphi_{K}^{(N)}(t)|&\leq L|t|\bigg(2(c-1)\sum_{\substack{k>K\\k\notin N\Z_{+}}}|b_k(\phi)| + \Big(1+\frac{1}{\delta}\Big)\dfrac{C}{K^\delta}\bigg)\\
        &\leq L|t|\bigg(2(c-1)\sum_{k>K}|b_k(\phi)| + \Big(1+\frac{1}{\delta}\Big)\dfrac{C}{K^\delta}\bigg)=o(1),
    \end{align*}
    for all $N\geq 2c$, and~\eqref{eq:TN_asymp_general} follows.
    
    Finally, by condition \eqref{m:lip}, the MCT, and the summability condition~\eqref{eq:summ-cond}, 
    \begin{align*}
        |\varphi_K^{\infty}(t)-\varphi^{\infty}(t)|&\leq|t|\E{|T_{\infty,c,K}^{\phi,m} - T_{\infty,c}^{\phi,m}|}\\
        &=|t|\E{\Big|m\bigg(\Big(\pi_\ell \sum_{k=1}^{K}\sum_{r=1}^{2} 2^{-1}b_k(\phi) Z_{k,r,\ell}^2\Big)_{\ell=1}^{c}\bigg) - m\bigg(\Big(\pi_\ell \sum_{k=1}^{\infty}\sum_{r=1}^{2} 2^{-1}b_k(\phi) Z_{k,r,\ell}^2\Big)_{\ell=1}^{c}\bigg)\Big|}\\
        &\leq L|t|\E{\Big\lVert \Big(\sum_{k=K+1}^{\infty}\sum_{r=1}^{2} \pi_\ell 2^{-1}b_k(\phi) Z_{k,r,\ell}^2\Big)_{\ell=1}^{c}\Big\rVert_{\infty}}\\
        &\leq L|t|\E{\sum_{\ell=1}^{c} \Big|\sum_{k=K+1}^{\infty}\sum_{r=1}^{2} \pi_\ell 2^{-1}b_k(\phi) Z_{k,r,\ell}^2\Big|}\\
        &\leq L|t|\E{\sum_{\ell=1}^{c} \sum_{k=K+1}^{\infty}\sum_{r=1}^{2} \pi_\ell 2^{-1}|b_k(\phi)| Z_{k,r,\ell}^2}\\
        &\leq L|t|\Big(\sum_{\ell=1}^{c}\pi_\ell\E{Z_{1,1,\ell}^2}\Big) \sum_{k=K+1}^{\infty} |b_k(\phi)|\\
        &=L|t|(c-1) \sum_{k=K+1}^{\infty} |b_k(\phi)|
        \to 0\quad\text{ as }K\to\infty.
    \end{align*}

    Let $\epsilon>0$. Choose $K_0\geq 1$ such that
    \begin{align*}
        \sup_{N\geq 2c} |\varphi^{(N)}(t)-\varphi_{K_0}^{(N)}(t)|<\epsilon/3, \quad\text{ and}\quad|\varphi_{K_0}^{\infty}(t)-\varphi^{\infty}(t)|<\epsilon/3.
    \end{align*}
    Then, by the triangle inequality and~\eqref{eq:T_K_asymp_general}, it follows that for each $t\in\R$,
    $$
    |\varphi^{(N)}(t)-\varphi^{\infty}(t)| \leq |\varphi^{(N)}(t)-\varphi_{K_0}^{(N)}(t)| + |\varphi_{K_0}^{(N)}(t)-\varphi_{K_0}^{\infty}(t)| + |\varphi_{K_0}^{\infty}(t)-\varphi^{\infty}(t)|<\epsilon,
    $$
    for all sufficiently large $N$.
\end{proof}

\begin{proof}[Proof of Corollary~\ref{cor:asymp-H0-sum}]
    Considering $m_{\rm sum}(\bx):=\sum_{\ell=1}^{c}x_\ell$, we only need to show that
    \begin{align}\label{eq:weak-limit-eq}
        T_{\infty, c}^{\phi, m_{\rm sum}}\equald T_{\infty, c}^{\phi}.
    \end{align}
    Fix $K\geq 1$. 
    We can rewrite
    $$
    T^{\phi,m_{\rm sum}}_{\infty,c,K}=
    2^{-1} \bZ^{\top}\big(\operatorname{diag}(b_1(\phi),\ldots,b_K(\phi))\otimes\bI_2\otimes\bPi\big)\bZ,
    $$
    where $\bZ\sim \mathcal{N}_{D}(\mathbf{0},\bSigma_0)$ and $\bPi:=\operatorname{diag}(\bpi)$. 
    
    Due to the diagonal block structure, we work with each block $\bZ_{k,r}\sim\mathcal{N}_{c}(\mathbf{0},\bS)$, where $\bS=\diag{1/\bpi}-\mathbf{1}_{c}\mathbf{1}_{c}^{\top}$, so that
    \begin{align*}
        \bZ^{\top}\big(\operatorname{diag}(b_1(\phi),\ldots,b_K(\phi))\otimes\bI_2\otimes\bPi\big)\bZ=\sum_{k=1}^{K}\sum_{r=1}^{2}b_k(\phi)\bZ_{k,r}^{\top}\bPi\bZ_{k,r}.
    \end{align*}
    Since $\bSigma_0=\bI_{2K}\otimes \bS$, the cross-covariance between $\bZ_{k,r}$ and $\bZ_{k',r'}$ is zero whenever $(k,r)\neq (k',r')$, and the vectors $\bZ_{k,r}$ are mutually independent.
    
    For each $(k,r)$ note that $\bZ_{k,r}\equald\bS^{1/2}\bW_{k,r}$ where $\{\bW_{k,r}:1\leq k\leq K,\ r=1,2\}$ are mutually independent and $\bW_{k,r}\sim\mathcal{N}_{c}(\mathbf{0},\bI_c)$. Then,
    $$
    \bZ_{k,r}^{\top}\bPi\bZ_{k,r}\equald\bW_{k,r}^{\top}(\bS^{1/2})^{\top}\bPi\bS^{1/2}\bW_{k,r}.
    $$
    We now study the spectrum of $\bD:=(\bS^{1/2})^{\top}\bPi\bS^{1/2}$. For square matrices $\bX,\bY$, we have that $\bX\bY$ and $\bY\bX$ have the same spectrum, see, e.g., Theorem 1.3.22 in \cite{Horn2013|SM}. Noting that 
    $$
    \bD=(\bS^{1/2})^{\top}(\bPi^{1/2})^{\top}\bPi^{1/2}\bS^{1/2}=(\bPi^{1/2}\bS^{1/2})^{\top}(\bPi^{1/2}\bS^{1/2}),
    $$
    it has the same spectrum as 
    $
        \bC :=\; \bPi^{1/2}\bS(\bPi^{1/2})^{\top}
        =\;\bI_c - \sqrt{\bpi}\sqrt{\bpi}^{\top}.
    $
    Note that 
    $\bC$ is idempotent. Also, $\operatorname{rank}(\bC)=\operatorname{tr}(\bC)=c-1$. Consequently, $\bC$ has the eigenvalue $1$ with multiplicity $c-1$ and the eigenvalue $0$ with multiplicity $1$.
    Since $\bD$ is symmetric, it is orthogonally diagonalizable as
    $
        \bD=\bU^{\top}\bLambda\bU, 
    $
    with
    $
    \bLambda=\diag{\bI_{c-1},0},
    $
    for some orthogonal matrix~$\bU$.

    Let $\bY_{k,r}:=\bU\bW_{k,r}$. Then, $\bY_{k,r}\sim\mathcal{N}_c(\mathbf{0}, \bI_c)$, and
    \begin{align*}
        \bZ_{k,r}^{\top}\bPi\bZ_{k,r}
        \equald\bY_{k,r}^{\top}\bLambda\bY_{k,r}
        =\sum_{j=1}^{c-1}Y_{k,r,j}^2
        \sim \chi^2_{c-1}.
    \end{align*}
    By independence across the pairs $(k,r)$,
    $
        T^{\phi,m_{\rm sum}}_{\infty,c,K}\equald\sum_{k=1}^{K} 2^{-1}b_k(\phi)Y_k=:
        T_{\infty,c,K}^{\phi}
    $
    for all $K\geq 1$.

    Now, since $T_{\infty,c,K}^{\phi,m_{\rm sum}}\to T_{\infty,c}^{\phi,m_{\rm sum}}$ in mean, by the $L^1$ tail argument used in the proof of Theorem~\ref{thm:asymp-H0-general}, we have that it converges also in distribution. Noting that $T_{\infty,c,K}^{\phi}\to T_{\infty,c}^{\phi}$ in mean, and therefore in law, the uniqueness of weak limits proves~\eqref{eq:weak-limit-eq}.
\end{proof}

\begin{proof}[Proof of Corollary~\ref{cor:centered_asymp}]
    We denote by $\varphi^{(N)}_K$, $\varphi^{(N)}$, $\varphi_K^{\infty}$, and $\varphi^{\infty}$ the characteristic functions of the $K$-truncated centered statistic, $\widetilde{T}^{\phi,m}_{N,c,K} :=m\big(\big(\sum_{\substack{1\le k\le K\\ k\notin N\Z_+}}\sum_{r=1}^2 2^{-1}b_k(\phi)n_\ell G_{k,r,\ell,N}^2\big)_{\ell=1}^c\big)$, $\widetilde{T}^{\phi,m}_{N,c}$, $T^{\phi,m}_{\infty,c,K}:=m\big(\big(\pi_\ell\sum_{k=1}^K\sum_{r=1}^2 2^{-1}b_k(\phi)Z_{k,r,\ell}^2\big)_{\ell=1}^c\big)$, and $T^{\phi,m}_{\infty,c}$, respectively. The proof follows the same lines as the proof of Theorem~\ref{thm:asymp-H0-general}.
    
    The only differences are in showing $|\varphi_K^{(N)}(t)-\varphi^{\infty}_K(t)|=o(1)$ as $N\to \infty$, which follows from the fixed-$K$ argument in Theorem~\ref{thm:asymp-H0-general}, since $\{1,\ldots,K\}\cap N\Z_{+}$ is empty for all $N> K$ and, hence, $\widetilde{T}_{N,c,K}^{\phi,m}=T_{N,c,K}^{\phi,m}$ for all such $N$, and in showing $\sup_{N\geq 2c}|\varphi^{(N)}(t)-\varphi^{(N)}_K(t)|=o(1)$ as $K\to \infty$ for the centered statistic. It follows that for all $N\geq 2c$ and all $t\in\R$,
    \begin{align*}
        |\varphi^{(N)}&(t)-\varphi_K^{(N)}(t)|\\
        &\leq |t|\Es{\Big|\widetilde{T}_{N,c}^{\phi,m}-\widetilde{T}_{N,c,K}^{\phi,m}\Big|}{0}\\
        &= |t|\EsBigg{\bigg|m\bigg(\Big(\sum_{k\notin N\Z_{+}}\sum_{r=1}^{2} 2^{-1}b_k(\phi) n_{\ell}G_{k,r,\ell,N}^2\Big)_{\ell=1}^{c}\bigg)-m\bigg(\Big(\sum_{\substack{k\leq K\\k\notin N\Z_{+}}}\sum_{r=1}^{2} 2^{-1}b_k(\phi) n_{\ell}G_{k,r,\ell,N}^2\Big)_{\ell=1}^{c}\bigg)\bigg|}{0}\\
        &\leq L|t|\EsBigg{\bigg\lVert\Big(\sum_{\substack{k>K\\k\notin N\Z_{+}}}\sum_{r=1}^{2} 2^{-1}b_k(\phi) n_{\ell}G_{k,r,\ell,N}^2\Big)_{\ell=1}^{c}\bigg\rVert_{\infty}}{0}\\
        &\leq L|t|\EsBigg{\sum_{\ell=1}^{c}\sum_{\substack{k>K\\k\notin N\Z_{+}}}\sum_{r=1}^{2}2^{-1}|b_k(\phi)| n_{\ell}G^2_{k,r,\ell,N}}{0}\nonumber\\
        &= L|t|\sum_{\ell=1}^{c}\sum_{\substack{k>K\\k\notin N\Z_{+}}}|b_k(\phi)|\Big(1 - \dfrac{n_{\ell}-1}{N-1}\Big)\nonumber\\
        &= L|t|\Big((c-1)\dfrac{N}{N-1}\sum_{\substack{k>K\\k\notin N\Z_{+}}}|b_k(\phi)|\Big)\nonumber\\
        &\leq 2L|t|(c-1)\sum_{k>K}|b_k(\phi)|,\nonumber
    \end{align*}
    and thus, $\sup_{N\geq 2c}|\varphi^{(N)}(t)-\varphi^{(N)}_K(t)|=o(1)$ as $K\to\infty$.
\end{proof}

\subsection{Proofs of Section~\ref{sec:nonnull-asymp}}\label{subsec:proofs:nonnull-asymp}

\begin{proof}[Proof of Theorem~\ref{thm:asymp-Halt-general}]
    Let $\psi:[-1,1]\to\R$ be defined as $\psi(x):=\phi(\cos(2\pi x))$ for all $x\in[-1,1]$. Let $X_{\ell,N}:=\pi_{\ell,N}\int_{0}^{2\pi}\int_{0}^{2\pi} \psi(\widehat{H}_N(\alpha) - \widehat{H}_N(\beta))\,\rd \widehat{F}_{\ell,n_{\ell}}(\alpha)\,\rd \widehat{F}_{\ell,n_{\ell}}(\beta)$, $\widetilde{X}_{\ell,N}:=\pi_{\ell}\int_{0}^{2\pi}\int_{0}^{2\pi}\psi(H(\alpha)-H(\beta))\,\rd \widehat{F}_{\ell,n_{\ell}}(\alpha)\,\rd \widehat{F}_{\ell,n_{\ell}}(\beta)$, and $x_\ell:=\pi_{\ell}\sum_{k=1}^{\infty}b_k(\phi)\big(\gamma_{\ell,k,1}^2 + \gamma_{\ell,k,2}^2\big)$ for each $\ell=1,\ldots,c$, and define $\bX_N:=(X_{1,N},\ldots,X_{c,N})$, $\widetilde{\bX}_{N}:=(\widetilde{X}_{1,N},\ldots,\widetilde{X}_{c,N})$, and $\bx:=(x_{1},\ldots,x_{c})$. We begin by showing that
    \begin{align}\label{eq:fixed-alt-general-x}
        \|\bX_N-\widetilde{\bX}_N\|\to 0\quad\text{almost surely}.
    \end{align}

    Fix $\ell=1,\ldots,c$. Then,
    \begin{align*}
        |X_{\ell,N} - \widetilde{X}_{\ell,N}|&=\left|\int_{0}^{2\pi}\int_{0}^{2\pi} \left[\pi_{\ell,N}\psi (\widehat{H}_N(\alpha) - \widehat{H}_N(\beta)) - \pi_{\ell}\psi(H(\alpha)-H(\beta))\right]\,\rd \widehat{F}_{\ell,n_{\ell}}(\alpha)\,\rd \widehat{F}_{\ell,n_{\ell}}(\beta)\right|\\
        &\leq \bigg(\pi_{\ell,N}\bigg|\int_{0}^{2\pi}\int_{0}^{2\pi} \left[\psi (\widehat{H}_N(\alpha) - \widehat{H}_N(\beta)) - \psi(H(\alpha)-H(\beta))\right]\,\rd \widehat{F}_{\ell,n_{\ell}}(\alpha)\,\rd \widehat{F}_{\ell,n_{\ell}}(\beta)\bigg|\nonumber\\
        &\quad+ \bigg|\int_{0}^{2\pi}\int_{0}^{2\pi}(\pi_{\ell,N} - \pi_{\ell})\;\psi(H(\alpha)-H(\beta))\,\rd \widehat{F}_{\ell,n_{\ell}}(\alpha)\,\rd \widehat{F}_{\ell,n_{\ell}}(\beta)\bigg|\bigg)\nonumber\\
        &\leq \bigg(\pi_{\ell,N}\dfrac{1}{n_\ell^2}\sum_{i,j=1}^{n_\ell} \bigg|\psi (\widehat{H}_N(\Theta_i^{(\ell)}) - \widehat{H}_N(\Theta_j^{(\ell)})) - \psi(H(\Theta_i^{(\ell)})-H(\Theta_j^{(\ell)}))\bigg|\nonumber\\
        &\quad+ \dfrac{1}{n_\ell^2}|\pi_{\ell,N} - \pi_{\ell}|\sum_{i,j=1}^{n_\ell}\;\left|\psi(H(\Theta_i^{(\ell)})-H(\Theta_j^{(\ell)}))\right|\bigg)\nonumber\\
        &\leq \bigg(\sup_{i,j=1,\ldots,n_\ell}\bigg|\psi (\widehat{H}_N(\Theta_i^{(\ell)}) - \widehat{H}_N(\Theta_j^{(\ell)})) - \psi(H(\Theta_i^{(\ell)})-H(\Theta_j^{(\ell)}))\bigg|\nonumber\\
        &\quad+ |\pi_{\ell,N} - \pi_{\ell}|\sup_{i,j=1,\ldots,n_\ell}\left|\psi(H(\Theta_i^{(\ell)})-H(\Theta_j^{(\ell)}))\right|\bigg).
    \end{align*}
    The first term in the last inequality converges almost surely to zero by Lemma~\ref{lemma:consistency}. For the second term, the function $\psi$ is continuous on a compact set; therefore there exists some $C>0$ such that $|\psi(x)|\leq C$ for all $x\in[-1,1]$. Thus, as $N\to\infty$,
    \begin{align*}
        |\pi_{\ell,N} - \pi_{\ell}|\sup_{i,j=1,\ldots,n_\ell}\left|\psi(H(\Theta_i^{(\ell)})-H(\Theta_j^{(\ell)}))\right|\leq C|\pi_{\ell,N} - \pi_{\ell}|\to 0.
    \end{align*}
    Since this holds for all $\ell=1,\ldots,c$, \eqref{eq:fixed-alt-general-x} follows.

    Note that for each $\ell=1,\ldots,c$,
    $$
        \dfrac{\widetilde{X}_{\ell,N}}{\pi_\ell}=\int_{0}^{2\pi}\int_{0}^{2\pi}\psi(H(\alpha)-H(\beta))\,\rd \widehat{F}_{\ell,n_{\ell}}(\alpha)\,\rd \widehat{F}_{\ell,n_{\ell}}(\beta)
        =\dfrac{1}{n_\ell^2}\sum_{i,j=1}^{n_\ell}\psi(H(\Theta_i^{(\ell)})-H(\Theta_j^{(\ell)}))
    $$
    is a $V$-statistic. Thus, by the SLLN of $V$-statistics (see, e.g., Theorem 3.3.1 in \cite{Koroljuk1994|SM}), the following convergence holds almost surely,
    \begin{align}\label{eq:SLLN-Vstat-general}
        \widetilde{X}_{\ell,N}=\dfrac{\pi_{\ell}}{n_\ell^2}\sum_{i,j=1}^{n_\ell}\psi(H(\Theta_i^{(\ell)})-H(\Theta_j^{(\ell)}))&\to \pi_{\ell}\int_{0}^{2\pi}\int_{0}^{2\pi}\psi(H(\alpha)-H(\beta))\,\rd F_{\ell}(\alpha)\,\rd F_{\ell}(\beta).
    \end{align}
    Also, the RHS of~\eqref{eq:SLLN-Vstat-general} is equivalent to
    \begin{align*}
        \pi_{\ell}\int_{0}^{2\pi}\int_{0}^{2\pi}&\psi(H(\alpha)-H(\beta))\,\rd F_{\ell}(\alpha)\,\rd F_{\ell}(\beta)\nonumber\\
        &=\pi_{\ell}\int_{0}^{2\pi}\int_{0}^{2\pi}\sum_{k=1}^{\infty}b_k(\phi)\cos(2\pi k(H(\alpha)-H(\beta)))\,\rd F_{\ell}(\alpha)\,\rd F_{\ell}(\beta)\nonumber\\
        &=\pi_{\ell}\sum_{k=1}^{\infty}b_k(\phi)\int_{0}^{2\pi}\int_{0}^{2\pi}\cos(2\pi k(H(\alpha)-H(\beta)))\,\rd F_{\ell}(\alpha)\,\rd F_{\ell}(\beta)\nonumber\\
        &=\pi_{\ell}\sum_{k=1}^{\infty}b_k(\phi)\bigg[\int_{0}^{2\pi}\int_{0}^{2\pi}\cos(2\pi kH(\alpha))\cos(2\pi kH(\beta))\,\rd F_\ell(\alpha)\,\rd F_{\ell}(\beta) \nonumber\\
        &\qquad\qquad\qquad\qquad\quad+ \int_{0}^{2\pi}\int_{0}^{2\pi}\sin(2\pi kH(\alpha))\sin(2\pi kH(\beta))\,\rd F_{\ell}(\alpha)\,\rd F_{\ell}(\beta)\bigg]\nonumber\\
        &=\pi_{\ell}\sum_{k=1}^{\infty}b_k(\phi)\bigg[\Big(\int_{0}^{2\pi}\cos(2\pi kH(\alpha))\,\rd F_{\ell}(\alpha)\Big)^2 + \Big(\int_{0}^{2\pi}\sin(2\pi k H(\beta))\,\rd F_{\ell}(\beta)\Big)^2\bigg]\nonumber\\
        &=\pi_{\ell}\sum_{k=1}^{\infty}b_k(\phi)\big(\gamma_{\ell,k,1}^2 + \gamma_{\ell,k,2}^2\big),
    \end{align*}
    where the second equality holds by the uniform convergence of the series. Thus, we have that 
    \begin{align}\label{eq:fixed-alt-general-x2}
        \widetilde{\bX}_N\to \bx\quad\text{almost surely}.
    \end{align}
    Then,~\eqref{eq:fixed-alt-general-x} and~\eqref{eq:fixed-alt-general-x2} yield that $\|\bX_N - \bx\|\to0$ almost surely.

    Finally, by conditions~\eqref{m:lip} and~\eqref{m:hom} and Remark~\ref{rmk:m-uniform-lim},
    \begin{align*}
        \sup_{\by\in K}\big|N^{-1}m(N\by)-m_\infty(\by)\big|\to0
    \end{align*}
    for every compact set $K\subset\R^c$. Moreover, $m_\infty$ is continuous. Let
    $\Omega_0:=\{\omega:\bX_N(\omega)\to\bx\}$, so that $\mathrm{P}(\Omega_0)=1$ by the preceding argument, and fix the compact set
    $K:=\{\by\in\R^c:\|\by-\bx\|\leq1\}$. For every $\omega\in\Omega_0$, there exists $N_\omega$ such that $\bX_N(\omega)\in K$ for all $N\geq N_\omega$. Hence, for all such $N$,
    \begin{align*}
        \left|N^{-1}T_{N,c}^{\phi,m}(\omega)-m_\infty(\bx)\right|
        &=\left|N^{-1}m\big(N\bX_N(\omega)\big)-m_\infty(\bx)\right|\\
        &\leq \sup_{\by\in K}\left|N^{-1}m(N\by)-m_\infty(\by)\right|
        +\left|m_\infty\big(\bX_N(\omega)\big)-m_\infty(\bx)\right|\\
        &\to0.
    \end{align*}
    The first term converges to zero by the compact-uniform convergence above, whereas the second does so by the continuity of $m_\infty$ and the convergence $\bX_N(\omega)\to\bx$. Since this holds for every $\omega\in\Omega_0$ and $\mathrm{P}(\Omega_0)=1$, the result follows.
\end{proof}

\begin{proof}[Proof of Corollary~\ref{cor:consistency-Halt-general}]
    Let $\mu_{\ell}$ be the probability measure on $[0,2\pi)$ with cdf $F_\ell$, for any $\ell=1,\ldots,c$. Let $\mu_H$ be the probability measure with cdf $H$. Since $\mu_H(A)=\sum_{\ell=1}^{c} \pi_{\ell} \mu_\ell(A)$ for any Borel set $A$ with $\pi_\ell>0$ for every $\ell=1,\ldots,c$, it follows that $\mu_{\ell}\ll\mu_H$. Then, by the Radon--Nikodym theorem, there exists a function $\rho_\ell:=\rd \mu_{\ell}/\rd \mu_H$ for all $\ell=1,\ldots,c$. In addition, $\rho_\ell$ is bounded $\mu_H$-almost everywhere, since $\mu_\ell(A)\leq \mu_H(A)/\pi_0$ for all Borel sets $A$, where $0<\pi_0:=\min_{1\leq \ell\leq c} \pi_\ell$. Thus,
    \begin{align}\label{eq:Fourier_fixedalt-general}
        \gamma_{\ell,k,r}&=\dfrac{1}{\sqrt{2}}\int_{0}^{2\pi}g_{k,r}(2\pi H(\theta))\,\rd F_{\ell}(\theta)\nonumber\\
        &=\dfrac{1}{\sqrt{2}}\int_{0}^{2\pi}g_{k,r}(2\pi H(\theta))\rho_\ell(\theta)\,\rd \mu_H(\theta).
    \end{align}
    
    Note that under $\Hcal_{1,{\rm fix}}$, there must be at least two distinct values $\ell_1,\ell_2\in\{1,\ldots,c\}$ such that $\mu_{\ell_1}\neq \mu_H$ and $\mu_{\ell_2}\neq \mu_H$. If at most one index $j$ satisfies $\mu_j\neq\mu_H$, then all $\ell\neq j$ have $\mu_\ell=\mu_H$, and $\mu_H=\sum_{\ell\neq j}\pi_\ell\mu_H+\pi_j\mu_j=(1-\pi_j)\mu_H+\pi_j\mu_j$, hence $\mu_j=\mu_H$, which is a contradiction.

    Consequently, $\rho_{\ell_1}$ and $\rho_{\ell_2}$ are not constant $\mu_H$-almost everywhere. Fix $\ell$ to be $\ell_1$ or $\ell_2$. Applying the change of variables theorem, see, e.g., Theorem 3.6.1 in \cite{Bogachev2007|SM}, with $Q_H(u):=\inf\{\theta\in[0,2\pi):H(\theta)\geq u\}$ for $u\in[0,1)$, and $\lambda$ being the Lebesgue measure, we have
    \begin{align*}
    \eqref{eq:Fourier_fixedalt-general}&=\dfrac{1}{\sqrt{2}}\int_{0}^{1}g_{k,r}(2\pi H(Q_H(u)))\rho_\ell(Q_H(u))\,\rd u\\
    &=\dfrac{1}{\sqrt{2}}\int_{0}^{1}g_{k,r}(2\pi u)\rho_\ell(Q_H(u))\,\rd u,
    \end{align*}
    since $\lambda\circ Q_H^{-1}=\mu_H$ and $H(Q_H(u))=u$ for all $u\in[0,1)$ by the continuity of $H$. Note that because $\rho_\ell$ is bounded $\mu_H$-almost everywhere, $\rho_\ell \circ Q_H$ is bounded $\lambda$-almost everywhere; thus $\rho_\ell \circ Q_H \in L^2([0,1], \lambda)$. Also, $\rho_\ell \circ Q_H$ is not constant $\lambda$-almost everywhere. Then, the uniqueness of Fourier coefficients implies that there exists some $k_1\geq 1$ such that $\|\bga_{\ell_1,k_1}\|\neq 0$ and $k_2\geq 1$ such that $\|\bga_{\ell_2,k_2}\|\neq 0$. By condition \eqref{m:pos}, the limit in Theorem~\ref{thm:asymp-Halt-general} is positive, and the consistency follows.
\end{proof}

\begin{proof}[Proof of Proposition~\ref{prp:gkr-asymp-Hlocalt}]
    By Lemma~\ref{lemma:unif-taylor}, for each $\ell=1,\ldots,c$,
    \begin{align}\label{eq:F-taylor}
        F_{\ell,N}(\theta)
        &=F(\theta)+N^{-1/2}\delta_{\ell}\bigl(f(\theta)-f(0)\bigr)+R_{\ell,N}^{F}(\theta),
    \end{align}
    where $\sup_{\theta\in[0,2\pi]}|R_{\ell,N}^{F}(\theta)|=o(N^{-1/2})$.
    Since the periodic extension of $f$ belongs to $C^1(\R)$, the functions $f$ and $f'$ are bounded on $[0,2\pi]$.
    Hence,
    \begin{align}\label{eq:H-taylor}
        H_N(\theta)
        &=\sum_{\ell=1}^{c}\pi_{\ell,N}F_{\ell,N}(\theta)\nonumber\\
        &=F(\theta)+\bigl(f(\theta)-f(0)\bigr)\sum_{\ell=1}^{c}\pi_{\ell,N}\Delta_{\ell,N}
        +\sum_{\ell=1}^{c}\pi_{\ell,N}R_{\ell,N}^{F}(\theta)\nonumber\\
        &=F(\theta)+N^{-1/2}\bigl(f(\theta)-f(0)\bigr)\bpi_N^{\top}\bdelta
        +R_{N}^{H}(\theta),
    \end{align}
    with $R_{N}^{H}(\theta):=\sum_{\ell=1}^{c}\pi_{\ell,N}R_{\ell,N}^{F}(\theta)$, $\sup_{\theta\in[0,2\pi]}|R_{N}^{H}(\theta)|=o(N^{-1/2})$, and $\bpi_N:=(\pi_{1,N},\ldots,\pi_{c,N})^{\top}$.

    Let $\psi_{k,r}(u):=g_{k,r}(2\pi u)$. Then, $\psi'_{k,r}(u)=2\pi g_{k,r}'(2\pi u)$, 
    \begin{align}\label{eq:int-psi-0}
        \int_{0}^{2\pi}\psi_{k,r}(F(\theta))\,\rd F(\theta)=\int_{0}^{1}g_{k,r}(2\pi u)\,\rd u=0,
    \end{align}
    and 
    \begin{align}\label{eq:int-psi-1}
        \int_{0}^{2\pi}\psi'_{k,r}(F(\theta))\,\rd F(\theta)=2\pi\int_{0}^{1}g'_{k,r}(2\pi u)\,\rd u=0.
    \end{align}

    To prove the asymptotic normality of $N^{1/2}(\bG_N-\bmu_N)$, where $\bmu_N:=(\mu_{k,r,\ell,N})$ is indexed conformably with $\bG_N$, we apply Lemma~\ref{lemma:gkr-clt-moving}. To that aim, we compute $\mu_{k,r,\ell,N}$ and the limit of $I_{k,k',r,r',N}^{(i,j,\ell)}$ in \eqref{eq:Ilim}.
    For the first, we apply~\eqref{eq:H-taylor} and Taylor's theorem to $\psi_{k,r}$ around $F(\theta)\in[0,1]$,
    \begin{align*}
        \psi_{k,r}(H_N(\theta))=\psi_{k,r}(F(\theta)) + \psi'_{k,r}(F(\theta))(H_N(\theta)-F(\theta)) + R_{k,r,N}^{\psi}(\theta),
    \end{align*}
    where, for each $\theta\in[0,2\pi]$, there exists $\xi_{N,\theta}$ between $F(\theta)$ and $H_N(\theta)$ such that
    \begin{align*}
        R_{k,r,N}^{\psi}(\theta)=\frac{1}{2}\psi''_{k,r}(\xi_{N,\theta})\big(H_N(\theta)-F(\theta)\big)^2.
    \end{align*}
    Therefore,
    \begin{align*}
        \sup_{\theta\in[0,2\pi]}|R_{k,r,N}^{\psi}(\theta)|
        \leq \frac{1}{2}\sup_{u\in[0,1]}|\psi''_{k,r}(u)|\sup_{\theta\in[0,2\pi]}|H_N(\theta)-F(\theta)|^2
        =O(N^{-1})
        =o(N^{-1/2}),
    \end{align*}
    since $\psi_{k,r}\in C^2([0,1])$ and $\sup_{\theta\in[0,2\pi]}|H_N(\theta)-F(\theta)|=O(N^{-1/2})$, which yields
    \begin{align}
        \mu_{k,r,\ell,N}
        :=\;&\int_{0}^{2\pi}\psi_{k,r}\big(H_N(\theta)\big)\,\rd F_{\ell,N}(\theta)\nonumber\\
        =\;&\int_{0}^{2\pi}\psi_{k,r}\Big(F(\theta)+N^{-1/2}\big(f(\theta)-f(0)\big) \bpi_N^{\top}\bdelta+R_{N}^{H}(\theta)\Big)\,\rd F_{\ell,N}(\theta)\nonumber\\
        =\;&\int_{0}^{2\pi}\psi_{k,r}\big(F(\theta)\big)\,\rd F_{\ell,N}(\theta) + \int_{0}^{2\pi}\psi'_{k,r}\big(F(\theta)\big)\Big(N^{-1/2}\big(f(\theta)-f(0)\big) \bpi_N^{\top}\bdelta+R_{N}^{H}(\theta)\Big)\,\rd F_{\ell,N}(\theta)\nonumber\\
        &+ \int_{0}^{2\pi}R_{k,r,N}^{\psi}(\theta)\,\rd F_{\ell,N}(\theta)\nonumber\\
        =\;&\int_{0}^{2\pi}\psi_{k,r}\big(F(\theta)\big)\,\rd F_{\ell,N}(\theta) + N^{-1/2}\bpi_N^{\top}\bdelta\int_{0}^{2\pi}\psi'_{k,r}\big(F(\theta)\big)\big(f(\theta)-f(0)\big) \,\rd F_{\ell,N}(\theta)\nonumber\\
        &+ \int_{0}^{2\pi}\psi'_{k,r}\big(F(\theta)\big)R_{N}^{H}(\theta)\,\rd F_{\ell,N}(\theta) + o(N^{-1/2})\nonumber\\
        =\;&\int_{0}^{2\pi}\psi_{k,r}\big(F(\theta)\big)\,\rd F_{\ell,N}(\theta) + N^{-1/2}\bpi_N^{\top}\bdelta\int_{0}^{2\pi}\psi'_{k,r}\big(F(\theta)\big)\big(f(\theta)-f(0)\big) \,\rd F_{\ell,N}(\theta)\nonumber\\
        &+ o(N^{-1/2}),\label{eq:mu-localt-1}
    \end{align}
    where in the last equation we used that $\psi'_{k,r}$ is bounded on $[0,1]$, that $\sup_{\theta\in[0,2\pi]}|R_{N}^{H}(\theta)|=o(N^{-1/2})$, and that $F_{\ell,N}$ is a probability measure, so that
    \begin{align*}
        \int_{0}^{2\pi}\psi'_{k,r}\big(F(\theta)\big)R_{N}^{H}(\theta)\,\rd F_{\ell,N}(\theta)=o(N^{-1/2}).
    \end{align*}
    Applying the expansion in Lemma~\ref{lemma:unif-taylor} to $f_{\ell,N}$ with $\Delta=\Delta_{\ell,N}$, it follows that
    \begin{align}\label{eq:taylor-fl}
        f_{\ell,N}(\theta)=f(\theta) + N^{-1/2}f'(\theta)\delta_\ell + R_{\ell,N}^{f}(\theta),\quad\text{with}\quad \sup_{\theta\in[0,2\pi]}|R_{\ell,N}^{f}(\theta)|=o(N^{-1/2}),
    \end{align}
    and, thus,
    \begin{align}
        \eqref{eq:mu-localt-1}=\;&\int_{0}^{2\pi}\psi_{k,r}\big(F(\theta)\big) \Big(f(\theta) + N^{-1/2} f'(\theta)\delta_{\ell} + R_{\ell,N}^{f}(\theta)\Big)\,\rd \theta\nonumber\\
        &+ N^{-1/2}\bpi_N^{\top}\bdelta\int_{0}^{2\pi}\psi'_{k,r}\big(F(\theta)\big)\big(f(\theta)-f(0)\big) \Big(f(\theta) + N^{-1/2}f'(\theta)\delta_{\ell} + R_{\ell,N}^{f}(\theta)\Big)\,\rd \theta + o(N^{-1/2})\nonumber\\
        =\;&\int_{0}^{2\pi}\psi_{k,r}\big(F(\theta)\big) \,\rd F(\theta)+ N^{-1/2}\delta_{\ell}\int_{0}^{2\pi}\psi_{k,r}\big(F(\theta)\big)f'(\theta)\,\rd \theta + \int_{0}^{2\pi}\psi_{k,r}\big(F(\theta)\big)R_{\ell,N}^{f}(\theta)\,\rd \theta\nonumber\\
        &+ N^{-1/2}\bpi_N^{\top}\bdelta\int_{0}^{2\pi}\psi'_{k,r}\big(F(\theta)\big)\big(f(\theta)-f(0)\big)\,\rd F(\theta)\nonumber\\
        &+ N^{-1/2}\bpi_N^{\top}\bdelta\int_{0}^{2\pi}\psi'_{k,r}\big(F(\theta)\big)\big(f(\theta)-f(0)\big)R_{\ell,N}^{f}(\theta)\,\rd \theta + o(N^{-1/2})\nonumber\\
        =\;&\int_{0}^{2\pi}\psi_{k,r}\big(F(\theta)\big) \,\rd F(\theta)+ N^{-1/2}\delta_{\ell}\int_{0}^{2\pi}\psi_{k,r}\big(F(\theta)\big)f'(\theta)\,\rd \theta\nonumber\\
        &+ N^{-1/2}\bpi_N^{\top}\bdelta\int_{0}^{2\pi}\psi'_{k,r}\big(F(\theta)\big)\big(f(\theta)-f(0)\big)\,\rd F(\theta) + o(N^{-1/2}),
        \label{eq:mu-localt-2}
    \end{align}
    where the third equality uses that $\psi_{k,r}$ and $\psi'_{k,r}$ are bounded on $[0,1]$, that $f$ is bounded on $[0,2\pi]$, and that $\sup_{\theta\in[0,2\pi]}|R_{\ell,N}^{f}(\theta)|=o(N^{-1/2})$. Consequently, the terms
    \begin{align*}
        N^{-1/2}\bpi_N^{\top}\bdelta\int_{0}^{2\pi}\psi'_{k,r}\big(F(\theta)\big)\big(f(\theta)-f(0)\big)R_{\ell,N}^{f}(\theta)\,\rd \theta\quad\text{and}\quad\int_{0}^{2\pi}\psi_{k,r}\big(F(\theta)\big)R_{\ell,N}^{f}(\theta)\,\rd \theta
    \end{align*}
    are both $o(N^{-1/2})$.
    Due to~\eqref{eq:int-psi-0} and~\eqref{eq:int-psi-1}, it holds that
    \begin{align*}
        \eqref{eq:mu-localt-2}=\;&N^{-1/2}\delta_{\ell}\int_{0}^{2\pi}\psi_{k,r}(F(\theta))f'(\theta)\,\rd \theta + N^{-1/2}\bpi_N^{\top}\bdelta\int_{0}^{2\pi}\psi'_{k,r}(F(\theta))f^2(\theta)\,\rd \theta + o(N^{-1/2})\nonumber\\
        =\;&N^{-1/2}(\bpi_N-\be_{\ell})^{\top}\bdelta\int_{0}^{2\pi}\psi'_{k,r}(F(\theta))f^2(\theta)\,\rd \theta + o(N^{-1/2})\nonumber\\
        =\;&N^{-1/2}(\bpi_N-\be_{\ell})^{\top}\bdelta\beta_{k,r} + o(N^{-1/2}),
    \end{align*}
    where the second equality uses integration by parts, yielding
    \begin{align*}
        \int_{0}^{2\pi}\psi_{k,r}(F(\theta))f'(\theta)\,\rd \theta&=-\int_{0}^{2\pi} \psi_{k,r}'(F(\theta)) f(\theta)^2 \,\rd \theta,
    \end{align*}
    after noting that $\psi_{k,r}(1)=\psi_{k,r}(0)$ and $f(0)=f(2\pi)$.
    
    We now compute $I_{k,k',r,r',N}^{(i,j,\ell)}$. Simplifying the expansion from~\eqref{eq:F-taylor}, we can write
    \begin{align*}
        F_{\ell,N}(\theta)
        &=F(\theta)+\widetilde{R}_{\ell,N}^{F}(\theta),\quad\text{with}\quad \sup_{\theta\in[0,2\pi]}|\widetilde{R}_{\ell,N}^{F}(\theta)|=O(N^{-1/2}),
    \end{align*}
    which yields
    \begin{align}
        I_{k,k',r,r',N}^{(i,j,\ell)}=\;&\iint_{0<x<y<2\pi} F_{\ell,N}(x)(1-F_{\ell,N}(y)) \psi'_{k,r}\big(H_N(x)\big)\psi'_{k',r'}\big(H_N(y)\big)\,\rd F_{i,N}(x)\,\rd F_{j,N}(y)\nonumber\\
        &+ \iint_{0<y<x<2\pi} F_{\ell,N}(y)(1-F_{\ell,N}(x)) \psi'_{k,r}\big(H_N(x)\big) \psi'_{k',r'}\big(H_N(y)\big)\,\rd F_{i,N}(x)\,\rd F_{j,N}(y)\nonumber\\
        =\;&\iint_{0<x<y<2\pi} \Big(F(x)+\widetilde{R}_{\ell,N}^{F}(x)\Big)\Big(1-F(y)-\widetilde{R}_{\ell,N}^{F}(y)\Big)\nonumber\\
        &\qquad\times \psi'_{k,r}\big(H_N(x)\big)\psi'_{k',r'}\big(H_N(y)\big)\,\rd F_{i,N}(x)\,\rd F_{j,N}(y)\nonumber\\
        &+ \iint_{0<y<x<2\pi} \Big(F(y)+\widetilde{R}_{\ell,N}^{F}(y)\Big)\Big(1-F(x)-\widetilde{R}_{\ell,N}^{F}(x)\Big)\nonumber\\
        &\qquad\times \psi'_{k,r}\big(H_N(x)\big) \psi'_{k',r'}\big(H_N(y)\big)\,\rd F_{i,N}(x)\,\rd F_{j,N}(y).\label{eq:Ilim-localt-1}
    \end{align}
    By~\eqref{eq:F-taylor} and~\eqref{eq:taylor-fl}, there exist functions $A_{\ell,N}^{(1)},A_{\ell,N}^{(2)},B_{i,j,N}:[0,2\pi]^2\to\R$ such that
    \begin{align*}
        F_{\ell,N}(x)\bigl(1-F_{\ell,N}(y)\bigr)&=F(x)\bigl(1-F(y)\bigr)+A_{\ell,N}^{(1)}(x,y),\\
        F_{\ell,N}(y)\bigl(1-F_{\ell,N}(x)\bigr)&=F(y)\bigl(1-F(x)\bigr)+A_{\ell,N}^{(2)}(x,y),\\
        f_{i,N}(x)f_{j,N}(y)&=f(x)f(y)+B_{i,j,N}(x,y),
    \end{align*}
    with
    \begin{align*}
        \sup_{x,y\in[0,2\pi]}\Big(|A_{\ell,N}^{(1)}(x,y)|+|A_{\ell,N}^{(2)}(x,y)|+|B_{i,j,N}(x,y)|\Big)=O(N^{-1/2}).
    \end{align*}
    Therefore,
    \begin{align}
        \eqref{eq:Ilim-localt-1}=\;&\iint_{0<x<y<2\pi} \Big(F(x)\big(1-F(y)\big)+A_{\ell,N}^{(1)}(x,y)\Big)\psi'_{k,r}\big(H_N(x)\big)\psi'_{k',r'}\big(H_N(y)\big)\nonumber\\
        &\qquad\times\Big(f(x)f(y)+B_{i,j,N}(x,y)\Big)\,\rd x\,\rd y\nonumber\\
        &+ \iint_{0<y<x<2\pi} \Big(F(y)\big(1-F(x)\big)+A_{\ell,N}^{(2)}(x,y)\Big)\psi'_{k,r}\big(H_N(x)\big)\psi'_{k',r'}\big(H_N(y)\big)\nonumber\\
        &\qquad\times\Big(f(x)f(y)+B_{i,j,N}(x,y)\Big)\,\rd x\,\rd y.\label{eq:Ilim-localt-2}
    \end{align}
    Also, since $\psi'_{k,r}\in C^1([0,1])$ and $\sup_{\theta\in[0,2\pi]}|H_N(\theta)-F(\theta)|=O(N^{-1/2})$, there exists $C_{k,r,N}:[0,2\pi]\to\R$ such that
    \begin{align*}
        \psi'_{k,r}(H_N(\theta))=\psi'_{k,r}(F(\theta))+C_{k,r,N}(\theta)
        \quad\text{and}\quad
        \sup_{\theta\in[0,2\pi]}|C_{k,r,N}(\theta)|=O(N^{-1/2}).
    \end{align*}
    Since $F$, $f$, $\psi'_{k,r}$, and $\psi'_{k',r'}$ are bounded on their domains, these uniform estimates imply that there exists a function $R_N:[0,2\pi]^2\to\R$ with $\sup_{x,y\in[0,2\pi]}|R_N(x,y)|=O(N^{-1/2})$ such that
    \begin{align}
        \eqref{eq:Ilim-localt-2}=\;&\iint_{0<x<y<2\pi}\Big(F(x)\big(1-F(y)\big)f(x)f(y)\psi'_{k,r}(F(x))\psi'_{k',r'}(F(y))+R_N(x,y)\Big)\,\rd x\,\rd y\nonumber\\
        &+\iint_{0<y<x<2\pi}\Big(F(y)\big(1-F(x)\big)f(x)f(y)\psi'_{k,r}(F(x))\psi'_{k',r'}(F(y))+R_N(x,y)\Big)\,\rd x\,\rd y.\label{eq:Ilim-localt-3}
    \end{align}
    Since the integration domain has finite area, $\iint_{(0,2\pi)^2}|R_N(x,y)|\,\rd x\,\rd y=O(N^{-1/2})$, and therefore
    \begin{align*}
        \eqref{eq:Ilim-localt-3}=\;&\iint_{0<x<y<2\pi} F(x)\big(1 - F(y)\big)\psi'_{k,r}(F(x))\psi'_{k',r'}(F(y)) \,\rd F(x)\,\rd F(y)\nonumber\\
        &+ \iint_{0<y<x<2\pi} F(y)\big(1 - F(x)\big)\psi'_{k,r}(F(x))\psi'_{k',r'}(F(y))\,\rd F(x)\,\rd F(y)+ O(N^{-1/2}).
    \end{align*}
    
    Thus, by \eqref{eq:Ilim-H0},
    \begin{align*}
        \lim_{N\to\infty} I_{k,k',r,r',N}^{(i,j,\ell)}&=\delta_{kk'}\delta_{rr'},
    \end{align*}
    and the assumptions of Lemma~\ref{lemma:gkr-clt-moving} are satisfied. To obtain the covariance matrix, we follow the computations done in the proof of Proposition~\ref{prp:gkr-asymp-H0}, and we obtain~\eqref{eq:sigma0_diag},~\eqref{eq:sigma0_diag2} and~\eqref{eq:sigma0_off}, proving that the limiting covariance matrix is the same as under $\Hcal_0$, that is, $\bSigma_0$. Hence, $N^{1/2}(\bG_N - \bmu_N)\inlaw \mathcal{N}_D(\mathbf{0},\bSigma_0)$.
    
    Letting $\bbeta:=\lim_{N\to\infty}N^{1/2}\bmu_N$, we have for each $k=1,\ldots,K$, $r=1,2$, and $\ell=1,\ldots,c$,
    \begin{align*}
        \eta_{k,r,\ell}=\lim_{N\to\infty} N^{1/2}\mu_{k,r,\ell,N} = \beta_{k,r}(\bpi-\be_{\ell})^{\top}\bdelta,
    \end{align*}
    since $\bpi_N\to\bpi$.
     Then, by Slutsky's theorem,
    \begin{align*}
        N^{1/2}\bG_N = N^{1/2}(\bG_N - \bmu_N) + N^{1/2}\bmu_N\inlaw \mathcal{N}_D(\bbeta,\bSigma_0)
    \end{align*}
    and the result is proven.
\end{proof}

\begin{proof}[Proof of Theorem~\ref{thm:asymp-Hlocalt-general}]
    Since $m$ is continuous due to \eqref{m:lip}, writing $n_\ell G^2_{k,r,\ell,N}=\pi_{\ell,N}(N^{1/2}G_{k,r,\ell,N})^2$ and noting $\bpi_N\to\bpi$, the result follows from Proposition~\ref{prp:gkr-asymp-Hlocalt}, Slutsky's theorem, and the continuous mapping theorem, exactly as in the fixed-$K$ step of the proof of Theorem~\ref{thm:asymp-H0-general}.
\end{proof}

\begin{proof}[Proof of Corollary~\ref{cor:asymp-Hlocalt}]
    Considering $m_{\rm sum}(\bx):=\sum_{\ell=1}^{c}x_\ell$, we only need to show that
    \begin{align}\label{eq:weak-limit-eq-Hlocalt}
        T_{\infty,c,K}^{\phi, m_{\rm sum},{\rm loc}}
        \equald T_{\infty,c,K}^{\phi,{\rm loc}}.
    \end{align}
    We can write
    \begin{align*}
        T_{\infty,c,K}^{\phi,m_{\rm sum},{\rm loc}}=
        2^{-1}\bZ^{\top}\big(\operatorname{diag}(b_1(\phi),\ldots,b_K(\phi))\otimes\bI_2\otimes\bPi\big)\bZ,
    \end{align*}
    where $\bZ\sim \mathcal{N}_{D}(\bbeta,\bSigma_0)$.
    
    Due to the diagonal block structure, we work with each block $\bZ_{k,r}\sim\mathcal{N}_{c}(\bbeta_{k,r},\bS)$ with $\bS=\diag{1/\bpi}-\mathbf{1}_{c}\mathbf{1}_{c}^{\top}$ and $\bbeta_{k,r}=(\eta_{k,r,1},\ldots,\eta_{k,r,c})^{\top}$, so that
    \begin{align*}
        \bZ^{\top}\big(\operatorname{diag}(b_1(\phi),\ldots,b_K(\phi))\otimes\bI_2\otimes\bPi\big)\bZ=\sum_{k=1}^{K}\sum_{r=1}^{2}b_k(\phi) \bZ_{k,r}^{\top}\bPi\bZ_{k,r}.
    \end{align*}
    Since $\bZ\sim\mathcal{N}_D(\bbeta,\bSigma_0)$ with $\bSigma_0=\bI_{2K}\otimes \bS$, the vectors $\bZ_{k,r}$ are mutually independent.
    
    Choose $\bbeta_{k,r}^{\ast}$ such that $\bbeta_{k,r}=\bS^{1/2}\bbeta_{k,r}^{\ast}$, which is possible because $\bbeta_{k,r}\in\operatorname{Im}(\bS^{1/2})$. To prove this fact, note that $\bS$ and $\bS^{1/2}$ are positive semidefinite and symmetric matrices. Then, $\operatorname{Im}(\bS)=(\ker(\bS))^{\perp}$, and $\operatorname{Im}(\bS^{1/2})=\operatorname{Im}(\bS)$. Noting that $\ker(\bS)=\operatorname{span}\{\bpi\}$, it follows that $\operatorname{Im}(\bS)=\bpi^{\perp}$. Finally, observe that $\bbeta_{k,r}\perp \bpi$, since
    \begin{align*}
        \bpi^{\top}\bbeta_{k,r}&=\beta_{k,r}\sum_{\ell=1}^{c}\pi_\ell(\bpi - \be_\ell)^{\top}\bdelta=0,
    \end{align*}
    and the statement follows.
    
    Then, note that $\bZ_{k,r}\equald\bS^{1/2}\bW_{k,r}$ where $\{\bW_{k,r}:1\leq k\leq K,\ r=1,2\}$ are mutually independent and $\bW_{k,r}\sim\mathcal{N}_c(\bbeta_{k,r}^{\ast}, \bI_c)$, and we have
    \begin{align*}
        \bZ_{k,r}^{\top}\bPi\bZ_{k,r}\equald\bW_{k,r}^{\top}(\bS^{1/2})^{\top}\bPi\bS^{1/2}\bW_{k,r}.
    \end{align*}
    By the diagonalization of $\bD:=(\bS^{1/2})^{\top}\bPi\bS^{1/2}$ carried out in the proof of Corollary~\ref{cor:asymp-H0-sum}, there exist an orthogonal matrix $\bU$ and $\bLambda=\diag{\bI_{c-1},0}$ such that
    \begin{align*}
        \bW_{k,r}^{\top}\bD\bW_{k,r}=(\bU\bW_{k,r})^{\top}\bLambda(\bU\bW_{k,r}).
    \end{align*}
    
    Hence, $\bY_{k,r}:=\bU\bW_{k,r}\sim\mathcal{N}_c(\bU\bbeta_{k,r}^{\ast}, \bI_{c})$,
    \begin{align*}
        \bY_{k,r}^{\top}\bLambda\bY_{k,r}=\sum_{j=1}^{c-1}Y_{k,r,j}^2\sim \chi^2_{c-1}(\lambda_{k,r}),
    \end{align*}
    and the noncentrality parameter is
    \begin{align*}
        \lambda_{k,r}=\|\bLambda\bU\bbeta_{k,r}^{\ast}\|^2&=(\bbeta_{k,r}^{\ast})^{\top}\bU^{\top}\bLambda\bU\bbeta_{k,r}^{\ast}=(\bbeta_{k,r}^{\ast})^{\top}(\bS^{1/2})^{\top}\bPi\bS^{1/2}\bbeta_{k,r}^{\ast}\\
        &=\bbeta_{k,r}^{\top}\bPi\bbeta_{k,r}\\
        &=\beta_{k,r}^2\sum_{\ell=1}^{c}\pi_\ell((\bpi-\be_{\ell})^{\top}\bdelta)^2\\
        &=\beta_{k,r}^2\lrp{\bdelta^{\top}\bPi\bdelta-(\bpi^{\top}\bdelta)^2},
    \end{align*}
    which is clearly nonnegative.
    By independence across $(k,r)$,~\eqref{eq:weak-limit-eq-Hlocalt} follows.
\end{proof}

\section{Technical lemmas}\label{sec:lemma}

This section collects the technical lemmas used in Section~\ref{sec:proofs}. Lemma~\ref{lemma:puri} establishes a triangular-array central limit theorem under moving distributions, while Lemma~\ref{lemma:bSigma} provides the finite-sample covariance identities used to derive its limiting covariance matrix. By applying Lemma~\ref{lemma:puri} to linear combinations and using the Cramér--Wold device, Lemma~\ref{lemma:gkr-clt-moving} establishes a joint central limit theorem for centered finite vectors of empirical harmonic averages evaluated at the uniform scores. This result underlies the proofs of Propositions~\ref{prp:gkr-asymp-H0} and~\ref{prp:gkr-asymp-Hlocalt}.

Lemma~\ref{lemma:puri} appears as Lemma 4.1 in \cite{Mardia1970|SM}, where it is stated without proof. The published statement contains two apparent typographical errors. First, one of his conditions reads $J_{\ell,N}(1)=o(N^{-1/2})$, whereas the intended condition is $J_{\ell,N}(1)=o(N^{1/2})$, as stated in \eqref{cond:puri3}. Second, the entries of the limiting covariance matrix $\bSigma$ are expressed in terms of quantities that depend on $N$, which should be replaced by their corresponding limits. We provide a corrected statement and, for completeness, a proof based on a multivariate triangular-array central limit theorem, with the Lindeberg condition verified through a Lyapunov-type moment bound.

\begin{lemma}[Lemma 4.1 in \cite{Mardia1970|SM}]\label{lemma:puri}
    For each $N\geq c$ and each $\ell=1,\ldots,c$, let $X_{N,1}^{(\ell)}, \ldots,\allowbreak X_{N,n_{\ell}}^{(\ell)}$ be iid random variables with common continuous cdf $F_{\ell,N}$, with $n_\ell\geq 1$ and $N=\sum_{\ell=1}^{c}n_\ell$, and assume the samples are mutually independent. Letting $\pi_{\ell,N}=n_\ell/N$, assume that, for each $\ell=1,\ldots,c$, $\pi_{\ell,N}\to\pi_\ell\in(0,1)$ as $N\to\infty$. Denote by $\widehat{F}_{\ell,N}$ the empirical cdf based on the $\ell$th sample, and let $\widehat{H}_N(x)=\sum_{\ell=1}^{c}\pi_{\ell,N}\widehat{F}_{\ell,N}(x)$ and $H_N(x)=\sum_{\ell=1}^{c}\pi_{\ell,N}F_{\ell,N}(x)$. Consider
    $$
    T_{\ell,N} := \int_{-\infty}^{+\infty} J_{\ell,N}(\widehat{H}_N(x))\,\rd \widehat{F}_{\ell,N}(x)
    $$
    and
    $$
    \mu_{\ell,N} := \int_{-\infty}^{+\infty} J_{\ell}(H_N(x))\,\rd F_{\ell,N}(x),
    $$
    where for each $N$, $J_{1,N},\ldots,J_{c,N}$ are functions such that:
    \begin{enumerate}[label=($c$\textit{\arabic*}),ref=\textit{$c$\arabic*}]
        \item $J_\ell(H)=\lim_{N\to\infty} J_{\ell,N}(H)$ exists for $0<H<1$ and is not constant on $(0,1)$,\label{cond:puri1}
        \item $\int_{\{x:0<\widehat{H}_N(x)<1\}} (J_{\ell,N}(\widehat{H}_N) - J_\ell(\widehat{H}_N))\,\rd \widehat{F}_{\ell,N}(x) = o_{\rm P}(N^{-1/2})$,\label{cond:puri2}
        \item $J_{\ell,N}(1)=o(N^{1/2})$, \label{cond:puri3}
        \item $|\rd^r J_\ell(H)/\rd H^r|\leq K(H(1-H))^{-r-1/2+\delta_0}$ for $r=0,1,2$, some $\delta_0>0$, and some $K>0$ independent of $F_{1,N},\ldots,F_{c,N}$ or $N$.\label{cond:puri4}
    \end{enumerate}
    Let
    \begin{align*}
        A_{ij,N}^{(\ell)}(J_{\ell_1}, J_{\ell_2}):=&\;\iint_{x<y} F_{\ell,N}(x)(1-F_{\ell,N}(y))J_{\ell_1}'(H_N(x))J_{\ell_2}'(H_N(y))\,\rd F_{i,N}(x)\,\rd F_{j,N}(y)\\
        &+ \iint_{y<x} F_{\ell,N}(y)(1-F_{\ell,N}(x))J_{\ell_1}'(H_N(x))J_{\ell_2}'(H_N(y))\,\rd F_{i,N}(x)\,\rd F_{j,N}(y),
    \end{align*}
    and assume that $A_{ij}^{(\ell)}(J_{\ell_1},J_{\ell_2}):=\lim_{N\to\infty}A_{ij,N}^{(\ell)}(J_{\ell_1},J_{\ell_2})$ exists for every $\ell,i,j,\ell_1,\ell_2\in\{1,\ldots,c\}$ such that $A_{ij}^{(\ell)}(J_{\ell_1},J_{\ell_2})$ appears in \eqref{eq:sigma_diag}--\eqref{eq:sigma_off}.
    Let $\bT_{N}:=(T_{1,N},\ldots,T_{c,N})^{\top}$ and $\bmu_N:=(\mu_{1,N},\ldots,\mu_{c,N})^{\top}$.
    Then, 
    $$
    N^{1/2}(\bT_{N}-\bmu_N)\inlaw \mathcal{N}_c(\mathbf{0},\bSigma),
    $$
    where $\bSigma=(\sigma_{ij})_{i,j=1}^c$ is given by
    \begin{align}
        \sigma_{ii}=\sum_{\substack{j=1\\j\neq i}}^{c}\Big(\pi_{j}[A_{ii}^{(j)}(J_i, J_i) + (\pi_{j}/\pi_{i})A_{jj}^{(i)}(J_i,J_i)] + (1/\pi_{i})\sum_{\substack{k=1\\k\neq j\\k\neq i}}^{c} \pi_{j}\pi_{k}A_{jk}^{(i)}(J_i,J_i)\Big)\label{eq:sigma_diag}
    \end{align}
    and
    \begin{align}
        \sigma_{ij}=\sum_{\ell=1}^{c}\pi_{\ell}[A_{ij}^{(\ell)}(J_i,J_j)-A_{\ell j}^{(i)}(J_i,J_j) - A_{\ell i}^{(j)}(J_j,J_i)],\quad i\neq j.\label{eq:sigma_off}
    \end{align}
\end{lemma}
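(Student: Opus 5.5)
The plan follows the classical Chernoff--Savage / Puri--Sen linearization of rank statistics, adapted to moving distributions $F_{\ell,N}$ and the triangular-array regime. For each $\ell$, I will decompose
\begin{align*}
T_{\ell,N}-\mu_{\ell,N}
={}&\int J_\ell(H_N)\,\rd(\widehat F_{\ell,N}-F_{\ell,N})
+\int (\widehat H_N-H_N)\,J_\ell'(H_N)\,\rd F_{\ell,N}
+\sum_{j=1}^{4}C_{j,\ell,N}.
\end{align*}
The first two terms form the leading term $B_{\ell,N}$. The remainders $C_{j,\ell,N}$ collect the following pieces: the $J_{\ell,N}-J_\ell$ discrepancy, which is $o_{\rm P}(N^{-1/2})$ by \eqref{cond:puri2}; the contribution of the point where $\widehat H_N=1$, which is at most $J_{\ell,N}(1)/n_\ell=o(N^{-1/2})$ by \eqref{cond:puri3}; the second-order Taylor term of $J_\ell$ around $H_N$; and the cross term $\int(\widehat H_N-H_N)J_\ell'(H_N)\,\rd(\widehat F_{\ell,N}-F_{\ell,N})$.

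The main obstacle is the last two remainders. Near the endpoints, the growth bound \eqref{cond:puri4} makes $J_\ell'$ and $J_\ell''$ unbounded. I would handle them as Chernoff--Savage do, working on the event where $\widehat H_N$ stays within a fixed multiple of $H_N$ and of $1-H_N$. By the Bahadur-type bound and the weighted Glivenko--Cantelli inequality, this event has probability tending to one uniformly in the underlying continuous cdf. On it, I would bound $|\widehat H_N-H_N|\,(H_N(1-H_N))^{-1/2+\delta_0/2}=O_{\rm P}(N^{-1/2})$, using the fact that $\widehat H_N-H_N$ is a convex combination of independent uniform empirical processes composed with the $F_{\ell,N}$. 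Then $(H_N(1-H_N))^{-3/2+\delta_0}$ times the square is integrable against $\rd F_{\ell,N}\le \pi_{\ell,N}^{-1}\,\rd H_N$, which gives $o_{\rm P}(N^{-1/2})$. The cross term I would control by a second-moment computation: conditionally on the other samples it is a centered average, with variance $O(N^{-1}\cdot N^{-\delta_0})$. The tails near $0$ and $1$ are cut off at $H_N\in[\epsilon_N,1-\epsilon_N]$ with $\epsilon_N\to0$ slowly, and the pieces outside are handled with the same weight bounds. Crucially, every constant depends only on $K,\delta_0$ and $\min_\ell\pi_{\ell,N}$, never on $F_{\ell,N}$, so the triangular array causes no trouble.

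Next, I will rewrite $B_{\ell,N}$ as a sum of independent centered terms. Substituting $\widehat H_N-H_N=\sum_i\pi_{i,N}(\widehat F_{i,N}-F_{i,N})$ and exchanging integrals, I express $N^{1/2}B_{\ell,N}$ as
\begin{align*}
\sum_{i=1}^{c}\frac{N^{1/2}}{n_i}\sum_{s=1}^{n_i}\xi^{(\ell)}_{i,N}(X^{(i)}_{N,s}),
\end{align*}
where each $\xi^{(\ell)}_{i,N}$ is a centered function. For $i\ne\ell$ it is $\pi_{i,N}\int(\mathds 1\{x\le y\}-F_{i,N}(y))J_\ell'(H_N(y))\,\rd F_{\ell,N}(y)$; for $i=\ell$ it adds $J_\ell(H_N(x))-\mu_{\ell,N}$. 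Computing covariances of these terms directly (the content of Lemma~\ref{lemma:bSigma}) produces the double integrals $A^{(\ell)}_{ij,N}(J_{\ell_1},J_{\ell_2})$ with exactly the weights in \eqref{eq:sigma_diag}--\eqref{eq:sigma_off}. The key identity is that the covariance of $\mathds 1\{X\le x\}$ and $\mathds 1\{X\le y\}$ under $F_{\ell,N}$ equals $F_{\ell,N}(x\wedge y)(1-F_{\ell,N}(x\vee y))$, after one integration by parts for the diagonal terms. Then I pass to the limit using the assumed existence of $A^{(\ell)}_{ij}$ and $\pi_{\ell,N}\to\pi_\ell$.

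Finally, I apply Cramér--Wold. For $\bba\in\R^c$, $\bba^\top N^{1/2}\bB_N$ is a row-sum of independent centered triangular arrays whose variance converges to $\bba^\top\bSigma\bba$. If this limit is zero, Chebyshev's inequality suffices. Otherwise I verify Lyapunov's condition with exponent $2+\eta$. By \eqref{cond:puri4} with $r=0,1$, the summands are bounded by $C(H_N(1-H_N))^{-1/2+\delta_0}$ evaluated at the observation. Since $F_{i,N}\le\pi_{i,N}^{-1}H_N$ and $H_N(X)$ is dominated by a uniform law, a $(2+\eta)$ moment is finite uniformly in $N$ for $\eta<2\delta_0/(1-2\delta_0)$. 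The Lyapunov ratio is therefore $O(N^{-\eta/2})\to0$. Slutsky's theorem combined with the negligibility of the remainders yields the claim.
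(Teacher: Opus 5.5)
Your proposal follows the paper's proof in all essentials. It uses the same Chernoff--Savage split $T_{\ell,N}-\mu_{\ell,N}=B_{1N}^{(\ell)}+B_{2N}^{(\ell)}+o_{\rm P}(N^{-1/2})$. It rewrites the leading term as sums of independent centered functions of the observations; your $\xi^{(\ell)}_{i,N}$ correspond to the paper's $Z^{(m)}_{N,i}$ and $Y^{(m,\ell)}_{N,i}$. It relies on the same covariance identity behind Lemma~\ref{lemma:bSigma} and a $2+\eta$ Lyapunov bound. The differences are that you use Cram\'er--Wold where the paper applies a multivariate Lindeberg CLT per sample and combines the samples through characteristic functions, and that you sketch the remainder yourself where the paper defers to Puri's Section~10.

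There is one slip in that remainder sketch. Condition~\eqref{cond:puri4} with $r=2$ gives $|J_\ell''|\le K(H(1-H))^{-5/2+\delta_0}$, not $-3/2+\delta_0$. With your weighted bound, the second-order term is then only $O_{\rm P}(N^{-1})(H_N(1-H_N))^{-3/2}$, which is not integrable. Your truncation rescues it, provided two conditions hold:
\begin{itemize}
\item $N\epsilon_N\to\infty$, so that the middle part, of order $O_{\rm P}(N^{-1}\epsilon_N^{-1/2})$, is $o_{\rm P}(N^{-1/2})$;
\item $\epsilon_N^{1/2+\delta_0}=o(N^{-1/2})$, so that the crude weight bounds suffice on the discarded tails.
\end{itemize}
A choice that works is $\epsilon_N=N^{-\gamma}$ with $\gamma\in(1/(1+2\delta_0),1)$; a slowly vanishing $\epsilon_N$ does not.
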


\begin{proof}[Proof of Lemma~\ref{lemma:puri}]
    This proof follows the same general decomposition as the proof of Theorem 6.1 in \cite{Puri1964|SM}, but it differs from Puri's argument in two aspects. First, we work directly with the vector $N^{1/2}(\bT_N-\bmu_N)$, instead of first proving scalar asymptotic normality and then deriving the joint limit. Second, we keep the $N$-dependence of $F_{\ell,N}$, $H_N$, and the covariance terms throughout, so that moving alternatives are handled directly by a multivariate triangular-array CLT. In particular, unlike Puri's strategy through Lemma 5.1 and Corollary 5.1, no separate uniformity argument is needed to pass from fixed to moving distributions.

    Throughout, we will use the notation $C$ as a generic constant that does not depend on $N$ or the distributions $F_{1,N},\ldots,F_{c,N}$, and whose value may change from line to line. Since $c$ is fixed, $c\ge2$, $n_\ell\ge1$, and $\pi_{\ell,N}\to\pi_\ell\in(0,1)$ for all $\ell$, there exists $\pi_0>0$ such that $\pi_0\leq\pi_{\ell,N}\leq1-\pi_0$ for all $\ell=1,\ldots,c$ and all $N\geq c$.
    
    For each $\ell=1,\ldots,c$, the following decomposition holds,
    \begin{align*}
        T_{\ell, N}=\mu_{\ell, N}+B_{1N}^{(\ell)}+B_{2N}^{(\ell)}+R_{N}^{(\ell)},
    \end{align*}
    where
    \begin{align*}
        B_{1N}^{(\ell)}&=\int_{\{x:0<H_N(x)<1\}}J_{\ell}(H_N(x))\, \rd(\widehat{F}_{\ell,N}-F_{\ell,N})(x),\\
        B_{2N}^{(\ell)}&=\int_{\{x:0<H_N(x)<1\}}(\widehat{H}_N(x)-H_N(x))J_{\ell}'(H_N(x))\, \rd F_{\ell,N}(x).
    \end{align*}
    For each fixed $\ell=1,\ldots,c$, it can be shown that higher-order terms satify $R_N^{(\ell)}=o_{\rm P}(N^{-1/2})$. This follows by repeating the proof in \citet[Section~10 of the Appendix]{Puri1964|SM} using conditions~\eqref{cond:puri2}--\eqref{cond:puri4} and the corresponding decomposition of $R_N^{(\ell)}$. Hence, if $\bB_{1N}:=(B_{1N}^{(1)},\ldots,B_{1N}^{(c)})^\top$, $\bB_{2N}:=(B_{2N}^{(1)},\ldots,B_{2N}^{(c)})^\top$, and $\bR_N:=(R_N^{(1)},\ldots,R_N^{(c)})^\top$, then
    \begin{align*}
        N^{1/2}(\bT_N-\bmu_N)=N^{1/2}(\bB_{1N}+\bB_{2N})+N^{1/2}\bR_N,
    \end{align*}
    with $N^{1/2}\bR_N=o_{\rm P}(1)$ because $c$ is fixed. Therefore, by Slutsky's theorem, it suffices to prove that
    \begin{align*}
        N^{1/2}(\bB_{1N}+\bB_{2N})\inlaw \mathcal{N}_c(\mathbf{0},\bSigma).
    \end{align*}

    Integrating by parts coordinatewise, for each $\ell=1,\ldots,c$, we have
    \begin{align*}
        N^{1/2}(B_{1N}^{(\ell)}+B_{2N}^{(\ell)})=
        \dfrac{1}{\sqrt{n_\ell}}\sum_{i=1}^{n_\ell} Z_{N,i}^{(\ell)}
        -\sum_{\substack{m=1\\m\neq \ell}}^{c}\dfrac{1}{\sqrt{n_m}}\sum_{i=1}^{n_m} Y_{N,i}^{(m,\ell)},
    \end{align*}
    where, for $m\neq \ell$,
    \begin{align*}
        Y_{N,i}^{(m,\ell)}&:=\sqrt{\pi_{m,N}}\lrp{B_{\ell,N}(X_{N,i}^{(m)})-\Es{B_{\ell,N}(X_{N,i}^{(m)})}{F_{m,N}}},\quad i=1,\ldots,n_m,
    \end{align*}
    and, for $m=1,\ldots,c$,
    \begin{align*}
        Z_{N,i}^{(m)}&:=\dfrac{1}{\sqrt{\pi_{m,N}}}\lrp{C_{m,N}(X_{N,i}^{(m)}) - \Es{C_{m,N}(X_{N,i}^{(m)})}{F_{m,N}}},\quad i=1,\ldots,n_m,
    \end{align*}
    where
    \begin{align*}
        B_{\ell,N}(x):=\int_{x_{0,N}}^{x}J_{\ell}'(H_N(y))\,\rd F_{\ell,N}(y)\quad\text{and}\quad C_{m,N}(x):=J_m(H_N(x))-\pi_{m,N}B_{m,N}(x),
    \end{align*}
    with a deterministic $x_{0,N}$ chosen such that, arbitrarily, $H_N(x_{0,N})=1/2$. Note that it exists because $H_N$ is continuous.

    For each $m=1,\ldots,c$ and $i=1,\ldots,n_m$, define the vector
    \begin{align*}
        \bU_{N,i}^{(m)}:=\lrp{U_{N,i,1}^{(m)},\ldots,U_{N,i,c}^{(m)}}^\top,\quad\text{where}\quad
        U_{N,i,\ell}^{(m)}:=\begin{cases}
            Z_{N,i}^{(m)}/\sqrt{n_m}, & \ell=m,\\
            -Y_{N,i}^{(m,\ell)}/\sqrt{n_m}, & \ell\neq m.
        \end{cases}
    \end{align*}
    Then
    \begin{align*}
        N^{1/2}(\bB_{1N}+\bB_{2N})=\sum_{m=1}^{c}\sum_{i=1}^{n_m}\bU_{N,i}^{(m)}.
    \end{align*}
    By construction, for each fixed $m$, the vectors $\bU_{N,1}^{(m)},\ldots,\bU_{N,n_m}^{(m)}$ are iid, have mean $\mathbf{0}$, and the collections corresponding to different $m$ are independent.

    From condition~\eqref{cond:puri4} and proceeding as in the proof of Sub-Lemma 5.1 in \cite{Puri1964|SM}, for some $\eta>0$ it holds that
    \begin{align*}
    \Es{|B_{\ell,N}(X_{N,1}^{(m)})|^{2+\eta}}{F_{m,N}}<C,\quad m\neq\ell,
    \qquad\text{and}\qquad
    \Es{|C_{m,N}(X_{N,1}^{(m)})|^{2+\eta}}{F_{m,N}}<C.
    \end{align*}
    Therefore,
    \begin{align*}
        \Es{|Y_{N,1}^{(m,\ell)}|^{2+\eta}}{F_{m,N}}<C,\qquad \Es{|Z_{N,1}^{(m)}|^{2+\eta}}{F_{m,N}}<C,
    \end{align*}
    and each coordinate of $\bU_{N,i}^{(m)}$ has a finite moment of order $2+\eta$. Since $c$ is fixed,
    \begin{align*}
        \Es{\|\bU_{N,i}^{(m)}\|^{2+\eta}}{F_{m,N}}
        &\leq c^{\eta/2}n_m^{-(1+\eta/2)}\lrpbigg{\Es{|Z_{N,i}^{(m)}|^{2+\eta}}{F_{m,N}}
        +\sum_{\substack{\ell=1\\\ell\neq m}}^{c}\Es{|Y_{N,i}^{(m,\ell)}|^{2+\eta}}{F_{m,N}}}
        \leq C n_m^{-(1+\eta/2)},
    \end{align*}
    ensuring the finiteness of the second moment as well. Moreover, for every $m=1,\ldots,c$,
    \begin{align*}
        \lim_{N\to\infty}\sum_{i=1}^{n_m}\Es{\left\|\bU_{N,i}^{(m)}\right\|^{2+\eta}}{F_{m,N}}
        \leq \lim_{N\to\infty} C n_m^{-\eta/2}= 0.
    \end{align*}
    Note that this condition implies the multivariate Lindeberg condition, since for every $\varepsilon>0$,
    \begin{align*}
        \sum_{i=1}^{n_m}\Es{\|\bU_{N,i}^{(m)}\|^2\mathds{1}_{\{\|\bU_{N,i}^{(m)}\|>\varepsilon\}}}{F_{m,N}}
        &\leq \varepsilon^{-\eta}\sum_{i=1}^{n_m}\Es{\|\bU_{N,i}^{(m)}\|^{2+\eta}}{F_{m,N}}\to 0.
    \end{align*}

    For each fixed $m$, let
    \begin{align*}
        \bSigma_N^{(m)}:=\E{\lrp{\sum_{i=1}^{n_m}\bU_{N,i}^{(m)}}\lrp{\sum_{i=1}^{n_m}\bU_{N,i}^{(m)}}^{\top}}.
    \end{align*}
    By Lemma~\ref{lemma:bSigma}, every entry of $\bSigma_N^{(m)}$ is a finite linear combination of quantities of the form $A_{ij,N}^{(m)}(J_r,J_s)$, with coefficients involving $\pi_{j,N}$, products $\pi_{j,N}\pi_{k,N}$, and the factor $1/\pi_{m,N}$. Since $\pi_{j,N}\to\pi_j$, $\pi_{m,N}\to\pi_m>0$, and the corresponding limits $A_{ij}^{(m)}(J_r,J_s)$ exist by hypothesis, it follows that $\bSigma_N^{(m)}\to\bSigma^{(m)}$ entrywise for some matrix $\bSigma^{(m)}$.
    Thus, the multivariate CLT for triangular arrays, see, e.g., Theorem 6.22 in \cite{Henze2024|SM}, gives for each $m=1,\ldots,c$,
    \begin{align*}
        \sum_{i=1}^{n_m}\bU_{N,i}^{(m)} \inlaw \mathcal{N}_c(\mathbf{0},\bSigma^{(m)}).
    \end{align*}

    Since the vectors corresponding to different $m$ are independent, it holds that
    for every $\mathbf{t}\in\R^c$,
    \begin{align*}
        \E{\exp\lrp{i\mathbf{t}^{\top}\lrp{\sum_{m=1}^{c}\sum_{i=1}^{n_m}\bU_{N,i}^{(m)}}}}
        &=\prod_{m=1}^{c}\E{\exp\lrp{i\mathbf{t}^{\top}\lrp{\sum_{i=1}^{n_m}\bU_{N,i}^{(m)}}}}\\
        &\to \prod_{m=1}^{c}\exp\lrp{-\frac12 \mathbf{t}^{\top}\bSigma^{(m)}\mathbf{t}}
        =\exp\lrp{-\frac12 \mathbf{t}^{\top}\bSigma \mathbf{t}},
    \end{align*}
    where $\bSigma:=\sum_{m=1}^{c}\bSigma^{(m)}$. Hence, by L\'evy's continuity theorem,
    \begin{align*}
        N^{1/2}(\bB_{1N}+\bB_{2N}) = \sum_{m=1}^{c}\sum_{i=1}^{n_m}\bU_{N,i}^{(m)} \inlaw \mathcal{N}_c(\mathbf{0},\bSigma).
    \end{align*}
    The diagonal entries of $\bSigma$ are given by
    \begin{align*}
        (\bSigma)_{\ell\ell}=\lim_{N\to\infty}\lrpBigg{\Vs{Z_{N,1}^{(\ell)}}{F_{\ell,N}}+\sum_{\substack{m=1\\m\neq \ell}}^{c}\Vs{Y_{N,1}^{(m,\ell)}}{F_{m,N}}},
    \end{align*}
    while for $\ell\neq \ell'$,
    \begin{align*}
        (\bSigma)_{\ell\ell'}&=\lim_{N\to\infty}\lrpBigg{\sum_{\substack{m=1\\m\neq \ell\\m\neq \ell'}}^{c}\Es{Y_{N,1}^{(m,\ell)}Y_{N,1}^{(m,\ell')}}{F_{m,N}}
        -\Es{Z_{N,1}^{(\ell)}Y_{N,1}^{(\ell,\ell')}}{F_{\ell,N}}
        -\Es{Y_{N,1}^{(\ell',\ell)}Z_{N,1}^{(\ell')}}{F_{\ell',N}}}.
    \end{align*}
    Using the covariance identities collected in Lemma~\ref{lemma:bSigma}, these yield precisely \eqref{eq:sigma_diag} and \eqref{eq:sigma_off}, and the result is proven.
\end{proof}

\begin{lemma}\label{lemma:gkr-clt-moving}
    Let $K\geq 1$ be fixed. For each $N\geq c$ and each $\ell=1,\ldots,c$, let $\Theta_{N,1}^{(\ell)}, \ldots, \Theta_{N,n_{\ell}}^{(\ell)}$ be iid random variables with common continuous cdf $F_{\ell,N}$ on $[0,2\pi)$, with $n_\ell\geq 1$ and $N=\sum_{\ell=1}^{c}n_\ell$, and assume the samples are mutually independent. Letting $\pi_{\ell,N}=n_\ell/N$, assume that, for each $\ell=1,\ldots,c$, $\pi_{\ell,N}\to\pi_\ell\in(0,1)$ as $N\to\infty$. Let $\widehat{F}_{\ell,N}$, $\widehat{H}_N$, and $H_N$ be as defined in Lemma~\ref{lemma:puri}. Also, let $\psi_{k,r}(u):=g_{k,r}(2\pi u)$ for $u\in[0,1]$, $k=1,\ldots,K$, and $r=1,2$.

    For $k=1,\ldots,K$, $r=1,2$, and $\ell=1,\ldots,c$, let
    \begin{align*}
        G_{k,r,\ell,N}
        &:=\dfrac{1}{n_\ell}\sum_{i=1}^{n_\ell}\psi_{k,r}(\widehat{H}_N(\Theta_{N,i}^{(\ell)}))
        =\int_{0}^{2\pi}\psi_{k,r}(\widehat{H}_N(\theta))\,\rd \widehat{F}_{\ell,N}(\theta),\\
        \mu_{k,r,\ell,N}
        &:=\int_{0}^{2\pi}\psi_{k,r}(H_N(\theta))\,\rd F_{\ell,N}(\theta).
    \end{align*}
    Let $D:=2Kc$, $\bG_N:=(\bG_{1,N}^{\top},\ldots, \bG_{K,N}^{\top})^{\top}\in\R^{D}$ with $\bG_{k,N}:=(\bG_{k,1,N}^{\top}, \bG_{k,2,N}^{\top})^{\top}$ where $\bG_{k,r,N}:=(G_{k,r,1,N},\ldots, G_{k,r,c,N})^{\top}$, and $\bmu_N:=(\mu_{k,r,\ell,N})\in\R^{D}$ indexed conformably.
    
    For $i,j,\ell=1,\ldots,c$, $k,k'=1,\ldots,K$, and $r,r'=1,2$, define
    \begin{align*}
        I_{k,k',r,r',N}^{(i,j,\ell)}:=&\;\iint_{0<x<y<2\pi} F_{\ell,N}(x)(1-F_{\ell,N}(y)) \psi'_{k,r}(H_N(x))\psi'_{k',r'}(H_N(y))\,\rd F_{i,N}(x)\,\rd F_{j,N}(y)\nonumber\\
        &+ \iint_{0<y<x<2\pi} F_{\ell,N}(y)(1-F_{\ell,N}(x)) \psi'_{k,r}(H_N(x)) \psi'_{k',r'}(H_N(y))\,\rd F_{i,N}(x)\,\rd F_{j,N}(y).
    \end{align*}
    Assume that, for every choice of indices $(i,j,\ell,k,k',r,r')$, the limit
    \begin{align}
        I_{k,k',r,r'}^{(i,j,\ell)}:=\lim_{N\to\infty}I_{k,k',r,r',N}^{(i,j,\ell)}\label{eq:Ilim}
    \end{align}
    exists as a finite real number. Then there exists a unique symmetric matrix $\bSigma\in\R^{D\times D}$ such that
    \begin{align*}
        N^{1/2}(\bG_N-\bmu_N)\inlaw \mathcal{N}_D(\mathbf{0},\bSigma).
    \end{align*}
\end{lemma}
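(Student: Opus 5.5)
We will deduce the lemma from Lemma~\ref{lemma:puri} combined with the Cram\'er--Wold device. Fix an arbitrary $\bba=(a_{k,r,\ell})\in\R^D$ and, for each $\ell=1,\ldots,c$, define the score function
$$J_{\ell,\bba}(u):=\sum_{k=1}^{K}\sum_{r=1}^{2}a_{k,r,\ell}\,\psi_{k,r}(u),\qquad u\in[0,1],$$
taking $J_{\ell,N}:=J_{\ell}:=J_{\ell,\bba}$, so there is no $N$-dependence. By linearity of the integrals, $\bba^\top N^{1/2}(\bG_N-\bmu_N)=N^{1/2}\sum_{\ell=1}^c (T_{\ell,N}-\mu_{\ell,N})$, where $T_{\ell,N}$ and $\mu_{\ell,N}$ are the quantities of Lemma~\ref{lemma:puri} built from $J_{\ell,\bba}$. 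The angular data on $[0,2\pi)$ fit that lemma after extending the cdfs by $0$ and $1$ outside $[0,2\pi)$.

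Next we check conditions \eqref{cond:puri1}--\eqref{cond:puri4}. Condition \eqref{cond:puri2} is trivial because $J_{\ell,N}=J_\ell$. Condition \eqref{cond:puri3} holds since $J_\ell(1)$ is a fixed finite constant. Condition \eqref{cond:puri4} holds because $J_\ell$ is a trigonometric polynomial, so $J_\ell,J_\ell',J_\ell''$ are bounded by a constant $C_{\bba}$. With $\delta_0=1/2$ the right-hand side $K(H(1-H))^{-r}$ is at least $K4^{r}$, so choosing $K$ large works. Condition \eqref{cond:puri1} needs $J_\ell$ to be nonconstant, which fails when $a_{\cdot,\cdot,\ell}=\mathbf 0$. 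In that case I would note that the lemma's conclusion remains valid for such a degenerate coordinate: $T_{\ell,N}=\mu_{\ell,N}=0$ identically, or, more robustly, the linearization in the proof of Lemma~\ref{lemma:puri} still applies to constant scores with all terms vanishing. So I simply drop those coordinates, or equivalently treat them as contributing $0$. I expect this degenerate-coordinate bookkeeping to be the main subtlety, together with justifying that the boundary behaviour at $H=0,1$ is harmless; the latter holds because $\psi_{k,r}$ is smooth and $1$-periodic.

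The existence of the limits $A^{(\ell)}_{ij}(J_{\ell_1,\bba},J_{\ell_2,\bba})$ follows by bilinearity. Indeed, $A^{(\ell)}_{ij,N}(J_{\ell_1,\bba},J_{\ell_2,\bba})=\sum_{k,k',r,r'}a_{k,r,\ell_1}a_{k',r',\ell_2}I^{(i,j,\ell)}_{k,k',r,r',N}$, which converges by assumption \eqref{eq:Ilim}. Lemma~\ref{lemma:puri} then gives $N^{1/2}(\bT_N-\bmu_N)\inlaw\mathcal N_c(\mathbf 0,\bSigma_{\bba})$. Applying the continuous map $\bx\mapsto\mathbf 1^\top\bx$ yields
$$\bba^\top N^{1/2}(\bG_N-\bmu_N)\inlaw\mathcal N\big(0,\sigma^2_{\bba}\big),\qquad \sigma_{\bba}^2:=\mathbf 1^\top\bSigma_{\bba}\mathbf 1.$$

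It remains to show that $\sigma^2_{\bba}$ is a quadratic form in $\bba$. From \eqref{eq:sigma_diag}--\eqref{eq:sigma_off}, every entry of $\bSigma_{\bba}$ is a linear combination, with coefficients depending only on $\bpi$, of the terms $A^{(\ell)}_{ij}(J_{\ell_1,\bba},J_{\ell_2,\bba})$. Each of these is bilinear in $\bba$ by the previous display, so $\sigma^2_{\bba}=Q(\bba)$ for a quadratic form $Q$. Polarization then defines a unique symmetric $\bSigma$ with $Q(\bba)=\bba^\top\bSigma\bba$, and $\bSigma$ is positive semidefinite because $Q$ is a limiting variance. By Cram\'er--Wold, since every linear combination converges to $\mathcal N(0,\bba^\top\bSigma\bba)$, we conclude $N^{1/2}(\bG_N-\bmu_N)\inlaw\mathcal N_D(\mathbf 0,\bSigma)$. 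Uniqueness is immediate because a Gaussian law determines its covariance matrix.
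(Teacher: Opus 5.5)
Your proposal is correct and follows essentially the same route as the paper. You fix $\bba$, apply Lemma~\ref{lemma:puri} with the $N$-free score functions $J_{\ell,\bba}$, and get the limits of the $A$-terms by bilinearity from \eqref{eq:Ilim}. You then read off $\sigma^2_{\bba}=\mathbf{1}_c^{\top}\bSigma_{\bba}\mathbf{1}_c$ as a quadratic form in $\bba$ and conclude by polarization and Cram\'er--Wold.

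The only difference is the degenerate case $\bba_\ell=\mathbf{0}$, where only your second justification is valid. The proof of Lemma~\ref{lemma:puri} does go through verbatim when $J_\ell\equiv 0$, since every term vanishes. Simply dropping coordinates is not available as a black-box use, because \eqref{cond:puri1} is required for every $\ell$. The paper keeps Lemma~\ref{lemma:puri} as a black box instead: it replaces each zero block by an arbitrary nonzero block and projects with $\mathbf{1}_{\cL}$. This works because each $\sigma_{ij}$ in \eqref{eq:sigma_diag}--\eqref{eq:sigma_off} depends only on $J_i$ and $J_j$.
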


\begin{proof}[Proof of Lemma~\ref{lemma:gkr-clt-moving}]
    Fix $\bba=(a_{k,r,\ell})\in\R^D$, indexed conformably with $\bG_N$. For each $\ell=1,\ldots,c$, let $\bba_\ell:=(a_{k,r,\ell})_{k=1,\ldots,K;\,r=1,2}\in\R^{2K}$ denote the block corresponding to sample $\ell$, and define
    \begin{align*}
        J_{\ell,\bba}(u):=\sum_{k=1}^{K}\sum_{r=1}^{2}a_{k,r,\ell}\psi_{k,r}(u),\qquad u\in[0,1].
    \end{align*}
    Also let
    \begin{align*}
        G_{\ell,\bba,N}
        &:=\int_{0}^{2\pi}J_{\ell,\bba}(\widehat{H}_N(\theta))\,\rd \widehat{F}_{\ell,N}(\theta),&
        \mu_{\ell,\bba,N}
        &:=\int_{0}^{2\pi}J_{\ell,\bba}(H_N(\theta))\,\rd F_{\ell,N}(\theta),
    \end{align*}
    and write $\bG_{\bba,N}:=(G_{1,\bba,N},\ldots,G_{c,\bba,N})^\top$ and $\bmu_{\bba,N}:=(\mu_{1,\bba,N},\ldots,\mu_{c,\bba,N})^\top$. Then
    \begin{align*}
        \bba^\top\bG_N=\sum_{\ell=1}^{c}\sum_{k=1}^{K}\sum_{r=1}^{2}a_{k,r,\ell}G_{k,r,\ell,N}
        =\sum_{\ell=1}^{c}G_{\ell,\bba,N},
    \end{align*}
    and therefore
    \begin{align*}
        N^{1/2}\bba^{\top}(\bG_N-\bmu_N)=N^{1/2}\mathbf{1}_{c}^{\top}(\bG_{\bba,N}-\bmu_{\bba,N}).
    \end{align*}

    Assume first that $\bba_\ell\neq\mathbf{0}$ for every $\ell=1,\ldots,c$. We apply Lemma~\ref{lemma:puri} to the vector $\bG_{\bba,N}$ with $J_{\ell,\bba,N}\equiv J_{\ell,\bba}$. Condition~\eqref{cond:puri1} holds because $J_{\ell,\bba}$ is a nontrivial finite Fourier combination and hence is not constant. Condition~\eqref{cond:puri2} holds because $J_{\ell,\bba}=J_{\ell,\bba,N}$. Condition~\eqref{cond:puri3} follows from $J_{\ell,\bba}(1)=O(1)=o(N^{1/2})$. Finally, condition~\eqref{cond:puri4} holds because $J_{\ell,\bba}$ and its first two derivatives are bounded on $[0,1]$, so for any fixed $\delta_0\in(0,1/2)$ the required bound follows.

    For $i,j,\ell,\ell_1,\ell_2\in\{1,\ldots,c\}$, the quantities appearing in Lemma~\ref{lemma:puri} satisfy
    \begin{align*}
        A_{ij,N}^{(\ell)}(J_{\ell_1,\bba},J_{\ell_2,\bba})
        =\sum_{k,k'=1}^{K}\sum_{r,r'=1}^{2}a_{k,r,\ell_1}a_{k',r',\ell_2}I_{k,k',r,r',N}^{(i,j,\ell)}.
    \end{align*}
    Since the limits of $I_{k,k',r,r',N}^{(i,j,\ell)}$ exist by hypothesis and the sums are finite, the corresponding limits $\smash{A_{ij}^{(\ell)}(J_{\ell_1,\bba},J_{\ell_2,\bba})}$ required by Lemma~\ref{lemma:puri} also exist. Therefore, $\smash{N^{1/2}(\bG_{\bba,N}-\bmu_{\bba,N})\inlaw \mathcal{N}_c(\mathbf{0},\bSigma_\bba)}$ for some covariance matrix $\bSigma_\bba$, and consequently $N^{1/2}\bba^\top(\bG_N-\bmu_N)\inlaw \mathcal{N}(0,\sigma^2_\bba)$,
    where $\sigma^2_\bba:=\mathbf{1}_{c}^{\top}\bSigma_\bba\mathbf{1}_{c}$. Indeed, substituting
    $
        A_{ij}^{(\ell)}(J_{\ell_1,\bba},J_{\ell_2,\bba})
        =\sum_{k,k'=1}^{K}\sum_{r,r'=1}^{2}a_{k,r,\ell_1}a_{k',r',\ell_2}I_{k,k',r,r'}^{(i,j,\ell)}
    $
    into \eqref{eq:sigma_diag}--\eqref{eq:sigma_off} shows that $\sigma^2_\bba$ is a quadratic form in $\bba$.

    Now suppose that $\bba_\ell=\mathbf{0}$ for some $\ell$, and let $\cL:=\{\ell\in\{1,\ldots,c\}:\bba_\ell\neq\mathbf{0}\}$.
    Assume $\cL$ is nonempty; otherwise $\bba=\mathbf{0}$ and there is nothing to prove. Choose $\tilde{\bba}$ so that $\tilde{\bba}_\ell=\bba_\ell$ for $\ell\in\cL$ and $\tilde{\bba}_\ell$ is any fixed nonzero block for $\ell\notin\cL$. The previous step yields $N^{1/2}(\bG_{\tilde{\bba},N}-\bmu_{\tilde{\bba},N})\inlaw \mathcal{N}_c(\mathbf{0},\bSigma_{\tilde{\bba}})$.
    
    Moreover, $\bba^\top(\bG_N-\bmu_N)
    =\sum_{\ell\in\cL}(G_{\ell,\tilde{\bba},N}-\mu_{\ell,\tilde{\bba},N})
    =\mathbf{1}_{\cL}^\top(\bG_{\tilde{\bba},N}-\bmu_{\tilde{\bba},N})$,
    where $\mathbf{1}_{\cL}:=(\mathds{1}_{\{\ell\in\cL\}})_{\ell=1}^{c}\in\R^{c}$. Hence, by the continuous mapping theorem, $N^{1/2}\bba^\top(\bG_N-\bmu_N)\inlaw \mathcal{N}(0,\sigma^2_\bba)$,
    with $\sigma^2_\bba:=\mathbf{1}_{\cL}^\top\bSigma_{\tilde{\bba}}\mathbf{1}_{\cL}=\sum_{(i,j)\in\cL\times\cL}\sigma_{ij,\tilde{\bba}}$ and $\sigma_{ij,\tilde{\bba}}$ given by \eqref{eq:sigma_diag}--\eqref{eq:sigma_off} using $A_{ij}^{(\ell)}(J_{\ell_1,\tilde{\bba}}, J_{\ell_2,\tilde{\bba}})$. Since $J_{\ell,\tilde{\bba}}=J_{\ell,\bba}$ for every $\ell\in\cL$, $\sigma^2_\bba=\sum_{(i,j)\in\cL\times\cL}\sigma_{ij,\bba}$. Moreover, since $J_{\ell,\bba}=0$ for every $\ell\notin\cL$, $\sigma_{\bba}^2$ coincides with the full-index homogeneous degree-two polynomial in $\bba$ that the previous case yields.

    Therefore, there exists a unique symmetric matrix $\bSigma\in\R^{D\times D}$ such that $\sigma^2_\bba=\bba^\top\bSigma\bba$, $\bba\in\R^{D}$.
    Since for every $\bba\in\R^{D}$,
    $N^{1/2}\bba^\top(\bG_N-\bmu_N)\inlaw \mathcal{N}(0,\bba^\top\bSigma\bba)$,
    the Cramér--Wold device yields
    \begin{align*}
        N^{1/2}(\bG_N-\bmu_N)\inlaw \mathcal{N}_D(\mathbf{0},\bSigma),
    \end{align*}
    which proves the result.
\end{proof}

\begin{lemma}\label{lemma:bSigma}
    Assume the notation and assumptions of Lemma~\ref{lemma:puri}. Fix $N\geq c$ and $m\in\{1,\ldots,c\}$. Define
    \begin{align*}
        Z_{N,1}^{(m)}&:=\dfrac{1}{\sqrt{\pi_{m,N}}}\lrp{C_{m,N}(X_{N,1}^{(m)}) - \Es{C_{m,N}(X_{N,1}^{(m)})}{F_{m,N}}}
    \end{align*}
    and
    \begin{align*}
        Y_{N,1}^{(m,\ell)}&:=\sqrt{\pi_{m,N}}\lrp{B_{\ell,N}(X_{N,1}^{(m)})-\Es{B_{\ell,N}(X_{N,1}^{(m)})}{F_{m,N}}},\qquad \ell\neq m,
    \end{align*}
    where
    \begin{align*}
        B_{r,N}(x):=\int_{x_{0,N}}^{x}J_{r}'(H_N(y))\,\rd F_{r,N}(y),\, r=1,\ldots,c,
    \text{ and }
        C_{m,N}(x):=J_m(H_N(x))-\pi_{m,N}B_{m,N}(x)
    \end{align*}
    with $x_{0,N}$ chosen so that $H_N(x_{0,N})=1/2$. Then,
    \begin{align*}
    \Es{Z_{N,1}^{(m)}Y_{N,1}^{(m,\ell)}}{F_{m,N}}&=\sum_{\substack{j=1\\j\neq m}}^{c}\pi_{j,N}
    A_{j\ell,N}^{(m)}(J_m,J_{\ell}),\qquad\ell\neq m,\\
    \Es{Y_{N,1}^{(m,\ell)}Y_{N,1}^{(m,\ell')}}{F_{m,N}}&=\pi_{m,N}A_{\ell\ell',N}^{(m)}(J_\ell, J_{\ell'}),\qquad\ell,\ell'\neq m,\\
    \Vs{Z_{N,1}^{(m)}}{F_{m,N}}&=\dfrac{1}{\pi_{m,N}}\sum_{\substack{j=1\\j\neq m}}^{c}\pi_{j,N}^2A_{jj,N}^{(m)}(J_m,J_m)+\dfrac{1}{\pi_{m,N}}\sum_{\substack{j=1\\j\neq m}}^{c}\sum_{\substack{k=1\\k\neq j\\k\neq m}}^{c}\pi_{j,N}\pi_{k,N}A_{jk,N}^{(m)}(J_m,J_m).
\end{align*}
\end{lemma}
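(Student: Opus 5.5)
These are covariance identities under a single distribution $F_{m,N}$, so the plan is a direct computation. Every quantity reduces to covariances of functions of the form $B_{r,N}(X)$ and $J_m(H_N(X))$ with $X\sim F_{m,N}$.

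The basic tool is the classical identity: for $X\sim F$ continuous and functions $u(x)=\int_{x_0}^x a\,\rd\mu_1$, $v(x)=\int_{x_0}^x b\,\rd\mu_2$,
\[
\operatorname{Cov}(u(X),v(X))=\iint_{x<y}F(x)(1-F(y))\,a(x)b(y)\,\rd\mu_1(x)\,\rd\mu_2(y)+\iint_{y<x}F(y)(1-F(x))\,a(x)b(y)\,\rd\mu_1(x)\,\rd\mu_2(y).
\]
This follows from writing $u(X)-u(x_0)=\int \mathds{1}_{\{x\le X\}}a(x)\,\rd\mu_1(x)$ (with sign conventions on either side of $x_0$), applying Fubini, and using $\operatorname{Cov}(\mathds{1}_{\{x\le X\}},\mathds{1}_{\{y\le X\}})=F(x\wedge y)(1-F(x\vee y))$. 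The moment bounds from condition~\eqref{cond:puri4}, obtained as in Sub-Lemma 5.1 of Puri, justify Fubini. With $F=F_{m,N}$ this gives directly
\[
\operatorname{Cov}\bigl(B_{\ell,N}(X),B_{\ell',N}(X)\bigr)=A^{(m)}_{\ell\ell',N}(J_\ell,J_{\ell'}).
\]
Multiplying by $\pi_{m,N}$ yields the second identity.

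For the other two identities, the key step is to rewrite $C_{m,N}$ in the same integral form. Since $H_N=\sum_j\pi_{j,N}F_{j,N}$ is continuous, the chain rule for Stieltjes integrals gives
\[
J_m(H_N(x))-J_m(1/2)=\int_{x_0}^x J_m'(H_N)\,\rd H_N=\sum_j\pi_{j,N}B^{(m)}_{j,N}(x),
\]
where $B^{(m)}_{j,N}(x):=\int_{x_0}^xJ_m'(H_N)\,\rd F_{j,N}$. The $j=m$ term is exactly $\pi_{m,N}B_{m,N}$, so
\[
C_{m,N}(x)=J_m(1/2)+\sum_{j\ne m}\pi_{j,N}B^{(m)}_{j,N}(x).
\]
Applying the covariance identity termwise, with $a=J_m'(H_N)$, $\mu_1=F_{j,N}$, and $b=J_\ell'(H_N)$, $\mu_2=F_{\ell,N}$, gives
\[
\operatorname{Cov}\bigl(B^{(m)}_{j,N}(X),B_{\ell,N}(X)\bigr)=A^{(m)}_{j\ell,N}(J_m,J_\ell).
\]
The factors $\pi_{m,N}^{-1/2}$ and $\pi_{m,N}^{1/2}$ cancel, which gives the first identity. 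For the variance, expanding the square of $\sum_{j\ne m}\pi_{j,N}B^{(m)}_{j,N}$ gives diagonal terms $\pi_{j,N}^2A^{(m)}_{jj,N}(J_m,J_m)$ and cross terms $\pi_{j,N}\pi_{k,N}A^{(m)}_{jk,N}(J_m,J_m)$ for $k\ne j,m$. The prefactor $1/\pi_{m,N}$ then comes from the definition of $Z$.

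The main obstacle is rigor at the endpoints. $J_m$ may be unbounded near $0$ and $1$, and the chain-rule step and the Fubini interchange need integrability. I would handle this first on truncated domains $\{\epsilon\le H_N\le 1-\epsilon\}$ and then pass to the limit by dominated convergence. The domination comes from the bound $|J'|\le K(H(1-H))^{-3/2+\delta_0}$ together with $F_{j,N}\le H_N/\pi_{j,N}$ and $1-F_{j,N}\le(1-H_N)/\pi_{j,N}$, so that the integrands are controlled by $(H_N(x)(1-H_N(y)))^{-1/2+\delta_0}$-type weights, which are integrable against $\rd H_N\otimes\rd H_N$.
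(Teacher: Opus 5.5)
Your proposal is correct and follows the paper's proof step for step. Both arguments rewrite $C_{m,N}=J_m(1/2)+\sum_{j\neq m}\pi_{j,N}B_{j,m,N}$ via the Stieltjes chain rule, use the centered indicator representation $\int(\mathds{1}_{\{y\le X\}}-(1-F_{m,N}(y)))J'(H_N(y))\,\mathrm{d}F_{j,N}(y)$, and apply Fubini with $\operatorname{Cov}(\mathds{1}_{\{x\le X\}},\mathds{1}_{\{y\le X\}})=F_{m,N}(x\wedge y)(1-F_{m,N}(x\vee y))$; your endpoint domination argument justifying Fubini is a detail the paper only asserts.
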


\begin{proof}[Proof of Lemma~\ref{lemma:bSigma}]
    For all $j=1,\ldots,c$, let $B_{j,m,N}(x):=\int_{x_{0,N}}^{x}J_{m}'(H_N(y))\,\rd F_{j,N}(y)$,
    so that $B_{m,m,N}=B_{m,N}$.
    Note that
    \begin{align*}
        J_{m}(H_N(x))&=J_{m}(H_N(x_{0,N}))+\int_{x_{0,N}}^{x}J_{m}'(H_N(y))\,\rd H_N(y)\\
        &=J_m(H_N(x_{0,N}))+\sum_{j=1}^{c}\pi_{j,N}B_{j,m,N}(x),
    \end{align*}
    and therefore
    \begin{align*}
        C_{m,N}(x):=J_m(H_N(x))-\pi_{m,N}B_{m,N}(x)=J_m(H_N(x_{0,N}))+\sum_{\substack{j=1\\j\neq m}}^{c}\pi_{j,N}B_{j,m,N}(x).
    \end{align*}
    More generally, for every $j,m\in\{1,\ldots,c\}$, 
    \begin{align*}
        B_{j,m,N}(X_{N,1}^{(m)})-\Es{B_{j,m,N}(X_{N,1}^{(m)})}{F_{m,N}}
        =\int_{-\infty}^{+\infty}\lrpBig{\mathds{1}_{\{y\leq X_{N,1}^{(m)}\}}-(1-F_{m,N}(y))}J_{m}'(H_N(y))\,\rd F_{j,N}(y)
    \end{align*}
    because
    \begin{align*}
        \int_{x_{0,N}}^{X_{N,1}^{(m)}}J_{m}'(H_N(y))\,\rd F_{j,N}(y)
        =\int_{-\infty}^{+\infty}\lrpBig{\mathds{1}_{\{y\leq X_{N,1}^{(m)}\}}-\mathds{1}_{\{y\leq x_{0,N}\}}}J_{m}'(H_N(y))\,\rd F_{j,N}(y)
    \end{align*}
    and the deterministic term $\mathds{1}_{\{y\leq x_{0,N}\}}$ cancels after centering.

    The first mixed term, for $m\neq\ell$, is
    \begin{align*}
        &\Es{Z_{N,1}^{(m)}Y_{N,1}^{(m,\ell)}}{F_{m,N}}\\
        &=\Es{\lrp{C_{m,N}(X_{N,1}^{(m)}) - \Es{C_{m,N}(X_{N,1}^{(m)})}{F_{m,N}}}\lrp{B_{\ell,N}(X_{N,1}^{(m)})-\Es{B_{\ell,N}(X_{N,1}^{(m)})}{F_{m,N}}}}{F_{m,N}}\\
        &=\mathrm{E}_{F_{m,N}}\Bigg[\lrpBigg{\sum_{\substack{j=1\\j\neq m}}^{c}\pi_{j,N}\int_{-\infty}^{+\infty}\lrpBig{\mathds{1}_{\{x\leq X_{N,1}^{(m)}\}}-(1-F_{m,N}(x))}J_{m}'(H_N(x))\,\rd F_{j,N}(x)}\\
        &\qquad\qquad\quad\times\lrp{\int_{-\infty}^{+\infty}\lrpBig{\mathds{1}_{\{y\leq X_{N,1}^{(m)}\}}-(1-F_{m,N}(y))}J_{\ell}'(H_N(y))\,\rd F_{\ell,N}(y)}\Bigg]\\
        &=\sum_{\substack{j=1\\j\neq m}}^{c}\pi_{j,N}\int_{-\infty}^{+\infty}\int_{-\infty}^{+\infty}\mathrm{E}_{F_{m,N}}\Big[\lrpBig{\mathds{1}_{\{x\leq X_{N,1}^{(m)}\}}-(1-F_{m,N}(x))}\lrpBig{\mathds{1}_{\{y\leq X_{N,1}^{(m)}\}}-(1-F_{m,N}(y))}\Big]\\
        &\qquad\qquad\quad\times J_{m}'(H_N(x))J_{\ell}'(H_N(y))\,\rd F_{j,N}(x)\,\rd F_{\ell,N}(y)\\
        &=\sum_{\substack{j=1\\j\neq m}}^{c}\pi_{j,N}\Bigg(\int_{-\infty}^{+\infty}\int_{-\infty}^{+\infty}(1-F_{m,N}(x\vee y))J_{m}'(H_N(x))J_{\ell}'(H_N(y))\,\rd F_{j,N}(x)\,\rd F_{\ell,N}(y)\\
        &\qquad\qquad\quad-\int_{-\infty}^{+\infty}\int_{-\infty}^{+\infty}(1-F_{m,N}(x))(1-F_{m,N}(y))J_{m}'(H_N(x))J_{\ell}'(H_N(y))\,\rd F_{j,N}(x)\,\rd F_{\ell,N}(y)\Bigg)\\
        &=\sum_{\substack{j=1\\j\neq m}}^{c}\pi_{j,N}A_{j\ell,N}^{(m)}(J_m,J_{\ell}),
    \end{align*}
    where Fubini's theorem justifies interchanging expectation and integration.

    For $m\neq \ell$ and $m\neq\ell'$, we have
    \begin{align*}
        &\Es{Y_{N,1}^{(m,\ell)}Y_{N,1}^{(m,\ell')}}{F_{m,N}}\\
        &=\pi_{m,N}\Es{\lrp{B_{\ell,N}(X_{N,1}^{(m)})-\Es{B_{\ell,N}(X_{N,1}^{(m)})}{F_{m,N}}}\lrp{B_{\ell',N}(X_{N,1}^{(m)})-\Es{B_{\ell',N}(X_{N,1}^{(m)})}{F_{m,N}}}}{F_{m,N}}\\
        &=\pi_{m,N}\mathrm{E}_{F_{m,N}}\bigg[\lrp{\int_{-\infty}^{+\infty}\lrpBig{\mathds{1}_{\{x\leq X_{N,1}^{(m)}\}}-(1-F_{m,N}(x))}J_{\ell}'(H_N(x))\,\rd F_{\ell,N}(x)}\\
        &\qquad\qquad\quad\times\lrp{\int_{-\infty}^{+\infty}\lrpBig{\mathds{1}_{\{y\leq X_{N,1}^{(m)}\}}-(1-F_{m,N}(y))}J_{\ell'}'(H_N(y))\,\rd F_{\ell',N}(y)}\bigg]\\
        &=\pi_{m,N}\int_{-\infty}^{+\infty}\int_{-\infty}^{+\infty}\mathrm{E}_{F_{m,N}}\Big[\lrpBig{\mathds{1}_{\{x\leq X_{N,1}^{(m)}\}}-(1-F_{m,N}(x))}\lrpBig{\mathds{1}_{\{y\leq X_{N,1}^{(m)}\}}-(1-F_{m,N}(y))}\Big]\\
        &\qquad\qquad\quad\times J_{\ell}'(H_N(x))J_{\ell'}'(H_N(y))\,\rd F_{\ell,N}(x)\,\rd F_{\ell',N}(y)\\
        &=\pi_{m,N}\Bigg(\int_{-\infty}^{+\infty}\int_{-\infty}^{+\infty}(1-F_{m,N}(x\vee y))J_{\ell}'(H_N(x))J_{\ell'}'(H_N(y))\,\rd F_{\ell,N}(x)\,\rd F_{\ell',N}(y)\\
        &\qquad\qquad\quad-\int_{-\infty}^{+\infty}\int_{-\infty}^{+\infty}(1-F_{m,N}(x))(1-F_{m,N}(y))J_{\ell}'(H_N(x))J_{\ell'}'(H_N(y))\,\rd F_{\ell,N}(x)\,\rd F_{\ell',N}(y)\Bigg)\\
        &=\pi_{m,N}A_{\ell\ell',N}^{(m)}(J_\ell, J_{\ell'}).
    \end{align*}

    Finally,
    \begin{align*}
        &\mathrm{V}\mathrm{ar}_{F_{m,N}}\lrc{Z_{N,1}^{(m)}}
        =\dfrac{1}{\pi_{m,N}}\mathrm{V}\mathrm{ar}_{F_{m,N}}\lrc{C_{m,N}(X_{N,1}^{(m)})}\\
        &=\dfrac{1}{\pi_{m,N}}\EsBigg{\lrpBigg{\sum_{\substack{j=1\\j\neq m}}^{c}\pi_{j,N}B_{j,m,N}(X_{N,1}^{(m)})-\EsBigg{\sum_{\substack{j=1\\j\neq m}}^{c}\pi_{j,N}B_{j,m,N}(X_{N,1}^{(m)})}{F_{m,N}}}^2}{F_{m,N}}\\
        &=\dfrac{1}{\pi_{m,N}}\mathrm{E}_{F_{m,N}}\Bigg[\Bigg(\sum_{\substack{j=1\\j\neq m}}^{c}\pi_{j,N}\int_{-\infty}^{+\infty}\lrpBig{\mathds{1}_{\{x\leq X_{N,1}^{(m)}\}}-(1-F_{m,N}(x))}J_{m}'(H_N(x))\,\rd F_{j,N}(x)\Bigg)\\
        &\qquad\qquad\quad\times\Bigg(\sum_{\substack{k=1\\k\neq m}}^{c}\pi_{k,N}\int_{-\infty}^{+\infty}\lrpBig{\mathds{1}_{\{y\leq X_{N,1}^{(m)}\}}-(1-F_{m,N}(y))}J_{m}'(H_N(y))\,\rd F_{k,N}(y)\Bigg)\Bigg]\\
        &=\dfrac{1}{\pi_{m,N}}\sum_{\substack{j=1\\j\neq m}}^{c}\sum_{\substack{k=1\\k\neq m}}^{c}\pi_{j,N}\pi_{k,N}\Bigg(\int_{-\infty}^{+\infty}\int_{-\infty}^{+\infty}(1-F_{m,N}(x\vee y))J_m'(H_N(x))J_m'(H_N(y))\,\rd F_{j,N}(x)\,\rd F_{k,N}(y)\\
        &\qquad\qquad\quad-\int_{-\infty}^{+\infty}\int_{-\infty}^{+\infty}(1-F_{m,N}(x))(1-F_{m,N}(y))J_m'(H_N(x))J_m'(H_N(y))\,\rd F_{j,N}(x)\,\rd F_{k,N}(y)\Bigg)\\
        &=\dfrac{1}{\pi_{m,N}}\sum_{\substack{j=1\\j\neq m}}^{c}\sum_{\substack{k=1\\k\neq m}}^{c}\pi_{j,N}\pi_{k,N}A_{jk,N}^{(m)}(J_m,J_m)\\
        &=\dfrac{1}{\pi_{m,N}}\sum_{\substack{j=1\\j\neq m}}^{c}\pi_{j,N}^2A_{jj,N}^{(m)}(J_m,J_m)
        +\dfrac{1}{\pi_{m,N}}\sum_{\substack{j=1\\j\neq m}}^{c}\sum_{\substack{k=1\\k\neq j\\k\neq m}}^{c}\pi_{j,N}\pi_{k,N}A_{jk,N}^{(m)}(J_m,J_m).\tag*{\qedhere}
    \end{align*}
\end{proof}

Lemma~\ref{lemma:exp-G2} gives the exact null second moments of the empirical harmonic averages and is used to control the truncation tail in Theorem~\ref{thm:asymp-H0-general}.

\begin{lemma}\label{lemma:exp-G2}
    Let $k\geq1$, $r=1,2$, $\ell=1,\ldots,c$, and $G_{k,r,\ell,N}={n_{\ell}}^{-1}\sum_{i=1}^{n_\ell}g_{k,r}(c_i^{(\ell)})$. Let $n_\ell\ge1$ for all $\ell=1,\ldots,c$. Then, under~$\Hcal_0$,
    \begin{align*}
            2^{-1}\sum_{r=1}^{2}\Es{n_{\ell}G_{k,r,\ell,N}^2}{0}&=\begin{cases}
            n_{\ell},&k\in N\Z_{+},\\
            1 - \dfrac{n_{\ell}-1}{N-1},&\text{ otherwise.}
        \end{cases}
    \end{align*}
    Moreover, $G_{k,r,\ell,N}^2=2\delta_{r1}$ almost surely for all $k\in N\Z_{+}$.
\end{lemma}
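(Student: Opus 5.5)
Under $\Hcal_0$ with continuous distributions, the pooled ranks $(r_1^{(\ell)},\ldots,r_{n_\ell}^{(\ell)})$ of sample $\ell$ form a uniformly random $n_\ell$-subset of $\{1,\ldots,N\}$ (in random order). Hence the uniform scores $c_i^{(\ell)}$ are a simple random sample without replacement of size $n_\ell$ from the finite population $\{2\pi j/N: j=1,\ldots,N\}$. The plan is to compute second moments by finite-population sampling formulas.

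\emph{Resonant harmonics.} Let $k\in N\Z_+$ and write $k=k'N$. Then $k c_i^{(\ell)}=2\pi k' r_i^{(\ell)}\in 2\pi\Z$, so deterministically $g_{k,1}(c_i^{(\ell)})=\sqrt2$ and $g_{k,2}(c_i^{(\ell)})=0$. Therefore $G_{k,1,\ell,N}=\sqrt2$ and $G_{k,2,\ell,N}=0$, which gives $G_{k,r,\ell,N}^2=2\delta_{r1}$. Consequently $2^{-1}\sum_r n_\ell G_{k,r,\ell,N}^2=n_\ell$.

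\emph{Non-resonant harmonics.} For $k\notin N\Z_+$, the population values $x_j:=g_{k,r}(2\pi j/N)$ have mean zero, by the cancellation identities $\sum_{j=1}^N\cos(2\pi kj/N)=\sum_{j=1}^N\sin(2\pi kj/N)=0$ shown in the proof of Proposition~\ref{prp:unif-scores-symm}. Write $\sigma_r^2:=N^{-1}\sum_j x_j^2$. The standard variance of a sample mean under sampling without replacement then gives
$$\Es{n_\ell G_{k,r,\ell,N}^2}{0}=\sigma_r^2\,\frac{N-n_\ell}{N-1}.$$
This can be checked directly by expanding $\sum_{i,i'}$ and using the fact that, for $i\neq i'$, $\E{x_{J}x_{J'}}=\frac{1}{N(N-1)}\big[(\sum_j x_j)^2-\sum_j x_j^2\big]=-\sigma_r^2/(N-1)$. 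Finally, $\sigma_1^2+\sigma_2^2=N^{-1}\sum_j 2(\cos^2+\sin^2)=2$. Averaging over $r$ therefore yields
$$2^{-1}\sum_r\Es{n_\ell G_{k,r,\ell,N}^2}{0}=\frac{N-n_\ell}{N-1}=1-\frac{n_\ell-1}{N-1}.$$

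The main point requiring care is that the two individual $\sigma_r^2$ need not equal $1$; for example, they differ when $2k\in N\Z$. This is exactly why the statement averages over $r$: only the sum $\sigma_1^2+\sigma_2^2=2$ is needed. The case $n_\ell=1$ is covered by the same formula, since the cross terms are then absent.
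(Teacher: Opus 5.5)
Your proof is correct and follows essentially the same route as the paper: under $\Hcal_0$ the single and pairwise rank probabilities are $1/N$ and $1/(N(N-1))$, the cross moments come from $\big(\sum_j x_j\big)^2-\sum_j x_j^2$ with $\sum_j x_j=0$ for $k\notin N\Z$, and the resonant case is evaluated directly. The only difference is that you sum over $r$ before evaluating the population second moments, so you only need $\sigma_1^2+\sigma_2^2=2$ and avoid the paper's case split on whether $2k\in N\Z$.
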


\begin{proof}[Proof of Lemma~\ref{lemma:exp-G2}]
    Under $\Hcal_0$, the sample $\Theta_1,\ldots,\Theta_N$ is iid, thus the joint distribution of \linebreak$(\Theta_1,\ldots,\Theta_N)$ is invariant under permutations, i.e., for every permutation $\pi$, $(\Theta_1,\ldots,\Theta_N)\equald \break(\Theta_{\pi(1)},\ldots,\Theta_{\pi(N)})$. Since the common distribution is continuous, ties have zero probability, and the events $A_\pi=\{\Theta_{\pi(1)}<\cdots<\Theta_{\pi(N)}\}$ form a partition of the sample space up to a set with zero probability. Therefore, permutation invariance implies that $\mathrm{P}(A_\pi)={1}/{N!}$. Let $\ell=1,\ldots,c$ and $i=1,\ldots,n_{\ell}$. Then, the event $\{r_i^{(\ell)}=s\}=\bigcup_{\pi:\pi(s)=i}A_\pi$, and $\mathrm{P}(\bigcup_{\pi:\pi(s)=i}A_\pi)=\#\{\pi:\pi(s)=i\}/N!$. Since $(N-1)!$ is the number of permutations in which $\Theta_i^{(\ell)}$ is the $s$th sorted observation, it follows that $\mathrm{P}(r_i^{(\ell)}=s)=1/N$ for all $s=1,\ldots,N$. Analogously, we have that for $1\leq i\neq j\leq n_\ell$, $\mathrm{P}(r_i^{(\ell)}=s, r_j^{(\ell)}=m)=1/(N(N-1))$ if $s\neq m$, and zero otherwise.
    
    Let $k\geq1$ be such that $k\notin N\Z$. Then, for $i=1,\ldots,n_{\ell}$,
    \begin{align*}
        \Es{\cos^2(2\pi k r_i^{(\ell)}/N)}{0}&=\dfrac{1}{N}\sum_{j = 1}^{N}\cos^2(2\pi k j / N)=\dfrac{1}{2} + \dfrac{1}{N}\sum_{j=1}^{N} \cos(4\pi k j/N)/2=\begin{cases}
            1/2, &2k\notin N\Z,\\
            1, &2k\in N\Z,
        \end{cases}
    \end{align*}
    and, analogously,
    \begin{align*}
        \Es{\sin^2(2\pi k r_i^{(\ell)}/N)}{0}&=
        \begin{cases}
            1/2, &2k\notin N\Z,\\
            0, &2k\in N\Z.
        \end{cases}
    \end{align*}
    For $1\leq i\neq j\leq n_\ell$,
    \begin{align*}
        \Es{\cos(2\pi k r_i^{(\ell)}/N)\cos(2\pi k r_j^{(\ell)}/N)}{0}
        &=\dfrac{1}{N(N-1)}\sum_{1\leq i_1\neq i_2\leq N}\cos(2\pi k i_1/N)\cos(2\pi k i_2/N)\\
        &=\dfrac{1}{N(N-1)}\bigg(\Big(\sum_{i_1=1}^{N}\cos(2\pi k i_1/N)\Big)^2 - \sum_{i_1=1}^{N}\cos^2(2\pi k i_1/N)\bigg)\\
        &=\begin{cases}
            -1/(2(N-1)), &2k\notin N\Z,\\
            -1/(N-1), &2k\in N\Z,
        \end{cases}
    \end{align*}
    and, analogously,
    \begin{align*}
        \Es{\sin(2\pi k r_i^{(\ell)}/N)\sin(2\pi k r_j^{(\ell)}/N)}{0}
        &=\begin{cases}
            -1/(2(N-1)), &2k\notin N\Z,\\
            0, &2k\in N\Z.
        \end{cases}
    \end{align*}

    Then,
    \begin{align*}
        \sum_{r=1}^{2}\Es{n_{\ell}G_{k,r,\ell,N}^2}{0}&={n_{\ell}}^{-1}\sum_{r=1}^{2}\sum_{i,j=1}^{n_\ell}\Es{g_{k,r}(c_i^{(\ell)})g_{k,r}(c_j^{(\ell)})}{0}\\
        &=2{n_{\ell}}^{-1}\sum_{1\leq i<j\leq n_\ell} \sum_{r=1}^{2}\Es{g_{k,r}(c_i^{(\ell)})g_{k,r}(c_j^{(\ell)})}{0} + {n_{\ell}}^{-1}\sum_{i=1}^{n_\ell}\sum_{r=1}^{2}\Es{g_{k,r}^2(c_i^{(\ell)})}{0}\\
        &=2\Big(1 - \dfrac{n_{\ell}-1}{N-1}\Big).
    \end{align*}
    
    For $k\in N\Z_{+}$, the result is straightforward.
\end{proof}

Lemma~\ref{lemma:consistency} establishes the almost-sure uniform replacement of the pooled empirical cdf by its limit inside the kernel, as required for the fixed-alternative limit in
Theorem~\ref{thm:asymp-Halt-general} and the corresponding consistency result. 

\begin{lemma}\label{lemma:consistency}
    Let $\psi:[-1,1]\to\R$ be a continuous function. Let $H$ be as defined in~\eqref{eq:H_limit} and $\widehat{H}_N$ be the ecdf of the pooled sample. Let $\ell=1,\ldots,c$ and $\{\Theta_i^{(\ell)}\}_{i=1}^{n_\ell}$ be the observations corresponding to the $\ell$th sample. Then, under the fixed-distribution setting described in Section~\ref{subsec:nonnull:consistency}, as $N\to\infty$,
    $$\sup_{1\leq i,j\leq n_\ell}\bigg|\psi (\widehat{H}_N(\Theta_i^{(\ell)}) - \widehat{H}_N(\Theta_j^{(\ell)})) - \psi(H(\Theta_i^{(\ell)})-H(\Theta_j^{(\ell)}))\bigg|\to 0\quad\text{almost surely.}$$
\end{lemma}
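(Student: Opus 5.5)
The plan is to reduce the statement to a uniform (in $\theta$) almost sure convergence of the pooled ecdf, $\sup_{\theta\in[0,2\pi)}|\widehat{H}_N(\theta)-H(\theta)|\to0$ almost surely. Uniform continuity of $\psi$ on the compact $[-1,1]$ then transfers this convergence through the kernel. Both arguments $\widehat{H}_N(\Theta_i^{(\ell)})-\widehat{H}_N(\Theta_j^{(\ell)})$ and $H(\Theta_i^{(\ell)})-H(\Theta_j^{(\ell)})$ lie in $[-1,1]$. For every pair $(i,j)$,
$$
\big|\big(\widehat{H}_N(\Theta_i^{(\ell)})-\widehat{H}_N(\Theta_j^{(\ell)})\big)-\big(H(\Theta_i^{(\ell)})-H(\Theta_j^{(\ell)})\big)\big|\leq 2\sup_{\theta}|\widehat{H}_N(\theta)-H(\theta)|=:2D_N .
$$
Writing $\omega_\psi$ for the modulus of continuity of $\psi$, the supremum in the statement is therefore bounded by $\omega_\psi(2D_N)$. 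This bound does not depend on $i,j$ or on $n_\ell$, so it suffices to show $D_N\to0$ almost surely.

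To bound $D_N$, I split
$$
D_N\leq \sum_{\ell=1}^{c}\pi_{\ell,N}\sup_\theta|\widehat{F}_{\ell,n_\ell}(\theta)-F_\ell(\theta)| + \sum_{\ell=1}^c|\pi_{\ell,N}-\pi_\ell| ,
$$
using $\widehat{H}_N=\sum_\ell \pi_{\ell,N}\widehat{F}_{\ell,n_\ell}$, $H=\sum_\ell\pi_\ell F_\ell$, and $0\leq F_\ell\leq1$. The second sum is deterministic and tends to zero under the non-extreme regime. For the first sum, each sample is iid from the fixed distribution $F_\ell$, and $n_\ell\to\infty$ because $\pi_\ell>0$. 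The Glivenko--Cantelli theorem then gives $\sup_\theta|\widehat{F}_{\ell,n_\ell}-F_\ell|\to0$ almost surely for each $\ell$. Since $c$ is finite, intersecting the $c$ probability-one events gives $D_N\to0$ almost surely.

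The only delicate point is the formal justification of Glivenko--Cantelli along the sequence $n_\ell=n_\ell(N)$. I would handle it by viewing each sample as the first $n_\ell$ terms of one infinite iid sequence $(\Theta_i^{(\ell)})_{i\geq1}$, which is the natural reading of the fixed-distribution setting. Then $n_\ell(N)\to\infty$ is a subsequence-type reindexing, and almost sure convergence is preserved. If instead the samples were redrawn for each $N$ as a triangular array, I would use the Dvoretzky--Kiefer--Wolfowitz inequality to bound $\mathrm{P}\big(\sup_\theta|\widehat{F}_{\ell,n_\ell}-F_\ell|>\varepsilon\big)$ by $2e^{-2n_\ell\varepsilon^2}$. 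This bound is summable in $N$ because $n_\ell\geq \pi_0 N$ eventually. Borel--Cantelli then again yields almost sure convergence, which completes the argument.
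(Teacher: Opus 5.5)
Your proposal is correct and follows the paper's proof step for step: bound the supremum by $\omega_\psi\big(2\sup_{\theta}|\widehat{H}_N(\theta)-H(\theta)|\big)$ using the modulus of continuity of $\psi$, then conclude with Glivenko--Cantelli together with $\pi_{\ell,N}\to\pi_\ell$. Your split of $\sup_\theta|\widehat{H}_N-H|$ into per-sample ecdf errors plus the proportion error is detail the paper states in one line. The same holds for your justification of the almost sure Glivenko--Cantelli step along $n_\ell(N)\to\infty$, including the DKW plus Borel--Cantelli argument for a triangular-array reading.
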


\begin{proof}[Proof of Lemma~\ref{lemma:consistency}]
    Let $\omega_\psi:[0,+\infty)\to [0,+\infty)$ be the modulus of continuity of $\psi$, given by
    $$
    \omega_{\psi}(\delta):=\sup_{\substack{u,v\in [-1,1]\\|u-v|\leq\delta}}|\psi(u)-\psi(v)|,\qquad\delta\geq0.
    $$
    Since $\psi$ is continuous on the compact set $[-1,1]$, it is uniformly continuous, and $\omega_\psi(\delta)<+\infty$ for all $\delta\geq0$. Moreover, $\omega_\psi$ is non-decreasing, and $\omega_{\psi}(\delta)\to\omega_{\psi}(0)=0$ as $\delta \downarrow0$.
    
    Fix $\ell=1,\ldots,c$ and $1\leq i,j\leq n_\ell$. Then, using the triangle inequality and the monotonicity of $\omega_{\psi}$, it follows that
    \begin{align*}
        \Big|\psi (\widehat{H}_N(\Theta_i^{(\ell)}) - \widehat{H}_N(\Theta_j^{(\ell)})) &- \psi(H(\Theta_i^{(\ell)})-H(\Theta_j^{(\ell)}))\Big|\\
        &\leq \omega_{\psi}\Big(\Big|\widehat{H}_N(\Theta_i^{(\ell)}) - \widehat{H}_N(\Theta_j^{(\ell)}) - H(\Theta_i^{(\ell)})+H(\Theta_j^{(\ell)})\Big|\Big)\\
        &\leq \omega_{\psi}\Big(\Big|\widehat{H}_N(\Theta_i^{(\ell)}) - H(\Theta_i^{(\ell)})\Big|+\Big|\widehat{H}_N(\Theta_j^{(\ell)}) - H(\Theta_j^{(\ell)})\Big|\Big).
    \end{align*}

    Thus, for all $\ell=1,\ldots,c$, it holds that 
    \begin{align*}
        \sup_{1\leq i,j\leq n_\ell}&\Big|\psi (\widehat{H}_N(\Theta_i^{(\ell)}) - \widehat{H}_N(\Theta_j^{(\ell)})) - \psi(H(\Theta_i^{(\ell)})-H(\Theta_j^{(\ell)}))\Big|\\
        &\leq \sup_{1\leq i,j\leq n_\ell}\omega_{\psi}\Big(\Big|\widehat{H}_N(\Theta_i^{(\ell)}) - H(\Theta_i^{(\ell)})\Big|+\Big|\widehat{H}_N(\Theta_j^{(\ell)}) - H(\Theta_j^{(\ell)})\Big|\Big)\\
        &\leq \omega_{\psi}\Big(\sup_{1\leq i,j\leq n_\ell}\left\{\Big|\widehat{H}_N(\Theta_i^{(\ell)}) - H(\Theta_i^{(\ell)})\Big|+\Big|\widehat{H}_N(\Theta_j^{(\ell)}) - H(\Theta_j^{(\ell)})\Big|\right\}\Big)\\
        &= \omega_{\psi}\Big(2\sup_{1\leq i\leq n_\ell}\Big|\widehat{H}_N(\Theta_i^{(\ell)}) - H(\Theta_i^{(\ell)})\Big|\Big)\\
        &\leq\omega_{\psi}\Big(2\sup_{x\in[0,2\pi)}\Big|\widehat{H}_N(x) - H(x)\Big|\Big),
    \end{align*}
    where we have leveraged the monotonicity of $\omega_\psi$.

    By the Glivenko--Cantelli theorem and since $\pi_{\ell,N}\to\pi_\ell$, $\sup_{x\in[0,2\pi)}\big|\widehat{H}_N(x) - H(x)\big|\to 0$ almost surely. Since $\omega_\psi$ is continuous at zero, it follows that
    \begin{align*}
        \sup_{1\leq i,j\leq n_\ell}\Big|\psi (\widehat{H}_N(\Theta_i^{(\ell)}) - \widehat{H}_N(\Theta_j^{(\ell)})) - \psi(H(\Theta_i^{(\ell)})-H(\Theta_j^{(\ell)}))\Big|\leq\omega_{\psi}\Big(2\sup_{x\in[0,2\pi)}\Big|\widehat{H}_N(x) - H(x)\Big|\Big)\to 0
    \end{align*} almost surely, and the result follows.
\end{proof}

Finally, Lemma~\ref{lemma:unif-taylor} provides uniform first-order expansions for a shifted density and its corresponding cdf. These expansions are used in Proposition~\ref{prp:gkr-asymp-Hlocalt}.

\begin{lemma}\label{lemma:unif-taylor}
    Let $F:[0,2\pi]\to[0,1]$ be an absolutely continuous cdf with density $f$, and assume that the periodic extension of $f$ belongs to $C^1(\R)$. For $x\in[0,2\pi]$ and $\Delta\in\R$, write
    \begin{align*}
        f(x+\Delta)=f(x)+f'(x)\Delta+R_2(x,\Delta).
    \end{align*}
    Then,
    \begin{align}\label{eq:unif-remaind}
        \sup_{x\in[0,2\pi]}|R_2(x,\Delta)|=o(|\Delta|)\quad\text{as}\quad\Delta\to0.
    \end{align}
    Moreover, for every $\theta\in[0,2\pi]$,
    \begin{align*}
        \int_{0}^{\theta} f(x+\Delta)\,\rd x
        =F(\theta)+\Delta\bigl(f(\theta)-f(0)\bigr)+R_{2}^{\ast}(\theta,\Delta),
    \end{align*}
    where $R_{2}^{\ast}(\theta,\Delta):=\int_{0}^{\theta}R_2(x,\Delta)\,\rd x$ satisfies
    \begin{align*}
        \sup_{\theta\in[0,2\pi]}|R_{2}^{\ast}(\theta,\Delta)|=o(|\Delta|)\quad\text{as}\quad\Delta\to0.
    \end{align*}
\end{lemma}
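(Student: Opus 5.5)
The uniform expansion \eqref{eq:unif-remaind} comes from the mean value theorem together with uniform continuity of $f'$. The integrated expansion then follows by integrating term by term.

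\emph{Uniform remainder.} Let $\tilde f$ denote the $2\pi$-periodic extension of $f$. Since $\tilde f\in C^1(\R)$, its derivative $\tilde f'$ is continuous and periodic. It is therefore uniformly continuous on $\R$. Let $\omega$ be its modulus of continuity, so $\omega(h)\to0$ as $h\downarrow0$.

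For $x\in[0,2\pi]$ and $\Delta\neq0$, the mean value theorem gives some $\xi_{x,\Delta}$ between $x$ and $x+\Delta$ with $f(x+\Delta)-f(x)=\tilde f'(\xi_{x,\Delta})\Delta$. Hence
\[
|R_2(x,\Delta)|=|\Delta|\,|\tilde f'(\xi_{x,\Delta})-\tilde f'(x)|\leq |\Delta|\,\omega(|\Delta|).
\]
This bound does not depend on $x$. Dividing by $|\Delta|$ and letting $\Delta\to0$ gives $\sup_x|R_2(x,\Delta)|=o(|\Delta|)$.

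Here $f(x+\Delta)$ is read through the periodic extension, as in the local model \eqref{eq:localt}. Periodicity is also what makes the argument valid near the endpoints $0$ and $2\pi$, where $x+\Delta$ leaves $[0,2\pi]$.

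\emph{Integrated expansion.} Integrate the pointwise identity over $[0,\theta]$. The constant term gives $\int_0^\theta f=F(\theta)$. For the linear term, the fundamental theorem of calculus applied to the $C^1$ function $\tilde f$ gives $\Delta\int_0^\theta f'(x)\,\rd x=\Delta(f(\theta)-f(0))$. The remainder is by definition $R_2^\ast(\theta,\Delta)$. Measurability and integrability of $R_2(\cdot,\Delta)$ are immediate because it is continuous in $x$.

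\emph{Remainder bound.} For every $\theta\in[0,2\pi]$,
\[
|R_2^\ast(\theta,\Delta)|\leq 2\pi\sup_x|R_2(x,\Delta)|,
\]
so the first part yields $\sup_\theta|R_2^\ast(\theta,\Delta)|=o(|\Delta|)$.

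The only step needing care is the first one: making the remainder uniform in $x$, including near the endpoints. Uniform continuity of the periodic $C^1$ extension settles it.
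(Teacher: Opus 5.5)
Your proof is correct and follows essentially the same route as the paper's: the paper writes the remainder in integral form, $R_2(x,\Delta)=\Delta\int_0^1\bigl(f'(x+t\Delta)-f'(x)\bigr)\,\rd t$, where you use the mean value theorem, but both bounds rest on the uniform continuity of the periodic extension of $f'$. The integration over $[0,\theta]$ and the bound $\sup_\theta|R_2^\ast(\theta,\Delta)|\leq 2\pi\sup_x|R_2(x,\Delta)|$ are the same as in the paper.
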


\begin{proof}[Proof of Lemma~\ref{lemma:unif-taylor}]
    For $x\in[0,2\pi]$ and $\Delta\in\R$,
    \begin{align*}
        R_2(x,\Delta)
        &=f(x+\Delta)-f(x)-f'(x)\Delta
        =\Delta\int_{0}^{1}\bigl(f'(x+t\Delta)-f'(x)\bigr)\,\rd t.
    \end{align*}

    Since the periodic extension of $f'$ is continuous and $2\pi$-periodic, it is uniformly continuous on $\R$. Let $\epsilon>0$. Then there exists $\delta>0$ such that
    \begin{align*}
        |u-v|<\delta
        \quad\Longrightarrow\quad
        |f'(u)-f'(v)|<\epsilon
    \end{align*}
    for all $u,v\in\R$. Hence, if $0<|\Delta|<\delta$, then for every $x\in[0,2\pi]$,
    \begin{align*}
        \frac{|R_2(x,\Delta)|}{|\Delta|}
        &\leq \int_{0}^{1}|f'(x+t\Delta)-f'(x)|\,\rd t
        <\epsilon.
    \end{align*}
    Taking the supremum over $x\in[0,2\pi]$ proves \eqref{eq:unif-remaind}.

    Integrating the Taylor expansion over $[0,\theta]$, we obtain
    \begin{align*}
        \int_{0}^{\theta} f(x+\Delta)\,\rd x
        &=\int_{0}^{\theta}f(x)\,\rd x
        +\Delta\int_{0}^{\theta}f'(x)\,\rd x
        +\int_{0}^{\theta}R_2(x,\Delta)\,\rd x\\
        &=F(\theta)+\Delta\bigl(f(\theta)-f(0)\bigr)+R_{2}^{\ast}(\theta,\Delta).
    \end{align*}

    Let $\epsilon>0$. By \eqref{eq:unif-remaind}, there exists $\delta>0$ such that, for all $0<|\Delta|<\delta$,
    \begin{align*}
        \sup_{x\in[0,2\pi]}\frac{|R_2(x,\Delta)|}{|\Delta|}<\frac{\epsilon}{2\pi}.
    \end{align*}
    Therefore, for all $\theta\in[0,2\pi]$,
    \begin{align*}
        \frac{|R_{2}^{\ast}(\theta,\Delta)|}{|\Delta|}
        &\leq \int_{0}^{\theta}\frac{|R_2(x,\Delta)|}{|\Delta|}\,\rd x
        \leq 2\pi \sup_{x\in[0,2\pi]}\frac{|R_2(x,\Delta)|}{|\Delta|}
        <\epsilon.
    \end{align*}
    This proves that $R_{2}^{\ast}(\theta,\Delta)=o(|\Delta|)$ uniformly in $\theta\in[0,2\pi]$.
\end{proof}

\section{Additional numerical results}\label{sec:sims}

\subsection{Imbalanced samples}\label{subsec:sims:imbalanced}

In this section, we extend the experiments of Section~\ref{subsec:sim:pow} to study how sample-size imbalance affects the power of the average, maximum, and ${\rm LSE}$ tests. We consider the same one-different-location alternatives, but vary the size of the shifted subsample relative to the unshifted samples, defining $r$ as the ratio of the shifted sample size to each unshifted sample size. Within each scenario, all subsamples are generated from the same parametric family, chosen from \hyperref[fsim1]{(vM)}, \hyperref[fsim2]{(W)}, \hyperref[fsim3]{(SC)}, and~\hyperref[fsim4]{(MvM)}. 

\begin{table}[!b]
\centering
\scalebox{0.6}{
    \begin{tabular}{ccc|rrrr|rrrr}
    \toprule
    \multirow{2}{*}{Family} & \multirow{2}{*}{$c$} & \multirow{2}{*}{$r$} & \multicolumn{4}{c|}{$\rm AD$} & \multicolumn{4}{c}{${\rm SM}_{10}$} \\
    & & & Avg & ${\rm LSE}_{1}$ & ${\rm LSE}_{10}$ & Max & Avg & ${\rm LSE}_{1}$ & ${\rm LSE}_{10}$ & Max \\
    \midrule
    & & $1/2$ & 43.5 & 50.4 & $\mathbf{55.5}$ & 54.6 & 39.6 & 46.3 & 54.5 & $\mathbf{56.2}$ \\ 
    &  & $1$ & 61.2 & 60.3 & $\mathbf{62.9}$ & 60.1 & 56.6 & 57.1 & $\mathbf{59.2}$ & $\mathbf{58.7}$ \\ 
    & \multirow{-3}{*}{3} & $2$ & $\mathbf{67.2}$ & 64.4 & 55.1 & 54.3 & $\mathbf{62.3}$ & $\mathbf{62.8}$ & 51.0 & 51.3 \\ 
    \cline{2-11}
    & & $1/2$ & 33.7 & 41.6 & 43.0 & $\mathbf{43.9}$ & 30.8 & 38.0 & $\mathbf{44.0}$ & $\mathbf{44.1}$ \\ 
    &  & $1$ & 56.5 & $\mathbf{62.2}$ & $\mathbf{62.5}$ & 61.7 & 51.4 & 59.3 & $\mathbf{61.6}$ & $\mathbf{61.9}$ \\ 
    & \multirow{-3}{*}{5} & $2$ & $\mathbf{74.6}$ & $\mathbf{74.2}$ & 70.8 & 71.0 & 69.7 & $\mathbf{72.4}$ & 70.4 & 69.9 \\ 
    \cline{2-11}
    & & $1/2$ & 22.0 & 30.2 & 30.2 & $\mathbf{31.2}$ & 20.3 & 28.7 & 32.1 & $\mathbf{34.0}$ \\ 
    &  & $1$ & 45.2 & 59.7 & $\mathbf{60.8}$ & 59.2 & 40.7 & 57.7 & $\mathbf{61.8}$ & $\mathbf{61.7}$ \\ 
    \multirow{-9}{*}{$\substack{{\rm vM}\\\\\centering\includegraphics[width=1.75cm, clip=true, trim={1.6cm 1.5cm 1.75cm 0.5cm}]{img/alt-shift-vMF.pdf}}$} & \multirow{-3}{*}{10} & $2$ & 73.3 & 84.3 & $\mathbf{85.3}$ & $\mathbf{85.3}$ & 68.0 & 83.7 & $\mathbf{86.5}$ & $\mathbf{87.1}$ \\ 
    \hline
    & & $1/2$ & 22.3 & 26.6 & $\mathbf{32.1}$ & $\mathbf{31.4}$ & 53.3 & 61.4 & 70.2 & $\mathbf{72.5}$ \\ 
    &  & $1$ & $\mathbf{34.8}$ & 32.8 & 33.5 & 31.1 & 73.0 & 75.2 & $\mathbf{77.3}$ & $\mathbf{77.9}$ \\ 
    & \multirow{-3}{*}{3} & $2$ & $\mathbf{40.2}$ & 35.4 & 22.6 & 22.2 & $\mathbf{79.3}$ & $\mathbf{79.0}$ & 68.1 & 68.5 \\ 
    \cline{2-11}
    & & $1/2$ & 17.2 & $\mathbf{19.4}$ & 18.1 & $\mathbf{19.0}$ & 40.8 & 53.0 & $\mathbf{59.8}$ & $\mathbf{59.9}$ \\ 
    &  & $1$ & 30.8 & $\mathbf{32.7}$ & 30.1 & 28.9 & 69.3 & 77.7 & $\mathbf{80.5}$ & $\mathbf{80.9}$ \\ 
    & \multirow{-3}{*}{5} & $2$ & $\mathbf{46.9}$ & 40.5 & 34.9 & 33.4 & 86.5 & $\mathbf{90.0}$ & $\mathbf{90.0}$ & 88.6 \\ 
    \cline{2-11}
    & & $1/2$ & $\mathbf{12.2}$ & $\mathbf{12.6}$ & 10.5 & 11.3 & 26.7 & 39.1 & 45.4 & $\mathbf{47.8}$ \\ 
    &  & $1$ & 23.0 & $\mathbf{27.6}$ & $\mathbf{27.0}$ & 26.1 & 54.8 & 76.6 & 82.5 & $\mathbf{83.4}$ \\ 
    \multirow{-9}{*}{$\substack{{\rm W}\\\\\centering\includegraphics[width=1.75cm, clip=true, trim={1.6cm 1.5cm 1.75cm 0.5cm}]{img/alt-shift-W.pdf}}$} & \multirow{-3}{*}{10} & $2$ & 44.1 & 55.2 & $\mathbf{56.8}$ & $\mathbf{57.6}$ & 86.2 & 96.7 & $\mathbf{98.1}$ & $\mathbf{98.1}$ \\ 
    \hline
    & & $1/2$ & 17.7 & 20.0 & $\mathbf{25.5}$ & 24.1 & 41.2 & 46.3 & 58.2 & $\mathbf{59.6}$ \\ 
    &  & $1$ & $\mathbf{26.8}$ & 24.4 & 25.3 & 23.3 & 58.9 & 61.2 & 61.3 & $\mathbf{63.4}$ \\ 
    & \multirow{-3}{*}{3} & $2$ & $\mathbf{30.6}$ & 26.5 & 17.9 & 18.0 & $\mathbf{65.2}$ & $\mathbf{65.7}$ & 53.2 & 53.3 \\ 
    \cline{2-11}
    & & $1/2$ & 14.2 & $\mathbf{15.3}$ & 13.5 & $\mathbf{15.0}$ & 30.3 & 39.3 & $\mathbf{45.9}$ & $\mathbf{46.0}$ \\ 
    &  & $1$ & 22.5 & $\mathbf{23.4}$ & 21.3 & 19.2 & 53.1 & 62.4 & $\mathbf{65.8}$ & $\mathbf{65.1}$ \\ 
    & \multirow{-3}{*}{5} & $2$ & $\mathbf{35.0}$ & 28.8 & 22.7 & 21.7 & 73.4 & $\mathbf{77.3}$ & 75.3 & 73.9 \\ 
    \cline{2-11}
    & & $1/2$ & $\mathbf{10.7}$ & $\mathbf{10.5}$ & 9.3 & 9.2 & 20.4 & 28.8 & 33.0 & $\mathbf{33.9}$ \\ 
    &  & $1$ & 18.1 & $\mathbf{19.0}$ & 17.7 & 17.5 & 41.6 & 60.6 & 65.2 & $\mathbf{66.3}$ \\ 
    \multirow{-9}{*}{$\substack{{\rm SC}\\\\\centering\includegraphics[width=1.75cm, clip=true, trim={1.6cm 1.5cm 1.75cm 0.5cm}]{img/alt-shift-SC.pdf}}$} & \multirow{-3}{*}{10} & $2$ & 33.8 & $\mathbf{36.3}$ & $\mathbf{37.0}$ & $\mathbf{36.9}$ & 72.1 & 87.0 & $\mathbf{90.9}$ & $\mathbf{91.3}$ \\ 
    \hline
    & & $1/2$ & 7.3 & 7.4 & $\mathbf{8.2}$ & $\mathbf{8.0}$ & 14.8 & 16.7 & 21.4 & $\mathbf{23.0}$ \\ 
    &  & $1$ & $\mathbf{8.2}$ & 7.7 & $\mathbf{8.0}$ & 7.0 & $\mathbf{21.0}$ & 20.2 & 19.9 & $\mathbf{20.5}$ \\ 
    & \multirow{-3}{*}{3} & $2$ & $\mathbf{9.0}$ & $\mathbf{8.5}$ & 7.7 & 8.1 & $\mathbf{23.1}$ & $\mathbf{23.5}$ & 17.6 & 18.4 \\ 
    \cline{2-11}
    & & $1/2$ & $\mathbf{6.7}$ & $\mathbf{6.7}$ & 5.7 & $\mathbf{6.3}$ & 11.5 & 12.9 & $\mathbf{14.5}$ & $\mathbf{14.6}$ \\ 
    &  & $1$ & $\mathbf{8.4}$ & 6.7 & 6.2 & 5.3 & 18.0 & $\mathbf{19.3}$ & 18.0 & 16.9 \\ 
    & \multirow{-3}{*}{5} & $2$ & $\mathbf{9.4}$ & 7.1 & 6.5 & 6.1 & $\mathbf{26.6}$ & 25.2 & 20.1 & 20.1 \\ 
    \cline{2-11}
    & & $1/2$ & $\mathbf{6.3}$ & 5.4 & 5.2 & 5.7 & 9.2 & 9.7 & 9.2 & $\mathbf{10.2}$ \\ 
    &  & $1$ & $\mathbf{7.6}$ & 5.9 & 6.0 & 5.6 & 14.2 & $\mathbf{16.5}$ & 16.4 & $\mathbf{17.1}$ \\ 
    \multirow{-9}{*}{$\substack{{\rm MvM (3)}\\\\\centering\includegraphics[width=1.75cm, clip=true, trim={1.6cm 1.5cm 1.75cm 0.5cm}]{img/alt-shift-MvMF3.pdf}}$} & \multirow{-3}{*}{10} & $2$ & $\mathbf{9.8}$ & 6.4 & 6.0 & 6.1 & 24.0 & $\mathbf{28.2}$ & $\mathbf{28.3}$ & $\mathbf{28.4}$ \\ 
    \hline
    & & $1/2$ & 5.8 & 5.8 & $\mathbf{6.7}$ & 6.2 & 7.1 & 7.0 & 9.1 & $\mathbf{10.1}$ \\ 
    &  & $1$ & 5.4 & 5.6 & $\mathbf{6.0}$ & 5.4 & $\mathbf{8.2}$ & 7.7 & 7.1 & 7.3 \\ 
    & \multirow{-3}{*}{3} & $2$ & 5.9 & 6.1 & $\mathbf{7.0}$ & 6.5 & 8.2 & 8.8 & 8.9 & $\mathbf{9.7}$ \\ 
    \cline{2-11}
    & & $1/2$ & 5.2 & $\mathbf{5.6}$ & 4.9 & 5.3 & 6.1 & $\mathbf{6.3}$ & $\mathbf{6.4}$ & $\mathbf{6.5}$ \\ 
    &  & $1$ & $\mathbf{5.8}$ & $\mathbf{5.7}$ & 5.3 & 4.6 & $\mathbf{7.6}$ & 7.0 & 6.2 & 5.7 \\ 
    & \multirow{-3}{*}{5} & $2$ & $\mathbf{5.8}$ & $\mathbf{5.4}$ & 5.3 & 4.8 & $\mathbf{9.4}$ & 8.3 & 7.5 & 6.7 \\ 
    \cline{2-11}
    & & $1/2$ & $\mathbf{5.6}$ & 4.8 & 4.5 & 4.8 & $\mathbf{6.2}$ & 5.6 & 5.5 & $\mathbf{5.8}$ \\ 
    &  & $1$ & $\mathbf{5.6}$ & 4.9 & 5.2 & 4.4 & $\mathbf{7.3}$ & $\mathbf{7.0}$ & 5.6 & 5.8 \\ 
    \multirow{-9}{*}{$\substack{{\rm MvM (4)}\\\\\centering\includegraphics[width=1.75cm, clip=true, trim={1.6cm 1.5cm 1.75cm 0.5cm}]{img/alt-shift-MvMF.pdf}}$} & \multirow{-3}{*}{10} & $2$ & $\mathbf{6.1}$ & 4.7 & 4.9 & 4.8 & $\mathbf{9.3}$ & 7.8 & 6.1 & 6.0 \\ 
    \bottomrule
    \end{tabular}}
    \caption{\small Empirical rejection proportions (\%) of tests based on $T_{N,c}^{\phi, m}$ with $c$ samples. In each simulation, the total sample size is approximately $N\approx 50c$: $c-1$ unshifted subsamples have $\floor{50c/(c-1+r)}$ observations each and are drawn from the distribution indicated by the first column, with location $\mu_0=\pi/2$ and concentration $\kappa=5$ (blue pdf in the charts), while the remaining subsample has $\ceil{50cr/(c-1+r)}$ observations and comes from a shifted distribution with location $\mu_c=\mu_0 + \pi/10$ and the same concentration (red pdf). Results are based on $M = 10^4$ replications. Within each row and kernel, boldface indicates the maximum power across aggregations. Values exceeding the lower end of the one-sided $95\%$ confidence interval for that maximum are also shown in bold.}
    \label{tbl:pow-imbalanced}
\end{table}

For each $r\in\{1/2, 1, 2\}$ and $c\in\{3,5,10\}$, with $n=50$, we generate $M=10^4$ Monte Carlo replications of total size $N\approx nc$. Each replication contains $c-1$ unshifted subsamples with $\floor{nc/(c-1+r)}$ observations each and one shifted subsample with $\ceil{rnc/(c-1+r)}$ observations. For every replication, the tests based on $\smash{T_{N,c}^{\phi,{\rm avg}}}$, $\smash{T_{N,c}^{\phi,\max}}$, and $\smash{T^{\phi,{\rm LSE}_\kappa}_{N,c}}$ with $\kappa\in\{1,10\}$ are computed using the $\rm AD$ and ${\rm SM}_{10}$ kernels. The asymptotic critical values at the $\alpha=0.05$ level are obtained from $M$ Monte Carlo replications of the $K_{\rm tr}=10^3$ truncation of the limiting null distribution in Theorem~\ref{thm:asymp-H0-general}. The empirical rejection proportions are reported in Table~\ref{tbl:pow-imbalanced}. Note that $r=1$ corresponds to the balanced case shown in Table~\ref{tbl:pow-shift}.

The following conclusions are drawn mainly from the vM, W, SC, and MvM(3) scenarios, since the MvM(4) alternative yields powers close to the nominal level and is therefore less informative for comparing aggregations.
\begin{enumerate}[label=(\textit{\roman*}),ref=\textit{\roman*}]
    \item For $r<1$, max-type aggregations, namely the maximum and ${\rm LSE}_{10}$, are generally the most powerful for $c=3,5$, as expected, since the smallest sample is the one carrying the departure from the null.
    \item For $r>1$, the behavior changes. Since the shifted subsample is larger, the pooled distribution is pulled toward it. As a result, the shifted subsample may become less discrepant relative to the pooled distribution, while the $c-1$ unshifted subsamples all display deviations in the opposite direction. The signal is therefore spread over several component statistics, which favors the average and ${\rm LSE}_{1}$ aggregations, especially for $c=3,5$.
    \item For $c=10$, the transition between max-type and average-type behavior is less clear. Even when $r>1$, the shifted subsample does not dominate the pooled distribution as strongly as it does for smaller $c$, because its relative weight is $r/(c-1+r)$. Thus, both types of aggregation can be competitive, depending on the distributional family and the strength of the signal.
    \item The ${\rm LSE}$ family behaves as expected as an interpolation between the two extremes. The statistic ${\rm LSE}_{10}$ is close to the maximum and performs well when the evidence is concentrated in one component, whereas ${\rm LSE}_{1}$ is closer to the average and benefits when information is spread across several component statistics.
\end{enumerate}

These conclusions support the flexibility of the general aggregation design, as there is no single optimal choice across all sample-size ratios $r$ and values of $c$.

\subsection{Area-based test statistic}\label{subsec:sims:area}

In this section we illustrate the connection between the area-based statistic $T_{N,c}^{\rm A}$ and its geometric motivation, $A_{N,c}$, introduced in Section~\ref{subsec:area}. For each $n\in\{10, 50, 200\}$, we generated two Monte Carlo realizations consisting of $c=3$ samples of size $n$: one under the null hypothesis and one under a von Mises alternative with locations $\mu_\ell = 2\pi(\ell-1)/3, \ell=1,2,3$, and common concentration $\kappa=2$. For each realization, we computed $T_{N,c}^{\rm A}$ and $A_{N,c}$.

Figure~\ref{fig:area-construction} shows the complete geometric construction of the statistic $A_{N,c}$, together with the resulting values of $A_{N,c}$ and $T_{N,c}^{\rm A}$, for each scenario. In each block, the left figure shows a unit circle where each point corresponds to a transformed uniform score, placed at the corresponding angle $\alpha_{i}^{(\ell)}$. The corresponding unit vectors $\smash{\{\bs^{(\ell)}_{i}\}_{i=1}^{n_\ell}}$ are also represented. In the right figure, we represent the $2n_\ell$-polygons, one for each sample, whose edges are obtained by sequentially connecting $\smash{\{\bs^{(\ell)}_{(i)}\}_{i=1}^{n_\ell}}$ and their corresponding opposite vectors. The area of each polygon $T^{(\ell)}$ is displayed. 
\begin{figure}[ht!]
    \centering
    \begin{subfigure}[b]{\linewidth}
        \includegraphics[width=0.49\linewidth,trim={2.1cm 0.55cm 0.45cm 0.9cm},clip=true]{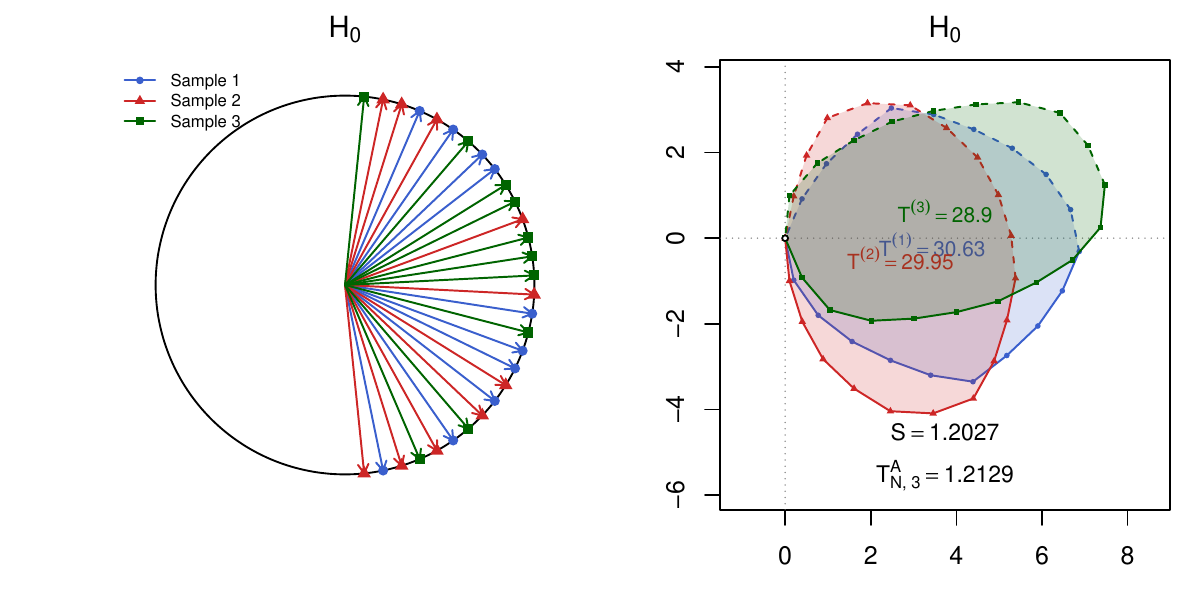}
        \hfill
        \includegraphics[width=0.49\linewidth,trim={2.1cm 0.55cm 0.45cm 0.9cm},clip=true]{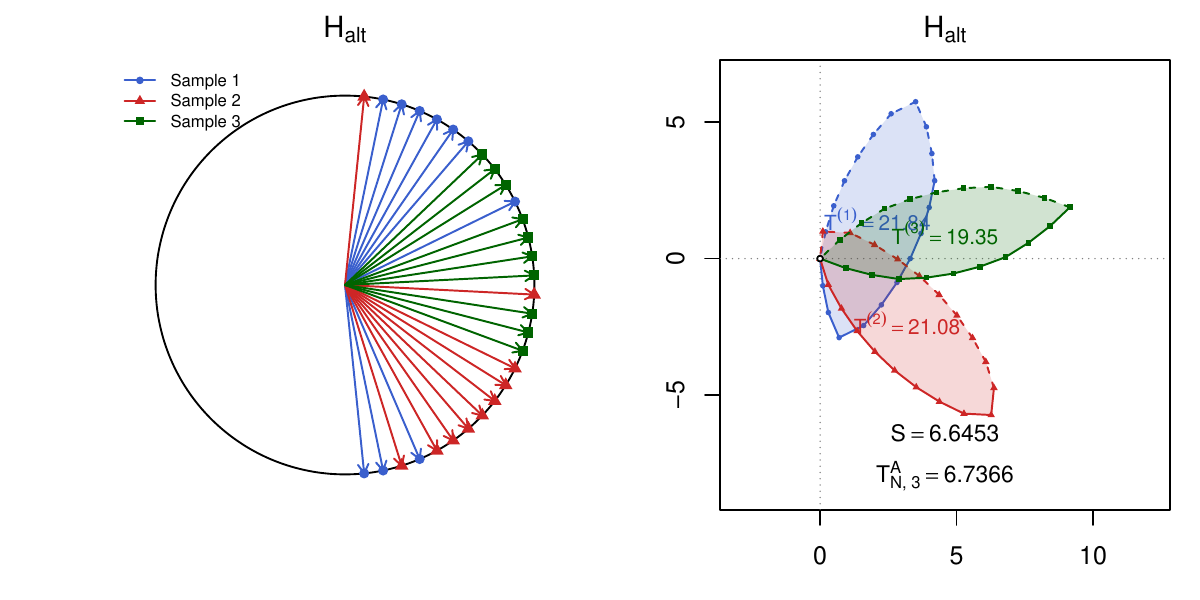}
        \caption{\small $n=10$}
    \end{subfigure}
    \begin{subfigure}[b]{\linewidth}
        \includegraphics[width=0.49\linewidth,trim={2.1cm 0.55cm 0.45cm 0.9cm},clip=true]{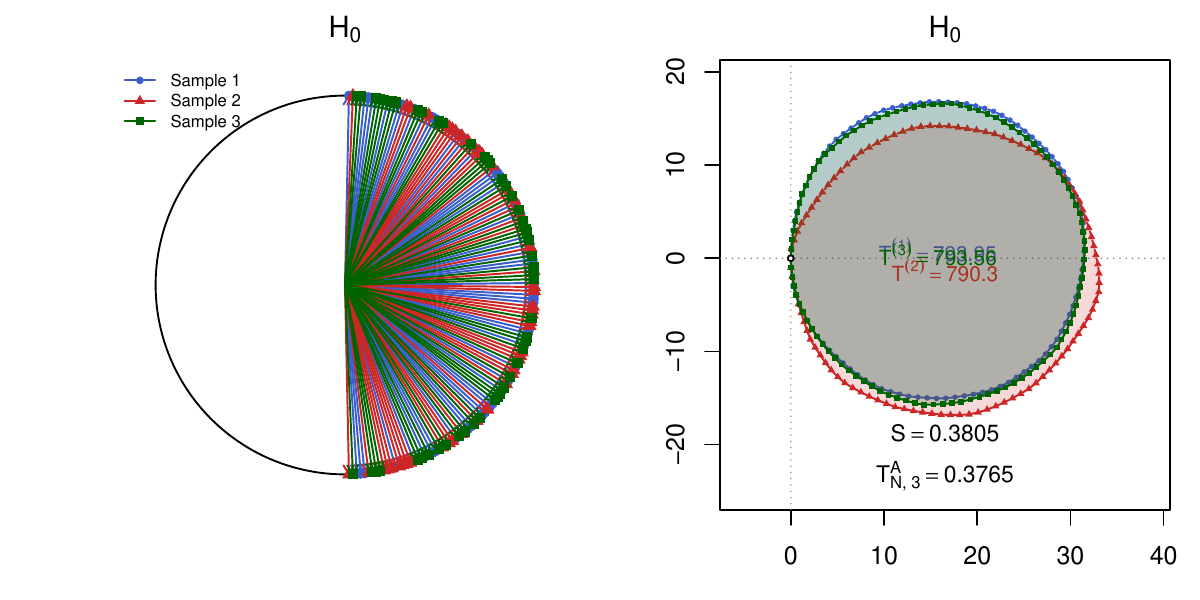}
        \hfill
        \includegraphics[width=0.49\linewidth,trim={2.1cm 0.55cm 0.45cm 0.9cm},clip=true]{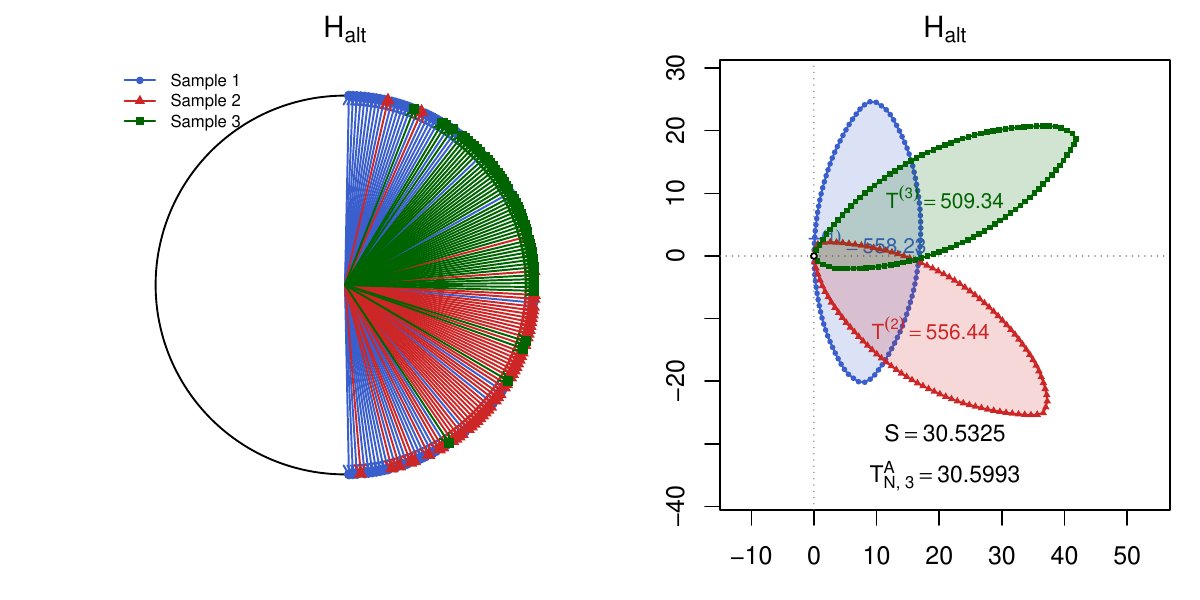}
        \caption{\small $n=50$}
    \end{subfigure}
    \begin{subfigure}[b]{\linewidth}
        \includegraphics[width=0.49\linewidth,trim={2.1cm 0.55cm 0.45cm 0.9cm},clip=true]{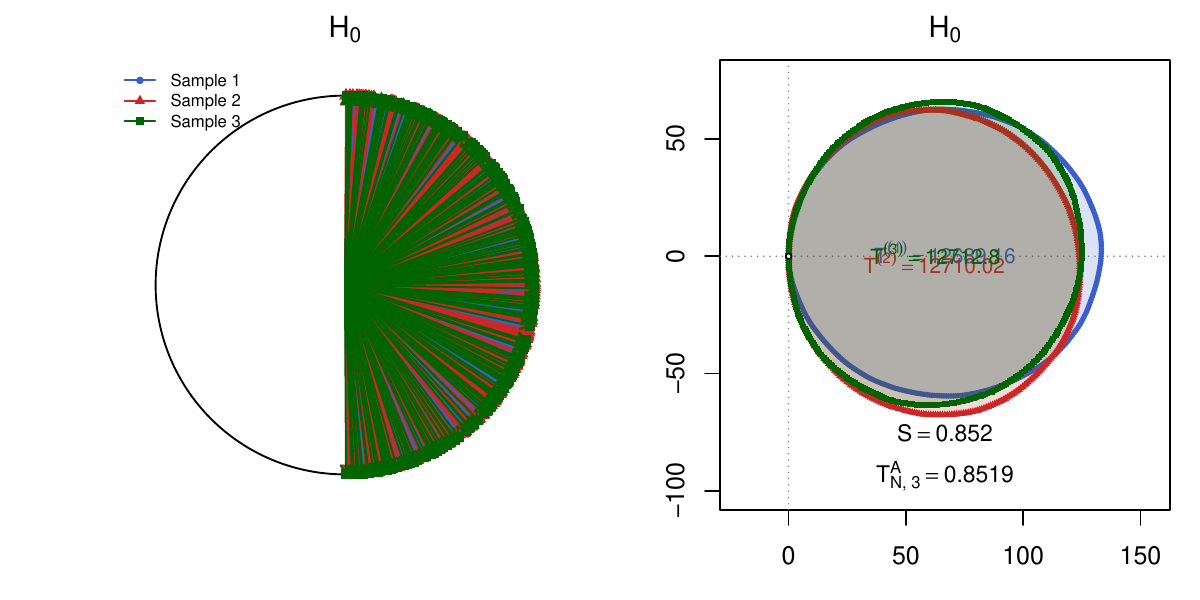}
        \hfill
        \includegraphics[width=0.49\linewidth,trim={2.1cm 0.55cm 0.45cm 0.9cm},clip=true]{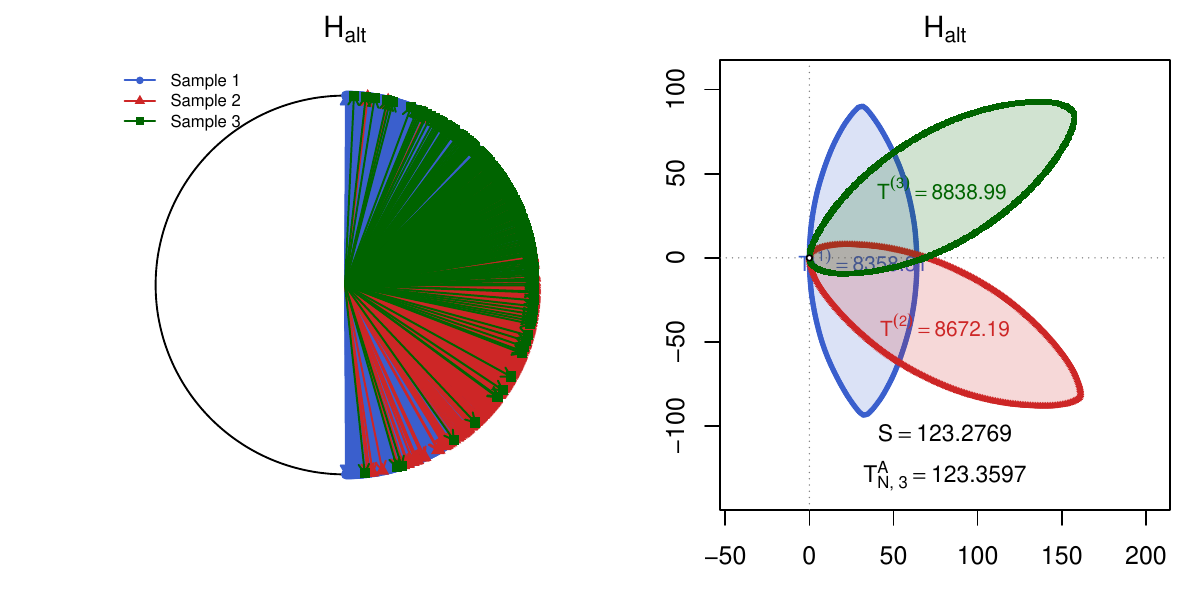}
        \caption{\small $n=200$}
    \end{subfigure}
    \caption{\small Illustration of the area-based statistic $A_{3n,3}$ with $c=3$ balanced samples under the null hypothesis (left column) and under a von Mises equispaced-location alternative (right column). Each sample has size $n$ indicated in the rows. For each panel, the left figure shows the transformed uniform scores and their corresponding unit vectors $\smash{\{\bs_{i}^{(\ell)}\}}_{i=1}^{n_{\ell}}$. The right figure shows the $2n_\ell$-polygons whose edges are obtained by sequentially connecting $\smash{\{\bs_{(i)}^{(\ell)}\}}_{i=1}^{n_{\ell}}$ and their opposite vectors. Polygon labels display the corresponding areas $T^{(\ell)}$, while the titles report the statistic values $A_{N,c}$ and $T_{N,c}^{\rm A}$.
    }\label{fig:area-construction}
\end{figure}

The intuition described in Section~\ref{subsec:area} becomes clear. Under the null hypothesis, the constructions are expected to be nearly regular polygons, as the edge directions are approximately evenly spread over the semicircle. As $n$ increases, these nearly regular polygons approach a circular shape. On the other hand, under the alternative, the edge vectors are clustered around certain locations, resulting in elongated polygons with smaller area. Since $A_{N,c}$ can be viewed as the deficit from the maximal polygon area, these smaller areas lead to larger values of $A_{N,c}$ and, asymptotically, of $T_{N,c}^{\rm A}$. The figure also illustrates the finite-sample approximation linking $A_{N,c}$ and $\smash{T_{N,c}^{\rm A}}$. Since $A_{N,c}$ is based on a finite-sample kernel that converges to the kernel defining $\smash{T_{N,c}^{\rm A}}$, the two statistic values get closer as $n$ increases.

Finally, note that rotating the data can modify the constructed polygons for $A_{N,c}$ in two different ways. First, it can rotate the polygons around the origin, which does not change their areas. Second, it can change the relative positions of some of the edge vectors, depending on the choice of origin for ranking the data, changing the corresponding area. Thus, $A_{N,c}$ is not rotation-invariant. However, asymptotically as $N\to\infty$, this effect becomes negligible. Indeed, $T_{N,c}^{\rm A}$ is rotation-invariant by definition.

\fi


\end{document}

%% file: preamble_arXiv.tex
\usepackage[utf8]{inputenc}
\usepackage[T1]{fontenc}

\usepackage{amsthm}
\usepackage{amsmath}
\usepackage{amsfonts}
\usepackage{amssymb}
\usepackage{dsfont}

\usepackage{graphicx} 
\usepackage{graphbox}
\usepackage{float}
\usepackage{booktabs}
\usepackage{multirow}
\usepackage{rotating}
\usepackage{array}
\usepackage[table,svgnames]{xcolor}
\usepackage{subcaption}
\usepackage[ruled]{algorithm2e}
\usepackage{tikz}
\usetikzlibrary{positioning, angles, quotes, arrows.meta}

\newcolumntype{C}[1]{>{\centering\arraybackslash}p{#1}}
\newcolumntype{R}[1]{>{\raggedleft\let\newline\\\arraybackslash\hspace{0pt}}m{#1}}
\newcolumntype{L}[1]{>{\raggedright\let\newline\\\arraybackslash\hspace{0pt}}m{#1}}

\usepackage{breakcites}

\usepackage{nowidow}

\usepackage{enumitem}

\usepackage[top=1.5cm,bottom=2cm,right=2.5cm,left=2.5cm]{geometry}
\usepackage{titling}

\usepackage{natbib}

\usepackage{ifplatform}

\usepackage{textcomp}
\usepackage{bbm}

\usepackage{comment}

\ifwindows
\fi

\allowdisplaybreaks

\newcommand{\lp}{\left(}
\newcommand{\rp}{\right)}
\newcommand{\lc}{\left[}
\newcommand{\rc}{\right]}

\newcommand{\R}{\mathbb{R}}

\newcommand{\Sp}{\mathbb{S}}

\newcommand{\Z}{\mathbb{Z}}
\newcommand{\N}{\mathbb{N}}
\newcommand{\rd}{\mathrm{d}}
\newcommand{\cL}{\mathcal{L}}

\newcommand{\bc}{\mathbf{c}}
\newcommand{\bx}{\mathbf{x}}
\newcommand{\be}{\mathbf{e}}

\newcommand{\bn}{\mathbf{n}}
\newcommand{\by}{\mathbf{y}}
\newcommand{\bX}{\mathbf{X}}
\newcommand{\bY}{\mathbf{Y}}
\newcommand{\bZ}{\mathbf{Z}}

\newcommand{\bU}{\mathbf{U}}

\newcommand{\bG}{\mathbf{G}}
\newcommand{\bR}{\mathbf{R}}
\newcommand{\bT}{\mathbf{T}}
\newcommand{\bmu}{\boldsymbol\mu}

\newcommand{\bw}{\mathbf{w}}

\newcommand{\bba}{\mathbf{a}}
\newcommand{\br}{\mathbf{r}}
\newcommand{\bs}{\mathbf{s}}

\newcommand{\bbeta}{\boldsymbol\eta}
\newcommand{\bdelta}{\boldsymbol\delta}

\newcommand{\bpi}{\boldsymbol\pi}
\newcommand{\bPi}{\boldsymbol\Pi}

\newcommand{\bga}{\boldsymbol\gamma}

\newcommand{\bTheta}{\boldsymbol\Theta}
\newcommand{\Ical}{\mathcal{I}}

\newcommand{\bSigma}{\boldsymbol\Sigma}
\newcommand{\bLambda}{\boldsymbol\Lambda}

\newcommand{\Hcal}{\mathcal{H}}

\newcommand{\bB}{\mathbf{B}}
\newcommand{\bC}{\mathbf{C}}
\newcommand{\bD}{\mathbf{D}}

\newcommand{\bI}{\mathbf{I}}
\newcommand{\bS}{\mathbf{S}}

\newcommand{\bW}{\mathbf{W}}

\newcommand{\equald}{\stackrel{d}{=}}
\newcommand{\inlaw}{\rightsquigarrow}
\newcommand{\inprob}{\stackrel{\mathrm{P}}{\rightarrow}}

\newcommand{\lrp}[1]{\left(#1\right)}

\newcommand{\lrpBig}[1]{\Big(#1\Big)}
\newcommand{\lrpbigg}[1]{\bigg(#1\bigg)}
\newcommand{\lrpBigg}[1]{\Bigg(#1\Bigg)}
\newcommand{\lrc}[1]{\left[#1\right]}
\newcommand{\lrb}[1]{\left\{#1\right\}}
\newcommand{\floor}[1]{\lfloor #1 \rfloor}
\newcommand{\ceil}[1]{\lceil #1 \rceil}

\newcommand{\E}[1]{\mathrm{E}\lc #1\rc}

\newcommand{\Es}[2]{\mathrm{E}_{#2}\lc #1\rc}
\newcommand{\Vs}[2]{\mathrm{Var}_{#2}\lc #1\rc}

\newcommand{\diag}[1]{\mathrm{diag}\lp #1\rp}

\newcommand{\Esbig}[2]{\mathrm{E}_{#2}\big[ #1\big]}

\newcommand{\EsBigg}[2]{\mathrm{E}_{#2}\Bigg[ #1\Bigg]}

\newcommand{\abs}[1]{\left| #1\right|}

\DeclareFontFamily{OT1}{pzc}{}
\DeclareFontShape{OT1}{pzc}{m}{it}{<-> s * [1.10] pzcmi7t}{}
\DeclareMathAlphabet{\mathpzc}{OT1}{pzc}{m}{it}

\newcommand{\symvMF}{($\vee$)}
\newcommand{\symMvMF}{($\ast$)}
\newcommand{\symbelts}{($\circ$)}
\newcommand{\symcross}{($+$)}

\newtheorem{theorem}{Theorem}[section]
\newtheorem{corollary}{Corollary}[section]
\newtheorem{remark}{Remark}[section]
\newtheorem{proposition}{Proposition}[section]
\newtheorem{lemma}{Lemma}[section]

\newtheorem{examp}{Example}[section]
\newtheorem{example}[examp]{Example}

